\renewcommand{\PrintDOI}[1]{\href{http://dx.doi.org/\detokenize{#1}}{doi: \detokenize{#1}}}
\setlist[enumerate,1]{label=\textup{(\arabic*)}}
\tikzset{node distance=2cm, auto}
\tikzset{cd/.style=matrix of math nodes,row sep=2em,column sep=2em, text height=1.5ex, text depth=0.5ex}
\tikzset{cdar/.style=->,auto}
\tikzset{mid/.style={anchor=mid}} 
\tikzset{narrowfill/.style={inner sep=1pt, fill=white}}
\newcommand*{\Cst}{\textup C^*}
\newcommand*{\red}{\textup r}
\newcommand*{\Cred}{\textup C^*_\textup r}
\newcommand*{\Cont}{\textup C}
\newcommand*{\nb}{\nobreakdash}
\newcommand*{\Star}{\(^*\)\nobreakdash-}
\newcommand*{\Mult}{\mathcal M}
\newcommand*{\UM}{\mathcal {UM}}
\newcommand{\idealin}{\mathrel{\triangleleft}} 
\newcommand{\Her}{\mathcal H}
\newcommand{\Hils}{\mathfrak H}
\newcommand*{\defeq}{\mathrel{\vcentcolon=}}
\newcommand*{\congto}{\xrightarrow\sim}
\newcommand*{\norm}[1]{\lVert#1\rVert}
\newcommand*{\abs}[1]{\lvert#1\rvert}
\newcommand*{\braket}[2]{\langle#1{\mid}#2\rangle}
\newcommand*{\bra}[1]{\langle#1{\mid}}
\newcommand*{\ket}[1]{{\mid}#1\rangle}
\newcommand*{\id}{\mathrm{id}}
\newcommand*{\Bor}{\mathrm{Bor}}
\newcommand*{\ima}{\textup i}
\newcommand*{\Ad}{\textup{Ad}}
\newcommand{\Bound}{\mathbb B}
\newcommand{\Comp}{\mathbb K}
\DeclareMathOperator{\dashind}{-Ind}
\DeclareMathOperator{\Spec}{Sp}
\DeclareMathOperator{\ord}{ord}
\DeclareMathOperator{\Homeo}{Homeo}
\DeclareMathOperator{\Kish}{Kish}
\DeclareMathOperator{\Aut}{Aut}
\DeclareMathOperator{\clsp}{\overline{span}}
\newcommand{\I}{\mathcal I}
\newcommand{\E}{\mathcal E}
\newcommand{\F}{\mathcal F}
\newcommand{\CC}{\mathcal C}
\newcommand{\Z}{\mathbb Z}
\newcommand{\N}{\mathbb N}
\newcommand{\T}{\mathbb T}
\numberwithin{equation}{section}
\newtheorem{theorem}{Theorem}[section]
\newtheorem{lemma}[theorem]{Lemma}
\newtheorem{proposition}[theorem]{Proposition}
\newtheorem{corollary}[theorem]{Corollary}
\theoremstyle{definition}
\newtheorem{definition}[theorem]{Definition}
\newtheorem{example}[theorem]{Example}
\newtheorem{remark}[theorem]{Remark}
\title[Aperiodicity, topological freeness and pure outerness]%
{Aperiodicity, topological freeness and pure outerness:
  from group actions to Fell bundles}
\author{Bartosz Kosma Kwa\'sniewski}
\email{bartoszk@math.uwb.edu.pl}
 \address{Institute of Mathematics,  University  of Bia\l ystok\\
ul. K. Cio\l kowskiego 1M, 15-245 Bia\l ystok,   Poland // Department of Mathematics and Computer Science, The University of Southern Denmark,
Campusvej 55, DK-5230 Odense M, Denmark }
\author{Ralf Meyer}
\email{rmeyer2@uni-goettingen.de}
\address{Mathematisches Institut\\
 Georg-August-Universit\"at G\"ottingen\\
 Bunsenstra\ss e 3--5\\
 37073 G\"ottingen\\
 Germany}
\keywords{group action; Fell bundle;
  aperiodic; topologically free; purely outer; Connes spectrum;
  purely infinite; strongly purely infinite; filling family}
\subjclass[2010]{Primary 46L55; Secondary 46L40}
\begin{document}

\begin{abstract}
  We generalise various non-triviality conditions for group actions to
  Fell bundles over discrete groups and prove several implications
  between them.  We also study sufficient criteria for the reduced
  section \(\Cst\)\nb-algebra~\(\Cred(\mathcal{B})\)
  of a Fell bundle \(\mathcal{B}=(B_g)_{g\in G}\)
  to be strongly purely infinite.  If the unit fibre \(A\defeq B_e\)
  contains an essential ideal that is separable or of Type~I,
  then~\(\mathcal{B}\)
  is aperiodic if and only if~\(\mathcal{B}\)
  is topologically free.  If, in addition, \(G=\Z\)
  or \(G=\Z/p\)
  for a square-free number~\(p\),
  then these equivalent conditions are satisfied if and only if~\(A\)
  detects ideals in~\(\Cred(\mathcal{B})\),
  if and only if~\(A^+\setminus\{0\}\)
  supports~\(\Cred(\mathcal{B})^+\setminus\{0\}\)
  in the Cuntz sense.  For~\(G\)
  as above and arbitrary~\(A\),
  \(\Cred(\mathcal{B})\)
  is simple if and only if~\(\mathcal{B}\)
  is minimal and pointwise outer.  In general, \(\mathcal{B}\)
  is aperiodic if and only if each of its non-trivial fibres has a
  non-trivial Connes spectrum.  If~\(G\)
  is finite or if~\(A\)
  contains an essential ideal that is of Type~I or simple, then
  aperiodicity is equivalent to pointwise pure outerness.
\end{abstract}

\maketitle
\setcounter{tocdepth}{1}

\section{Introduction}
\label{sec:introduction}

Several deep results on the relationship between various
non-triviality conditions for group actions and the simplicity of
reduced crossed products were proved around~1980 by Olesen and
Pedersen \cites{Olesen-Pedersen:Applications_Connes,
  Olesen-Pedersen:Applications_Connes_2,
  Olesen-Pedersen:Applications_Connes_3}, Kishimoto
\cites{Kishimoto:Simple_crossed, Kishimoto:Outer_crossed,
  Kishimoto:Freely_acting}, and
Rieffel~\cite{Rieffel:Actions_finite}.  Their powerful results are
used, for instance, in
\cites{Jolissaint-Robertson:Simple_purely_infinite,
  Sierakowski:IdealStructureCrossedProducts,
  Rordam-Sierakowski:Purely_infinite,
  Pasnicu-Phillips:Spectrally_free,
  Kirchberg-Sierakowski:Strong_pure} to study of the ideal structure
and pure infiniteness of crossed products for actions of discrete
groups.

The main point of this article is to generalise this theory to Fell
bundles over discrete groups.  This contains (twisted) crossed
products for partial group actions as a special case, see
\cites{Exel:TwistedPartialActions, Exel:Partial_dynamical} or
Example~\ref{ex:twisted_fell_bundles} below. In
fact, 
Fell bundles over~\(G\)
model all \(\Cst\)\nb-algebras
graded by~\(G\),
and many important \(\Cst\)\nb-algebras
come with such gradings.  For instance, the relative Cuntz--Pimsner
algebras of a \(\Cst\)\nb-correspondence
\cites{Muhly-Solel:Tensor, Pimsner:Generalizing_Cuntz-Krieger} and
Doplicher--Roberts algebras
\cite{Doplicher-Pinzari-Zuccante:Hilbert_bimodule} are naturally
\(\Z\)\nb-graded,
and these gradings are well understood, see
\cites{Kwasniewski:Cuntz-Pimsner-Doplicher,
  Schweizer:Dilations_correspondences,
  Abadie-Eilers-Exel:Morita_bimodules}.  The Cuntz--Nica--Toeplitz
algebra \cite{Sims-Yeend:Cstar_product_systems} of a product system
over a quasi-lattice order is graded by the ambient group, and this
grading is exploited, for instance, in
\cite{Carlsen-Larsen-Sims-Vittadello:Co-universal}.  The
Cuntz--Pimsner algebras \cite{Fowler:Product_systems} of product
systems over Ore semigroups are naturally graded by the group
completion of the Ore semigroup, with well understood fibres, see
\cites{Kwasniewski-Szymanski:Ore, Albandik-Meyer:Product}.  Thus an
extension of the classical theory to Fell bundles allows to study the
ideal structure and pure infiniteness properties for large classes of
\(\Cst\)\nb-algebras.

Let \(\alpha\colon G\to \Aut(A)\)
be an action of a discrete group~\(G\)
on a \(\Cst\)-algebra~\(A\).
Popular among the non-triviality conditions for such actions are
\emph{proper outerness} (see~\cite{Elliott:Simple_crossed}) and
\emph{topological freeness}
(see~\cite{Archbold-Spielberg:Topologically_free}).  However, proper
outerness is only useful if~\(A\)
is separable.  Then it is equivalent to topological freeness and to
\emph{aperiodicity} by the work of Olesen and Pedersen.  Aperiodicity
requires a seemingly technical condition for~\(\alpha_g\)
for all \(g\in G\setminus\{e\}\)
(see Definition~\ref{def:Kishimoto} below), which we call
\emph{Kishimoto's condition} because its role was first highlighted by
Kishimoto~\cite{Kishimoto:Outer_crossed}.  The term ``aperiodicity''
was coined
in~\cite{Muhly-Solel:Morita_equivalence_of_tensor_algebras}, where
single \(\Cst\)\nb-correspondences
were considered, and carried over to Fell bundles over discrete groups
in~\cite{Kwasniewski-Szymanski:Pure_infinite}.  By a result of
Kishimoto~\cite{Kishimoto:Freely_acting}, the aperiodic group actions
are the same as the \emph{pointwise spectrally non-trivial} actions of
Pasnicu and Philips~\cite{Pasnicu-Phillips:Spectrally_free}.
All conditions above except
proper outerness imply that~\(A\)
\emph{detects} ideals in \(B\defeq A\rtimes_{\alpha,\red} G\),
that is, that \(I\cap A\neq0\)
whenever~\(I\)
is a non-zero ideal in~\(B\).
Their residual versions together with exactness imply that~\(A\)
\emph{separates ideals} in~\(B\),
that is, \(I\cap A\neq J\cap A\)
whenever \(I,J\idealin B\)
are ideals with \(I\neq J\),
see~\cite{Sierakowski:IdealStructureCrossedProducts}.
Furthermore, aperiodicity implies that elements of \(A^+\setminus \{0\}\)
\emph{support} the non-zero positive elements in the reduced crossed
product \(A\rtimes_{\alpha,\red} G\)
in the Cuntz sense; this is exactly the property one uses to detect
pure infiniteness, see \cite{Kwasniewski:Crossed_products}*{Subsection
  2.8} and~\cite{Kwasniewski-Szymanski:Pure_infinite}.  By arguments
in~\cite{Kirchberg-Sierakowski:Strong_pure}, residual
aperiodicity implies that~\(A^+\)
is a \emph{filling family} for \(A\rtimes_{\alpha,\red} G\).
Filling families are introduced
in~\cite{Kirchberg-Sierakowski:Filling_families} to detect strong pure
infiniteness.

Aperiodic and topologically free Fell bundles over discrete groups
have been defined already in
\cites{Kwasniewski-Szymanski:Pure_infinite, Abadie-Abadie:Ideals,
  Kwasniewski-Szymanski:Ore}.  Several other non-triviality conditions
for group actions generalise readily to Fell bundles (see
Definition~\ref{def:aperiodic} below).  This includes pointwise pure
outerness or pointwise pure universal weak outerness, but apparently
not proper outerness.  To establish relationships among these
pointwise conditions, it suffices to study a single Hilbert bimodule.

We highlight our main achievements.  Let
\(\mathcal{B}=(B_g)_{g\in G}\)
be a Fell bundle with unit fibre \(A\defeq B_e\).
If~\(A\)
contains an essential
ideal that is separable or of Type~I,
then~\(\mathcal{B}\)
is aperiodic if and only if~\(\mathcal{B}\)
is topologically free, if and only if~\(\mathcal{B}\) is pointwise
purely universally weakly outer
(Theorems \ref{thm:separable_type_I_equivalences}
and~\ref{the:Hilbi_Kish_vs_weakly_inner}).  If, in addition,
\(G=\Z\)
or \(G=\Z/p\)
for a square-free number~\(p\),
then this is further equivalent to the condition that~\(A\)
detects ideals in \(\Cred(\mathcal{B})= \Cst(\mathcal{B})\)
(Theorem~\ref{the:Fell_uniqueness}).  Under the latter assumptions,
\(\mathcal{B}\)
is residually aperiodic if and only if~\(A^+\)
is a filling family for \(\Cst(\mathcal{B})\),
if and only if~\(A\)
separates ideals in \(\Cst(\mathcal{B})\)
(Theorem~\ref{the:automatic_gauge-invariance}).
We generalise the Connes spectrum and use it to characterise
aperiodicity of Fell bundles
(Theorem~\ref{the:Equivalence_for_finite2}), and the property
that~\(A\)
detects ideals in~\(\Cst(\mathcal{B})\)
when~\(G\)
is Abelian
(Proposition~\ref{pro:Fell_Connes_spectrum_vs_detect_ideals}).
If~\(G\)
is finite or~\(A\) contains an essential ideal
that is simple or of Type~I, then~\(\mathcal{B}\)
is aperiodic if and only if~\(\mathcal{B}\)
is pointwise purely outer (Theorem~\ref{the:Equivalence_for_finite2}).
If \(G=\Z\)
or \(G=\Z/p\)
for a square-free number~\(p\),
then~\(\Cst(\mathcal{B})\)
is simple if and only if~\(\mathcal{B}\)
is pointwise purely outer and minimal (Theorem~\ref{the:Fell_simple}).

For group actions by automorphisms, most of our results are already
contained in the classic articles by Olesen--Pedersen, Kishimoto and
Rieffel mentioned above.  The results about \(G=\Z/p\)
with square-free~\(p\)
are not stated explicitly there, but the proofs for~\(\Z\)
evidently cover this case as well.  There are many other important
results in these articles.  The culmination of the work of Olesen and
Pedersen in \cite{Olesen-Pedersen:Applications_Connes_3}*{Theorem 6.6}
shows that eleven non-triviality properties are equivalent for an
automorphism of a separable \(\Cst\)\nb-algebra.
The work of Kishimoto adds a few more equivalent properties.  Some of
the properties of automorphisms that are used in the proofs do not
generalise to Hilbert bimodules.  Therefore, it seems difficult to
prove our main results directly.  We proceed differently.
We reduce our main results to the special case of group actions by automorphisms.
We use that every Fell bundle is Morita equivalent to
a Fell bundle that comes from a partial action that globalises to an
action by automorphisms.  Such a \emph{Morita globalisation} for Fell
bundles over discrete groups is constructed by
Abadie~\cite{Abadie:Enveloping} and by
Quigg~\cite{Quigg:Discrete_coactions_and_bundles} using Takai
Duality (see
Section~\ref{sec:Morita_globalisation}).  In order to show that this
construction preserves the non-triviality properties we are interested
in, we introduce the concepts of a \emph{Morita covering} and a
\emph{dense Morita covering} (see Section~\ref{sec:Kish_permanence}).
We show that these preserve Kishimoto's condition and a number of
other properties of Hilbert bimodules and Fell bundles (see
Proposition~\ref{pro:dense_Morita_covering}) and that Abadie's Morita
globalisation is also a Morita covering.  This allows us to reduce our
main theorems to the special case treated in the classical theory.
Instead of separability or simplicity
of~\(A\),
it suffices to assume that~\(A\)
contains an essential ideal with that property or an essential ideal
of Type~I.

It is not clear whether the conditions that are relevant for pure
infiniteness and strong pure infiniteness are invariant under Morita
globalisations.  So we study the relationships between them
directly.  In particular, in Section~\ref{sec:strong_pi} we
formulate strong pure infiniteness criteria for reduced Fell bundle
section \(\Cst\)\nb-algebras that generalise those in
\cites{Kirchberg-Sierakowski:Strong_pure,
  Kwasniewski-Szymanski:Pure_infinite}.

The paper is organised as follows.  Section~\ref{sec:automorphisms}
collects the classical results about automorphisms and group actions
by automorphisms in a way suitable for our later generalisations, with
a few small additions.  Section~\ref{sec:Hilmi_crossed} recalls
preliminaries on Hilbert bimodules, Fell bundles and their crossed
products.  Section~\ref{sec:nontriviality} introduces some of the
non-triviality conditions for Hilbert bimodules and Fell bundles, as
well as notions of residually supporting and filling subalgebras.
Section~\ref{sec:strong_pi} contains general criteria for strong pure
infiniteness of Fell bundle \(\Cst\)\nb-algebras.
Section~\ref{sec:Kish_permanence} studies permanence of non-triviality
conditions for single Hilbert bimodules under Morita restrictions and
dense Morita coverings.  Section~\ref{sec:Morita_globalisation}
studies a canonical Morita globalisation for a Fell bundle that goes
back to Abadie~\cite{Abadie:Enveloping} and
Quigg~\cite{Quigg:Discrete_coactions_and_bundles}.  In
Section~\ref{sec:tok}, we
use Morita globalisations and Morita coverings to generalise and
extend relationships between non-triviality conditions from single
automorphisms to single Hilbert bimodules.  In
Section~\ref{sec:Connes_spectrum}, we define the Connes and strong
Connes spectra for Fell bundles over Abelian discrete groups and for
single Hilbert bimodules.  We relate these notions to aperiodicity or
pure outerness of Fell bundles, and to detection of ideals
in~\(\Cred(\mathcal{B})\).
We show that a number of conditions are equivalent if \(G=\Z\)
or \(G=\Z/p\)
for a square-free number~\(p\).
We also prove residual versions of our results, which are related to
separation of ideals in~\(\Cred(\mathcal{B})\).

\subsection{Acknowledgements}

The research leading to these results has been funded by the
European Union's 7th Framework Programme (FP7/2007--2013) under
grant agreement number 621724.  The first-named author was partially
supported by the NCN (National Centre of Science) grant
2014/14/E/ST1/00525.  This work was finished while he participated in
the Simons Semester at IMPAN -- Fundation grant 346300 and the Polish
Government MNiSW 2015--2019 matching fund.  He also would like to
express his gratitude to Suliman Albandik for inviting him to the
Mathematisches Institut in G\"ottingen, where this project started.

\section{Group actions by automorphisms}
\label{sec:automorphisms}

Throughout this article, \(G\)
is a discrete group and~\(e\)
denotes its unit element.  Let \(\alpha\colon G\to \Aut(A)\)
be an action of~\(G\)
on a \(\Cst\)\nb-algebra~\(A\).
Let \(\Her(A)\)
be the set of \emph{non-zero}, hereditary subalgebras of~\(A\),
and let
\[
\Her^\alpha(A) \defeq
\{D\in \Her(A)\mid \alpha_g(D)=D \text{ for all }g\in G\}.
\]
Let \(\I(A)\) be the ideal lattice of~\(A\), and \(\I^\alpha(A)\)
the sublattice of \(\alpha\)\nb-invariant ideals.

For the time being, we assume that~\(G\)
is Abelian, so that the Pontryagin dual~\(\widehat{G}\)
and various spectra for group actions are defined.  The \emph{Arveson
  spectrum} of~\(\alpha\)
is the set \(\Spec(\alpha)\)
of all \(\chi\in\widehat{G}\)
for which there is a net~\((a_\lambda)\)
in~\(A\)
with \(\norm{a_\lambda}=1\)
for all~\(\lambda\)
and \(\lim {}\norm{\alpha_g(a_\lambda) - \chi(g)a_{\lambda}}=0\)
for all \(g\in G\),
compare \cite{Pedersen:Cstar_automorphisms}*{Proposition 8.1.9(iii)}.
The Connes and Borchers spectra were introduced by
Olesen~\cite{Olesen:Inner}, see also
\cite{Pedersen:Cstar_automorphisms}*{Chapter~8}.  The \emph{Connes
  spectrum} of~\(\alpha\) is
\[
\Gamma(\alpha)\defeq  \bigcap_{D\in \Her^\alpha(A)} \Spec(\alpha|_D).
\]
It is a closed subgroup of~\(\widehat{G}\).  The \emph{Borchers
  spectrum} \(\Gamma_\Bor(\alpha)\) of~\(\alpha\) is a
similar intersection taken over those \(D\in \Her^\alpha(A)\)
for which the ideal~\(\overline{A D A}\) is essential in~\(A\).  In
general, it
is the closure of a union of subgroups of~\(\widehat{G}\), see
\cite{Pedersen:Cstar_automorphisms}*{Propositions 8.8.4 and~8.8.5}.
The strong Connes spectrum~\(\tilde\Gamma(\alpha)\) is introduced by
Kishimoto~\cite{Kishimoto:Simple_crossed} and used by him in
\cites{Kishimoto:Outer_crossed, Kishimoto:Freely_acting}.  Since the
group~\(G\) is discrete, \(\tilde\Gamma(\alpha)\) is just a residual
version of~\(\Gamma(\alpha)\).  Namely,
\cite{Kishimoto:Freely_acting}*{Proposition~4.1} identifies
\begin{equation}
  \label{eq:residual_Connes_is_strong_Connes}
  \tilde\Gamma(\alpha) = \bigcap_{I\in\I^\alpha(A), I\neq A} \Gamma(\alpha|_{A/I}),
\end{equation}
where \(\alpha|_{A/I}\) denotes the induced action
on~\(A/I\).  By~\eqref{eq:residual_Connes_is_strong_Connes},
\(\tilde\Gamma(\alpha)\) is a closed subgroup of~\(\widehat{G}\).

We are going to describe the Connes spectrum and the strong Connes
spectrum in ways that generalise immediately to Fell bundles.  Then we
use these spectra to characterise when~\(A\)
detects or separates ideals in~\(A\rtimes_\alpha G\).


\begin{proposition}
  \label{pro:Connes_spectrum}%
  Let \(\alpha\colon G\to \Aut(A)\) be an action of a discrete
  Abelian group, and let \(\beta\colon \widehat{G}\to
  \Aut(A\rtimes_\alpha G)\) be the dual action.  Let \(\chi\in
  \widehat{G}\).
  \begin{enumerate}
  \item \label{en:Strong_Connes_spectrum_characterisation1}%
    \(\chi\in \Gamma(\alpha)\) if and only \(I \cap
    \beta_\chi(I)\neq 0\) for each non-zero ideal~\(I\)
    in~\(A\rtimes_\alpha G\).
  \item \label{en:Strong_Connes_spectrum_characterisation}%
    \(\chi\in \tilde \Gamma(\alpha)\) if and only \(\beta_\chi(I)=
    I\) for any ideal~\(I\) in~\(A\rtimes_\alpha G\).
  \end{enumerate}
\end{proposition}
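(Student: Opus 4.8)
Write $B\defeq A\rtimes_\alpha G$. The plan is to reduce both assertions to the Galois correspondence between $\beta$\nb-invariant ideals of $B$ and $\alpha$\nb-invariant ideals of $A$. Since $G$ is discrete Abelian, hence amenable, the canonical conditional expectation $E\colon B\to A$ is faithful, and averaging over the compact group $\widehat G$ shows that the smallest $\beta$\nb-invariant ideal of $B$ containing a given ideal $I\idealin B$ is $J_I\rtimes_\alpha G$, where $J_I\defeq\clsp\{E(b):b\in I\}$ is an $\alpha$\nb-invariant ideal of $A$, nonzero when $I\neq0$; dually, the largest $\beta$\nb-invariant ideal inside $I$ is $\bigl(\bigcap_{\gamma}\beta_\gamma(I)\bigr)\cap A$ tensored up with $G$. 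Consequently $J\mapsto J\rtimes_\alpha G$ is a lattice isomorphism from $\I^\alpha(A)$ onto the $\beta$\nb-invariant ideals of $B$, compatible with quotients through $(A/J)\rtimes_{\alpha|_{A/J}}G\cong B/(J\rtimes_\alpha G)$. I will also use that an $\alpha$\nb-invariant hereditary subalgebra of an $\alpha$\nb-invariant ideal $J\idealin A$ is again one of $A$, so that $\Gamma(\alpha)\subseteq\Gamma(\alpha|_J)\subseteq\Spec(\alpha|_J)$, and, for the second assertion, the identity~\eqref{eq:residual_Connes_is_strong_Connes}.

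For assertion (1), I argue the implication ``$\Leftarrow$'' contrapositively. If $\chi\notin\Gamma(\alpha)$, pick $D\in\Her^\alpha(A)$ with $\chi\notin\Spec(\alpha|_D)$; replacing $A$ by the $\alpha$\nb-invariant ideal $\overline{ADA}$ and $B$ by the corresponding $\beta$\nb-invariant ideal---which carries the dual action of $(\overline{ADA},G,\alpha)$---we may assume $D$ full. Now $\chi\notin\Spec(\alpha|_D)$ provides a neighbourhood $V\ni\chi$ on which the spectral subspace of $D$ vanishes, and Olesen's construction (cf.\ \cite{Pedersen:Cstar_automorphisms}*{Chapter~8} and \cites{Olesen-Pedersen:Applications_Connes_2,Olesen-Pedersen:Applications_Connes_3}) turns this ``spectral gap at $\chi$'' into a nonzero ideal $I\idealin B$ with $I\cap\beta_\chi(I)=0$. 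Conversely, suppose $\chi\in\Gamma(\alpha)$ and $0\neq I\idealin B$. Since $I\cdot\beta_\chi(I)\subseteq I\cap\beta_\chi(I)$, it suffices to find $b\in I$ with $b\,\beta_\chi(b^*)\neq0$. Let $D\in\Her^\alpha(A)$ be the $\alpha$\nb-invariant hereditary subalgebra generated by $\{E(x):x\in I^+\}$---nonzero because $E$ is faithful---and let $(a_\lambda)$ be a norm-one net in $D$ with $\norm{\alpha_g(a_\lambda)-\chi(g)a_\lambda}\to0$ for all $g$, which exists as $\chi\in\Gamma(\alpha)\subseteq\Spec(\alpha|_D)$. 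Combining the Fourier-coefficient identity $E\bigl(b\,\beta_\chi(b^*)\bigr)=\sum_{g}\overline{\chi(g)}\,b_g b_g^*$ for $b=\sum_g b_g\delta_g$ in the algebraic crossed product, the approximate $\chi$\nb-eigenvector property of $(a_\lambda)$, and Arveson's spectral theory to control the $G$\nb-support, one produces such a $b$; this is the pointwise form of the Olesen--Pedersen primeness criterion for $B$.

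Assertion (2) is deduced from (1). By~\eqref{eq:residual_Connes_is_strong_Connes} and assertion (1) applied to each quotient system $(A/J,G,\alpha|_{A/J})$, whose crossed product is $B/(J\rtimes_\alpha G)$, and using that the proper $\beta$\nb-invariant ideals of $B$ are exactly the $I_0=J\rtimes_\alpha G$ and that $\beta_\chi$ descends to $B/I_0$, one obtains: $\chi\in\tilde\Gamma(\alpha)$ if and only if $I\cap\beta_\chi(I)\not\subseteq I_0$ for every proper $\beta$\nb-invariant ideal $I_0\idealin B$ and every ideal $I\idealin B$ with $I\supsetneq I_0$. If $\beta_\chi(I)=I$ for every ideal $I$, this holds trivially ($I\cap\beta_\chi(I)=I\supsetneq I_0$), so $\chi\in\tilde\Gamma(\alpha)$. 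For the converse, let $\chi\in\tilde\Gamma(\alpha)$ and suppose $\beta_\chi(P)\neq P$ for some primitive ideal $P\idealin B$. Put $I_0\defeq\bigcap_{\gamma}\beta_\gamma(P)$; this is a proper $\beta$\nb-invariant ideal, so $\bar B\defeq B/I_0$ is $\beta$\nb-prime and $\bar P\defeq P/I_0$ is a primitive ideal with dense $\widehat G$\nb-orbit in $\Prim(\bar B)$ and $\bar\beta_\chi(\bar P)\neq\bar P$. Since $\tilde\Gamma(\alpha)$ is a closed subgroup, $\overline{\langle\chi\rangle}\subseteq\tilde\Gamma(\alpha)$, and hence assertion (1) holds inside $\bar B$ for every character in $\overline{\langle\chi\rangle}$. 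The aim is to use this, together with the density of the orbit of $\bar P$ and the compactness of $\overline{\langle\chi\rangle}$, to produce a proper $\beta$\nb-invariant ideal $\bar I_0\subsetneq\bar B$ and a nonzero ideal $\bar K\idealin\bar B/\bar I_0$ with $\bar K\cap\bar\beta_\chi(\bar K)=0$, contradicting $\chi\in\Gamma(\alpha|_{A/J'})$ for the proper $\alpha$\nb-invariant ideal $J'$ that corresponds to $\bar I_0$.

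The main obstacle is exactly this last step. Because $\Prim(B)$ is only $T_0$, one cannot separate $\bar P$ from $\bar\beta_\chi(\bar P)$ by disjoint open sets, so manufacturing a genuinely $\beta$\nb-invariant---not merely $\beta_\chi$\nb-invariant---proper quotient of $\bar B$ in which $\bar\beta_\chi$ has a ``wandering'' ideal is the delicate point, and it is here that one must exploit that $\overline{\langle\chi\rangle}$ is compact and acts with dense orbit rather than just assertion (1) for the single ideal $P$. Carrying out this argument---equivalently, transcribing Kishimoto's analysis of the strong Connes spectrum \cite{Kishimoto:Simple_crossed}---and Olesen's ideal construction in assertion (1)$(\Leftarrow)$ are where the genuine work lies; the remaining bookkeeping with the $\beta$\nb-invariant-ideal correspondence is routine.
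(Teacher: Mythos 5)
There is a genuine gap, and you name it yourself: the steps you defer to ``Olesen's construction'' in (1)\((\Leftarrow)\), to ``the pointwise form of the Olesen--Pedersen primeness criterion'' in (1)\((\Rightarrow)\), and to ``transcribing Kishimoto's analysis'' in (2) are precisely the mathematical content of the proposition, and none of them is carried out. In (1)\((\Rightarrow)\), the Fourier-coefficient identity \(E\bigl(b\,\beta_\chi(b^*)\bigr)=\sum_g\overline{\chi(g)}\,b_g b_g^*\) does not by itself yield a nonvanishing element: the summands are positive elements scaled by unimodular constants, so the sum can cancel, and making the Arveson spectral theory ``control the \(G\)\nb-support'' so that cancellation is excluded is exactly the nontrivial argument of \cite{Olesen-Pedersen:Applications_Connes}*{Proposition~5.4}. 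In (1)\((\Leftarrow)\), turning a spectral gap of \(\alpha|_D\) at \(\chi\) into a nonzero ideal \(I\) of the crossed product with \(I\cap\beta_\chi(I)=0\) is likewise not a formal manipulation but the substance of that result. In (2), you correctly isolate the delicate point --- producing a proper \(\beta\)\nb-invariant quotient in which \(\beta_\chi\) moves some ideal off itself, starting only from a primitive ideal with \(\beta_\chi(P)\neq P\) --- and you explicitly leave it as an ``aim''; this is the content of \cite{Kishimoto:Simple_crossed}*{Lemma~3.4}. So as written the proposal is an outline with the hard kernel missing, not a proof.

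For comparison: the paper does not reprove these facts at all. Its proof of this proposition is a two-line citation --- part (1) is \cite{Olesen-Pedersen:Applications_Connes}*{Proposition~5.4}, part (2) is \cite{Kishimoto:Simple_crossed}*{Lemma~3.4}, upgraded from \(\beta_\chi(I)\subseteq I\) to \(\beta_\chi(I)=I\) by the observation that \(\tilde\Gamma(\alpha)\) is a subgroup, so \(\chi^{-1}\in\tilde\Gamma(\alpha)\) forces the reverse inclusion as well. If you are permitted to cite the classical literature, that is the efficient route, and the small subgroup remark is the only genuinely new point to supply; if you intend a self-contained rederivation, you must actually execute the Olesen--Pedersen and Kishimoto arguments (or an equivalent), and your reduction to the \(\beta\)\nb-invariant-ideal correspondence, while correct as bookkeeping, does not substitute for them.
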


\begin{proof}
  Part~\ref{en:Strong_Connes_spectrum_characterisation1} is
  \cite{Olesen-Pedersen:Applications_Connes}*{Proposition~5.4} and
  part \ref{en:Strong_Connes_spectrum_characterisation} is almost
  \cite{Kishimoto:Simple_crossed}*{Lemma~3.4}; we may replace
  \(\beta_\chi(I)\subseteq I\) by \(\beta_\chi(I)= I\) because
  \(\tilde\Gamma(\alpha)\) is a subgroup.
\end{proof}

\begin{definition}
  Let~\(A\) be a \(\Cst\)\nb-subalgebra of a \(\Cst\)-algebra \(B\).
  We say that~\(A\) \emph{detects ideals} in~\(B\) if \(J\cap A= 0\)
  implies \(J=0\) for each ideal~\(J\) in~\(B\).  It \emph{separates
    ideals} if \(I \cap A = J\cap A\) for two ideals \(I,J\idealin
  B\) only happens if already \(I=J\).
\end{definition}

\begin{remark}
  \label{rem:separates_is_residual_detects}
  A \(\Cst\)\nb-subalgebra \(A\subseteq B\) separates ideals
  in~\(B\) if and only if~\(A\) residually detects ideals
  in~\(B\), that is, for each ideal~\(J\) in~\(B\) the image
  of~\(A\) detects ideals in the quotient~\(B/J\).
\end{remark}

\begin{theorem}[\cite{Olesen-Pedersen:Applications_Connes_2}*{Theorem~2.5}]
  \label{the:Connes_spectrum_vs_detect_ideals}
  Let \(\alpha\colon G\to \Aut(A)\) be an action of a discrete
  Abelian group~\(G\).  Then \(A\)~detects ideals in
  \(A\rtimes_\alpha G\) if and only if\/ \(\Gamma(\alpha) =
  \widehat{G}\).
\end{theorem}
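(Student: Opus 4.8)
The plan is to derive both directions from Proposition~\ref{pro:Connes_spectrum}\,\ref{en:Strong_Connes_spectrum_characterisation1}, which rephrases the condition \(\chi\in\Gamma(\alpha)\) in terms of ideals of \(B\defeq A\rtimes_\alpha G\) and the dual action \(\beta\colon\widehat G\to\Aut(B)\). Throughout I use the standard fact that a \(\beta\)\nb-invariant ideal \(K\idealin B\) has the form \((K\cap A)\rtimes_\alpha G\), so that \(K\neq0\) forces \(K\cap A\neq0\).

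\emph{The implication ``\(A\) detects ideals'' \(\Rightarrow\) \(\Gamma(\alpha)=\widehat G\)} I prove by contraposition. If \(\chi\notin\Gamma(\alpha)\), then Proposition~\ref{pro:Connes_spectrum}\,\ref{en:Strong_Connes_spectrum_characterisation1} yields a non-zero ideal \(J\idealin B\) with \(J\cap\beta_\chi(J)=0\). The ideal \(I\defeq J\cap A\) of \(A\) is \(\alpha\)\nb-invariant, because \(\alpha_g\) acts on \(B\) by conjugation with the canonical unitary multiplier \(u_g\) and \(u_g J u_g^*=J\) since \(J\) is an ideal. Hence the ideal of \(B\) generated by \(I\) is contained in \(J\) and, being generated by elements of \(A=B^\beta\), is \(\beta\)\nb-invariant, so it is also contained in \(\beta_\chi(J)\); therefore it lies in \(J\cap\beta_\chi(J)=0\), which forces \(I=0\). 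Thus \(J\neq0\) while \(J\cap A=0\), and \(A\) does not detect ideals.

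\emph{The implication \(\Gamma(\alpha)=\widehat G\) \(\Rightarrow\) ``\(A\) detects ideals''} Let \(0\neq J\idealin B\); we must show \(J\cap A\neq0\). Write \(\hat J\defeq\overline{\sum_{\chi\in\widehat G}\beta_\chi(J)}\) for the smallest \(\beta\)\nb-invariant ideal containing \(J\). Replacing \((A,\alpha)\) by \((\hat J\cap A,\alpha|_{\hat J\cap A})\) we may assume \(\hat J=B\); this is harmless, since \(\Her^\alpha(\hat J\cap A)\subseteq\Her^\alpha(A)\), so passing to the invariant ideal can only enlarge the Connes spectrum, keeping it equal to \(\widehat G\). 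It now suffices to show that the largest \(\beta\)\nb-invariant ideal contained in \(J\), namely \(\check J\defeq\bigcap_{\chi\in\widehat G}\beta_\chi(J)\), is non-zero, for then \(0\neq\check J\cap A\subseteq J\cap A\). By Proposition~\ref{pro:Connes_spectrum}\,\ref{en:Strong_Connes_spectrum_characterisation1}, the hypothesis says that \(L\cap\beta_\chi(L)\neq0\) for \emph{every} non-zero ideal \(L\idealin B\) and \emph{every} \(\chi\in\widehat G\). Iterating this starting from \(L=J\) (each step replaces \(L\) by the non-zero ideal \(L\cap\beta_\chi(L)\)) produces, for any finite set \(\{\chi_1,\dots,\chi_k\}\subseteq\widehat G\), a non-zero ideal contained in \(\bigcap_{\chi\in S}\beta_\chi(J)\), where \(S\) ranges over the subgroup-closure of the cube \(\{\prod_l\chi_l^{\varepsilon_l}:\varepsilon\in\{0,1\}^k\}\); in particular \(\bigcap_{\chi\in H}\beta_\chi(J)\neq0\) for every finite subgroup \(H\le\widehat G\). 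When \(G\) is finite this immediately gives \(\check J\neq0\), completing the proof.

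For infinite \(G\) the remaining point — promoting the non-vanishing of all the finite sub-intersections to \(\check J\neq0\) itself — is the technical heart of the statement and is the main obstacle. Here I would follow Olesen and Pedersen's original argument, which analyses the Arveson and Connes spectra of the dual action: \(\Gamma(\alpha)=\widehat G\) says precisely that the Arveson spectrum of \(\alpha\) is full over every \(\alpha\)\nb-invariant hereditary subalgebra of \(A\), and, through the correspondence between the \(\alpha\)\nb-invariant hereditary subalgebras of \(A\) and the \(\beta\)\nb-invariant hereditary subalgebras of \(B\), this propagates to a fullness property of \(\beta\) which a non-zero ideal \(J\) with \(\check J=0\) would violate. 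Everything else is formal bookkeeping with ideals and the dual action.
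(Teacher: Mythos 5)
Your write-up should first be measured against the fact that the paper does not prove this theorem at all: it is imported verbatim as \cite{Olesen-Pedersen:Applications_Connes_2}*{Theorem~2.5}, so the only honest options are to cite it likewise or to give a genuinely complete argument. What you do prove is fine but is the easy part. The implication ``\(A\) detects ideals \(\Rightarrow\Gamma(\alpha)=\widehat{G}\)'' is correct and is indeed immediate from Proposition~\ref{pro:Connes_spectrum}: if \(J\cap\beta_\chi(J)=0\), then the ideal of \(B\) generated by \(J\cap A\) is \(\beta\)\nb-invariant and contained in \(J\), hence in \(J\cap\beta_\chi(J)=0\), so \(J\cap A=0\). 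The converse for \emph{finite} \(G\) also works (with the cosmetic repair that your iteration only produces products with exponents in \(\{0,1\}\), not a ``subgroup-closure''; enumerating all elements of the finite subgroup \(H\) as the \(\chi_i\) is what actually gives \(\bigcap_{\chi\in H}\beta_\chi(J)\neq0\)).

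The genuine gap is the converse for infinite \(G\) --- in particular \(G=\Z\), which is the case this paper actually uses. Knowing \(\bigcap_{\chi\in H}\beta_\chi(J)\neq0\) for every finite subgroup \(H\le\widehat{G}\) does not give \(\bigcap_{\chi\in\widehat{G}}\beta_\chi(J)\neq0\): the finite-stage intersections form a decreasing net of non-zero ideals, and such a net can perfectly well have zero intersection; nothing in the hypothesis \(\Gamma(\alpha)=\widehat{G}\), as you have used it, rules this out. (Even exploiting that the torsion subgroup of \(\T\) is dense and that a closed ideal invariant under a dense subgroup is invariant under all of \(\widehat{G}\) by continuity of \(\beta\), you would still need the intersection along an increasing sequence of finite subgroups to be non-zero, which again does not follow.) Your final paragraph, saying you ``would follow Olesen and Pedersen's original argument'' and gesturing at a fullness property of \(\beta\), is not a proof and does not identify the mechanism by which a non-zero \(J\) with zero invariant core contradicts \(\Gamma(\alpha)=\widehat{G}\); that step is precisely the content of the cited theorem. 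So as written, your proposal establishes the statement only for finite \(G\) together with one (easy) implication in general, and for the rest it reduces to the same citation the paper makes --- which is acceptable only if stated as such, not presented as a sketchable remainder.
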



An action is called
\emph{minimal} if \(0\)
and~\(A\)
are the only invariant
ideals in~\(A\).
This is necessary for the crossed product to be simple.
We denote by \(\Mult(A)\) the multiplier algebra of~\(A\), and by \(\UM(A)\)
the group of its unitaries.

\begin{theorem}
  \label{the:automorphism_simple}
  Let \(\alpha\colon G\to \Aut(A)\) be a minimal action of a
  discrete Abelian group~\(G\).  Consider the following conditions:
  \begin{enumerate}[wide,label=\textup{(\ref*{the:automorphism_simple}.\arabic*)}]
  \item \label{en:automorphism_simple_simple}%
    \(A\rtimes_\alpha G\) is simple;
  \item \label{en:automorphism_simple_Connes}%
    \(\Gamma(\alpha)=\widehat{G}\);
  \item \label{en:automorphism_simple_centre}%
    \(\Mult(A\rtimes_\alpha G)\) has trivial centre;
  \item \label{en:automorphism_simple_invariant_outer}%
    there are no \(g\in G\setminus\{e\}\) and \(u\in\UM(A)\) with
    \(\alpha_g = \Ad_u\) and \(\alpha_h(u)=u\) for all \(h\in G\);
  \item \label{en:automorphism_simple_outer}
    there are no \(g\in G\setminus\{e\}\) and \(u\in\UM(A)\) with
    \(\alpha_g = \Ad_u\).
  \end{enumerate}
  Then \ref{en:automorphism_simple_simple}\(\Leftrightarrow
  \)\ref{en:automorphism_simple_Connes}\(\Rightarrow
  \)\ref{en:automorphism_simple_centre}\(\Rightarrow
  \)\ref{en:automorphism_simple_invariant_outer}\(\Leftarrow
  \)\ref{en:automorphism_simple_outer}.  If the group~\(G\) is
  cyclic or finite, then
  \ref{en:automorphism_simple_simple}--\ref{en:automorphism_simple_invariant_outer}
  are equivalent.  If~\(G\) is \(\Z\) or~\(\Z/p\) for a square-free
  number~\(p\), then
  \ref{en:automorphism_simple_simple}--\ref{en:automorphism_simple_outer}
  are equivalent.
\end{theorem}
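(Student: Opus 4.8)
The plan is to prove \ref{en:automorphism_simple_simple}\(\Leftrightarrow\)\ref{en:automorphism_simple_Connes}\(\Rightarrow\)\ref{en:automorphism_simple_centre}\(\Rightarrow\)\ref{en:automorphism_simple_invariant_outer}\(\Leftarrow\)\ref{en:automorphism_simple_outer} for an arbitrary discrete Abelian group~\(G\) by combining Theorem~\ref{the:Connes_spectrum_vs_detect_ideals} with a few short computations, and to close the missing implications by quoting the classical Connes spectrum theory of Olesen--Pedersen, Kishimoto and Rieffel, using the group structure of~\(\Z\) and of~\(\Z/p\) with square-free~\(p\) in the last two cases. Write \(B\defeq A\rtimes_\alpha G\), let \(\lambda_g\in\UM(B)\) be the canonical unitaries implementing~\(\alpha\) (so \(\alpha_g=\Ad_{\lambda_g}|_A\)), and let \(\beta\colon\widehat G\to\Aut(B)\) be the dual action. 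For \ref{en:automorphism_simple_simple}\(\Leftrightarrow\)\ref{en:automorphism_simple_Connes}: for any ideal \(J\idealin B\), the set \(J\cap A\) is an ideal of~\(A\) invariant under every~\(\alpha_g\), so minimality gives \(J\cap A\in\{0,A\}\), and \(J\cap A=A\) forces \(J=B\); hence, for minimal~\(\alpha\), the algebra~\(A\) detects ideals in~\(B\) if and only if~\(B\) is simple, and by Theorem~\ref{the:Connes_spectrum_vs_detect_ideals} this is equivalent to \(\Gamma(\alpha)=\widehat G\).

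For \ref{en:automorphism_simple_Connes}\(\Rightarrow\)\ref{en:automorphism_simple_centre}: then~\(B\) is simple, hence prime, and the centre of the multiplier algebra of a prime \(\Cst\)\nb-algebra is trivial --- a non-scalar self-adjoint \(z\in Z(\Mult(B))\) yields, via functions \(f,g\) of its spectrum with \(fg=0\), nonzero orthogonal central elements \(f(z),g(z)\), so that \(\overline{f(z)B}\) and \(\overline{g(z)B}\) are nonzero ideals of~\(B\) (nonzero as~\(B\) is essential in \(\Mult(B)\)) with zero product. For \ref{en:automorphism_simple_centre}\(\Rightarrow\)\ref{en:automorphism_simple_invariant_outer}: if \(\alpha_g=\Ad_u\) with \(g\neq e\), \(u\in\UM(A)\) and \(\alpha_h(u)=u\) for all~\(h\), then \(z\defeq u^*\lambda_g\in\UM(B)\) satisfies \(zaz^*=u^*\alpha_g(a)u=a\) for \(a\in A\) and \(\lambda_hz\lambda_h^*=\alpha_h(u^*)\lambda_{hgh^{-1}}=u^*\lambda_g=z\) (as~\(G\) is Abelian), hence \(z\in Z(\Mult(B))\); and \(z\) is a nonzero unitary with \(\beta_\chi(z)=\chi(g)z\), so if \(z\) were scalar then \(\chi(g)=1\) for all \(\chi\in\widehat G\), i.e.\ \(g=e\); thus \(z\) is non-scalar and \ref{en:automorphism_simple_centre} fails. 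Finally \ref{en:automorphism_simple_outer}\(\Rightarrow\)\ref{en:automorphism_simple_invariant_outer} is trivial. One can moreover verify that \ref{en:automorphism_simple_invariant_outer}\(\Rightarrow\)\ref{en:automorphism_simple_centre} holds for every minimal~\(\alpha\): Fourier-decomposing a central \(z\in\Mult(B)\) along~\(\beta\), its \(g\)-component is \(m_g\lambda_g\) with \(m_g\in\Mult(A)\), \(m_g\alpha_g(a)=am_g\), \(\alpha_h(m_g)=m_g\); then \(m_gm_g^*,m_g^*m_g\in Z(\Mult(A))^\alpha=\C\) by the Dauns--Hofmann theorem and minimality, so a nonzero~\(m_g\) is a multiple of a unitary implementing~\(\alpha_g\) \(\alpha\)-invariantly, whence \ref{en:automorphism_simple_invariant_outer} forces \(m_g=0\) for \(g\neq e\) and \(z=m_e\in\C\).

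It remains to prove \ref{en:automorphism_simple_invariant_outer}\(\Rightarrow\)\ref{en:automorphism_simple_Connes} for~\(G\) cyclic or finite, and \ref{en:automorphism_simple_invariant_outer}\(\Rightarrow\)\ref{en:automorphism_simple_outer} for \(G=\Z\) or \(G=\Z/p\) with square-free~\(p\); for these I would invoke \cites{Olesen-Pedersen:Applications_Connes,Olesen-Pedersen:Applications_Connes_2,Olesen-Pedersen:Applications_Connes_3,Kishimoto:Simple_crossed,Kishimoto:Outer_crossed,Kishimoto:Freely_acting,Rieffel:Actions_finite}. For a minimal action the Connes and strong Connes spectra agree, since the intersection in~\eqref{eq:residual_Connes_is_strong_Connes} runs over the single invariant ideal \(I\neq A\), namely \(I=0\); and the classical arguments show that, for~\(G\) cyclic or finite, a minimal action with \(\Gamma(\alpha)\neq\widehat G\) admits, for some \(g\neq e\), an \(\alpha\)-invariant unitary multiplier implementing~\(\alpha_g\) --- contradicting~\ref{en:automorphism_simple_invariant_outer}. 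For \(G=\Z\) or \(\Z/p\) with square-free~\(p\) one additionally upgrades ``\(\alpha_g\) inner for some \(g\neq e\)'' to ``\(\alpha_{g'}\) inner by an \(\alpha\)-invariant unitary for some \(g'\neq e\)'': the~\(g\) with \(\alpha_g\) inner form a subgroup, over which one averages implementing unitaries; this runs for~\(\Z\) as in the cited work, and since \(\Z/p\cong\prod_i\Z/p_i\) with the~\(p_i\) distinct primes, every subgroup of such a~\(G\) is a direct factor, so those arguments apply verbatim on each factor. This gives \ref{en:automorphism_simple_invariant_outer}\(\Rightarrow\)\ref{en:automorphism_simple_outer}.

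The only genuinely non-formal steps are these last two, and both rely on the full Olesen--Pedersen--Kishimoto--Rieffel analysis of~\(\Gamma(\alpha)^\perp\) --- its description in terms of invariantly inner automorphisms and the cohomological averaging trivialising the unitary cocycle that obstructs invariance. The sole point beyond what is explicit in the literature is that these arguments, written there for~\(\Z\) and for finite groups, carry over without change to \(\Z/p\) with square-free~\(p\), the reason being purely group-theoretic (every subgroup of such a~\(G\) is a direct summand); I expect this transfer, together with the bookkeeping in the averaging, to be the main --- though not deep --- obstacle.
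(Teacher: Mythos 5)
Your handling of the unconditional implications is correct and essentially the paper's: \ref{en:automorphism_simple_simple}\(\Leftrightarrow\)\ref{en:automorphism_simple_Connes} via minimality plus Theorem~\ref{the:Connes_spectrum_vs_detect_ideals} (the paper cites \cite{Olesen-Pedersen:Applications_Connes_2}*{Theorem~3.1}, but your reduction ``detects ideals \(=\) simple'' for minimal actions is fine), \ref{en:automorphism_simple_Connes}\(\Rightarrow\)\ref{en:automorphism_simple_centre} by simplicity, and \ref{en:automorphism_simple_centre}\(\Rightarrow\)\ref{en:automorphism_simple_invariant_outer} by the same central unitary \(u^*\lambda_g\), with your dual-action argument for non-scalarity a harmless variant; the bonus claim that \ref{en:automorphism_simple_invariant_outer}\(\Rightarrow\)\ref{en:automorphism_simple_centre} for minimal actions (Fourier coefficients of a central multiplier, \(Z(\Mult(A))^\alpha=\C\)) is also essentially right, though only sketched and not needed for the statement.

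The gap is in the two implications that actually carry the hypotheses, and which you explicitly leave to ``the classical arguments'': \ref{en:automorphism_simple_invariant_outer}\(\Rightarrow\)\ref{en:automorphism_simple_Connes} for cyclic or finite \(G\), and \ref{en:automorphism_simple_invariant_outer}\(\Rightarrow\)\ref{en:automorphism_simple_outer} for \(G=\Z\) or \(\Z/p\) with \(p\) square-free. The first really is a citation, but you must verify the hypotheses: one picks \(g\in\Gamma(\alpha)^\bot\setminus\{e\}\) and needs \(\Gamma(\alpha)\) discrete with compact quotient \(\widehat{G}/\Gamma(\alpha)\) (automatic because every proper closed subgroup of \(\T\) or of a finite dual is discrete) before \cite{Olesen-Pedersen:Applications_Connes_3}*{Theorem~4.5} produces a \(G\)\nb-invariant \(u\) with \(\alpha_g=\Ad_u\). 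The second is exactly the point the paper has to argue, since the \(\Z/p\) case is not in the literature, and it is done by an explicit computation: if \(\alpha_g=\Ad_u\) with \(g\neq e\), then \(\alpha_g(u)=u\); lifting \(g\) to \(\hat g\in\Z\) and setting \(w\defeq u\,\alpha_1(u)\dotsm\alpha_{\hat g-1}(u)\) one gets \(\Ad_w=\alpha_{\hat g^2}\) and, by telescoping, \(\alpha_1(w)=u^*w\alpha_{\hat g}(u)=w\), so \(w\) is \(\alpha\)\nb-invariant; square-freeness enters only through the arithmetic fact that \(p\nmid\hat g\) implies \(p\nmid\hat g^2\), so the resulting exponent is still nontrivial, contradicting \ref{en:automorphism_simple_invariant_outer}. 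Your proposed substitute --- decompose \(\Z/p\) by the Chinese Remainder Theorem, note every subgroup is a direct summand, and ``average implementing unitaries'' over the subgroup of inner elements --- does not close this: averaging unitaries need not yield a unitary (the actual device is the cocycle/product computation above), condition \ref{en:automorphism_simple_invariant_outer} demands invariance under all of \(G\), which an argument run ``on each factor'' does not provide, and the detour is unnecessary anyway because \(\Z/p\) is cyclic, so the one-generator computation applies directly. As written, the crucial square-free step remains an acknowledged expectation rather than a proof.
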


\begin{proof}
  The equivalence of \ref{en:automorphism_simple_simple}
  and~\ref{en:automorphism_simple_Connes} is
  \cite{Olesen-Pedersen:Applications_Connes_2}*{Theorem 3.1}.
  All central multipliers of a simple \(\Cst\)\nb-algebra are scalar
  multiples of~\(1\).  Thus
  \ref{en:automorphism_simple_simple}\(\Rightarrow
  \)\ref{en:automorphism_simple_centre}.  The implication
  \ref{en:automorphism_simple_outer}\(\Rightarrow
  \)\ref{en:automorphism_simple_invariant_outer} is trivial.

  We prove by contradiction that~\ref{en:automorphism_simple_centre}
  implies~\ref{en:automorphism_simple_invariant_outer}.  So assume
  that \(\alpha_g = \Ad_u\)
  for some \(g\in G\setminus\{e\}\)
  and some \(G\)\nb-invariant
  \(u\in\UM(A)\).
  Let \(G\ni h\mapsto \lambda_h\)
  denote the canonical homomorphism from~\(G\)
  to the group of unitary multipliers of~\(A\rtimes_\alpha G\).
  The unitary \(u^*\lambda_g\in\UM(A\rtimes_\alpha G)\)
  commutes with \(A\subseteq \Mult(A\rtimes_\alpha G)\)
  because \(u^*\lambda_g a (u^*\lambda_g)^* = u^* \alpha_g(a) u = a\)
  for all \(a\in A\),
  and it commutes with all~\(\lambda_h\)
  because
  \(\lambda_h (u^* \lambda_g) \lambda_h^* = \alpha_h(u)^* \lambda_{h g
    h^{-1}} = u^* \lambda_g\);
  here we use that~\(u\)
  is \(\lambda_h\)\nb-invariant
  and~\(G\)
  Abelian.  Thus~\(u^*\lambda_g\)
  is a central unitary multiplier of~\(A\rtimes_\alpha G\).
  It is not a scalar multiple of~\(1\)
  because \(g\neq e\).
  Thus~\(\Mult(A\rtimes_\alpha G)\) has non-trivial centre.

  Suppose \(G=\Z\)
  or that~\(G\)
  is finite.  We prove by contradiction that
  \ref{en:automorphism_simple_invariant_outer} implies
  \ref{en:automorphism_simple_Connes}.  So assume
  \(\Gamma(\alpha)\neq\widehat{G}\).
  Then there is \(g\in \Gamma(\alpha)^\bot\setminus\{ e\}\).
  All proper closed subgroups of \(\widehat{\Z}=\T\)
  or~\(\widehat{F}\)
  for a finite Abelian group~\(F\)
  are discrete.  So~\(\Gamma(\alpha)\)
  is discrete.  The quotient~\(\widehat{G}/\Gamma(\alpha)\)
  is compact.  So
  \cite{Olesen-Pedersen:Applications_Connes_3}*{Theorem 4.5} applies;
  it gives \(\alpha_g = \Ad_u\)
  for a \(G\)\nb-invariant
  \(u\in\UM(A)\),
  contradicting~\ref{en:automorphism_simple_invariant_outer}.

  Finally, we prove
  \ref{en:automorphism_simple_invariant_outer}\(\Rightarrow
  \)\ref{en:automorphism_simple_outer}
  if \(G\)
  is \(\Z\)
  or~\(\Z/p\)
  with square-free~\(p\).
  Suppose \(\alpha_g = \Ad_u\)
  for some \(g\in G\setminus\{ e\}\)
  and \(u\in\UM(A)\).
  Then \(\alpha_g(u) = \Ad_u(u)=u\).
  We argue as in the proof of
  \cite{Olesen-Pedersen:Applications_Connes_2}*{Theorem~4.6} to show
  that \(\alpha_{g\cdot g}=\Ad_w\)
  for an \(\alpha\)\nb-invariant unitary multiplier
  \(w\in\UM(A)\).
  If \(G=\Z/p\),
  we lift~\(g\)
  to \(\hat{g}\in\Z\).
  Since~\(p\)
  does not divide~\(\hat{g}\)
  and~\(p\)
  is square-free, it does not divide~\(\hat{g}^2\)
  either, so \(g^2\neq e\) in~\(G\).  Define
  \[
  w\defeq u\alpha_1(u)\alpha_2(u)\dotsm \alpha_{\hat{g}-1}(u).
  \]
  Then
  \(\Ad_w = \Ad_u \circ \Ad_{\alpha_1(u)} \circ \dotsb \circ
  \Ad_{\alpha_{\hat{g}-1}}(u) = \Ad_u^{\hat{g}} = \alpha_{\hat{g}\cdot \hat{g}}\)
  because \(\Ad_u \circ \Ad_v = \Ad_{u v}\)
  and \(\Ad_{\alpha(v)} = \alpha\circ \Ad_v \circ \alpha^{-1}\)
  for any automorphism~\(\alpha\)
  and any unitaries \(u,v\).
  And~\(w\)
  is invariant under~\(\alpha_1\)
  and hence under~\(\alpha_h\) for all \(h\in G\) because
  \[
  \alpha_1(w)
  = \alpha_1(u)\alpha_2(u)\alpha_3(u)\dotsm \alpha_{\hat{g}}(u)
  = u^* w \alpha_{\hat{g}}(u)
  = u^* w u
  = \alpha_g^{-1}(w)
  = w;
  \]
  the last step uses \(\alpha_g(u)=u\).
\end{proof}


\begin{remark}
  \label{rem:Phillips_examples}
  Let \(G=\Z/4\)
  and let~\(A\)
  be the CAR algebra, that is, the UHF algebra of type~\(2^\infty\).
  Phillips has constructed an action \(\alpha\colon G\to\Aut(A)\)
  such that~\(\alpha_2\)
  is inner and \(A\rtimes_\alpha \Z/4\)
  is simple, see
  \cite{Phillips:Equivariant_K-theory_freeness}*{Example 9.3.9 and
    Remark 9.3.10}.  Thus \ref{en:automorphism_simple_outer} is
  strictly weaker than
  \ref{en:automorphism_simple_simple}%
  --\ref{en:automorphism_simple_invariant_outer}
  for the finite cyclic group~\(\Z/4\).
\end{remark}

Next we consider some properties of a single automorphism
\(\alpha\in\Aut(A)\).  We will later apply these to group actions by
requiring them for all \(g\in G\setminus\{ e\}\).  Let
\(\Gamma(\alpha)\), \(\Gamma_\Bor(\alpha)\) and
\(\tilde\Gamma(\alpha)\) be the corresponding spectra for the
\(\Z\)\nb-action generated by~\(\alpha\).

\begin{lemma}
  \label{lem:explanation_of_spectra_for_automorphisms}
  Let \(\alpha\colon G\to \Aut(A)\) be an action of a cyclic
  group~\(G\) with generator \(g\in G\).  Then
  \[
  \]
  \begin{gather}
    \label{eq:Gamma_single}
    \Gamma(\alpha)=\Gamma(\alpha_g),
    \qquad
    \Gamma_\Bor(\alpha)=\Gamma_\Bor(\alpha_g),
    \qquad
    \tilde\Gamma(\alpha)=\tilde\Gamma(\alpha_g),\\
    \label{eq:Bor_through_Connes}
    \Gamma_\Bor(\alpha)
    = \overline{\bigcup_{D\in \Her^\alpha(A)} \Gamma(\alpha|_D)}.
  \end{gather}
\end{lemma}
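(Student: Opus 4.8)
\emph{Plan.} The strategy is to reduce both displays to the single automorphism~$\alpha_g$ and then quote the classical theory recalled above. Let $q\colon\Z\to G$ be the homomorphism $q(n)=g^n$; since~$g$ generates~$G$ it is surjective, and by definition the $\Z$\nb-action generated by~$\alpha_g$ is $\alpha\circ q$. Its transpose is an injective homomorphism $\widehat q\colon\widehat G\hookrightarrow\widehat\Z=\T$, $\chi\mapsto\chi(g)$; since~$\widehat G$ is compact and~$\T$ Hausdorff, $\widehat q$ is a homeomorphism onto its (closed) image, which equals~$\T$ if $G\cong\Z$ and the group of $n$\nb-th roots of unity if $\abs G=n$. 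The equalities in~\eqref{eq:Gamma_single} are to be read under this identification of~$\widehat G$ with a closed subgroup of~$\T$. First I would observe that an hereditary subalgebra or an ideal of~$A$ is fixed by all of~$\alpha$ precisely when it is fixed by~$\alpha_g$ (apply $\alpha_g^{\pm 1}$ repeatedly and use $G=\langle g\rangle$), so $\Her^\alpha(A)=\Her^{\alpha_g}(A)$ and $\I^\alpha(A)=\I^{\alpha_g}(A)$. Next, for $D\in\Her^\alpha(A)$ and a net $(a_\lambda)$ in~$D$ with $\norm{a_\lambda}=1$, the telescoping identity
\[
\alpha_{g^k}(a)-\chi(g)^{k}a=\sum_{m=1}^{k}\chi(g)^{k-m}\,\alpha_{g^{m-1}}\bigl(\alpha_g(a)-\chi(g)a\bigr)
\]
(and its analogue for negative powers, using that the~$\alpha_{g^j}$ are isometric and $\abs{\chi(g)}=1$) shows that both the condition $\lim_\lambda\norm{\alpha_h(a_\lambda)-\chi(h)a_\lambda}=0$ for all $h\in G$ and the condition $\lim_\lambda\norm{\alpha_g^n(a_\lambda)-\chi(g)^{n}a_\lambda}=0$ for all $n\in\Z$ reduce to the single requirement $\lim_\lambda\norm{\alpha_g(a_\lambda)-\chi(g)a_\lambda}=0$. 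Via the net characterisation of the Arveson spectrum \cite{Pedersen:Cstar_automorphisms}*{Proposition~8.1.9(iii)}, applied both to the $G$\nb-action $\alpha|_D$ and to the $\Z$\nb-action it induces, this gives $\chi\in\Spec(\alpha|_D)\iff\chi(g)\in\Spec(\alpha_g|_D)$; that is, $\widehat q$ maps $\Spec(\alpha|_D)$ bijectively onto $\Spec(\alpha_g|_D)$. (The map is onto because $\Spec(\alpha_g|_D)\subseteq\widehat q(\widehat G)$: if $\abs G=n$ then $\alpha_g^n=\id$, which forces $z^n=1$ for $z\in\Spec(\alpha_g|_D)$.)

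\emph{The identities in~\eqref{eq:Gamma_single}.} Being injective, $\widehat q$ commutes with intersections, so applying it to $\Gamma(\alpha)=\bigcap_{D\in\Her^\alpha(A)}\Spec(\alpha|_D)$ and using the above on each~$D$ yields $\widehat q(\Gamma(\alpha))=\bigcap_{D\in\Her^{\alpha_g}(A)}\Spec(\alpha_g|_D)=\Gamma(\alpha_g)$. The subfamily of those $D\in\Her^\alpha(A)=\Her^{\alpha_g}(A)$ with $\overline{ADA}$ essential in~$A$ is the same for both actions---essentiality of the ideal $\overline{ADA}$ of~$A$ does not involve the action---so restricting the intersection to that subfamily gives $\widehat q(\Gamma_\Bor(\alpha))=\Gamma_\Bor(\alpha_g)$. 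For the strong Connes spectrum I would use~\eqref{eq:residual_Connes_is_strong_Connes}, valid for any discrete group action and in particular for~$\alpha$ and for the $\Z$\nb-action generated by~$\alpha_g$: it expresses $\tilde\Gamma(\alpha)$ and $\tilde\Gamma(\alpha_g)$ as intersections of $\Gamma(\alpha|_{A/I})$ and $\Gamma(\alpha_g|_{A/I})$ over the common index set $\{I\in\I^\alpha(A):I\neq A\}$; each quotient action $\alpha|_{A/I}$ is again cyclic with generating automorphism $\alpha_g|_{A/I}$, so $\widehat q(\Gamma(\alpha|_{A/I}))=\Gamma(\alpha_g|_{A/I})$ by the first equality already proved, and intersecting over~$I$ gives $\widehat q(\tilde\Gamma(\alpha))=\tilde\Gamma(\alpha_g)$.

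\emph{The identity~\eqref{eq:Bor_through_Connes} and the main difficulty.} By what has been shown and $\Her^\alpha(A)=\Her^{\alpha_g}(A)$, the homeomorphism~$\widehat q$ carries each side of~\eqref{eq:Bor_through_Connes} for~$\alpha$ onto the corresponding side for the $\Z$\nb-action generated by~$\alpha_g$ (on the right-hand side one uses that~$\widehat q$, being a homeomorphism onto a closed subset of~$\T$, commutes with closures). Thus it suffices to treat the case $G=\Z$ of a single automorphism, where~\eqref{eq:Bor_through_Connes} is precisely the description of the Borchers spectrum in \cite{Pedersen:Cstar_automorphisms}*{Propositions~8.8.4 and~8.8.5}. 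Everything else being formal, this last point is the only substantive input: its non-trivial inclusion ``$\subseteq$'' requires producing, from a character lying in $\Spec(\alpha|_E)$ for every invariant hereditary~$E$ with $\overline{AEA}$ essential, together with a neighbourhood of that character, a single invariant hereditary subalgebra whose Connes spectrum meets the neighbourhood. This is the heart of the Borchers-spectrum machinery, and I would quote it rather than reprove it.
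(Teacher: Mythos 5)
Your proposal is correct and follows essentially the same route as the paper: both reduce everything to the single automorphism~\(\alpha_g\) (equivalently, the \(\Z\)\nb-action it generates) by observing that \(\alpha\)\nb- and \(\alpha_g\)\nb-invariant hereditary subalgebras and ideals coincide and that the Arveson spectra correspond under the embedding \(\widehat{G}\hookrightarrow\T\), then handle \(\tilde\Gamma\) via~\eqref{eq:residual_Connes_is_strong_Connes} and quote the classical Borchers-spectrum formula for~\eqref{eq:Bor_through_Connes}; your write-up merely makes the identification \(\widehat q\) and the telescoping estimate explicit. The one small correction is the final reference: the precise identity \(\Gamma_\Bor=\overline{\bigcup_{D}\Gamma(\alpha|_D)}\) is \cite{Olesen-Pedersen:Applications_Connes_3}*{Theorem~3.9} (which the paper cites), whereas \cite{Pedersen:Cstar_automorphisms}*{Propositions~8.8.4 and~8.8.5} only give that \(\Gamma_\Bor(\alpha)\) is the closure of a union of subgroups of~\(\widehat G\) --- since you treat this input as a black box anyway, this is a citation fix rather than a gap.
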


\begin{proof}
  The Arveson spectrum \(\Spec(\alpha)\)
  of~\(\alpha\)
  coincides with the spectrum \(\Spec(\alpha_g)\)
  of~\(\alpha_g\)
  treated as an operator on~\(A\).
  A subset of~\(A\)
  is invariant for the action~\(\alpha\)
  if and only if it is invariant under the automorphism~\(\alpha_g\).
  Thus the spectra \(\Gamma(\alpha)\)
  and \(\Gamma_\Bor(\alpha)\)
  depend only on~\(\alpha_g\)
  and, in view of \eqref{eq:residual_Connes_is_strong_Connes}, the
  same applies to~\(\tilde\Gamma(\alpha)\).
  This implies~\eqref{eq:Gamma_single}.
  Equation~\eqref{eq:Bor_through_Connes} follows from
  \cite{Olesen-Pedersen:Applications_Connes_3}*{Theorem~3.9}.
\end{proof}

\begin{definition}[see \cite{Kishimoto:Outer_crossed}*{Lemma~1.1}]
  \label{def:Kishimoto}
  An automorphism \(\alpha\in\Aut(A)\)
  \emph{satifies Kishimoto's condition} if, for all \(D\in \Her(A)\),
  \(b\in A\),
  \begin{equation}
    \label{eq:Kishimoto_condition}
    \inf {}\{\norm{a b \alpha(a)} \mid a\in D^+,\ \norm{a}=1\}=0.
  \end{equation}
\end{definition}

A similar condition was first used by Connes in the von Neumann algebraic
setting and later by Elliott~\cite{Elliott:Simple_crossed} to prove
that reduced crossed products for outer group actions
on AF-algebras are simple.  Kishimoto
identified~\eqref{eq:Kishimoto_condition} as a key step to
generalise Elliott's result to group actions on arbitrary simple
\(\Cst\)\nb-algebras.

The following elementary lemma will be used several times:

\begin{lemma}
  \label{lem:technical}
  Let \(b\in A^+\)
  with \(\norm{b}=1\)
  and let \(\varepsilon>0\).
  There is \(d\in \overline{b A b}^+\)
  with \(\norm{d}=1\),
  such that
  \begin{equation}
    \label{eq:def_D0}
    D_0\defeq \{x\in A \mid d x= x = x d\}
    \in \Her(\overline{b A b})
    \subseteq \Her(A)
  \end{equation}
  and \(\norm{b x - x}<\varepsilon \norm{x}\)
  and \(\norm{b x}\ge (1-\varepsilon) \norm{x}\) for all \(x\in D_0\).
\end{lemma}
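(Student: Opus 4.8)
\emph{Proof proposal.} The plan is to produce~\(d\) by continuous functional calculus on~\(b\). I would fix \(\delta\) with \(0<\delta<\varepsilon\) and choose a continuous function \(h\colon[0,1]\to[0,1]\) that vanishes on \([0,1-\delta]\), equals~\(1\) on \([1-\delta/2,1]\), and is affine on \([1-\delta,1-\delta/2]\); then set \(d\defeq h(b)\). Since \(h(0)=0\), a standard fact gives \(d\in\overline{bAb}\), and clearly \(d\ge0\); moreover \(\norm{d}=\sup_{t\in\Spec(b)}h(t)=h(1)=1\), using that \(\norm{b}=1\) forces \(1\in\Spec(b)\). So \(d\) has the required form, and everything else is an analysis of \(D_0=\{x\in A\mid dx=x=xd\}\).

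First I would check that \(D_0\) is a hereditary subalgebra of~\(\overline{bAb}\). It is a norm-closed \Star{}subalgebra of~\(A\), since each of the equations \(dx=x\) and \(xd=x\) is preserved under products and, jointly, under the adjoint. For \(x\in D_0\) one has \(dxd=(dx)d=xd=x\); as \(d\in\overline{bAb}\) and \(\overline{bAb}\cdot A\cdot\overline{bAb}\subseteq\overline{bAb}\), this already shows \(D_0\subseteq\overline{bAb}\). For heredity, observe that for a positive \(c\in D_0\) and any \(a\in A\) we have \(d(cac)=(dc)ac=cac\) and \((cac)d=ca(cd)=cac\) because \(dc=c=cd\); hence \(\overline{cAc}\subseteq D_0\) by closedness of~\(D_0\), so \(0\le y\le c\) with \(y\in A\) forces \(y\in\overline{cAc}\subseteq D_0\). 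Heredity in~\(A\) descends to heredity in the subalgebra~\(\overline{bAb}\), and \(\Her(\overline{bAb})\subseteq\Her(A)\), so only the non-vanishing of~\(D_0\) remains to be shown.

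This non-vanishing is the step I expect to be the crux, and it is exactly why~\(h\) must equal~\(1\) on a whole interval abutting~\(1\) rather than merely at the point~\(1\). I would introduce a second continuous function \(g\colon[0,1]\to[0,1]\) that vanishes on \([0,1-\delta/2]\) and has \(g(1)=1\). Then \(hg=g\) on \([0,1]\), so \(dg(b)=g(b)=g(b)d\), i.e.\ \(g(b)\in D_0\); and \(\norm{g(b)}=\sup_{t\in\Spec(b)}g(t)\ge g(1)=1\). Hence \(g(b)\neq0\), so \(D_0\neq0\) and \(D_0\in\Her(\overline{bAb})\subseteq\Her(A)\).

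Finally, the two norm estimates are routine functional calculus. For \(x\in D_0\) we have \(x=dx\), hence \(bx-x=bdx-dx=(bd-d)x=k(b)x\) with \(k(t)\defeq(t-1)h(t)\). Since \(k\) vanishes on \([0,1-\delta]\) and \(\abs{k(t)}=(1-t)h(t)\le1-t\le\delta\) on \([1-\delta,1]\), we get \(\norm{k(b)}\le\delta\), so \(\norm{bx-x}\le\delta\norm{x}<\varepsilon\norm{x}\) for \(x\neq0\), and then \(\norm{bx}\ge\norm{x}-\norm{bx-x}\ge(1-\delta)\norm{x}\ge(1-\varepsilon)\norm{x}\). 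This finishes the argument.
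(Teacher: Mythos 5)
Your proposal is correct and follows essentially the same route as the paper: define \(d\) by functional calculus with a trapezoidal bump function equal to \(1\) near \(1\) and vanishing away from \(1\), show \(D_0\neq 0\) because it contains the image under functional calculus of a function supported still closer to \(1\) (the paper uses \(f_{\nicefrac{\varepsilon}{2}}(b)\), you use \(g(b)\)), and derive both norm estimates from \(\norm{bd-d}\) being small. Your extra care with \(\delta<\varepsilon\) and the explicit heredity argument are fine but only cosmetic refinements of the paper's proof.
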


\begin{proof}
  We may assume \(\varepsilon\in(0,1)\).
  Given \(\delta\in(0,1)\),
  we define
  \begin{equation}
    \label{eq:f_varepsilon}
    f_\delta\colon [0,1]\to[0,1],\qquad
    t \mapsto
    \begin{cases}
      0&\text{if }0\le t < 1-\delta,\\
      \frac{2}{\delta}(t- 1+\delta)&
      \text{if } 1-\delta\le t < 1-\nicefrac{\delta}{2},\\
      1&\text{if }1-\nicefrac{\delta}{2}\le t \le1.
    \end{cases}
  \end{equation}
  This function is continuous, and \(\norm{f_\delta(b)}=1\)
  because \(\norm{b}=1\).
  Put \(d\defeq f_\varepsilon(b)\in \overline{b A b}^+\).
  Then~\(D_0\)
  in~\eqref{eq:def_D0} is a hereditary subalgebra
  of~\(\overline{b A b}\).
  It is non-zero because it
  contains~\(f_{\nicefrac{\varepsilon}{2}}(b)\).
  Let \(x\in D_0\).
  Then
  \(\norm{b x - x} = \norm{b d x - d x} \le \norm{b d - d} \norm{x}
  \le \varepsilon \norm{x}\)
  because \(\abs{(t-1)f_\varepsilon(t)} \le \varepsilon\)
  for all \(t\in[0,1]\).
  Hence
  \(\norm{x} \le \norm{b x - x} + \norm{b x} \le \varepsilon \norm{x}
  + \norm{b x}\), that is, \(\norm{b x}\ge (1-\varepsilon) \norm{x}\).
\end{proof}

\begin{theorem}
  \label{the:Kishimoto_spectrum}
  For any \(\alpha\in \Aut(A)\) the following conditions are
  equivalent:
  \begin{enumerate}[wide,label=\textup{(\ref*{the:Kishimoto_spectrum}.\arabic*)}]
  \item \label{en:Kishimoto_spectrum}%
    \(\alpha\) satisfies Kishimoto's condition:
    \eqref{eq:Kishimoto_condition} holds for all \(D\in \Her(A)\),
    \(b\in A\);
  \item \label{en:Kishimoto_spectrum_Mult}%
    \eqref{eq:Kishimoto_condition} holds for all \(D\in \Her(A)\) and
    \(b\in \Mult(A)\);
  \item \label{en:Kishimoto_spectrum_1}%
    \eqref{eq:Kishimoto_condition} holds for all \(D\in \Her(A)\)
    and \(b=1\);
  \item \label{en:Kishimoto_spectrum_Borchers}%
    \(\Gamma_\Bor(\alpha|_I)\neq \{1\}\) for all non-zero \(I\in
    \I^\alpha(A)\);
  \item \label{en:Kishimoto_spectrum_Connes}%
    for \(I\in \I^\alpha(A)\)
    with \(I\neq0\)
    there is \(D\in\Her^\alpha(I)\)
    with \(\Gamma(\alpha|_D)\neq \{1\}\);
  \item \label{en:Kishimoto_spectrum_commutator}%
    \(\sup {}\{\norm{x-\alpha(x)} \mid x\in D^+,\ \norm{x}=1\}=1\) for all
    \(D\in \Her(A)\).
  \end{enumerate}
\end{theorem}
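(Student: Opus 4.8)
The plan is to group the six conditions into a \emph{metric} cluster (\ref{en:Kishimoto_spectrum}, \ref{en:Kishimoto_spectrum_Mult}, \ref{en:Kishimoto_spectrum_1}, \ref{en:Kishimoto_spectrum_commutator}) and a \emph{spectral} cluster (\ref{en:Kishimoto_spectrum_Borchers}, \ref{en:Kishimoto_spectrum_Connes}).  I would prove the equivalences inside the metric cluster by elementary estimates together with one functional-calculus argument, note that the spectral cluster collapses by Lemma~\ref{lem:explanation_of_spectra_for_automorphisms}, and bridge the two clusters by the classical Olesen--Pedersen and Kishimoto theory of proper outerness.  The bridge is where the depth lies; everything else is bookkeeping around Lemmas~\ref{lem:technical} and~\ref{lem:explanation_of_spectra_for_automorphisms}.

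Inside the metric cluster, \ref{en:Kishimoto_spectrum_Mult}\(\Rightarrow\)\ref{en:Kishimoto_spectrum} and \ref{en:Kishimoto_spectrum_Mult}\(\Rightarrow\)\ref{en:Kishimoto_spectrum_1} are trivial because \(A\subseteq\Mult(A)\ni1\).  For \ref{en:Kishimoto_spectrum}\(\Rightarrow\)\ref{en:Kishimoto_spectrum_Mult}, given \(D\in\Her(A)\), \(b\in\Mult(A)\) and \(\varepsilon>0\), I would pick \(b_0\in D^+\) with \(\norm{b_0}=1\) and apply Lemma~\ref{lem:technical} to obtain \(d\in\overline{b_0Ab_0}^+\) and \(D_0=\{x\in A\mid dx=x=xd\}\in\Her(\overline{b_0Ab_0})\subseteq\Her(D)\); then \ref{en:Kishimoto_spectrum} applied to the pair \((D_0,db)\in\Her(A)\times A\) yields \(a\in D_0^+\), \(\norm{a}=1\), with \(\norm{a(db)\alpha(a)}<\varepsilon\), and \(a(db)\alpha(a)=ab\alpha(a)\) since \(ad=a\).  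For \ref{en:Kishimoto_spectrum_1}\(\Rightarrow\)\ref{en:Kishimoto_spectrum_commutator}: if \(a\in D^+\), \(\norm{a}=1\) and \(\norm{a\alpha(a)}<\varepsilon\), then \(\norm{a-\alpha(a)}\ge\norm{a^2-a\alpha(a)}\ge1-\varepsilon\); as \(\norm{x-\alpha(x)}\le1\) whenever \(0\le x,\alpha(x)\le1\), the supremum in \ref{en:Kishimoto_spectrum_commutator} is~\(1\).

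The remaining step \ref{en:Kishimoto_spectrum_commutator}\(\Rightarrow\)\ref{en:Kishimoto_spectrum_1} is the functional-calculus argument.  Fix \(D\) and \(\varepsilon\).  Using \(\norm{a\alpha(a)}=\norm{a\alpha^{-1}(a)}\) for self-adjoint~\(a\), I may replace \(\alpha\) by \(\alpha^{-1}\) if needed and assume \ref{en:Kishimoto_spectrum_commutator} produces \(x\in D^+\), \(\norm{x}=1\), with \(\max\sigma(x-\alpha(x))>1-\eta\) for any prescribed small \(\eta>0\).  Put \(y\defeq x-\alpha(x)\) and \(e\defeq g(y)\ge0\), where \(g\colon[-1,1]\to[0,1]\) is continuous, vanishes away from the top of \(\sigma(y)\) and equals~\(1\) there, so \(\norm{e}=1\) and \(\norm{ey-e}\le2\eta\).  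Since \(e(1-x)e+e\alpha(x)e=e^2-eye\) has norm \(\le2\eta\) and both summands are positive, \(\norm{(1-x)e}\) and \(\norm{\alpha(x)e}\) are \(O(\sqrt\eta)\); applying~\(\alpha\) to the first gives \(\norm{(1-\alpha(x))\alpha(e)}=O(\sqrt\eta)\), hence \(\norm{e\alpha(e)}\le\norm{e\alpha(x)\alpha(e)}+O(\sqrt\eta)=O(\sqrt\eta)\), i.e.\ \(e\) is almost orthogonal to its \(\alpha\)-translate.  Finally \(a\defeq x^{1/2}ex^{1/2}\) lies in \(D^+\) because \(0\le a\le x\); a squeeze \(exe\le ex^{1/2}e\le e^2\) gives \(\norm{a-e}=O(\sqrt\eta)\), so \(\norm{a}\ge1-O(\sqrt\eta)\) and \(\norm{a\alpha(a)}\le\norm{e\alpha(e)}+O(\sqrt\eta)=O(\sqrt\eta)\); normalising~\(a\) and taking \(\eta\) small enough gives \ref{en:Kishimoto_spectrum_1}.

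For the spectral cluster and the bridge: applying \eqref{eq:Bor_through_Connes} to \(\alpha|_I\) for a non-zero \(I\in\I^\alpha(A)\) gives \(\Gamma_\Bor(\alpha|_I)=\overline{\bigcup\{\Gamma(\alpha|_D)\mid D\in\Her^\alpha(I)\}}\), and since each \(\Gamma(\alpha|_D)\) is a subgroup of~\(\widehat{\Z}\), it contains~\(1\); so this set is \(\neq\{1\}\) exactly when some \(\Gamma(\alpha|_D)\) is, which is \ref{en:Kishimoto_spectrum_Borchers}\(\Leftrightarrow\)\ref{en:Kishimoto_spectrum_Connes}.  It then remains to connect the clusters, say by \ref{en:Kishimoto_spectrum_1}\(\Leftrightarrow\)\ref{en:Kishimoto_spectrum_Borchers}, and here I would invoke the classics rather than argue from scratch: \ref{en:Kishimoto_spectrum_Borchers} is exactly proper outerness of~\(\alpha\) in the sense of Olesen and Pedersen, the implication \ref{en:Kishimoto_spectrum_Borchers}\(\Rightarrow\)\ref{en:Kishimoto_spectrum} is Kishimoto's lemma on reduced crossed products by outer automorphisms, and the reverse implication follows from Olesen--Pedersen's structure theory of the Borchers spectrum (triviality of \(\Gamma_\Bor(\alpha|_I)\) produces a non-zero hereditary subalgebra of~\(I\) on which~\(\alpha\) is metrically close to the identity, contradicting \ref{en:Kishimoto_spectrum_commutator}).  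I expect this classical input to be the main obstacle.
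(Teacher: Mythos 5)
Your handling of the metric cluster is correct and, for \ref{en:Kishimoto_spectrum}\(\Rightarrow\)\ref{en:Kishimoto_spectrum_Mult}, coincides with the paper's argument (Lemma~\ref{lem:technical} applied to \(D\) and then Kishimoto's condition for the pair \((D_0,db)\)).  Your direct functional-calculus proof of \ref{en:Kishimoto_spectrum_1}\(\Leftrightarrow\)\ref{en:Kishimoto_spectrum_commutator} is a genuine improvement in self-containedness: the paper simply cites \cite{Olesen-Pedersen:Applications_Connes_3}*{Theorem~5.1} for this equivalence, and your estimates (\(\norm{e(1-x)e}\), \(\norm{e\alpha(x)e}\le 2\eta\) forcing \(\norm{(1-x)e},\norm{\alpha(x)e}=O(\sqrt\eta)\), hence \(\norm{e\alpha(e)}=O(\sqrt\eta)\), and \(a\defeq x^{\nicefrac12}ex^{\nicefrac12}\in D^+\) with \(\norm{a-e}=O(\sqrt\eta)\)) check out, including the reduction to \(\max\sigma(x-\alpha(x))>1-\eta\) via the identity \(\norm{a\alpha(a)}=\norm{a\alpha^{-1}(a)}\) for self-adjoint \(a\).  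The treatment of \ref{en:Kishimoto_spectrum_Borchers}\(\Leftrightarrow\)\ref{en:Kishimoto_spectrum_Connes} via \eqref{eq:Bor_through_Connes} is exactly the paper's.

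The gap is in the bridge between the two clusters, which you yourself flag as the main obstacle.  First, the assertion that \ref{en:Kishimoto_spectrum_Borchers} ``is exactly proper outerness in the sense of Olesen and Pedersen'' conflicts with the framework of this paper: proper outerness is the metric condition of Definition~\ref{def:properly_outer} (\(\norm{\alpha|_I-\Ad_u}=2\)), and by Theorem~\ref{the:automorphism_Kishimoto_vs_properly_outer} it is equivalent to the conditions of the present theorem only when \(A\) is separable; the present theorem has no separability hypothesis, and condition \ref{en:Kishimoto_spectrum_Borchers} is what Kishimoto calls \emph{freely acting} (Remark~\ref{rem:spectrally non-trivial}), not proper outerness.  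Any route through literature results about properly outer automorphisms would therefore only prove the theorem for separable \(A\).  Second, your proposed source for \ref{en:Kishimoto_spectrum_commutator}\(\Rightarrow\)\ref{en:Kishimoto_spectrum_Borchers} --- that triviality of \(\Gamma_\Bor(\alpha|_I)\) yields a hereditary subalgebra on which \(\alpha\) is norm-close to the identity --- is not an identifiable theorem in that form (and is stronger than what you need); as it stands this direction is asserted, not proved.  The clean repair is the one the paper uses: \cite{Kishimoto:Freely_acting}*{Theorem~2.1} gives the equivalence of \ref{en:Kishimoto_spectrum_Mult}, \ref{en:Kishimoto_spectrum_1} and \ref{en:Kishimoto_spectrum_Borchers} in full generality, which supplies both directions of your bridge at once; combined with your metric-cluster arguments and \eqref{eq:Bor_through_Connes}, the proof then closes.
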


\begin{proof}
  The conditions
  \ref{en:Kishimoto_spectrum_Mult}--\ref{en:Kishimoto_spectrum_Borchers}
  are equivalent by \cite{Kishimoto:Freely_acting}*{Theorem~2.1}.
  Conditions \ref{en:Kishimoto_spectrum_1}
  and~\ref{en:Kishimoto_spectrum_commutator} are equivalent by
  \cite{Olesen-Pedersen:Applications_Connes_3}*{Theorem 5.1}, and
  \ref{en:Kishimoto_spectrum_Borchers}
  and~\ref{en:Kishimoto_spectrum_Connes} are equivalent
  by~\eqref{eq:Bor_through_Connes}.  We check
  that~\ref{en:Kishimoto_spectrum}
  implies~\ref{en:Kishimoto_spectrum_Mult}, which is rather implicit
  in \cites{Kishimoto:Outer_crossed, Kishimoto:Freely_acting}.  Let
  \(a\in D^+\)
  with \(\norm{a}=1\)
  and \(\varepsilon>0\).
  Pick \(d\in \overline{a A a}{}^+\)
  and \(D_0\in\Her(D)\)
  as in Lemma~\ref{lem:technical}.  Kishimoto's condition for \(D_0\)
  and \(d b\in A\)
  gives \(x \in D^+\)
  with \(x d = x\),
  \(\norm{x}=1\),
  and \(\norm{x b \alpha(x)} = \norm{x d b \alpha(x)} < \varepsilon\).
\end{proof}

\begin{remark}
  \label{rem:spectrally non-trivial}
  Kishimoto~\cite{Kishimoto:Freely_acting} calls automorphisms
  satisfying~\ref{en:Kishimoto_spectrum_Borchers}
  \emph{freely acting}.  Pasnicu and
  Phillips~\cite{Pasnicu-Phillips:Spectrally_free} call such
  automorphisms \emph{spectrally non-trivial}.  We only mentioned
  the Borchers spectrum to show that the conditions used in
  \cites{Kishimoto:Freely_acting, Pasnicu-Phillips:Spectrally_free}
  are equivalent to what we call Kishimoto's condition.  We prefer
  the formulation using~\eqref{eq:Kishimoto_condition} because it
  generalises to Hilbert bimodules.
\end{remark}

The following definition names several triviality and non-triviality
conditions for automorphisms:

\begin{definition}
  \label{def:properly_outer}
  We call~\(\alpha\in\Aut(A)\) \emph{inner} or
  \emph{universally weakly inner} if there are unitaries~\(u\)
  in \(\Mult(A)\) or in the bidual \(\textup{W}^*\)\nb-algebra~\(A^{**}\)
  with \(\alpha = \Ad_u\), respectively.
  We call~\(\alpha\)
  \emph{partly inner} or \emph{partly universally weakly inner} if
  there are \(0\neq I\in \I^\alpha(A)\)
  and a unitary~\(u\)
  in \(\UM(I)\)
  or~\(I^{**}\),
  respectively, such that \(\alpha|_I= \Ad_u\).
  We call \(\alpha\in\Aut(A)\) \emph{outer},
  \emph{purely outer} or \emph{purely universally weakly outer} if it
  is not inner, not partly inner or not partly universally weakly inner,
  respectively, see \cite{Rieffel:Actions_finite}*{Section~1}.

  We call~\(\alpha\) \emph{properly outer} if \(\norm{\alpha|_I -
    \Ad_u}=2\) for any \(0\neq I\in \I^\alpha(A)\) and any unitary
  multiplier \(u\in\UM(I)\) (see
  \cite{Elliott:Simple_crossed}*{Definition~2.1}).

  Let~\(\widehat{A}\) be the spectrum of~\(A\).  We call~\(\alpha\)
  \emph{topologically non-trivial} if the set of
  \([\pi]\in\widehat{A}\) with \(\widehat{\alpha}[\pi] \neq [\pi]\)
  is dense or, equivalently, \(\{[\pi]\in \widehat{A}:
  [\pi\circ\alpha] = [\pi]\}\) has empty interior in~\(\widehat{A}\).
\end{definition}

\begin{theorem}
  \label{the:automorphism_Kishimoto_vs_properly_outer}
  Consider the following conditions for \(\alpha\in\Aut(A)\):
  \begin{enumerate}[wide,label=\textup{(\ref*{the:automorphism_Kishimoto_vs_properly_outer}.\arabic*)}]
  \item \label{en:automorphism_Kishimoto_vs_properly_outer_Kishimoto}%
    \(\alpha\) satisfies Kishimoto's condition;
  \item \label{en:automorphism_Kishimoto_vs_properly_outer_weakly_inner}%
    \(\alpha\) is purely universally weakly outer;
  \item \label{en:automorphism_Kishimoto_vs_properly_outer_topological}%
    \(\alpha\) is topologically non-trivial;
  \item \label{en:automorphism_Kishimoto_vs_properly_outer_distance2}%
    \(\alpha\) is properly outer;
  \item \label{en:automorphism_Kishimoto_vs_properly_outer_purely_outer}%
    \(\alpha\) is purely outer.
  \end{enumerate}
  Then
  \ref{en:automorphism_Kishimoto_vs_properly_outer_weakly_inner}%
  \(\Leftarrow\)\ref{en:automorphism_Kishimoto_vs_properly_outer_topological}%
  \(\Rightarrow\)\ref{en:automorphism_Kishimoto_vs_properly_outer_distance2}%
  \(\Rightarrow\)\ref{en:automorphism_Kishimoto_vs_properly_outer_purely_outer}%
  \(\Leftarrow\)\ref{en:automorphism_Kishimoto_vs_properly_outer_Kishimoto}.
  If~\(A\) is separable then
  \ref{en:automorphism_Kishimoto_vs_properly_outer_Kishimoto}--\ref{en:automorphism_Kishimoto_vs_properly_outer_distance2}
  are equivalent.  If~\(A\) is simple then
  \ref{en:automorphism_Kishimoto_vs_properly_outer_Kishimoto}%
  \(\Leftrightarrow\)\ref{en:automorphism_Kishimoto_vs_properly_outer_purely_outer}.
\end{theorem}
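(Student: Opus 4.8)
The plan is to organise the proof around three groups of implications, mostly citing classical results and only doing new work where the Fell-bundle perspective offers a cleaner route. First I would establish the two ``topologically non-trivial'' implications. For \ref{en:automorphism_Kishimoto_vs_properly_outer_topological}\(\Rightarrow\)\ref{en:automorphism_Kishimoto_vs_properly_outer_distance2}, I would invoke the classical fact (Elliott, and also Olesen--Pedersen) that an automorphism acting non-trivially on a dense set of the spectrum of every invariant ideal is properly outer; concretely, if \(\alpha|_I=\Ad_u\) for a unitary multiplier \(u\in\UM(I)\) and some \(0\neq I\in\I^\alpha(A)\), then \(\widehat{\alpha}\) fixes all of \(\widehat{I}\), hence has non-empty interior of fixed points inside \(\widehat{A}\), contradicting topological non-triviality. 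For \ref{en:automorphism_Kishimoto_vs_properly_outer_topological}\(\Rightarrow\)\ref{en:automorphism_Kishimoto_vs_properly_outer_weakly_inner} one argues the same way but with \(I^{**}\) in place of \(\Mult(I)\): if \(\alpha|_I=\Ad_u\) for a unitary \(u\in I^{**}\), then \(\widehat{\alpha}\) acts trivially on \(\widehat{I}\) (the spectrum of \(I\) depends only on its representation theory, which \(\Ad_u\) does not move), again contradicting density of the non-fixed points; so topologically non-trivial \(\Rightarrow\) purely universally weakly outer. The easy implication \ref{en:automorphism_Kishimoto_vs_properly_outer_distance2}\(\Rightarrow\)\ref{en:automorphism_Kishimoto_vs_properly_outer_purely_outer} is immediate since \(\norm{\alpha|_I-\Ad_u}=2\) in particular means \(\alpha|_I\neq\Ad_u\), so \(\alpha\) is not partly inner.

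Next I would handle the implication \ref{en:automorphism_Kishimoto_vs_properly_outer_Kishimoto}\(\Rightarrow\)\ref{en:automorphism_Kishimoto_vs_properly_outer_purely_outer}. Suppose \(\alpha\) is partly inner, so \(\alpha|_I=\Ad_u\) for some \(0\neq I\in\I^\alpha(A)\) and \(u\in\UM(I)\). By Theorem~\ref{the:Kishimoto_spectrum}, condition~\ref{en:Kishimoto_spectrum_Mult} shows that Kishimoto's condition restricts to~\(I\) with multipliers allowed; applying it with \(D=I\) and \(b=u^*\in\UM(I)\subseteq\Mult(I)\) yields, for each \(\varepsilon>0\), an \(a\in I^+\) with \(\norm{a}=1\) and \(\norm{a u^* \alpha(a)}<\varepsilon\). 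But \(\alpha(a)=\alpha|_I(a)=uau^*\), so \(a u^* \alpha(a)=a u^* u a u^* = a^2 u^*\), whence \(\norm{a u^* \alpha(a)}=\norm{a^2 u^*}=\norm{a^2}=1\) (using that \(u^*\) is unitary in \(\Mult(I)\) and \(\norm{a}=1\)). This contradicts \(\norm{a u^*\alpha(a)}<\varepsilon<1\). Hence \(\alpha\) is purely outer. (The same computation, run inside \(I^{**}\) via a bidual extension of Kishimoto's condition, is what will later be needed for the Hilbert-bimodule generalisations, but for the present statement the multiplier version suffices.)

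Finally I would address the two conditional equivalences. If \(A\) is separable, the circle closes: \ref{en:automorphism_Kishimoto_vs_properly_outer_distance2}\(\Rightarrow\)\ref{en:automorphism_Kishimoto_vs_properly_outer_Kishimoto}, which together with \ref{en:automorphism_Kishimoto_vs_properly_outer_Kishimoto}\(\Rightarrow\)\ref{en:automorphism_Kishimoto_vs_properly_outer_purely_outer}\(\Leftarrow\)\ref{en:automorphism_Kishimoto_vs_properly_outer_distance2} and \ref{en:automorphism_Kishimoto_vs_properly_outer_topological}\(\Rightarrow\)\ref{en:automorphism_Kishimoto_vs_properly_outer_distance2} gives equivalence of \ref{en:automorphism_Kishimoto_vs_properly_outer_Kishimoto}--\ref{en:automorphism_Kishimoto_vs_properly_outer_distance2}; the implication \ref{en:automorphism_Kishimoto_vs_properly_outer_distance2}\(\Rightarrow\)\ref{en:automorphism_Kishimoto_vs_properly_outer_Kishimoto} and \ref{en:automorphism_Kishimoto_vs_properly_outer_distance2}\(\Rightarrow\)\ref{en:automorphism_Kishimoto_vs_properly_outer_topological} are exactly the hard direction of the Olesen--Pedersen--Kishimoto theory for separable \(C^*\)-algebras, which I would cite (e.g.\ from \cite{Olesen-Pedersen:Applications_Connes_3} together with \cite{Kishimoto:Outer_crossed}) rather than reprove. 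If \(A\) is simple, then the only non-zero invariant ideal is \(A\) itself, so ``purely outer'' collapses to ``outer'': \(\alpha\) is purely outer iff \(\alpha\neq\Ad_u\) for all \(u\in\UM(A)\); and for simple \(A\) Kishimoto's theorem \cite{Kishimoto:Outer_crossed} gives that outerness implies Kishimoto's condition, while \ref{en:automorphism_Kishimoto_vs_properly_outer_Kishimoto}\(\Rightarrow\)\ref{en:automorphism_Kishimoto_vs_properly_outer_purely_outer} is the implication just proved; hence \ref{en:automorphism_Kishimoto_vs_properly_outer_Kishimoto}\(\Leftrightarrow\)\ref{en:automorphism_Kishimoto_vs_properly_outer_purely_outer}. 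The main obstacle I anticipate is not any single implication but making the citations precise: the classical sources phrase ``properly outer'' and ``topologically free'' slightly differently (and sometimes only for the action of \(\Z\) rather than a single automorphism), so the real work is checking that Definition~\ref{def:properly_outer} matches the hypotheses of the cited theorems and that the separable-case equivalences are genuinely available in the form stated.
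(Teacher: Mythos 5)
Your overall route is the same as the paper's: the easy implications are dispatched by (essentially) the same observations, your argument for \ref{en:automorphism_Kishimoto_vs_properly_outer_Kishimoto}\(\Rightarrow\)\ref{en:automorphism_Kishimoto_vs_properly_outer_purely_outer} is word for word the paper's computation (take \(b=u^*\), use Theorem~\ref{the:Kishimoto_spectrum} applied to \(I\) so that multipliers of~\(I\) are allowed, and get \(\norm{a u^*\alpha(a)}=\norm{a^2u^*}=1\)), and the two conditional equivalences are settled by the same classical sources (Olesen--Pedersen \cite{Olesen-Pedersen:Applications_Connes_3}*{Theorem~6.6} in the separable case; for simple~\(A\), outer \(\Rightarrow\) Kishimoto via Kishimoto/Pedersen, the paper routing this through \cite{Pedersen:Cstar_automorphisms}*{Corollary 8.9.10} and Theorem~\ref{the:Kishimoto_spectrum} rather than quoting \cite{Kishimoto:Outer_crossed} directly). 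However, there is one genuine flaw and one bookkeeping gap. The flaw is in your ``concrete'' justification of \ref{en:automorphism_Kishimoto_vs_properly_outer_topological}\(\Rightarrow\)\ref{en:automorphism_Kishimoto_vs_properly_outer_distance2}: proper outerness (Definition~\ref{def:properly_outer}) requires \(\norm{\alpha|_I-\Ad_u}=2\) for \emph{all} \(0\neq I\in\I^\alpha(A)\) and \(u\in\UM(I)\), so its negation only gives some \(I,u\) with \(\norm{\alpha|_I-\Ad_u}<2\), not \(\alpha|_I=\Ad_u\). Your argument rules out only the exact-equality case, i.e.\ it proves \ref{en:automorphism_Kishimoto_vs_properly_outer_topological}\(\Rightarrow\)\ref{en:automorphism_Kishimoto_vs_properly_outer_purely_outer}, which is weaker (and already follows from your correct direct proof of \ref{en:automorphism_Kishimoto_vs_properly_outer_topological}\(\Rightarrow\)\ref{en:automorphism_Kishimoto_vs_properly_outer_weakly_inner}). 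The missing ingredient, which is what the proof of \cite{Archbold-Spielberg:Topologically_free}*{Proposition~1} (cited by the paper) actually supplies, is the Kadison--Ringrose/Olesen--Pedersen fact that an automorphism of~\(I\) at distance \(<2\) from the identity fixes every point of~\(\widehat{I}\); applied to \(\alpha|_I\circ\Ad_{u^*}\), this gives \(\widehat\alpha=\id\) on the nonempty open set~\(\widehat I\) and hence the contradiction. Since you also invoke the classical fact by citation, the implication is salvageable, but the sketch as written does not prove it.

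The bookkeeping gap concerns the separable case: with the implications you list (\ref{en:automorphism_Kishimoto_vs_properly_outer_topological}\(\Rightarrow\)\ref{en:automorphism_Kishimoto_vs_properly_outer_weakly_inner}, \ref{en:automorphism_Kishimoto_vs_properly_outer_topological}\(\Leftrightarrow\)\ref{en:automorphism_Kishimoto_vs_properly_outer_distance2}, \ref{en:automorphism_Kishimoto_vs_properly_outer_distance2}\(\Rightarrow\)\ref{en:automorphism_Kishimoto_vs_properly_outer_Kishimoto}) nothing ever leads \emph{out of} condition \ref{en:automorphism_Kishimoto_vs_properly_outer_weakly_inner}, so the asserted equivalence of \ref{en:automorphism_Kishimoto_vs_properly_outer_Kishimoto}--\ref{en:automorphism_Kishimoto_vs_properly_outer_distance2} is not closed. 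The paper avoids this by citing \cite{Olesen-Pedersen:Applications_Connes_3}*{Theorem~6.6} in full and identifying its items (iv), (ii) and (ix) with Kishimoto's condition (via Theorem~\ref{the:Kishimoto_spectrum}), proper outerness and pure universal weak outerness, respectively; you need to invoke the same equivalence so that condition \ref{en:automorphism_Kishimoto_vs_properly_outer_weakly_inner} is also pulled back into the cycle.
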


\begin{proof}
  The implications
  \ref{en:automorphism_Kishimoto_vs_properly_outer_weakly_inner}%
  \(\Leftarrow\)\ref{en:automorphism_Kishimoto_vs_properly_outer_topological}%
  \(\Rightarrow\)\ref{en:automorphism_Kishimoto_vs_properly_outer_distance2}
  follow from
  \cite{Olesen-Pedersen:Applications_Connes_2}*{Lemma~4.3} and the
  proof of
  \cite{Archbold-Spielberg:Topologically_free}*{Proposition~1},
  respectively.  The implication
  \ref{en:automorphism_Kishimoto_vs_properly_outer_distance2}\(
  \Rightarrow\)\ref{en:automorphism_Kishimoto_vs_properly_outer_purely_outer}
  is obvious.  To see
  \ref{en:automorphism_Kishimoto_vs_properly_outer_purely_outer}\(
  \Leftarrow\)\ref{en:automorphism_Kishimoto_vs_properly_outer_Kishimoto}
  assume that \(\alpha|_I= \Ad_u\) for some \(0\neq I\in
  \I^\alpha(A)\) and a unitary~\(u\) in \(\UM(I)\).  Let \(b\defeq
  u^*\).  Then \(\norm{ab\alpha_g(a))}=\norm{aau^*} = 1\) for all
  \(a\in I^+\) with \(\norm{a}=1\), which
  contradicts~\ref{en:Kishimoto_spectrum_Mult}.

  If~\(A\)
  is separable, then
  \ref{en:automorphism_Kishimoto_vs_properly_outer_Kishimoto}--%
  \ref{en:automorphism_Kishimoto_vs_properly_outer_distance2}
  are equivalent by
  \cite{Olesen-Pedersen:Applications_Connes_3}*{Theorem 6.6}.  Indeed,
  Kishimoto's condition is equivalent to
  \cite{Olesen-Pedersen:Applications_Connes_3}*{Theorem 6.6.(iv)} by
  Theorem~\ref{the:Kishimoto_spectrum}, and conditions
  \cite{Olesen-Pedersen:Applications_Connes_3}*{Theorem 6.6.(ii)
    and~(ix)} are
  \ref{en:automorphism_Kishimoto_vs_properly_outer_distance2}
  and~\ref{en:automorphism_Kishimoto_vs_properly_outer_weakly_inner},
  respectively.

  If~\(A\)
  is simple and~\(\alpha\)
  is outer, then \(\Gamma(\alpha)\neq\{1\}\)
  by \cite{Pedersen:Cstar_automorphisms}*{Corollary 8.9.10}.  This
  implies~\ref{en:Kishimoto_spectrum_Connes}, which is equivalent to
  Kishimoto's condition by Theorem~\ref{the:Kishimoto_spectrum}.  Thus
  \ref{en:automorphism_Kishimoto_vs_properly_outer_Kishimoto}
  and~\ref{en:automorphism_Kishimoto_vs_properly_outer_purely_outer}
  are equivalent.
  %
\end{proof}

In general, pure outerness does not imply proper outerness, even for
separable \(\Cst\)\nb-algebras.
We thank George Elliott for explaining the following counterexample.

\begin{example}
  \label{exa:outer_not_proper}
  Let~\(\N^\infty\)
  be the set of all sequences~\((n_k)_{k\in\N}\)
  with \(n_k\in\N\)
  for all \(k\in\N\)
  and \(n_k=0\)
  for all but finitely many~\(k\).
  Let \(\Hils = \ell^2(\N^\infty)\),
  viewed as the infinite tensor product of copies of~\(\ell^2(\N)\).
  Let \(A_n \subseteq \Bound(\Hils)\)
  be the unital \(\Cst\)\nb-subalgebra
  of~\(\Bound(\Hils)\)
  spanned by operators of the form \(x\otimes 1\)
  with \(x\in \Comp(\ell^2(\N^m))\)
  for \(m=0,1,\dotsc,n\).
  That is, \(x\otimes 1\)
  acts by~\(x\)
  on the tensor product of the first~\(m\)
  tensor factors~\(\ell^2(\N)\),
  and identically on the remaining tensor factors~\(\ell^2(\N)\).
  We have \(A_0 \subseteq A_1 \subseteq A_2 \subseteq \dotsb\).
  Let~\(A\)
  be the closure of~\(\bigcup A_n\).
  The \(\Cst\)\nb-algebra~\(A\)
  is isomorphic to the fixed-point algebra~\(\mathcal{O}_\infty^\T\)
  for the standard gauge action of~\(\T\)
  on the Cuntz algebra~\(\mathcal{O}_\infty\).
  It is an AF-algebra.  Let \(I_n\idealin A\)
  be the ideal generated by \(\Comp(\ell^2(\N^n))\otimes 1\).
  These ideals form a decreasing chain
  \(A=I_0 \supsetneq I_1 \supsetneq I_2 \supsetneq \dotsb\)
  with \(\bigcap I_n=0\).
  Any non-zero ideal of~\(A\)
  is of the form~\(I_n\) for some \(n\in\N\).

  Let \(P\in\Bound(\ell^2(\N))\)
  be the projection onto \(\ell^2(2\N)\),
  the even numbers.  Let \(P_m\in\Bound(\Hils)\)
  denote the operator that acts by~\(P\)
  on the \(m\)th
  tensor factor and identically on the other factors.  Let
  \(h \defeq \sum_{m=1}^\infty 2^{-m-1} P_m \in \Bound(\Hils)\).
  If \(a\in \Comp(\ell^2(\N^k))\otimes 1\),
  then \([P_m,a]\)
  vanishes for \(k<m\),
  and \([P_m,a]\in \Comp(\ell^2(\N^k))\otimes 1\)
  if \(k\ge m\).
  Thus \([P_m,A]\subseteq A\)
  for all \(m\in\N\)
  and hence \([h,A]\subseteq A\).
  The \Star{}derivation \(\delta(A)\defeq \ima [h,A]\)
  of~\(A\)
  generates an automorphism
  \(\alpha \defeq \exp(2\pi \delta) = \Ad_u\)
  with \(u \defeq \exp(2\pi \ima h)\).

  The automorphism~\(\alpha\)
  is universally weakly inner by construction.  If
  \(x\in\Comp(\ell^2(\N^n))\)
  is non-zero, then \(h\cdot (x\otimes 1)\)
  is not in~\(A\)
  because it contains~\(2^{-m-1} P\)
  as its \(m\)th
  tensor factor for \(m>n\),
  which has distance~\(2^{-m-1}\)
  from~\(\Comp(\ell^2(\N))\).
  Hence~\(h\)
  is not a multiplier of~\(I_n\)
  for any \(n\in\N\).
  Since \(\norm{h}\le \nicefrac12\),
  we get~\(h\)
  from~\(u\)
  by \(h = \log(u)\).
  So~\(u\)
  is not a multiplier of~\(I_n\)
  either.  If there were another unitary \(v\in \UM(I_n)\)
  with \(\Ad_v = \Ad_u\),
  then~\(v^* u\)
  would commute with the image of~\(I_n\)
  in \(\Bound(\Hils)\).
  Since~\(I_n\)
  acts irreducibly on~\(\Hils\),
  the unitary~\(v^* u\)
  would have to be a scalar multiple of~\(1\),
  contradicting \(u\notin \Mult(I_n)\).
  So~\(\alpha\) is not partly inner.
\end{example}




All the properties of a single automorphism defined above generalise
to group actions by requiring them pointwise.  Actions that satisfy
Kishimoto's condition pointwise are called \emph{aperiodic} in
\cites{Kwasniewski-Szymanski:Pure_infinite,
  Muhly-Solel:Morita_equivalence_of_tensor_algebras}.  Pasnicu and
Phillips \cite{Pasnicu-Phillips:Spectrally_free} call such actions
\emph{pointwise spectrally non-trivial}, compare
Remark~\ref{rem:spectrally non-trivial}.


\begin{definition}
  \label{def:pointwise_group_actions}
  Let \(\alpha\colon G\to \Aut(A)\) be an action of a discrete
  group~\(G\).  We call~\(\alpha\) \emph{pointwise outer},
  \emph{pointwise purely outer} or \emph{pointwise purely
    universally weakly outer} if, for each \(g\in G\setminus\{e\}\),
  the automorphism \(\alpha_g\) is outer, purely outer or purely
  universally weakly outer, respectively.  We call~\(\alpha\)
  \emph{aperiodic} if~\(\alpha_g\) satisfies Kishimoto's condition
  for all \(g\in G\setminus\{e\}\).
\end{definition}

\begin{definition}
  \label{def:topological_nontiriviality_and_freeness}
  Let \(\alpha\colon G\to \Aut(A)\) be an action of a discrete
  group~\(G\).  We call~\(\alpha\) \emph{pointwise topologically
    non-trivial} if~\(\alpha_g\) is topologically non-trivial for
  each \(g\in G\setminus\{e\}\), that is, for each \(g\in
  G\setminus\{e\}\), the set
  \begin{math}
    F_g \defeq \{[\pi]\in \widehat{A} \mid [\pi\circ\alpha]=[\pi]\}
  \end{math}
  has empty interior in \(\widehat{A}\).  We call~\(\alpha\)
  \emph{topologically free} if the dual topological
  action \(\widehat\alpha\colon G\to \Homeo(\widehat{A})\) is
  topologically free, that is, the union \(F_{g_1}\cup
  F_{g_2}\cup \dotsb \cup F_{g_n}\) has empty interior
  in~\(\widehat{A}\) for any \(g_1,\dotsc,g_n \in G\setminus\{e\}\)
  (see \cite{Archbold-Spielberg:Topologically_free}*{Definition~1}).
\end{definition}

The following lemma shows that topological freeness and pointwise
topological non-triviality are equivalent in the separable case.

\begin{lemma}
  \label{lem:separable_topological_equivalences}
  Let \(\alpha\colon G\to \Aut(A)\)
  be an action of a discrete group~\(G\)
  on a separable \(\Cst\)\nb-algebra~\(A\).
  The following are equivalent:
  \begin{enumerate}[wide,label=\textup{(\ref*{lem:separable_topological_equivalences}.\arabic*)}]
  \item \label{en:separable_topolol_pointwise_free_aut}%
    \(\alpha\) is pointwise topologically non-trivial;
  \item \label{en:separable_topolol_free_aut}%
    \(\mathcal\alpha\) is topologically free.
  \end{enumerate}
  Moreover, if~\(G\)
  is countable, then the above conditions are equivalent to:
  \begin{enumerate}[wide,label=\textup{(\ref*{lem:separable_topological_equivalences}.\arabic*)},resume]
  \item \label{en:separable_densly_free_aut}%
    \(\widehat\alpha\) is free on a dense subset of~\(\widehat{A}\).
  \end{enumerate}
\end{lemma}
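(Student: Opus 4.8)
The implication \ref{en:separable_topolol_free_aut}$\Rightarrow$\ref{en:separable_topolol_pointwise_free_aut} is immediate: specialising the defining condition for topological freeness to a singleton $\{g\}$ with $g\in G\setminus\{e\}$ says exactly that $F_g$ has empty interior.

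For \ref{en:separable_topolol_pointwise_free_aut}$\Rightarrow$\ref{en:separable_topolol_free_aut}, the plan is a Baire category argument resting on two inputs. The first is that $\widehat{A}$ is a Baire space, which holds because $A$ is separable; this is standard and is already used in \cite{Archbold-Spielberg:Topologically_free}. The second, and only non-formal, input is the observation -- going back to \cite{Archbold-Spielberg:Topologically_free} -- that for $g\in G\setminus\{e\}$ the fixed-point set $F_g=\{[\pi]\in\widehat{A}:\widehat{\alpha_g}[\pi]=[\pi]\}$ of the homeomorphism $\widehat{\alpha_g}$ has empty interior if and only if it is meagre; it is here that one genuinely uses that $\widehat{A}$ is the spectrum of a separable \(\Cst\)\nb-algebra rather than an arbitrary topological space. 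Granting this, assume \ref{en:separable_topolol_pointwise_free_aut}. Then every $F_g$ with $g\neq e$ is meagre; hence so is every finite union $F_{g_1}\cup\dots\cup F_{g_n}$ with $g_1,\dots,g_n\in G\setminus\{e\}$; and a meagre subset of the Baire space $\widehat{A}$ has empty interior. This is \ref{en:separable_topolol_free_aut}.

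Now suppose in addition that $G$ is countable and consider \ref{en:separable_densly_free_aut}. Unwinding the definitions, $\widehat{\alpha}$ is free on a dense subset of $\widehat{A}$ precisely when $\widehat{A}\setminus\bigcup_{g\in G\setminus\{e\}}F_g$ is dense, equivalently when $\bigcup_{g\in G\setminus\{e\}}F_g$ has empty interior. In particular \ref{en:separable_densly_free_aut}$\Rightarrow$\ref{en:separable_topolol_pointwise_free_aut} is trivial. Conversely, if \ref{en:separable_topolol_pointwise_free_aut} holds, then by the observation above each $F_g$ with $g\neq e$ is meagre, and since $G$ is countable the union $\bigcup_{g\in G\setminus\{e\}}F_g$ is a countable union of meagre sets, hence meagre; by the Baire property it therefore has empty interior, which is \ref{en:separable_densly_free_aut}.

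The crux -- and the only step that is not routine Baire bookkeeping -- is thus the equivalence, for a single $g\neq e$, between ``$F_g$ has empty interior'' and ``$F_g$ is meagre''. It fails for fixed-point sets of homeomorphisms of general $T_0$ spaces, and is exactly the structural input we take from \cite{Archbold-Spielberg:Topologically_free}.
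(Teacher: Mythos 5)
The skeleton of your Baire category argument matches the spirit of the intended proof, but its pivotal claim is false. You assert that, for \(g\neq e\), the fixed-point set \(F_g\subseteq\widehat{A}\) has empty interior if and only if it is meagre, and that this is the structural input from \cite{Archbold-Spielberg:Topologically_free}. No such statement appears there, and it fails in general. Take \(A\) the CAR algebra (separable, simple, not of Type~I) and \(\alpha=\bigotimes_{n}\Ad(\mathrm{diag}(1,-1))\) the order-two product-type automorphism. Since \(A\) is simple, \(\widehat{A}\) carries the indiscrete topology, so every proper subset of \(\widehat{A}\) has empty interior while the only meagre subset is \(\emptyset\). The pure product state built from the eigenvectors \(e_1\) is \(\alpha\)\nb-invariant, so its GNS class lies in \(F_\alpha\); the pure product state built from \((e_1+e_2)/\sqrt{2}\) is mapped by \(\alpha\) to the product state built from \((e_1-e_2)/\sqrt{2}\), which is disjoint from it by the standard criterion for infinite tensor product states, so its class is not in \(F_\alpha\). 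Hence \(F_\alpha\) is a nonempty proper subset: empty interior, yet not meagre. Consequently ``each \(F_g\) is meagre'' cannot be extracted from \ref{en:separable_topolol_pointwise_free_aut}, and both your finite-union step \ref{en:separable_topolol_pointwise_free_aut}\(\Rightarrow\)\ref{en:separable_topolol_free_aut} and your countable-union step \ref{en:separable_topolol_pointwise_free_aut}\(\Rightarrow\)\ref{en:separable_densly_free_aut} are unproven. Note also that the sets \(F_g\) are in general neither closed nor ``small'' in any naive sense (they can be dense with empty interior), so even the finite-union implication is not routine Baire bookkeeping.

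The paper proves the hard implication by a different route: for countable \(G\), \ref{en:separable_topolol_pointwise_free_aut}\(\Rightarrow\)\ref{en:separable_densly_free_aut} is taken from the proof of \cite{Olesen-Pedersen:Applications_Connes_2}*{Proposition~4.4}, where separability enters in a genuinely different (and less formal) way than a meagreness dichotomy; the case of uncountable \(G\) is then handled by observing that topological freeness involves only finite sets of group elements and therefore holds for \(G\) if and only if it holds for all countable subgroups. If you want a self-contained argument you must supply a substitute for that Olesen--Pedersen input; the equivalence you posit is not it. (A minor side remark: \(\widehat{A}\) is a Baire space for every \(\Cst\)\nb-algebra, not only separable ones, so that is not where separability is used.)
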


\begin{proof}
  The implications \ref{en:separable_densly_free_aut}%
  \(\Rightarrow\)\ref{en:separable_topolol_free_aut}%
  \(\Rightarrow\)\ref{en:separable_topolol_pointwise_free_aut}
  are obvious.  Conversely, if~\(G\)
  is countable then \ref{en:separable_topolol_pointwise_free_aut}
  implies~\ref{en:separable_densly_free_aut} by the proof of
  \cite{Olesen-Pedersen:Applications_Connes_2}*{Proposition~4.4}.
  Since~\ref{en:separable_topolol_free_aut} holds for~\(G\)
  if and only if it holds for all countable subgroups of~\(G\),
  the equivalence \ref{en:separable_topolol_pointwise_free_aut}%
  \(\Leftrightarrow\)\ref{en:separable_topolol_free_aut}
  for countable~\(G\) implies the same for all~\(G\).
\end{proof}

The following theorem highlights the role of aperiodicity and
topological freeness.

\begin{theorem}
  \label{the:Kishimoto_to_detect}
  Let~\(A\) be a \(\Cst\)\nb-algebra and \(G\) a discrete group.  If
  \(\alpha\colon G\to\Aut(A)\) is an aperiodic or topologically free
  group action, then~\(A\) detects ideals in the reduced crossed
  product \(A\rtimes_{\alpha,\red} G\).
\end{theorem}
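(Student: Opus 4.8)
The plan is to handle the two hypotheses separately and in each case to produce, from a nonzero ideal $J \idealin A \rtimes_{\alpha,\red} G$ with $J \cap A = 0$, a contradiction with the relevant non-triviality condition. Let $E\colon A \rtimes_{\alpha,\red} G \to A$ denote the canonical faithful conditional expectation, and write a general element $b$ of the algebraic crossed product as a finite sum $\sum_{g} b_g \lambda_g$ with $b_g \in A$; the key soft fact is that $E$ is faithful and $E(b\lambda_g^*) = b_g$, so that $A$ ``sees'' every Fourier coefficient. The strategy in both cases is the standard Kishimoto/Archbold--Spielberg averaging argument: starting from $0 \neq b \in J$ we want to approximate $b$ in norm by an element whose image under $E$ is large, by compressing with a carefully chosen positive element $a$ of norm one; if $b \in J$ then $aba^* \in J$ as well, so if we can arrange $\norm{aba^* - aE(b)a^*}$ small while $\norm{aE(b)a^*}$ stays bounded below, we conclude $aE(b)a^* \in J \cap A$ is nonzero, contradicting $J \cap A = 0$.

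First I would treat the aperiodic case. Fix $0 \neq b \in J$; after replacing $b$ by $b^*b$ and rescaling we may assume $b \in (A\rtimes_{\alpha,\red}G)^+$ with $\norm{E(b)} = 1$ (faithfulness of $E$ guarantees $E(b^*b) \neq 0$). Pick a finitely supported $b' = \sum_{g \in F} b'_g \lambda_g$ with $b'_e = E(b)$ close to $b$ in norm, say within $\varepsilon$. Using Lemma~\ref{lem:technical} applied to $b_e = E(b)$ choose $d$ and the hereditary subalgebra $D_0 \in \Her(A)$; now apply Kishimoto's condition (in the guise of Theorem~\ref{the:Kishimoto_spectrum}) successively, one group element $g \in F \setminus \{e\}$ at a time, to the hereditary subalgebras obtained along the way and the elements $b'_g$, to extract $a \in D_0^+$ with $\norm{a}=1$ such that $\norm{a b'_g \alpha_g(a)} < \varepsilon/\abs{F}$ for every $g \in F \setminus\{e\}$ simultaneously (this is where one iterates Kishimoto's condition, shrinking the hereditary subalgebra at each step, which is possible since each such condition is again of the form \eqref{eq:Kishimoto_condition} on a smaller hereditary subalgebra). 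Then $a b' a \approx a E(b) a$ up to $2\varepsilon$ in norm, while $\norm{a E(b) a} \geq (1-\varepsilon)^2$ by the norm estimates in Lemma~\ref{lem:technical}, and since $aba \in J$ and $\norm{aba - a b' a} \leq \varepsilon$, we get $\norm{aba - aE(b)a} < 4\varepsilon$; as $\varepsilon$ was arbitrary and $J$ is closed, $aE(b)a \in J \cap A$ with $\norm{aE(b)a} \geq 1/2$ for $\varepsilon$ small, a contradiction.

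Second I would treat the topologically free case, reducing to the separable/countable picture so that Lemma~\ref{lem:separable_topological_equivalences} and the Olesen--Pedersen spectral machinery apply. Given $0 \neq b \in J$ and its finite Fourier support $F$, restrict attention to the countable subgroup $G_0$ generated by $F$; since $F_{g_1}\cup\dots\cup F_{g_n}$ has empty interior in $\widehat A$, there is an irreducible representation $[\pi]$ with $\pi \circ \alpha_g \not\cong \pi$ for every $g \in F\setminus\{e\}$, and by Lemma~\ref{lem:separable_topological_equivalences}\ref{en:separable_densly_free_aut} (after passing to a separable $G_0$-invariant subalgebra containing $E(b)$ and using that the argument only involves countably much data) such $[\pi]$ can be found in a dense set. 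One then runs the same compression argument as above, except that the smallness of $\norm{a b'_g \alpha_g(a)}$ now comes from the Archbold--Spielberg idea: choose a unit vector $\xi$ with $\pi(E(b))\xi \approx \xi$, and since $\pi$ and $\pi\circ\alpha_g$ are inequivalent one can find $a \in A^+$, $\norm{a}=1$, with $\pi(a)\xi \approx \xi$ but $\pi(\alpha_g(a))$ nearly orthogonal to the relevant space for each $g \in F\setminus\{e\}$, killing the off-diagonal terms. The conclusion $aE(b)a \in J\cap A$, $aE(b)a \neq 0$, again contradicts $J \cap A = 0$.

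The main obstacle in both arguments is the \emph{simultaneous} choice of the compressing element $a$ that works against all of the finitely many nontrivial Fourier coefficients at once: Kishimoto's condition (or topological freeness) is a statement about one group element at a time, so one must iterate, and the iteration must be set up so that each successive application takes place on a hereditary subalgebra of the one produced by the previous step while the norm estimates on $E(b)$ do not degrade. This is exactly the content that is packaged into Lemma~\ref{lem:technical} and Theorem~\ref{the:Kishimoto_spectrum}, and the bookkeeping of the $\varepsilon$'s across $\abs{F}$ steps is the only genuinely delicate point; everything else is the soft structure of reduced crossed products (faithfulness of $E$, $J$ being an ideal, closedness).
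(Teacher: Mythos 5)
The paper itself disposes of both cases by citation (Archbold--Spielberg, Theorem~1, for topological freeness and Giordano--Sierakowski, Theorem~3.12, for aperiodicity), so any direct argument is ipso facto a different route; your aperiodic half is essentially the standard argument behind those citations (it is the content of \cite{Kwasniewski-Szymanski:Pure_infinite}*{Lemma~4.2}), and the iteration of Kishimoto's condition interleaved with Lemma~\ref{lem:technical} does work as you describe. The one flaw there is the conclusion: the compressing element \(a\) depends on \(\varepsilon\), so ``\(\varepsilon\) arbitrary and \(J\) closed'' does not place \(aE(b)a\) in \(J\); for a fixed \(\varepsilon\) you only know \(\operatorname{dist}(aE(b)a,J)<4\varepsilon\). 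The standard repair is to note that, since \(J\cap A=0\), the quotient map \(q\colon A\rtimes_{\alpha,\red}G\to (A\rtimes_{\alpha,\red}G)/J\) is injective on \(A\), hence isometric there, so a single small \(\varepsilon\) already gives the contradiction \(1-\varepsilon\le\norm{q(aE(b)a)}=\norm{q(aE(b)a-aba)}\le 4\varepsilon\). This is a minor, fixable slip.

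The topologically free half, however, has a genuine gap. You claim that from \([\pi]\neq[\pi\circ\alpha_g]\) one can choose \(a\in A^+\), \(\norm{a}=1\), making \(\norm{a b'_g\alpha_g(a)}\) small \emph{in norm}. Inequivalence of the irreducible representations \(\pi\) and \(\pi\circ\alpha_g\) only lets you make \(\pi(a)\xi\approx\xi\) while \(\pi(\alpha_g(a))\) is small against finitely many prescribed vectors (via disjointness and Kaplansky density); it gives no control on \(\norm{a b'_g\alpha_g(a)}\) in \(A\). If norm-smallness were available for every hereditary subalgebra and every coefficient, you would be deriving Kishimoto's condition~\eqref{eq:Kishimoto_condition}, i.e.\ aperiodicity, from topological freeness --- which fails in general and is exactly why the paper treats the two hypotheses by different citations and only proves their equivalence under extra hypotheses in Theorem~\ref{thm:separable_type_I_equivalences}. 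The Archbold--Spielberg argument therefore does not run the ``same compression argument as above'': it works with a state, extending the pure vector state \(\langle\pi(\cdot)\xi,\xi\rangle\) to a state on the quotient by \(J\), pulling it back to a state vanishing on \(J\), and killing the off-diagonal terms \(a b'_g\alpha_g(a)\lambda_g\) by Cauchy--Schwarz against that state rather than in norm. Separately, your detour through Lemma~\ref{lem:separable_topological_equivalences} is both unjustified (its separability and countability hypotheses are not available here) and unnecessary: topological freeness as defined already says that the finite union \(F_{g_1}\cup\dotsb\cup F_{g_n}\) over the Fourier support has empty interior, which is all the representation-selection step needs.
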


\begin{proof}
  The statement for topological freeness is
  \cite{Archbold-Spielberg:Topologically_free}*{Theorem~1}.  The
  statement using aperiodicity is contained, for instance, in
  \cite{Giordano-Sierakowski:Purely_infinite}*{Theorem~3.12}.
\end{proof}

The next theorem asserts that aperiodicity and topogical freeness are
also necessary for~\(A\)
to detect ideals in \(A\rtimes_{\alpha,\red} G\)
if~\(A\)
is separable and~\(G\)
is cyclic of infinite or square-free order.  This fails for most
groups~\(G\),
even if~\(A\)
is separable and simple, so that topological non-triviality and
aperiodicity are equivalent to various pointwise outerness conditions
by Theorem~\ref{the:automorphism_Kishimoto_vs_properly_outer} and
Lemma~\ref{lem:separable_topological_equivalences}.  For the
group~\(\Z/4\),
the example mentioned in Remark~\ref{rem:Phillips_examples} is a
counterexample: the crossed product is simple although
\(2\in \Z/4\setminus\{0\}\)
acts by an inner automorphism.  The noncommutative torus~\(A_\theta\)
is stably isomorphic to \(\Comp(\ell^2(\Z))\rtimes \Z^2\)
for a \(\Z^2\)\nb-action
on~\(\Comp(\ell^2(\Z))\).
It is simple for irrational~\(\theta\)
although all automorphisms of~\(\Comp(\ell^2(\Z))\) are inner.

\begin{theorem}
  \label{the:automorphism_detect_vs_Kishimoto}
  Let \(G=\Z\)
  or \(G=\Z/p\)
  for a square-free number~\(p\),
  let~\(A\)
  be a separable \(\Cst\)\nb-algebra
  and let \(\alpha\colon G\to\Aut(A)\)
  be a group action.  The following are equivalent:
  \begin{enumerate}[wide,label=\textup{(\ref*{the:automorphism_detect_vs_Kishimoto}.\arabic*)}]
  \item \label{en:automorphism_detect_vs_Kishimoto_detect}%
    \(A\)~detects ideals in \(A\rtimes_\alpha G= A\rtimes_{\alpha,\red} G\);
  \item \label{en:automorphism_detect_vs_Kishimoto_Kish}%
    \(\alpha\)~is aperiodic;
 \item \label{en:separable_topolo_free_aut}%
   \(\mathcal\alpha\)~is topologically free.
  \end{enumerate}
\end{theorem}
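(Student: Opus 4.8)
The plan is to establish \ref{en:automorphism_detect_vs_Kishimoto_Kish}\(\Leftrightarrow\)\ref{en:separable_topolo_free_aut}, then the easy implication \ref{en:automorphism_detect_vs_Kishimoto_Kish}\(\Rightarrow\)\ref{en:automorphism_detect_vs_Kishimoto_detect}, and finally the reverse implication \ref{en:automorphism_detect_vs_Kishimoto_detect}\(\Rightarrow\)\ref{en:automorphism_detect_vs_Kishimoto_Kish}, which is where the assumption on \(G\) is used. (Recall that \(A\rtimes_\alpha G=A\rtimes_{\alpha,\red}G\) because \(\Z\) and \(\Z/p\) are amenable.) The equivalence \ref{en:automorphism_detect_vs_Kishimoto_Kish}\(\Leftrightarrow\)\ref{en:separable_topolo_free_aut} holds for any countable group and separable \(A\): by Theorem~\ref{the:automorphism_Kishimoto_vs_properly_outer} each \(\alpha_g\) satisfies Kishimoto's condition if and only if it is topologically non-trivial, so \(\alpha\) is aperiodic if and only if it is pointwise topologically non-trivial, and Lemma~\ref{lem:separable_topological_equivalences} identifies the latter with topological freeness. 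The implication \ref{en:automorphism_detect_vs_Kishimoto_Kish}\(\Rightarrow\)\ref{en:automorphism_detect_vs_Kishimoto_detect} is Theorem~\ref{the:Kishimoto_to_detect}; neither step uses the special form of \(G\).

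So assume \ref{en:automorphism_detect_vs_Kishimoto_detect}. By Theorem~\ref{the:Connes_spectrum_vs_detect_ideals} this is \(\Gamma(\alpha)=\widehat G\), hence \(\Gamma(\alpha_1)=\widehat G\) for a generator \(1\in G\) by Lemma~\ref{lem:explanation_of_spectra_for_automorphisms}. The basic tool is the monotonicity \(\Gamma(\gamma|_I)\supseteq\Gamma(\gamma)\), valid for an action \(\gamma\) of any discrete Abelian group and a non-zero invariant ideal \(I\), because \(\Her^\gamma(I)\subseteq\Her^\gamma(B)\) and \(\Gamma\) is defined as an intersection of Arveson spectra over invariant hereditary subalgebras. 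It implies that if a cyclic group \(H\) acts on an algebra with \(\Gamma=\widehat H\), then \(\Gamma_\Bor(\cdot|_I)\supseteq\Gamma(\cdot|_I)=\widehat H\neq\{1\}\) for every non-zero invariant ideal \(I\), so by Theorem~\ref{the:Kishimoto_spectrum}\ref{en:Kishimoto_spectrum_Borchers} and Lemma~\ref{lem:explanation_of_spectra_for_automorphisms} a generator of \(H\) satisfies Kishimoto's condition. Hence it suffices to show, for each \(g\in G\setminus\{e\}\), that the cyclic subgroup \(\langle g\rangle\) acts on \(A\) with full Connes spectrum. This is automatic when \(g\) generates \(G\) (as \(\Gamma(\alpha)=\widehat G\)); the content is the remaining case.

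For \(G=\Z\) this means proving that \(\Gamma(\alpha_1)=\T\) forces \(\Gamma(\alpha_n)=\T\) for all \(n\ge 2\), equivalently that \(A\) detects ideals in \(A\rtimes_{\alpha_n}\Z\) whenever it does so in \(A\rtimes_\alpha\Z\). Arguing by contradiction, if \(\alpha_n\) failed Kishimoto's condition then by Theorem~\ref{the:automorphism_Kishimoto_vs_properly_outer} it would not be properly outer, so by the Olesen--Pedersen theory \(\alpha_n\) restricts on a non-zero invariant ideal to an inner automorphism implemented by an invariant unitary; the computation in the proof of Theorem~\ref{the:automorphism_simple} then produces, in the corresponding ideal of the crossed product, a central multiplier lying in the dual-action degree \(n\ne 0\), whose spectral projections yield a non-zero ideal \(I\) and a character \(\chi\) with \(I\cap\beta_\chi(I)=0\), contradicting \(\Gamma(\alpha)=\T\) via Proposition~\ref{pro:Connes_spectrum}\ref{en:Strong_Connes_spectrum_characterisation1}. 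For \(G=\Z/p\) with \(p\) square-free and \(g\ne e\) not a generator, one argues in the same spirit: failure of Kishimoto's condition for \(\alpha_g\) gives an inner restriction on an invariant ideal, and — lifting \(g\) to \(\hat g\in\Z\) and using, exactly as in the proof of Theorem~\ref{the:automorphism_simple}, that \(g^2\ne e\) because \(p\) is square-free — an averaging of the implementing unitary gives \(\alpha_{g^2}=\Ad_w\) with \(w\) invariant, hence a central multiplier of a crossed product in the non-zero degree \(g^2\), again contradicting \(\Gamma(\alpha)=\widehat G\) by Proposition~\ref{pro:Connes_spectrum}.

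The hard part is precisely this implication \ref{en:automorphism_detect_vs_Kishimoto_detect}\(\Rightarrow\)\ref{en:automorphism_detect_vs_Kishimoto_Kish}, and within it the non-generating powers \(\alpha_g\): the hypothesis \(\Gamma(\alpha)=\widehat G\) is strictly stronger than ``a generator satisfies Kishimoto's condition'' — for instance the \(\Z\)-action on \(C(S^1)\) generated by composition with the antipodal map has generator satisfying Kishimoto's condition but squares to the identity, with Connes spectrum \(\mu_2\subsetneq\T\) — so some arithmetic input on \(G\) is unavoidable, and for a general finite cyclic group the statement is false: by Remark~\ref{rem:Phillips_examples}, for \(G=\Z/4\) the action of Phillips on the CAR algebra has simple, hence ideal-detecting, crossed product even though \(2\in\Z/4\) acts inner. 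For actions by automorphisms the case \(G=\Z\) is essentially due to Olesen--Pedersen, and their arguments extend to \(G=\Z/p\) with \(p\) square-free.
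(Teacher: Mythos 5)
Your treatment of \ref{en:automorphism_detect_vs_Kishimoto_Kish}\(\Leftrightarrow\)\ref{en:separable_topolo_free_aut} and of \ref{en:automorphism_detect_vs_Kishimoto_Kish}\(\Rightarrow\)\ref{en:automorphism_detect_vs_Kishimoto_detect} matches the paper, and your reduction of \ref{en:automorphism_detect_vs_Kishimoto_detect}\(\Rightarrow\)\ref{en:automorphism_detect_vs_Kishimoto_Kish} to showing that each cyclic subgroup \(\langle g\rangle\) acts with full Connes spectrum (via monotonicity of \(\Gamma\) under restriction to invariant ideals and Theorem~\ref{the:Kishimoto_spectrum}) is legitimate. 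The gap is in the core step: you claim that failure of Kishimoto's condition for \(\alpha_n\) (equivalently, by separability, failure of proper outerness) yields, ``by the Olesen--Pedersen theory'', a non-zero invariant ideal on which \(\alpha_n\) is \emph{inner with an invariant implementing unitary}. That implication is false. In the separable case Theorem~\ref{the:automorphism_Kishimoto_vs_properly_outer} only gives that \(\alpha_n\) is partly \emph{universally weakly} inner, i.e.\ implemented on some \(\alpha_n\)-invariant ideal by a unitary in the bidual \(I^{**}\), not by a multiplier unitary, and with no invariance under the rest of the action; the paper's own Example~\ref{exa:outer_not_proper} (a separable AF algebra) shows that a universally weakly inner automorphism need not be partly inner at all. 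Consequently the central-multiplier computation you import from the proof of Theorem~\ref{the:automorphism_simple} — which needs \(\alpha_g=\Ad_u\) exactly, for a multiplier unitary invariant under the whole action, on an ideal invariant under the whole action — has no valid input, and your contradiction with \(\Gamma(\alpha)=\widehat{G}\) does not go through.

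What actually bridges this gap in the paper (following Olesen--Pedersen, \emph{Applications of the Connes spectrum III}, Theorem~10.4 and Proposition~4.2) is quite different and is exactly where the square enters: from \(\alpha^n\) not properly outer one obtains an \(\alpha\)-invariant hereditary subalgebra \(B\in\Her^\alpha(A)\) with \(\alpha^{n^2}|_B=\exp(\delta)\) for an \(\alpha\)-invariant \Star{}derivation \(\delta\), and then \(n^2\mathrel{\bot}\Gamma(\alpha)\), so \(\Gamma(\alpha)\neq\widehat{G}\). Note that the exponent is \(n^2\), not \(n\), even for \(G=\Z\); this is why square-freeness of \(p\) is the right hypothesis (\(p\nmid n\Rightarrow p\nmid n^2\)), whereas in your sketch squaring appears only incidentally in the \(\Z/p\) case and rests on the unsupported exact-innerness claim. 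Your surrounding remarks (the antipodal-map example, the \(\Z/4\) counterexample of Phillips, and the closing attribution to Olesen--Pedersen) are correct, but the proof as written replaces the deep derivation machinery by an implication that the paper itself refutes.
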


\begin{proof}[Proof of
  Theorem~\textup{\ref{the:automorphism_detect_vs_Kishimoto}}]
  The statements \ref{en:automorphism_detect_vs_Kishimoto_Kish}
  and~\ref{en:separable_topolo_free_aut} are equivalent by
  Theorem~\ref{the:automorphism_Kishimoto_vs_properly_outer} and
  Lemma~\ref{lem:separable_topological_equivalences}.  If \(G=\Z\),
  then the equivalence of
  \ref{en:automorphism_detect_vs_Kishimoto_detect}
  and~\ref{en:automorphism_detect_vs_Kishimoto_Kish} is contained in
  \cite{Olesen-Pedersen:Applications_Connes_3}*{Theorem~10.4}.  We
  claim that the proof of
  \cite{Olesen-Pedersen:Applications_Connes_3}*{Theorem~10.4} still
  works for \(G=\Z/p\)
  if~\(p\)
  is square-free and shows that
  \ref{en:automorphism_detect_vs_Kishimoto_detect}
  and~\ref{en:automorphism_detect_vs_Kishimoto_Kish} are equivalent.
  Since~\ref{en:automorphism_detect_vs_Kishimoto_Kish} always
  implies~\ref{en:automorphism_detect_vs_Kishimoto_detect} by
  Theorem~\ref{the:Kishimoto_to_detect}, we only have to look at how
  the implications (i)\(\Rightarrow\)(ii)\(\Rightarrow\)(iii)
  in \cite{Olesen-Pedersen:Applications_Connes_3}*{Theorem~10.4} are
  proved.  Let \(n\in\Z\)
  be such that \(\alpha^n\in\Aut(A)\)
  is not properly outer and~\(p\)
  does not divide~\(n\).
  Then~\(\alpha^n\)
  is not purely universally weakly outer by
  Theorem~\ref{the:automorphism_Kishimoto_vs_properly_outer}.  The
  proof in~\cite{Olesen-Pedersen:Applications_Connes_3} first provides
  \(B\in\Her^\alpha(A)\)
  such that \(\alpha^{n^2}|_B = \exp(\delta)\)
  for an \(\alpha\)\nb-invariant
  \Star{}derivation \(\delta\colon B\to B\).
  Then \cite{Olesen-Pedersen:Applications_Connes_3}*{Proposition~4.2}
  shows that \(n^2 \mathrel{\bot} \Gamma(\alpha)\).
  Since~\(p\)
  is square-free and does not divide~\(n\),
  it does not divide~\(n^2\).
  So \(\Gamma(\alpha)\neq \widehat{G}\),
  which is equivalent
  to~\ref{en:automorphism_detect_vs_Kishimoto_detect} by
  Theorem~\ref{the:Connes_spectrum_vs_detect_ideals}.
\end{proof}

For each \(n>0\),
let \(\T_n\defeq \{z\in \T\mid z^n=1\}\)
be the roots of unity of order~\(n\).
We also put \(\T_\infty\defeq \T\).
Let~\(\ord(g)\) be the order of an element~\(g\) of a group~\(G\).

\begin{theorem}
  \label{the:Equivalence_for_finite}
  Let~\(A\) be a \(\Cst\)\nb-algebra, \(G\) a discrete group and
  \(\alpha\colon G\to\Aut(A)\) a group action.  The following are
  equivalent:
  \begin{enumerate}[wide,label=\textup{(\ref*{the:Equivalence_for_finite}.\arabic*)}]
  \item \label{en:Equiv_finite_Kishimoto}%
    \(\alpha\) is aperiodic;
  \item \label{en:Equiv_finite_spectra1}%
    \(\Gamma(\alpha_g)=\T_{\ord(g)}\) for all \(g\in G\);
  \item \label{en:Equiv_finite_spectra2}%
    \(\Gamma(\alpha_g)\neq\{1\}\) for all \(g\in G\setminus\{e\}\);
  \end{enumerate}
  The above equivalent conditions imply the following:
  \begin{enumerate}[wide,label=\textup{(\ref*{the:Equivalence_for_finite}.\arabic*)},resume]
  \item \label{en:Equiv_finite_outer}%
    \(\alpha\) is pointwise purely outer.
  \end{enumerate}
  If~\(G\)
  is finite or~\(A\)
  is simple, then all the conditions
  \ref{en:Equiv_finite_Kishimoto}--\ref{en:Equiv_finite_outer} are
  equivalent.
\end{theorem}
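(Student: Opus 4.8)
The plan is to reduce every assertion to the classical Connes-spectrum theory for cyclic group actions recalled above, proving \ref{en:Equiv_finite_Kishimoto}\(\Rightarrow\)\ref{en:Equiv_finite_spectra1}\(\Rightarrow\)\ref{en:Equiv_finite_spectra2}\(\Rightarrow\)\ref{en:Equiv_finite_Kishimoto}. The implication \ref{en:Equiv_finite_spectra1}\(\Rightarrow\)\ref{en:Equiv_finite_spectra2} is trivial since \(\T_{\ord(g)}\neq\{1\}\) for \(g\neq e\). For \ref{en:Equiv_finite_spectra2}\(\Rightarrow\)\ref{en:Equiv_finite_Kishimoto}, fix \(g\in G\setminus\{e\}\) and a non-zero \(\alpha_g\)-invariant ideal \(I\idealin A\); being a non-zero \(\alpha_g\)-invariant hereditary subalgebra of~\(A\), it satisfies \(\Her^{\alpha_g}(I)\subseteq\Her^{\alpha_g}(A)\), whence \(\Gamma(\alpha_g|_I)\supseteq\Gamma(\alpha_g)\neq\{1\}\) directly from the definition of the Connes spectrum as an intersection of Arveson spectra over invariant hereditary subalgebras. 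Hence condition~\ref{en:Kishimoto_spectrum_Connes} of Theorem~\ref{the:Kishimoto_spectrum} holds for~\(\alpha_g\) (take \(D\defeq I\)), so \(\alpha_g\) satisfies Kishimoto's condition; as \(g\) was arbitrary, \(\alpha\) is aperiodic. For \ref{en:Equiv_finite_Kishimoto}\(\Rightarrow\)\ref{en:Equiv_finite_spectra1}, fix \(g\in G\setminus\{e\}\) (the case \(g=e\) being trivial) and put \(H\defeq\langle g\rangle\); then \(\alpha|_H\) is again aperiodic because \(H\setminus\{e\}\subseteq G\setminus\{e\}\). As~\(H\) is Abelian, hence amenable, \(A\rtimes_{\alpha|_H,\red}H=A\rtimes_{\alpha|_H}H\), and Theorem~\ref{the:Kishimoto_to_detect} shows that~\(A\) detects ideals in it; thus \(\Gamma(\alpha|_H)=\widehat H\) by Theorem~\ref{the:Connes_spectrum_vs_detect_ideals}, and \(\Gamma(\alpha|_H)=\Gamma(\alpha_g)\) by Lemma~\ref{lem:explanation_of_spectra_for_automorphisms}, while \(\widehat H\cong\T_{\ord(g)}\).

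For \ref{en:Equiv_finite_Kishimoto}\(\Rightarrow\)\ref{en:Equiv_finite_outer}, each \(\alpha_g\) with \(g\neq e\) satisfies Kishimoto's condition and is therefore purely outer by Theorem~\ref{the:automorphism_Kishimoto_vs_properly_outer}. If~\(A\) is simple, then conversely pure outerness of~\(\alpha_g\) is equivalent to Kishimoto's condition by the last clause of Theorem~\ref{the:automorphism_Kishimoto_vs_properly_outer}, which gives \ref{en:Equiv_finite_outer}\(\Rightarrow\)\ref{en:Equiv_finite_Kishimoto} in that case.

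It remains to prove \ref{en:Equiv_finite_outer}\(\Rightarrow\)\ref{en:Equiv_finite_Kishimoto} when~\(G\) is finite, which I would do by contraposition. If~\(\alpha\) is not aperiodic, then \(\alpha_g\) fails Kishimoto's condition for some \(g\in G\setminus\{e\}\). By Theorem~\ref{the:Kishimoto_spectrum} there is a non-zero \(\alpha_g\)-invariant ideal~\(I\) with \(\Gamma(\alpha_g|_D)=\{1\}\) for all \(D\in\Her^{\alpha_g}(I)\); taking \(D\defeq I\) and the inclusion \(\Gamma(\alpha_g)\subseteq\Gamma(\alpha_g|_I)\) noted above, we obtain \(\Gamma(\alpha_g)=\{1\}\). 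Let \(H\defeq\langle g\rangle\), a finite cyclic group; then \(\Gamma(\alpha|_H)=\Gamma(\alpha_g)=\{1\}\) is a proper subgroup of the finite group \(\widehat H\), so \(\widehat H/\Gamma(\alpha|_H)\) is compact and \(\Gamma(\alpha|_H)^{\bot}=H\). The Olesen--Pedersen structure theorem for automorphisms with non-full Connes spectrum (\cite{Olesen-Pedersen:Applications_Connes_3}*{Theorem~4.5}, in the form used in the proof of Theorem~\ref{the:automorphism_simple}) then produces some \(h\in H\setminus\{e\}\), a non-zero \(\alpha_h\)-invariant ideal \(J\idealin A\), and a unitary \(u\in\UM(J)\) with \(\alpha_h|_J=\Ad_u\); that is, \(\alpha_h\) is partly inner. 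Since \(h\in G\setminus\{e\}\), this contradicts pointwise pure outerness of~\(\alpha\).

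\emph{The main obstacle is this last step.} As cited in the proof of Theorem~\ref{the:automorphism_simple}, \cite{Olesen-Pedersen:Applications_Connes_3}*{Theorem~4.5} is invoked there for minimal actions and delivers a unitary multiplier of~\(A\) itself, whereas here \(\alpha|_H\) need not be minimal; so one must verify that the same argument, localised to a non-zero \(H\)-invariant hereditary subalgebra \(B\in\Her^{\alpha|_H}(A)\), yields \(\alpha_h|_{\overline{ABA}}=\Ad_u\) for a unitary multiplier~\(u\) of \(\overline{ABA}\), and hence genuine partial innerness in the sense of Definition~\ref{def:properly_outer}. Everything else is a direct bookkeeping of the results collected in Section~\ref{sec:automorphisms}.
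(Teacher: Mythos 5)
Your treatment of \ref{en:Equiv_finite_Kishimoto}\(\Leftrightarrow\)\ref{en:Equiv_finite_spectra1}\(\Leftrightarrow\)\ref{en:Equiv_finite_spectra2}, of \ref{en:Equiv_finite_Kishimoto}\(\Rightarrow\)\ref{en:Equiv_finite_outer}, and of the case of simple~\(A\) coincides with the paper's argument and is fine. The gap is exactly where you flag it, and it is not a routine verification. For finite~\(G\) you need that \(\Gamma(\alpha_g)\neq\T_{\ord(g)}\) forces some \(\alpha_h\), \(h\in\langle g\rangle\setminus\{e\}\), to be \emph{partly inner}, i.e.\ inner on a non-zero invariant \emph{ideal}; the route through \cite{Olesen-Pedersen:Applications_Connes_3}*{Theorem~4.5} cannot deliver this by mere ``localisation''. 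That theorem is invoked in Theorem~\ref{the:automorphism_simple} under minimality, where ``partly inner'' collapses to ``inner''; without minimality the Olesen--Pedersen machinery yields innerness (or a generating invariant derivation) only on an invariant \emph{hereditary} subalgebra, and the passage from a hereditary subalgebra \(B\) to the generated ideal \(\overline{ABA}\) is precisely what fails in general. Worse, the statement your sketch would actually establish --- ``\(\widehat{H}/\Gamma(\alpha|_H)\) compact and \(h\in\Gamma(\alpha|_H)^\bot\setminus\{e\}\) imply \(\alpha_h\) is partly inner'' --- is refuted by the paper's own Example~\ref{exa:non-detect_although_not_partly_inner}: there \(G=\Z\), every \(\alpha_n\) with \(n\neq0\) fails Kishimoto's condition, so \(\Gamma(\alpha)=\{1\}\) by the very computation you use, \(\T/\Gamma(\alpha)\) is compact and \(\Gamma(\alpha)^\bot=\Z\), yet the action is pointwise purely outer. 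Hence any correct completion must use finiteness of \(\langle g\rangle\) in an essential way, not just compactness of \(\widehat{H}/\Gamma\), and your proposal does not supply that ingredient.

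The paper closes this step differently and much more cheaply: given \ref{en:Equiv_finite_outer} and \(g\neq e\), the restriction~\(\beta\) of~\(\alpha\) to the finite cyclic group \(\langle g\rangle\) is pointwise purely outer, so \(A\)~detects ideals in \(A\rtimes_\beta\langle g\rangle\) by Rieffel's theorem \cite{Rieffel:Actions_finite}*{Theorem~1.1}; then Theorem~\ref{the:Connes_spectrum_vs_detect_ideals} and Lemma~\ref{lem:explanation_of_spectra_for_automorphisms} give \(\Gamma(\alpha_g)=\Gamma(\beta)=\T_{\ord(g)}\), i.e.\ \ref{en:Equiv_finite_spectra1}. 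If you prefer your contrapositive formulation, you should replace the appeal to \cite{Olesen-Pedersen:Applications_Connes_3}*{Theorem~4.5} by this finite-group (averaging-based) result, or reprove it; as written, the key implication for finite~\(G\) remains unproved.
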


\begin{proof}
  The implication \ref{en:Equiv_finite_spectra1}%
  \(\Rightarrow\)\ref{en:Equiv_finite_spectra2}
  is trivial, and
  \ref{en:Equiv_finite_spectra2}\(\Rightarrow\)\ref{en:Equiv_finite_Kishimoto}
  follows from Theorem~\ref{the:Kishimoto_spectrum}
  (condition~\ref{en:Kishimoto_spectrum_Connes} holds because
  \(\Gamma(\alpha_g)\subseteq \Gamma(\alpha_g|_D)\)
  for any restriction~\(\alpha_g|_D\)
  of~\(\alpha_g\)).
  Theorem \ref{the:automorphism_Kishimoto_vs_properly_outer} shows
  that \ref{en:Equiv_finite_Kishimoto} implies
  \ref{en:Equiv_finite_outer}.  To prove the first part of the
  theorem, it remains to show that~\ref{en:Equiv_finite_Kishimoto}
  implies~\ref{en:Equiv_finite_spectra1}.  So
  assume~\ref{en:Equiv_finite_Kishimoto} and pick \(g\in G\).
  The automorphism~\(\alpha_g\)
  generates an action \(\beta\colon \langle g \rangle \to \Aut(A)\)
  of the subgroup \(\langle g \rangle \cong \Z/\ord(g)\)
  of~\(G\)
  generated by~\(g\),
  which is aperiodic by~\ref{en:Equiv_finite_Kishimoto}.  Hence~\(A\)
  detects ideals in \(A\rtimes_\beta \langle g \rangle\)
  by Theorem~\ref{the:Kishimoto_to_detect}.  Thus
  Theorem~\ref{the:Connes_spectrum_vs_detect_ideals} and
  Lemma~\ref{lem:explanation_of_spectra_for_automorphisms} imply
  \(\Gamma(\alpha_g)=\T_{\ord(g)}\).

  If~\(A\)
  is simple, then conditions \ref{en:Equiv_finite_Kishimoto}
  and~\ref{en:Equiv_finite_outer} are equivalent by
  Theorem~\ref{the:automorphism_Kishimoto_vs_properly_outer}.  Thus it
  remains to show that~\ref{en:Equiv_finite_outer}
  implies~\ref{en:Equiv_finite_spectra1} if~\(G\)
  is finite.  Let \(g\in G\setminus\{e\}\)
  and let~\(\beta\)
  be the action of~\(\langle g\rangle\)
  generated by~\(\alpha_g\)
  as above.  By assumption, \(\beta\)
  is pointwise purely outer.  Therefore, \(A\)
  detects ideals in \(A\rtimes_\beta \langle g \rangle\)
  by \cite{Rieffel:Actions_finite}*{Theorem~1.1}.  Hence
  \(\Gamma(\alpha_g)=\Gamma(\beta)=\T_{\ord(g)}\)
  by Theorem~\ref{the:Connes_spectrum_vs_detect_ideals} and
  Lemma~\ref{lem:explanation_of_spectra_for_automorphisms}.
\end{proof}

\begin{example}
  \label{exa:non-detect_although_not_partly_inner}
  Let \(\bar\alpha\colon \Z\to\Aut(A)\) be the \(\Z\)\nb-action
  generated by the automorphism~\(\alpha\) of
  \(A=\mathcal{O}_\infty^\T\) built in
  Example~\ref{exa:outer_not_proper}.  Since~\(\alpha\) is
  universally weakly inner, so are all its powers~\(\bar\alpha_n\)
  for \(n\in\Z\).  On the one hand, the action~\(\bar\alpha\) is far
  from being aperiodic: \(\bar\alpha_n\)~cannot satisfy Kishimoto's
  condition for any \(n\in \Z\setminus\{0\}\) by
  Theorem~\ref{the:automorphism_Kishimoto_vs_properly_outer}.  On the
  other
  hand, \(\bar\alpha_n\) is purely outer for all
  \(n\in\Z\setminus\{0\}\) by the same argument as for~\(\alpha\).
  Thus the last statement in
  Theorem~\ref{the:Equivalence_for_finite} may fail if~\(G\) is
  infinite and~\(A\) is not simple.
\end{example}

\section{Hilbert bimodules, Fell bundles, and their crossed products}
\label{sec:Hilmi_crossed}

We recall some basic definitions and results about Hilbert
bimodules and Fell bundles.  See
\cites{Raeburn-Williams:Morita_equivalence, Exel:Partial_dynamical}
for more details on Hilbert bimodules, Morita equivalence, and Fell
bundles, and also the section on preliminaries
in~\cite{Kwasniewski-Szymanski:Pure_infinite}.

\begin{definition}
  \label{def:Hilbert_bimodule}
  A \emph{Hilbert \(A,B\)\nb-bimodule}~\(\E\)
  is a vector space with a right Hilbert \(B\)\nb-module
  and a left Hilbert \(A\)\nb-module
  structure, such that
  \({}_A\braket{x}{y}\cdot z = x\cdot\braket{y}{z}_B\)
  for all \(x,y,z\in \E\).
  The \emph{conjugate Hilbert \(B,A\)\nb-bimodule}~\(\E^*\)
  is the complex-conjugate vector space of~\(\E\)
  with the canonical Hilbert \(B,A\)\nb-bimodule
  structure: \(b\cdot^* x\cdot^* a \defeq a^*\cdot x\cdot b^*\),
  \({}_B\braket{x}{y}^* \defeq \braket{x}{y}_B\),
  \(\braket{x}{y}_A^* \defeq {}_A\braket{x}{y}\).
  If \(A=B\), we call~\(\E\) a \emph{Hilbert \(A\)\nb-bimodule}.

  The closed linear spans of the left and right inner products
  in~\(\E\)
  are ideals in \(A\)
  and~\(B\), which we denote by \({}_A\braket{\E}{\E}\)
  and~\(\braket{\E}{\E}_B\),
  respectively.  The Hilbert bimodule~\(\E\)
  is \emph{full over~\(A\)}
  if \({}_A\braket{\E}{\E}=A\);
  \emph{full over~\(B\)}
  if \(\braket{\E}{\E}_B=B\);
  and \emph{full} or an \emph{equivalence
    \textup{(}bimodule\textup{)}} if both \({}_A\braket{\E}{\E}=A\)
  and \(\braket{\E}{\E}_B=B\).
  The \(\Cst\)\nb-algebras
  \(A\)
  and~\(B\)
  are \emph{Morita--Rieffel equivalent} if an equivalence
  \(A,B\)-bimodule exists.
\end{definition}

\begin{definition}
  \label{def:Hilbert_bimodule_from_isomorphism}
  Let \(\varphi\colon B\congto A\) be an isomorphism.
  Let~\(A_\varphi\) be~\(A\)
  with the obvious left Hilbert \(A\)\nb-module
  structure and the right Hilbert \(B\)\nb-module
  structure \(a\cdot b\defeq a\varphi(b)\),
  \(\braket{a_1}{a_2}_B \defeq \varphi^{-1}(a_1^*\cdot a_2)\)
  for \(a,a_1,a_2\in A\),
  \(b\in B\).  This is an equivalence bimodule.
\end{definition}

We view a Hilbert \(A,B\)-bimodule as an arrow from~\(B\) to~\(A\),
that is, the source is on the right and the range on the left.  This
convention is helpful in connection with groupoid
\(\Cst\)\nb-algebras and with Fell bundles over groups and inverse
semigroups, see~\cite{Buss-Meyer:Actions_groupoids}.  In this
article, the direction convention does not matter much.  It ensures
that the isomorphisms in Example~\ref{exa:automorphism_to_Fell}
below preserve the natural \(G\)\nb-gradings.

\begin{remark}
  \label{rem:opposite_convention}
  The Hilbert bimodule~\(A_\varphi\)
  is isomorphic through~\(\varphi^{-1}\)
  to~\(B\)
  with the obvious right Hilbert \(B\)\nb-module
  structure and the left Hilbert \(A\)\nb-module
  structure by \(a\cdot b \defeq \varphi^{-1}(a)b\)
  and \({}_A\braket{b_1}{b_2} \defeq \varphi(b_1 b_2^*)\)
  for \(a\in A\)
  and \(b,b_1,b_2\in B\).
  We denote this Hilbert \(A,B\)\nb-bimodule
  by~\({}_{\varphi^{-1}}B\).
  The construction~\({}_\varphi B\)
  (without inverse) still gives a \(\Cst\)\nb-correspondence
  for any morphism \(\varphi\colon A\to B\).
  This is why the conventions for~\({}_\varphi B\)
  are used, for instance, in~\cite{Buss-Meyer-Zhu:Higher_twisted}.  So
  the conventions in~\cite{Buss-Meyer-Zhu:Higher_twisted} would
  associate the equivalence bimodule~\(A_\varphi\)
  to~\(\varphi^{-1}\) and not to~\(\varphi\).
\end{remark}

We may compose Hilbert bimodules by the interior tensor product,
see~\cite{Lance:Hilbert_modules}.  The bimodule
structure and the inner products induce isomorphisms
of Hilbert bimodules
\begin{alignat}{2}
  \label{eq:left_mult}
  A\otimes_A \E&\congto \E,&\qquad
  a\otimes \xi&\mapsto a\cdot\xi,\\
  \label{eq:right_mult}
  \E\otimes_B B&\congto \E,&\qquad
  \xi\otimes b&\mapsto \xi\cdot b,\\
  \label{eq:left_innprod}
  \E\otimes_B \E^*&\congto {}_A\braket{\E}{\E}\idealin A,&\qquad
  \xi_1\otimes\xi_2&\mapsto {}_A\braket{\xi_1}{\xi_2},\\
  \label{eq:right_innprod}
  \E^*\otimes_A \E&\congto \braket{\E}{\E}_B\idealin B,&\qquad
  \xi_1\otimes\xi_2&\mapsto \braket{\xi_1}{\xi_2}_B.
\end{alignat}
These are part of a bicategory structure with
Hilbert bimodules as arrows, see~\cite{Buss-Meyer:Actions_groupoids}.
More generally, if \(I\idealin A\)
and \(J\idealin B\)
are ideals, the multiplication maps restrict to isomorphisms
\begin{equation}
  \label{eq:restricted_mult}
  I\otimes_A \E \congto I\cdot \E\subseteq \E,\qquad
  \E\otimes_B J \congto \E\cdot J\subseteq \E.
\end{equation}

A Hilbert \(A,B\)-bimodule~\(\E\)
is an \({}_A\braket{\E}{\E},\braket{\E}{\E}_B\)-equivalence
bimodule and thus an equivalence between ideals in \(A\)
and~\(B\).
Conversely, any equivalence between ideals in \(A\)
and~\(B\)
comes from a unique Hilbert \(A,B\)\nb-bimodule.
Thus Hilbert \(A,B\)-bimodules
are the same as equivalences between ideals in \(A\)
and~\(B\).
Hilbert bimodules are interpreted
in~\cite{Buss-Meyer:Actions_groupoids} as \emph{partial equivalences}
of \(\Cst\)\nb-algebras
because a \emph{partial isomorphism} is defined as an isomorphism
between ideals in \(A\) and~\(B\).

A Hilbert \(A,B\)-bimodule~\(X\) induces a \emph{dual partial homeomorphism}
\[
\widehat{X}\colon \widehat{B}\supseteq \widehat{\braket{X}{X}_B}
\congto
\widehat{{}_A\braket{X}{X}}
\subseteq \widehat{A},\qquad
[\pi]\mapsto [X\dashind^A_B (\pi)],
\]
where \(X\dashind^A_B (\pi)\colon A \to \Bound(X\otimes_\pi
\Hils_\pi)\) is the Hilbert space~\(X\otimes_\pi \Hils_\pi\) with
the obvious representation of~\(A\).  The partial homeomorphism
of~\(\widehat{A}\)
associated to the identity Hilbert \(A\)\nb-bimodule~\(A\)
is the identity on~\(\widehat{A}\),
and the partial homeomorphism associated to \(X\otimes_B Y\)
for two composable Hilbert bimodules is the product
\(\widehat{X}\circ\widehat{Y}\)
of partial homeomorphisms.  Hence~\(\widehat{X^*}\)
is the partial inverse~\(\widehat{X}^{-1}\)
of~\(\widehat{X}\)
(see \cites{Kwasniewski-Szymanski:Pure_infinite,
  Abadie-Abadie:Ideals}).

\begin{example}
  \label{exa:widehat_partial_auto}
  Let \(\varphi\colon J\congto I\) be an isomorphism between two
  ideals \(I\idealin A\) and \(J\idealin B\).  Consider the
  equivalence \(I, J\)-bimodule~\(I_\varphi\) as a Hilbert
  \(A,B\)-bimodule.  Then~\(A\) acts on the left on~\(I_\varphi\) by
  multiplication and~\(B\) acts on the right by \(x\cdot b \defeq
  \varphi(\varphi^{-1}(x)b)\) for \(x\in I\), \(b\in B\).  The
  partial homeomorphism \(\widehat{I_\varphi}\colon
  \widehat{B}\supseteq \widehat{J} \congto \widehat{I}\subseteq
  \widehat{A}\) is the partial inverse to \(\widehat{\varphi}\colon
  \widehat{A}\supseteq \widehat{I} \congto \widehat{J}\subseteq
  \widehat{B}\).  In particular, \(\widehat{A_\alpha} =
  \widehat{\alpha}^{-1}\) for an automorphism~\(\alpha\).
\end{example}

\begin{definition}
  \label{def:invariant_ideal}
  Let~\(X\)
  be a Hilbert \(A\)\nb-bimodule.
  An ideal \(I\idealin A\)
  is \emph{\(X\)\nb-invariant}
  if \(I X = X I\).
  Let~\(\I^X(A)\)
  be the lattice of \(X\)\nb-invariant
  ideals.  We call~\(X\)
  \emph{minimal} if \(\I^X(A) = \{0,A\}\).
  If \(I\in \I^X(A)\),
  then \(X I\)
  is naturally a Hilbert \(I\)\nb-bimodule,
  called \emph{restriction of~\(X\)
    to~\(I\)},
  and the quotient~\(X/XI\)
  is naturally a Hilbert \(A/I\)\nb-bimodule,
  called \emph{the restriction of~\(X\) to~\(A/I\)}.
\end{definition}

\begin{remark}
  \label{rem:restrictions_dual}
  Let \(f\colon M\supseteq \Delta \congto f(\Delta) \subseteq M\)
  be a partial bijection of a set~\(M\).
  We call a subset \(U\subseteq M\)
  \emph{\(f\)\nb-invariant}
  if \(f(U\cap \Delta) = U\cap f(\Delta)\).
  For any such~\(U\),
  let~\(f|_U\)
  be the restriction of~\(f\)
  to a partial bijection
  \(U\supseteq U\cap\Delta \congto U\cap f(\Delta) \subseteq U\)
  of~\(U\).
  If~\(X\)
  is a Hilbert \(A\)\nb-bimodule,
  there is a bijection between \(X\)\nb-invariant
  ideals in~\(A\)
  and \(\widehat{X}\)\nb-invariant
  open subsets of~\(\widehat{A}\).
  Moreover, if \(I\in \I^X(A)\),
  then \cite{Kwasniewski-Szymanski:Pure_infinite}*{Lemma~2.2} gives
  \(\widehat{X I} = \widehat{X}|_{\widehat{I}}\) and \(\widehat{X/X I}
  = \widehat{X}|_{\widehat{A}\setminus \widehat{I}}\).
\end{remark}

\begin{definition}[\cite{Exel:Partial_dynamical}*{Definition~16.1}]
  \label{def:Fell_bundle}
  Let~\(G\)
  be a discrete group.  A \emph{Fell bundle}~\(\mathcal{B}\)
  over~\(G\)
  consists of Banach spaces \((B_g)_{g\in G}\)
  with bilinear, associative multiplication maps
  and conjugate-linear, antimultiplicative involutions
  \[
  {\cdot}\colon B_g \times B_h \to B_{g h},\qquad
  ^*\colon B_g \to B_{g^{-1}}
  \]
  for \(g,h\in G\), such that
  \(\norm{ab}\leq \norm{a}\norm{b}\) and \(\norm{a^*a}=\norm{a}^2\)
  for all \(a\in B_g\),
  \(b\in B_h\),
  \(g,h\in G\), and for each
  \(a\in B_g\) there is \(c\in B_e\) with \(a^* a = c^* c\).
  Then \(A\defeq B_e\)
  is a \(\Cst\)\nb-algebra and \(a^*a \geq 0\)
  in~\(A\)
  for all \(a\in B_g\),
  \(g\in G\).
  So each~\(B_g\)
  is a Hilbert \(A\)\nb-bimodule
  with inner products \({}_A\braket{x}{y} \defeq x\cdot y^*\)
  and \(\braket{x}{y}_A \defeq x^*\cdot y\)
  for \(x,y\in B_g\),
  \(g\in G\).
  Linear extension of the multiplication and involution defines a
  \Star{}algebra structure on \(\bigoplus_{g\in G} B_g\).
  The (full) \emph{cross-section \(\Cst\)\nb-algebra}
  \(\Cst(\mathcal{B})\)
  of~\(\mathcal{B}\)
  is the \(\Cst\)\nb-completion
  of the \Star{}algebra \(\bigoplus_{g\in G} B_g\)
  for its maximal \(\Cst\)\nb-norm.
  The \emph{reduced cross-section \(\Cst\)\nb-algebra}
  \(\Cred(\mathcal{B})\)
  of~\(\mathcal{B}\)
  is the \(\Cst\)\nb-completion
  of the \Star{}algebra \(\bigoplus_{g\in G} B_g\)
  in the minimal \(\Cst\)\nb-norm \(\norm{\cdot}_\red\) that satisfies
  \begin{equation}
    \label{eq:Fell_minimal_norm}
    \norm{b_e}\le \biggl\| \sum_{g\in G} b_g \biggr\|_\red\qquad
    \text{for all}\ \sum_{g\in G} b_g \in \bigoplus_{g\in G} B_g.
  \end{equation}
\end{definition}

The coordinate projection \(\bigoplus_{g\in G} B_g \to B_e=A\)
extends to a faithful conditional expectation \(E\colon
\Cred(\mathcal{B}) \to A\).  Exel calls a Fell bundle amenable if
\(\Lambda\colon \Cst(\mathcal{B})\to \Cred(\mathcal{B})\) is an
isomorphism.  This always happens if the underlying group~\(G\) is
amenable and, in particular, if~\(G\) is Abelian, see
\cite{Exel:Partial_dynamical}*{Theorem 20.7}.

\begin{remark}
  Let \(\mathcal{B}=(B_g)_{g\in G}\)
  be a Fell bundle over a discrete Abelian group~\(G\).
  There is a \emph{dual action}~\(\beta\)
  of~\(\widehat{G}\)
  on~\(\Cst(\mathcal{B})\)
  defined by \(\beta_\chi(x) = \chi(g)\cdot x\)
  for all \(\chi\in\widehat{G}\),
  \(g\in G\),
  \(x\in B_g \subseteq \Cst(\mathcal{B})\).
  Thus \(\bigoplus_{g\in G} B_g\)
  is the decomposition of~\(\Cst(\mathcal{B})\)
  into its homogeneous subspaces for the \(\widehat{G}\)\nb-action.
  Conversely, if~\(B\)
  is any \(\Cst\)\nb-algebra
  with a continuous \(\widehat{G}\)\nb-action~\(\beta\),
  then the homogeneous subspaces
  \[
  B_g \defeq \{b\in B\mid \beta_\chi(b) = \chi(g)\cdot b
  \ \text{for all}\ \chi\in\widehat{G}\}
  \]
  with the multiplication and involution from~\(B\)
  form a Fell bundle.  Moreover, \(B\cong\Cst(\mathcal{B})\)
  by the obvious \(\widehat{G}\)\nb-equivariant
  isomorphism.  Thus Fell bundles over~\(G\)
  are the same as spectral decompositions for \(\widehat{G}\)\nb-actions
  on \(\Cst\)\nb-algebras.
  This is a Fell bundle variant of Takai duality.
\end{remark}

\begin{example}[Crossed products]
  \label{exa:automorphism_to_Fell}
  Let \(\alpha\colon G\to\Aut(A)\)
  be a group action.
  Then \(\mathcal{A}_\alpha\defeq (A_{\alpha_g})_{g\in G}\)
  with the multiplication maps
  \((a,g)\cdot (b,h) \defeq (a\alpha_g(b),g\cdot h)\)
  and involutions \((a,g)^* \defeq (\alpha_g^{-1}(a^*),g^{-1})\)
  for all \(a,b\in A\),
  \(g,h\in G\)
  is a Fell bundle over~\(G\).  We have natural isomorphisms
  \[
  \Cst(\mathcal{A}_\alpha)\cong A\rtimes_\alpha G\qquad
  \Cred(\mathcal{A}_\alpha)\cong A\rtimes_{\alpha,\red} G.
  \]
  One could also associate to~\(\alpha\) the Fell
  bundle~\({}_\alpha\mathcal{A} \defeq ({}_{\alpha_g^{-1}}A)_{g\in
    G}\).  The isomorphisms \(A_{\alpha_g}\cong
  {}_{\alpha_g^{-1}}A\) of Hilbert bimodules in
  Remark~\ref{rem:opposite_convention} combine to an isomorphism of
  Fell bundles \(\mathcal{A}_{\alpha}\cong{_\alpha}\mathcal{A}\).
\end{example}

\begin{example}[Crossed products by partial actions]
  Example \ref{exa:automorphism_to_Fell} generalises naturally to
  partial actions.
  Let \(\alpha = (\alpha_g)_{g \in G}\)
  be a \emph{partial action of~\(G\)
    on a \(\Cst\)\nb-algebra
    \(A\)},
  that is, for each \(g \in G\),
  \(\alpha_g\colon D_{g^{-1}}\congto D_g\)
  is an isomorphism between ideals of~\(A\)
  such that \(\alpha_e = \id_A\)
  and \(\alpha_{g h}\)
  extends \(\alpha_g\circ \alpha_h\)
  for all \(g,h \in G\).
  Then \(\mathcal{A}_\alpha \defeq ((D_g)_{\alpha_g})_{g\in G}\)
  with involutions as above and the multiplication maps
  \((a,g)\cdot (b,h) \defeq (\alpha_g(\alpha_{g^{-1}}(a)\cdot b),g\cdot h)\)
  for all \(a\in D_g\),
  \(b\in D_h\),
  \(g,h\in G\)
  is a Fell bundle over~\(G\).
  The \emph{crossed products} \(A\rtimes_\alpha G\)
  and \(A\rtimes_{\alpha,\red} G\)
  may be defined as \(\Cst(\mathcal{A}_\alpha)\)
  and \(\Cred(\mathcal{A}_\alpha)\),
  respectively, see \cite{Exel:Partial_dynamical}*{Proposition~16.28}.
  Fell bundles may be interpreted as partial group actions by Hilbert
  bimodules, compare~\cite{Buss-Meyer:Actions_groupoids}.  The full
  and reduced section \(\Cst\)\nb-algebras
  of a Fell bundle play the role of the full and reduced crossed
  products for a partial group action.
\end{example}

\begin{example}[Twisted cross section algebras]
  \label{ex:twisted_fell_bundles}%
  Let \(\mathcal{B}=(B_g)_{g\in G}\)
  be a Fell bundle over a discrete group \(G\)
  and let \(\omega\colon G\times G\to \T\)
  be a \(2\)-cocycle, that is,
  \[
  \omega(g,h)\omega(gh,k)=\omega(g,hk)\omega(h,k)
  \quad\text{and}\quad
  \omega(e,g)=\omega(g,e)=1
  \]
  for all \(g,h,k \in G\).
  Then we deform the multiplication and involution in~\(\mathcal{B}\)
  by the formulas \(b\cdot_\omega c\defeq \omega(g,h) bc\),
  \(b^{* \omega}=\overline{\omega(g,g^{-1})} b^*\)
  for \(b\in B_g\),
  \(c\in B_h \).
  This gives another Fell bundle
  \(\mathcal{B}^{\omega}=(B_g)_{g\in G}\),
  see \cite{Raeburn:Deformations}*{Proposition~3.3}.  We call
  \(\Cst(\mathcal{B}^\omega)\)
  and \(\Cred(\mathcal{B}^\omega)\)
  the \emph{full} and \emph{reduced cross section \(\Cst\)\nb-algebra
    of~\(\mathcal{B}\)
    twisted by~\(\omega\)},
  respectively.  If \(\alpha = (\alpha_g)_{g \in G}\)
  is a partial action of~\(G\) on a \(\Cst\)\nb-algebra~\(A\), then
  \[
  A\rtimes_\alpha^\omega G =\Cst(\mathcal{A}_\alpha^{\omega}), \qquad
  A\rtimes_{\alpha,\red}^\omega G=\Cred(\mathcal{A}_\alpha^{\omega})
  \]
  are the twisted full and reduced partial crossed products,
  compare~\cite{Exel:TwistedPartialActions}.
\end{example}

\begin{example}[Crossed products by Hilbert bimodules]
  \label{def:Fell_bundle_for_Hilbert_bimodule}
  Let~\(X\)
  be a Hilbert \(A\)\nb-bimodule.
  Let \(X_0 \defeq A\),
  \(X_n \defeq X^{\otimes_A n}\),
  and \(X_{-n} \defeq (X^*)^{\otimes_A n}\)
  for \(n\in\N\).
  This becomes a Fell bundle for the obvious involutions and the
  obvious multiplication maps between \(X_n\)
  and~\(X_m\)
  if \(n,m\)
  have the same sign, and more complicated multiplication maps that
  use the inner product maps \eqref{eq:left_innprod}
  and~\eqref{eq:right_innprod} if the signs are different.  The
  \emph{Hilbert bimodule crossed product} \(A\rtimes_X \Z\)
  is the cross-section \(\Cst\)\nb-algebra
  \(\Cst((X_n)_{n\in \Z})\)
  of this Fell bundle~\((X_n)_{n\in \Z}\),
  see~\cite{Abadie-Eilers-Exel:Morita_bimodules}.
  A Fell bundle~\((B_n)_{n\in\Z}\)
  is of this form for some Hilbert bimodule~\(X\)
  if and only if \((B_1)^{\otimes_A n} \cong B_n\)
  for \(n>0\),
  and then \(A=B_0\)
  and \(X=B_1\).
  Such Fell bundles are called \emph{semi-saturated} in
  \cite{Exel:Circle_actions}*{Definition~4.1}.
\end{example}

Let \(\mathcal{B}=(B_g)_{g\in G}\) be a Fell bundle and let
\(A\defeq B_e\) be its unit fibre.  An ideal \(I\idealin A\) is
\emph{\(\mathcal{B}\)\nb-invariant} if it is \(B_g\)\nb-invariant
for all \(g\in G\), that is, \(I B_g = B_g I\) for all \(g\in G\).
Let \(\I^{\mathcal{B}}(A) \subseteq \I(A)\) be the lattice of
\(\mathcal{B}\)\nb-invariant ideals.  We call~\(\mathcal{B}\)
\emph{minimal} if \(\I^\mathcal{B}(A) = \{0,A\}\).  If \(I\in
\I^{\mathcal{B}}(A)\), then the family of restrictions
\(\mathcal{B}|_I = (B_g I)_{g\in G}\) forms a Fell subbundle (even
an ideal) of~\(\mathcal{B}\), which we call the \emph{restriction
  of~\(\mathcal{B}\) to~\(I\)}.  We \emph{restrict the Fell bundle
  to~\(A/I\)} by taking \(\mathcal{B}|_{A/I} \defeq
\mathcal{B}\mathbin/\mathcal{B}|_I = (B_g/B_g I)_{g\in G}\) with the induced
multiplication maps and involutions.

An ideal~\(J\)
in~\(\Cred(\mathcal{B})\)
is called \emph{graded} if \(\bigoplus (B_g \cap J)\)
is dense in~\(J\).
Let \(\I^{\widehat{G}}(\Cred(\mathcal{B}))\)
denote the lattice of graded ideals.  If~\(G\)
is Abelian, then an ideal~\(J\)
in \(\Cst(\mathcal{B})=\Cred(\mathcal{B})\)
is graded if and only if~\(J\)
is invariant under the dual action
\(\beta\colon \widehat{G}\to \Aut(\Cst(\mathcal{B}))\).

\begin{proposition}
  \label{pro:gauge-invariance_vs_separation}
  Let~\(\mathcal{B}\) be a Fell bundle over~\(G\).
  The map
  \[
  \I(\Cred(\mathcal{B})) \to \I(A),\qquad
  I \mapsto I\cap A,
  \]
  preserves intersections, and its image
  is~\(\I^{\mathcal{B}}(A)\),
  the lattice of \(\mathcal{B}\)\nb-invariant
  ideals in~\(A\).
  It restricts to a lattice isomorphism
  \(\I^{\widehat{G}}(\Cred(\mathcal{B})) \congto \I^{\mathcal{B}}(A)\)
  on the sublattice \(\I^{\widehat{G}}(\Cred(\mathcal{B}))\)
  of graded ideals.  The inverse isomorphism maps
  \(I\in\I^{\mathcal{B}}(A)\) to the graded ideal
  \(\Cred(\mathcal{B}|_I)\) in \(\Cred(\mathcal{B})\).
\end{proposition}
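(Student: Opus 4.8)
The plan is to prove the assertions in turn, keeping in mind the two elementary observations that \(\mathfrak I\cap A\) is a closed two-sided ideal of \(A=B_e\) for every ideal \(\mathfrak I\idealin\Cred(\mathcal B)\), and that \((\mathfrak I_1\cap\mathfrak I_2)\cap A=(\mathfrak I_1\cap A)\cap(\mathfrak I_2\cap A)\), so the map preserves intersections with no further work. The crucial preliminary step I would establish is the identity
\[
  B_g\cap\mathfrak I=\overline{B_g\cdot(\mathfrak I\cap A)}=\overline{(\mathfrak I\cap A)\cdot B_g}\qquad\text{for all }g\in G .
\]
Here the inclusions ``\(\supseteq\)'' are immediate since \(B_g\subseteq\Cred(\mathcal B)\) and \(\mathfrak I\) is an ideal; for ``\(\subseteq\)'' I would note that \(b\in B_g\cap\mathfrak I\) forces \(b^*b\in B_{g^{-1}}\cdot B_g\subseteq A\) and \(b^*b\in\mathfrak I\), hence \(f_\varepsilon(b^*b)\in\mathfrak I\cap A\) for \(f_\varepsilon(t)\defeq t(t+\varepsilon)^{-1}\), and \(b=\lim_{\varepsilon\downarrow0}b\cdot f_\varepsilon(b^*b)\in\overline{B_g\cdot(\mathfrak I\cap A)}\); the left-handed statement follows symmetrically using \(bb^*\). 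In particular \(\mathfrak I\cap A\) is \(B_g\)-invariant for every \(g\), so the image of \(\mathfrak I\mapsto\mathfrak I\cap A\) is contained in \(\I^{\mathcal B}(A)\).

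For surjectivity onto \(\I^{\mathcal B}(A)\) I would, given \(J\in\I^{\mathcal B}(A)\), consider the closed subspace \(\mathfrak I_J\defeq\overline{\bigoplus_{g\in G}B_gJ}\subseteq\Cred(\mathcal B)\). Using \(\mathcal B\)-invariance in the form \(B_gJ=JB_g\), a short computation (together with the Cohen factorisation theorem, which makes each \(B_gJ\) closed) shows that \(\bigoplus_g B_gJ\) is a \(*\)-subalgebra and that \(\mathfrak I_J\) is a two-sided ideal. The coordinate projection carries \(\bigoplus_g B_gJ\) onto \(B_eJ=J\), so \(E(\mathfrak I_J)\subseteq J\) by continuity of the conditional expectation \(E\colon\Cred(\mathcal B)\to A\); hence every \(a\in\mathfrak I_J\cap A\) satisfies \(a=E(a)\in J\), and therefore \(\mathfrak I_J\cap A=J\). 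This already shows that the image of the map equals \(\I^{\mathcal B}(A)\).

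Next I would identify \(\mathfrak I_J\) with \(\Cred(\mathcal B|_J)\). The norm inherited by \(\bigoplus_g B_gJ\) from \(\Cred(\mathcal B)\) is a \(\Cst\)-norm satisfying the defining inequality~\eqref{eq:Fell_minimal_norm} for the Fell bundle \(\mathcal B|_J\) (that inequality holds in \(\Cred(\mathcal B)\), hence on the subalgebra), so by the minimality of the reduced norm it dominates \(\norm{\cdot}_\red\); consequently the identity map on \(\bigoplus_g B_gJ\) extends to a surjective \(*\)-homomorphism \(q\colon\mathfrak I_J\twoheadrightarrow\Cred(\mathcal B|_J)\). Since \(q\) intertwines the restriction \(E|_{\mathfrak I_J}\) with the canonical faithful conditional expectation \(\Cred(\mathcal B|_J)\to J\), and \(E\) is faithful, the kernel of \(q\) is trivial and \(q\) is an isomorphism. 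Thus \(J\mapsto\mathfrak I_J=\Cred(\mathcal B|_J)\) is a well-defined section of \(\mathfrak I\mapsto\mathfrak I\cap A\).

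It remains to treat graded ideals. If \(\mathfrak I\) is graded then \(\mathfrak I=\overline{\bigoplus_g(B_g\cap\mathfrak I)}=\overline{\bigoplus_g B_g(\mathfrak I\cap A)}=\mathfrak I_{\mathfrak I\cap A}\) by the identity above, while each \(\mathfrak I_J\) is visibly graded with \(\mathfrak I_J\cap A=J\); hence \(\mathfrak I\mapsto\mathfrak I\cap A\) and \(J\mapsto\mathfrak I_J\) are mutually inverse, inclusion-preserving bijections between \(\I^{\widehat G}(\Cred(\mathcal B))\) and \(\I^{\mathcal B}(A)\). To conclude that they are lattice isomorphisms it suffices to know that \(\I^{\widehat G}(\Cred(\mathcal B))\) is a sublattice of \(\I(\Cred(\mathcal B))\), i.e.\ closed under intersections and closures of sums; this follows from the standard description of graded ideals of \(\Cred(\mathcal B)\) as exactly those ideals \(\mathfrak I\) with \(E(\mathfrak I)\subseteq\mathfrak I\) (see~\cite{Exel:Partial_dynamical}), a condition plainly preserved by both operations. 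I expect the identification \(\mathfrak I_J\cong\Cred(\mathcal B|_J)\) to be the only genuinely delicate point: as we work with \emph{reduced} cross-section algebras there is no universal property available, and one has to play off the minimality of the reduced norm against the faithfulness of the conditional expectation; the remaining ingredients are routine Hilbert-module and approximate-unit manipulations.
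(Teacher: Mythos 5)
Your argument is necessarily a different route from the paper, which gives no proof at all but simply cites \cite{Kwasniewski-Szymanski:Pure_infinite}*{Proposition~3.2 and Corollary~3.3}; your self-contained proof reproduces the standard mechanism behind that result, and its core is correct: the identity $B_g\cap\mathfrak{I}=\overline{B_g(\mathfrak{I}\cap A)}=\overline{(\mathfrak{I}\cap A)B_g}$, the construction of $\mathfrak{I}_J=\overline{\bigoplus_g B_gJ}$ with $\mathfrak{I}_J\cap A=J$, and the identification $\mathfrak{I}_J\cong\Cred(\mathcal{B}|_J)$ obtained by playing the minimality of the reduced norm in \eqref{eq:Fell_minimal_norm} against the faithfulness of $E$ are all sound, and they give the stated image and the bijection between graded and invariant ideals.

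There is, however, one genuine flaw in the final step: graded ideals of $\Cred(\mathcal{B})$ are \emph{not} in general exactly the ideals $\mathfrak{I}$ with $E(\mathfrak{I})\subseteq\mathfrak{I}$. Gradedness does imply $E$-invariance, but the converse fails precisely when $\mathcal{B}$ is not exact: for $I\in\I^{\mathcal{B}}(A)$, the kernel $\mathfrak{I}$ of the canonical map $\Cred(\mathcal{B})\to\Cred(\mathcal{B}|_{A/I})$ is $E$-invariant, because this map intertwines the conditional expectations, so $E(\mathfrak{I})\subseteq I\subseteq\mathfrak{I}$; on the other hand $B_g\cap\mathfrak{I}=B_gI$, so the graded part of $\mathfrak{I}$ is $\Cred(\mathcal{B}|_I)$, and $\mathfrak{I}$ is graded only if exactness holds at $I$ — which can fail when $G$ is not exact (this is exactly why the paper imposes exactness as a separate hypothesis in later results). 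Fortunately, you do not need that characterization: the sublattice property follows from what you have already proved. Every graded ideal equals some $\mathfrak{I}_J$, and $\mathfrak{I}_{J_1}\cap\mathfrak{I}_{J_2}=\overline{\mathfrak{I}_{J_1}\mathfrak{I}_{J_2}}=\mathfrak{I}_{J_1\cap J_2}$, since $B_gJ_1\cdot B_hJ_2\subseteq B_{gh}\overline{J_1J_2}$ and $\overline{J_1J_2}=J_1\cap J_2$, while $\overline{\mathfrak{I}_{J_1}+\mathfrak{I}_{J_2}}=\mathfrak{I}_{\overline{J_1+J_2}}$. Hence intersections and closed sums of graded ideals are graded and correspond to the lattice operations on $\I^{\mathcal{B}}(A)$; with this replacement your proof is complete.
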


\begin{proof}
  This follows from
  \cite{Kwasniewski-Szymanski:Pure_infinite}*{Proposition~3.2 and
    Corollary~3.3}.
\end{proof}

\begin{corollary}
  \label{cor:detection_in_reduced_cross}
  The unit fibre \(A=B_e\)
  detects ideals in \(\Cred(\mathcal{B})\)
  if and only if each non-zero ideal in~\(\Cred(\mathcal{B})\)
  contains a non-zero graded ideal.
\end{corollary}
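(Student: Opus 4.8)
The plan is to read off both implications directly from Proposition~\ref{pro:gauge-invariance_vs_separation}, which already records everything we need about the graded ideals of \(B\defeq\Cred(\mathcal{B})\); beyond this, no real work should be needed. Throughout, I use the lattice isomorphism \(\I^{\widehat{G}}(B)\congto\I^{\mathcal{B}}(A)\), \(J_0\mapsto J_0\cap A\), with inverse \(I\mapsto\Cred(\mathcal{B}|_I)\).

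For sufficiency, I would argue as follows. Suppose every non-zero ideal of~\(B\) contains a non-zero graded ideal, and let \(0\neq J\idealin B\). Choose a non-zero graded ideal \(J_0\subseteq J\). Since the map \(J_0\mapsto J_0\cap A\) is a lattice isomorphism onto \(\I^{\mathcal{B}}(A)\) and hence injective with \(0\mapsto0\), we get \(J_0\cap A\neq0\). (Alternatively, by gradedness there is \(0\neq b\in B_g\cap J_0\) for some \(g\in G\), and then \(0\neq b^*b\in A\cap J_0\).) Hence \(J\cap A\supseteq J_0\cap A\neq0\), so \(A\) detects ideals in~\(B\).

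For necessity, suppose \(A\) detects ideals in~\(B\) and let \(0\neq J\idealin B\). Then \(I\defeq J\cap A\neq0\), and \(I\in\I^{\mathcal{B}}(A)\) by Proposition~\ref{pro:gauge-invariance_vs_separation}. I would then show that the associated graded ideal \(\Cred(\mathcal{B}|_I)\subseteq B\) is non-zero and contained in~\(J\). It is non-zero because the composite \(\I^{\mathcal{B}}(A)\to\I^{\widehat{G}}(B)\to\I^{\mathcal{B}}(A)\) is the identity, so \(\Cred(\mathcal{B}|_I)\cap A=I\neq0\). It is contained in~\(J\) because, viewed inside~\(B\), \(\Cred(\mathcal{B}|_I)\) is the closed linear span of \(\bigcup_{g\in G}B_gI\), and each \(B_gI\subseteq B_g\cdot J\subseteq J\) since~\(J\) is an ideal; closedness of~\(J\) finishes this. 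Thus~\(J\) contains the non-zero graded ideal \(\Cred(\mathcal{B}|_I)\), as wanted.

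The only delicate point is the identification invoked in the last step: one must know that the graded ideal \(\Cred(\mathcal{B}|_I)\) sits inside \(\Cred(\mathcal{B})\) precisely as the closure of \(\bigoplus_{g\in G}B_gI\), rather than merely as an abstract reduced completion of the Fell subbundle \(\mathcal{B}|_I\) in its own reduced norm. This is exactly what the final assertion of Proposition~\ref{pro:gauge-invariance_vs_separation} supplies (via \cite{Kwasniewski-Szymanski:Pure_infinite}), so once it is quoted the argument closes immediately.
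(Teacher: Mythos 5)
Your argument is correct and follows exactly the route the paper intends: the corollary is stated without a separate proof precisely because it is read off from the lattice isomorphism \(\I^{\widehat{G}}(\Cred(\mathcal{B}))\cong\I^{\mathcal{B}}(A)\) of Proposition~\ref{pro:gauge-invariance_vs_separation}, which is what you do in both directions (and your flagged ``delicate point'' about \(\Cred(\mathcal{B}|_I)\) sitting inside \(\Cred(\mathcal{B})\) as the closure of \(\bigoplus_g B_gI\) is indeed covered by the final assertion of that proposition and the cited results of Kwa\'sniewski--Szyma\'nski). Nothing further is needed.
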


\begin{corollary}
  \label{cor:separation_in_reduced_cross}
  The unit fibre \(A=B_e\)
  separates ideals in~\(\Cred(\mathcal{B})\)
  if and only if each ideal in~\(\Cred(\mathcal{B})\) is graded.
\end{corollary}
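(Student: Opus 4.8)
The plan is to read this off directly from Proposition~\ref{pro:gauge-invariance_vs_separation}. By definition, \(A\) separates ideals in \(\Cred(\mathcal{B})\) precisely when the map \(I\mapsto I\cap A\) from \(\I(\Cred(\mathcal{B}))\) to \(\I(A)\) is injective. Proposition~\ref{pro:gauge-invariance_vs_separation} already tells us that this map is injective (indeed a lattice isomorphism onto \(\I^{\mathcal{B}}(A)\)) \emph{when restricted to the sublattice \(\I^{\widehat{G}}(\Cred(\mathcal{B}))\) of graded ideals}, with inverse \(I\mapsto \Cred(\mathcal{B}|_I)\). So the whole content of the corollary is to compare injectivity on all of \(\I(\Cred(\mathcal{B}))\) with injectivity on the graded sublattice, and the proof amounts to bookkeeping with these two maps.

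For the ``if'' direction I would argue as follows. Suppose every ideal of \(\Cred(\mathcal{B})\) is graded, i.e.\ \(\I^{\widehat{G}}(\Cred(\mathcal{B})) = \I(\Cred(\mathcal{B}))\). Then Proposition~\ref{pro:gauge-invariance_vs_separation} says that \(I\mapsto I\cap A\) is a lattice isomorphism on all of \(\I(\Cred(\mathcal{B}))\); in particular it is injective, which is exactly the statement that \(A\) separates ideals. For the ``only if'' direction, suppose \(A\) separates ideals and let \(J\idealin\Cred(\mathcal{B})\) be arbitrary. Put \(I\defeq J\cap A\), which lies in \(\I^{\mathcal{B}}(A)\) by Proposition~\ref{pro:gauge-invariance_vs_separation}. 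The same proposition produces the graded ideal \(\Cred(\mathcal{B}|_I)\) and asserts that \(I\mapsto \Cred(\mathcal{B}|_I)\) is inverse to \(J\mapsto J\cap A\) on graded ideals; in particular \(\Cred(\mathcal{B}|_I)\cap A = I = J\cap A\). Since \(A\) separates ideals and \(J\cap A = \Cred(\mathcal{B}|_I)\cap A\), we conclude \(J = \Cred(\mathcal{B}|_I)\), which is graded.

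I do not expect any real obstacle here: the corollary is essentially a restatement of Proposition~\ref{pro:gauge-invariance_vs_separation}, and the only point worth a second glance is the identity \(\Cred(\mathcal{B}|_I)\cap A = I\) used in the ``only if'' direction, which is precisely the assertion that the two displayed maps in Proposition~\ref{pro:gauge-invariance_vs_separation} are mutually inverse on graded ideals, hence already available. (Alternatively, one could route the ``only if'' direction through Remark~\ref{rem:separates_is_residual_detects} and Corollary~\ref{cor:detection_in_reduced_cross}, but the direct argument above is shorter.)
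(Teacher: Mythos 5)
Your argument is correct and is exactly the route the paper intends: the corollary is stated without proof as an immediate consequence of Proposition~\ref{pro:gauge-invariance_vs_separation}, and your two directions (injectivity of \(J\mapsto J\cap A\) when all ideals are graded, and \(\Cred(\mathcal{B}|_{J\cap A})\cap A = J\cap A\) forcing \(J=\Cred(\mathcal{B}|_{J\cap A})\) under separation) are precisely the bookkeeping that proof requires. Nothing is missing.
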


\begin{definition}
  A Fell bundle~\(\mathcal{B}\)
  is called \emph{exact} if the canonical maps
  \[
  \Cred(\mathcal{B}) \mathbin{/} \Cred(\mathcal{B}|_I) \to
  \Cred(\mathcal{B}|_{A/I})
  \]
  are isomorphisms for all \(I\in\I^{\mathcal{B}}(A)\)
  (see \cite{Abadie-Abadie:Ideals}*{Definition~3.14} or
  \cite{Kwasniewski-Szymanski:Pure_infinite}*{Definition~3.4}).  This
  holds automatically if the group~\(G\) is exact.
\end{definition}

\begin{proposition}
  \label{pro:separates_vs_exactness_B-residual_detect}
  The unit fibre \(A=B_e\) separates ideals
  in~\(\Cred(\mathcal{B})\) if and only if~\(\mathcal{B}\) is exact
  and~\(A/I\) detects ideals in \(\Cred(\mathcal{B}|_{A/I})\) for
  each \(I\in\I^{\mathcal{B}}(A)\) with \(I\neq A\).
\end{proposition}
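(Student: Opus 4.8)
The plan is to reduce everything to bookkeeping with graded ideals, using Proposition~\ref{pro:gauge-invariance_vs_separation} together with the two reformulations of ``separates'' already at our disposal: by Corollary~\ref{cor:separation_in_reduced_cross}, the unit fibre~\(A\) separates ideals in \(\Cred(\mathcal{B})\) if and only if every ideal of \(\Cred(\mathcal{B})\) is graded, and by Remark~\ref{rem:separates_is_residual_detects} this is the same as asking that for every ideal \(J\idealin\Cred(\mathcal{B})\) the image of~\(A\) detect ideals in \(\Cred(\mathcal{B})/J\). Throughout I will use that, for \(I\in\I^{\mathcal{B}}(A)\), the graded ideal of \(\Cred(\mathcal{B})\) meeting~\(A\) in~\(I\) is exactly \(\Cred(\mathcal{B}|_I)\), and that the canonical \Star{}homomorphism \(\Cred(\mathcal{B})\to\Cred(\mathcal{B}|_{A/I})\) restricts on the unit fibre to the quotient map \(A\to A/I\); hence its kernel~\(K\) always satisfies \(\Cred(\mathcal{B}|_I)\subseteq K\) and \(K\cap A=I\), and exactness at~\(I\) is precisely the assertion \(K=\Cred(\mathcal{B}|_I)\).

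For the forward implication I would assume that~\(A\) separates ideals. Exactness then drops out immediately: for any \(I\in\I^{\mathcal{B}}(A)\) the kernel~\(K\) above and the graded ideal \(\Cred(\mathcal{B}|_I)\) are two ideals of \(\Cred(\mathcal{B})\) with the same intersection~\(I\) with~\(A\), so separation forces \(K=\Cred(\mathcal{B}|_I)\). For the residual detection statement, I would fix \(I\in\I^{\mathcal{B}}(A)\) with \(I\neq A\) and apply the residual reformulation of separation (Remark~\ref{rem:separates_is_residual_detects}) to \(J\defeq\Cred(\mathcal{B}|_I)\): the image of~\(A\) in \(\Cred(\mathcal{B})/J\) is \(A/I\), and via the exactness isomorphism \(\Cred(\mathcal{B})/J\congto\Cred(\mathcal{B}|_{A/I})\) this image becomes the unit fibre of \(\mathcal{B}|_{A/I}\); hence \(A/I\) detects ideals in \(\Cred(\mathcal{B}|_{A/I})\).

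For the converse I would assume \(\mathcal{B}\) exact and the residual detection hypothesis, and show, using Corollary~\ref{cor:separation_in_reduced_cross}, that an arbitrary ideal \(J\idealin\Cred(\mathcal{B})\) is graded. Put \(I\defeq J\cap A\in\I^{\mathcal{B}}(A)\). If \(I=A\) then \(J\supseteq\Cred(\mathcal{B}|_A)=\Cred(\mathcal{B})\) and we are done. If \(I\neq A\), I would first check the inclusion \(\Cred(\mathcal{B}|_I)\subseteq J\), which follows from \(B_gI\subseteq\Cred(\mathcal{B})J\subseteq J\) for all \(g\in G\); then I would pass to the quotient \(\Cred(\mathcal{B})/\Cred(\mathcal{B}|_I)\), which by exactness is \(\Cred(\mathcal{B}|_{A/I})\). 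The image \(\bar J\) of~\(J\) there meets the unit fibre \(A/I\) trivially, because an element of~\(A\) landing in~\(\bar J\) lies in \(J+\Cred(\mathcal{B}|_I)=J\), hence in \(J\cap A=I\), hence maps to~\(0\) in \(A/I\). The detection hypothesis at~\(I\) then gives \(\bar J=0\), i.e.\ \(J=\Cred(\mathcal{B}|_I)\) is graded.

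The arguments are essentially diagram chases with ideals once Proposition~\ref{pro:gauge-invariance_vs_separation} is in hand, so I do not anticipate a serious obstacle; the two small points each needing a line are that the canonical restriction maps act on the unit fibre as ordinary quotient maps and that \(\Cred(\mathcal{B}|_I)\subseteq J\) whenever \(I=J\cap A\). The conceptual core, worth isolating as a sentence in the write-up, is the observation that exactness at~\(I\) simply says that two ideals of \(\Cred(\mathcal{B})\) with the same trace~\(I\) on~\(A\) coincide — which is exactly what separation delivers — combined with the residual reformulation of separation from Remark~\ref{rem:separates_is_residual_detects}.
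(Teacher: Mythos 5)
Your argument is correct. The paper itself disposes of this proposition in one line, by combining Corollary~\ref{cor:separation_in_reduced_cross} with a citation to Theorem~3.12 of \cite{Kwasniewski-Szymanski:Pure_infinite}, so the actual work is outsourced; what you have written is a self-contained replacement for that citation, built only from ingredients already in the paper: the graded-ideal lattice isomorphism of Proposition~\ref{pro:gauge-invariance_vs_separation}, the reformulation of separation as ``every ideal is graded'' (Corollary~\ref{cor:separation_in_reduced_cross}), and the residual-detection reformulation of Remark~\ref{rem:separates_is_residual_detects}. The key steps check out: the kernel \(K\) of the canonical map \(\Cred(\mathcal{B})\to\Cred(\mathcal{B}|_{A/I})\) (whose existence the paper's definition of exactness already presupposes) satisfies \(\Cred(\mathcal{B}|_I)\subseteq K\) and \(K\cap A=I\), the latter because the unit fibre \(A/I\) embeds isometrically into \(\Cred(\mathcal{B}|_{A/I})\) via~\eqref{eq:Fell_minimal_norm}; so separation forces \(K=\Cred(\mathcal{B}|_I)\), which is exactness, and then applying residual detection at \(J=\Cred(\mathcal{B}|_I)\) gives the forward direction. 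Conversely, for an arbitrary ideal \(J\) with trace \(I=J\cap A\in\I^{\mathcal{B}}(A)\), your inclusion \(\Cred(\mathcal{B}|_I)\subseteq J\) and the computation \(\bar J\cap(A/I)=0\) in \(\Cred(\mathcal{B})/\Cred(\mathcal{B}|_I)\cong\Cred(\mathcal{B}|_{A/I})\) correctly yield \(J=\Cred(\mathcal{B}|_I)\), hence gradedness; the case \(I=A\) is handled separately as it must be, since the detection hypothesis excludes it. What your route buys is transparency and independence from the external reference; what the paper's route buys is brevity, since the cited theorem (which in substance contains the same bookkeeping) is proved elsewhere. If you write this up, the only points worth a displayed sentence each are exactly the two you flagged: that the canonical maps restrict to the quotient maps on unit fibres (so \(K\cap A=I\)), and that \(I\subseteq J\) forces \(\Cred(\mathcal{B}|_I)\subseteq J\).
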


\begin{proof}
  Use Corollary~\ref{cor:separation_in_reduced_cross} and
  \cite{Kwasniewski-Szymanski:Pure_infinite}*{Theorem~3.12}.
\end{proof}

The Hilbert \(A\)\nb-bimodules~\(B_g\) in a Fell bundle induce
partial homeomorphisms~\(\widehat{B_g}\) of~\(\widehat{A}\) as
above.  The range of~\(\widehat{B}_g\) for \(g\in G\) is the open
subset of~\(\widehat{A}\) corresponding to the ideal \(D_g\defeq
{}_A\braket{B_g}{B_g} = B_g B_{g^{-1}}\).

\begin{lemma}[\cite{Abadie-Abadie:Ideals}*{Proposition~2.2}]
  \label{lem:partial_action_on_irreps}
  The family \(\widehat{\mathcal{B}}\defeq (\widehat{B_g})_{g\in G}\)
  forms a partial action of~\(G\)
  on the space~\(\widehat{A}\),
  that is,
  \(\widehat{B_g}\colon \widehat{D}_{g^{-1}}\congto \widehat{D_g}\)
  are homeomorphisms between open subsets of~\(\widehat{A}\)
  such that \(\widehat{B_e} = \id_{\widehat{A}}\)
  and \(\widehat{B_{g h}}\)
  extends \(\widehat{B_g}\circ \widehat{B_h}\) for \(g,h \in G\).
\end{lemma}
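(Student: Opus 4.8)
The plan is to recognise this as a bundle-theoretic repackaging of the bicategory structure on Hilbert bimodules recalled above, so that the only genuine computation is the identification of $\overline{B_g B_h}$ with a restriction of~$B_{gh}$.

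\textbf{Fibres of $\widehat{\mathcal{B}}$.} First I would read off domains and ranges. Each~$B_g$ is a Hilbert \(A\)\nb-bimodule with \(\braket{B_g}{B_g}_A = B_g^* B_g = B_{g^{-1}} B_g = D_{g^{-1}}\) and \({}_A\braket{B_g}{B_g} = B_g B_{g^{-1}} = D_g\); hence, by the very definition of the dual partial homeomorphism, \(\widehat{B_g}\) is a homeomorphism between the open subsets \(\widehat{D_{g^{-1}}}\) and \(\widehat{D_g}\) of~\(\widehat{A}\).  Since \(B_e = A\) is the identity Hilbert \(A\)\nb-bimodule, \(\widehat{B_e} = \id_{\widehat{A}}\).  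As \(B_{g^{-1}} = B_g^*\), we get \(\widehat{B_{g^{-1}}} = \widehat{B_g}^{\,-1}\), so once the extension law below is established the remaining partial-action axioms follow by the usual bookkeeping.

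\textbf{Identifying $\overline{B_g B_h}$ as a restriction of $B_{gh}$.} The Fell-bundle product restricts to a map \(m\colon B_g\otimes_A B_h\to B_{gh}\), \(\xi\otimes\eta\mapsto \xi\eta\), which is \(A\)\nb-bilinear by associativity and preserves the right inner products, since for \(\xi_i\in B_g\), \(\eta_i\in B_h\)
\[
\braket{\xi_1\otimes\eta_1}{\xi_2\otimes\eta_2}_A
= \braket{\eta_1}{\xi_1^*\xi_2\eta_2}_A
= (\xi_1\eta_1)^*(\xi_2\eta_2)
= \braket{\xi_1\eta_1}{\xi_2\eta_2}_A .
\]
Thus \(m\) is an isomorphism of Hilbert \(A\)\nb-bimodules onto the closed sub-bimodule \(\overline{B_g B_h}\subseteq B_{gh}\).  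I then put \(J\defeq \braket{\overline{B_g B_h}}{\overline{B_g B_h}}_A = \overline{B_{h^{-1}} D_{g^{-1}} B_h}\), an ideal of~\(A\) contained in \(D_{(gh)^{-1}} = B_{(gh)^{-1}} B_{gh}\) because \(B_{h^{-1}} B_{g^{-1}} B_g B_h \subseteq B_{(gh)^{-1}} B_{gh}\), and claim \(\overline{B_g B_h} = B_{gh}\cdot J\).  The inclusion ``\(\subseteq\)'' is the factorisation \(\overline{B_g B_h} = \overline{(B_g B_h)\cdot J}\); conversely \(B_{gh} J = \overline{B_{gh} B_{h^{-1}} D_{g^{-1}} B_h}\subseteq \overline{B_g D_{g^{-1}} B_h} = \overline{B_g B_h}\), using \(B_{gh} B_{h^{-1}}\subseteq B_g\) and \(\overline{B_g D_{g^{-1}}} = B_g\) (fullness of~\(B_g\) as a right Hilbert \(D_{g^{-1}}\)-module).

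\textbf{Conclusion.} By~\eqref{eq:restricted_mult}, \(B_{gh}\cdot J \cong B_{gh}\otimes_A J\) as Hilbert \(A\)\nb-bimodules; and \(J\), viewed as a Hilbert \(A\)\nb-bimodule, is the restriction of the identity bimodule~\(A\) to the (trivially invariant) ideal~\(J\), so \(\widehat{J} = \widehat{A}|_{\widehat{J}} = \id_{\widehat{J}}\) by Remark~\ref{rem:restrictions_dual}.  Hence the composition law for dual partial homeomorphisms gives \(\widehat{B_{gh}\cdot J} = \widehat{B_{gh}}\circ \id_{\widehat{J}} = \widehat{B_{gh}}|_{\widehat{J}}\), where we also use \(J\subseteq D_{(gh)^{-1}}\), the domain of~\(\widehat{B_{gh}}\).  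On the other hand, the isomorphism~\(m\) together with functoriality of \(X\mapsto\widehat{X}\) under interior tensor products yields \(\widehat{B_{gh}\cdot J} = \widehat{\overline{B_g B_h}} = \widehat{B_g\otimes_A B_h} = \widehat{B_g}\circ \widehat{B_h}\).  Comparing the two expressions shows that \(\widehat{B_{gh}}\) restricted to~\(\widehat{J}\) equals \(\widehat{B_g}\circ\widehat{B_h}\), i.e.\ \(\widehat{B_{gh}}\) extends \(\widehat{B_g}\circ\widehat{B_h}\), which is the last partial-action axiom.  The one step that is not pure bicategory formalism is the equality \(\overline{B_g B_h} = B_{gh}\cdot J\) — more precisely the inclusion \(B_{gh} J\subseteq \overline{B_g B_h}\) — where the structure of the Fell-bundle product enters; I expect this to be the main (though still short) point of the argument.
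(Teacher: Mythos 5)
Your argument is correct, and in fact it supplies a proof that the paper itself omits: Lemma~\ref{lem:partial_action_on_irreps} is only quoted there from \cite{Abadie-Abadie:Ideals}*{Proposition~2.2}, so there is no in-paper proof to compare with.  Your route is the natural one given the bicategorical setup of Section~\ref{sec:Hilmi_crossed}: the multiplication map \(B_g\otimes_A B_h\to B_{gh}\) preserves the inner products (your displayed computation is right, and the left inner products are preserved by the symmetric computation \({}_A\braket{\xi_1\,{}_A\braket{\eta_1}{\eta_2}}{\xi_2}=(\xi_1\eta_1)(\xi_2\eta_2)^*\)), so it is an isomorphism onto the closed span \(\overline{B_gB_h}\subseteq B_{gh}\); the identity \(\overline{B_gB_h}=B_{gh}\cdot J\) with \(J\defeq\braket{\overline{B_gB_h}}{\overline{B_gB_h}}_A\) is exactly the point where the Fell-bundle structure enters, and both inclusions are justified correctly (the factorisation \(Y=\overline{Y\braket{Y}{Y}}\) via an approximate unit, and \(B_{gh}B_{h^{-1}}\subseteq B_g\), \(\overline{B_gD_{g^{-1}}}=B_g\)); then \eqref{eq:restricted_mult}, Remark~\ref{rem:restrictions_dual} and the functoriality \(\widehat{X\otimes Y}=\widehat X\circ\widehat Y\) give \(\widehat{B_g}\circ\widehat{B_h}=\widehat{B_{gh}}|_{\widehat J}\), which is the required extension property, the cases \(\widehat{B_e}=\id\) and the identification of domains and ranges being immediate from the definitions.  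The only steps left implicit — that isomorphic Hilbert bimodules induce the same partial homeomorphism, and that \(J\subseteq D_{(gh)^{-1}}\) guarantees \(\widehat J\) lies in the domain of \(\widehat{B_{gh}}\) — are routine and you state the latter explicitly, so I see no gap.
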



\begin{remark}
  \label{rem:restrictions_of_duals_to_Fell_bundles}
  Remark~\ref{rem:restrictions_dual} generalises to Fell bundles in
  the obvious way: the open subset~\(\widehat{I}\)
  of~\(\widehat{A}\)
  is invariant for the partial action
  \(\widehat{\mathcal{B}} = (\widehat{B_g})_{g\in G}\)
  if and only if~\(I\)
  is \(\mathcal{B}\)\nb-invariant,
  and the partial actions \(\widehat{\mathcal{B}|_I}\)
  and \(\widehat{\mathcal{B}|_{A/I}}\)
  dual to the restrictions of~\(\mathcal{B}\)
  to \(I\)
  and~\(A/I\)
  agree with the restrictions
  \((\widehat{B_g}|_{\widehat{I}})_{g\in G}\)
  and \((\widehat{B_g}|_{\widehat{A}\setminus \widehat{I}})_{g\in G}\)
  of the partial action
  \(\widehat{\mathcal{B}} = (\widehat{B_g})_{g\in G}\)
  to \(\widehat{I}\) and~\(\widehat{A/I}\), respectively.
\end{remark}

\section{Non-triviality conditions for Hilbert bimodules and Fell bundles}
\label{sec:nontriviality}

We are going to generalise various non-triviality conditions from
automorphisms and group actions to Hilbert bimodules and to Fell
bundles over discrete groups.

It will become useful in a future project to formulate the following
definition and lemma for arbitrary bimodules, without inner
products.

\begin{definition}
  \label{def:Kishimoto_Hilm}
  Let~\(A\) be a \(\Cst\)\nb-algebra and~\(X\) an \(A\)\nb-bimodule.
  Let \(\Kish(X)\subseteq X\) be the set of all \(x\in X\) with
  \begin{equation}
    \label{eq:inf_equation}
    \inf {}\{\norm{a x a} \mid a\in D^+,\ \norm{a}=1\}=0
    \qquad\text{for all } D\in \Her(A).
  \end{equation}
  We say that \(x\in X\)
  \emph{satisfies Kishimoto's condition} if \(x\in\Kish(X)\),
  and that~\(X\)
  \emph{satisfies Kishimoto's condition} if \(\Kish(X)=X\).
\end{definition}

\begin{lemma}
  \label{lem:kishimoto_subspace}
  For any \(A\)\nb-bimodule~\(X\), the subset~\(\Kish(X)\) is a
  closed linear subspace of~\(X\).
\end{lemma}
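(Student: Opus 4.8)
The plan is to verify the two closure properties separately: first that $\Kish(X)$ is closed under scalar multiplication and addition, and then that it is topologically closed in the norm of~$X$.

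Closure under scalars is immediate: for $\lambda\in\C$ and $x\in\Kish(X)$ we have $\norm{a(\lambda x)a}=\abs{\lambda}\norm{axa}$, so taking infima over $a\in D^+$ with $\norm{a}=1$ shows the infimum is still zero (trivially so if $\lambda=0$). For additivity, suppose $x,y\in\Kish(X)$ and fix $D\in\Her(A)$ and $\varepsilon>0$. First apply the hypothesis for~$x$ to obtain $a\in D^+$ with $\norm{a}=1$ and $\norm{axa}<\varepsilon/2$. The key trick is then to shrink the hereditary subalgebra: using Lemma~\ref{lem:technical} applied to $b\defeq a$, pick $d\in\overline{aAa}{}^+$ with $\norm{d}=1$ and a non-zero hereditary subalgebra $D_0\in\Her(\overline{aAa})\subseteq\Her(D)$ consisting of elements $z$ with $dz=z=zd$ (so in particular $az=z=za$, since~$a$ acts as a unit on $\overline{aAa}\supseteq D_0$ once we note $d\in\overline{aAa}$ forces $az\to z$; more directly one uses that $d=f_\varepsilon(a)$ and $af_\varepsilon(a)=f_\varepsilon(a)$ up to small error, but the clean statement is that elements of $D_0$ are ``nearly fixed'' by~$a$). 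Now apply the hypothesis for~$y$ to the smaller algebra $D_0$: there is $c\in D_0^+$ with $\norm{c}=1$ and $\norm{cyc}<\varepsilon/2$. Since $c\in D_0$ we have $ac$ close to~$c$, and one estimates
\[
\norm{c(x+y)c}\le\norm{cxc}+\norm{cyc}\le\norm{a x a}\cdot\norm{\text{correction}}+\norm{cyc},
\]
where the point is that $cxc=c(axa)c$ up to an error controlled by $\norm{ac-c}<\varepsilon'$, so $\norm{cxc}\lesssim\norm{axa}+\varepsilon'<\varepsilon/2+\varepsilon'$. Choosing the parameter in Lemma~\ref{lem:technical} small enough makes this $<\varepsilon/2$, hence $\norm{c(x+y)c}<\varepsilon$ with $c\in D^+$, $\norm{c}=1$. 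As $\varepsilon$ and $D$ were arbitrary, $x+y\in\Kish(X)$.

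For norm-closedness, let $x_n\in\Kish(X)$ with $x_n\to x$ in~$X$. Fix $D\in\Her(A)$ and $\varepsilon>0$, and choose~$n$ with $\norm{x_n-x}<\varepsilon/2$. Pick $a\in D^+$, $\norm{a}=1$, with $\norm{ax_na}<\varepsilon/2$. Since the bimodule action satisfies $\norm{a y a}\le\norm{a}^2\norm{y}=\norm{y}$ for contractive~$a$ (this is part of the definition of an $A$\nb-bimodule with the relevant boundedness, as used implicitly throughout), we get
\[
\norm{axa}\le\norm{a(x-x_n)a}+\norm{ax_na}\le\norm{x-x_n}+\norm{ax_na}<\varepsilon.
\]
Thus the infimum over such~$a$ is zero, so $x\in\Kish(X)$.

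The main obstacle is the additivity argument, specifically handling the interaction between the two terms: one cannot simply find a single~$a$ that simultaneously makes $\norm{axa}$ and $\norm{aya}$ small, because the witnessing nets for~$x$ and~$y$ need not be compatible. The resolution---already used in the proof of Theorem~\ref{the:Kishimoto_spectrum}---is the ``nested hereditary subalgebra'' technique from Lemma~\ref{lem:technical}: first make $\norm{axa}$ small in~$D$, then work inside the smaller algebra $D_0\subseteq D$ on whose elements~$a$ acts almost as a unit, so that compressing by $c\in D_0$ automatically keeps the $x$\nb-term small (it is essentially $c(axa)c$) while giving a fresh degree of freedom to kill the $y$\nb-term. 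Getting the error bookkeeping right---tracking how $\norm{ac-c}$ propagates through $\norm{cxc}$---is the only slightly delicate point, but it is routine given Lemma~\ref{lem:technical}.
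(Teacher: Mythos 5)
Your proof is correct and follows essentially the same route as the paper: first make \(\norm{axa}\) small, then pass to the nested hereditary subalgebra \(D_0\) from Lemma~\ref{lem:technical} on which the first compressing element acts approximately as a unit, and finally kill the \(y\)\nb-term inside \(D_0\), with the paper's three-term triangle-inequality estimate matching your bookkeeping. (Only your parenthetical claim that the element acts as an \emph{exact} unit on \(D_0\) is false, but harmless, since the argument only needs \(\norm{ac-c}\) small, which is exactly what Lemma~\ref{lem:technical} provides.)
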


\begin{proof}
  The subset~\(\Kish(X)\)
  is clearly closed under limits and under multiplication by scalars.
  We show that it is closed under addition.  Let \(x,y\in \Kish(X)\).
  We want to prove that \(x+y\in\Kish(X)\).
  We may assume \(\norm{x}=1\).
  Let \(D\in\Her(A)\)
  and \(\varepsilon>0\).
  Since \(x\in\Kish(X)\),
  there is \(b\in D^+\)
  with \(\norm{b}=1\)
  and \(\norm{b x b}<\varepsilon\).
  Lemma~\ref{lem:technical} gives \(d\in (b D b)^+\)
  such that \(D_0 \defeq \{a\in D\mid d a = a = a d\}\)
  is a non-zero hereditary subalgebra of~\(D\)
  and hence of~\(A\),
  and \(\norm{a-b a} = \norm{a-a b}<\varepsilon \norm{a}\)
  for all \(a\in D_0^+\).
  Therefore, if \(a\in D_0^+\) and \(\norm{a}\le1\), then
  \begin{align*}
    \norm{a x a}
    &\le \norm{(a-a b) x a} + \norm{a b x (a-b a)}
    + \norm{a b x b a} < 3\varepsilon.  
  \end{align*}
  Since \(y\in\Kish(X)\)
  and \(D_0\in\Her(A)\),
  there is \(a\in D_0^+ \subseteq D^+\)
  with \(\norm{a}=1\)
  and \(\norm{a y a}<\varepsilon\).
  Hence \(\norm{a(x+y)a}<4\varepsilon\).
  This proves that \(x+y\in \Kish(X)\).
\end{proof}

From now on, \(X\) will be a Hilbert bimodule over a
\(\Cst\)\nb-algebra~\(A\).  It may be weakly completed to a Hilbert
bimodule~\(X^{**}\) over the bidual
\(\textup{W}^*\)\nb-algebra~\(A^{**}\),
see~\cite{Buss-Exel-Meyer:Reduced}.

\begin{definition}
  \label{def:topological_condition_Hilm}
  A Hilbert \(A\)\nb-bimodule~\(X\) is
  \emph{topologically non-trivial} if the subset
  \(\{[\pi]\in \widehat{A}\mid \widehat{X}([\pi])=[\pi]\}\)
  in~\(\widehat{A}\)
  has empty interior.  Equivalently, any open subset of
  \(\widehat{\braket{X}{X}_A} \subseteq \widehat{A}\)
  contains~\([\pi]\) with \(\widehat{X}([\pi]) \neq [\pi]\).

  A Hilbert \(A\)\nb-bimodule~\(X\)
  is \emph{inner} if it is isomorphic to~\(A\)
  as a Hilbert bimodule.  It is \emph{universally weakly inner} if
  \(X^{**} \cong A^{**}\)
  as a Hilbert \(A^{**}\)\nb-bimodule.
  It is \emph{partly inner} if there is \(0\neq I\in\I(A)\)
  so that \(X\cdot I\)
  is isomorphic to~\(I\) as a Hilbert \(A,I\)\nb-bimodule.
  It is \emph{partly universally weakly inner} if there is
  \(I\in\I(A)\)
  so that \(X^{**}\cdot I^{**} \cong I^{**}\) as a Hilbert
  \(A^{**},I^{**}\)\nb-bimodule.
  In both cases, \(I\) is \(X\)\nb-invariant automatically.
  We call~\(X\) \emph{purely outer} or \emph{purely universally
    weakly outer} if~\(X\) is not partly inner or not partly
  universally weakly inner, respectively.
\end{definition}


\begin{lemma}
  \label{lem:outer_Hilbert_bimodule}
  An automorphism \(\alpha\in\Aut(A)\)
  satisfies Kishimoto's condition if and only if~\(A_\alpha\)
  does; and it is topologically non-trivial, inner, partly inner,
  or partly universally weakly inner, respectively, if and only
  if~\(A_\alpha\) is so.
\end{lemma}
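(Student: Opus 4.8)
The plan is to check each property separately, using the explicit description of the Hilbert bimodule $A_\alpha$ from Definition~\ref{def:Hilbert_bimodule_from_isomorphism} and the calculation $\widehat{A_\alpha}=\widehat\alpha^{-1}$ from Example~\ref{exa:widehat_partial_auto}. The underlying vector space of $A_\alpha$ is $A$ itself, with the obvious left module structure and right action twisted by $\alpha$: concretely $a\cdot_{\mathrm{right}} b = a\alpha(b)$ and $\braket{a_1}{a_2}_A=\alpha^{-1}(a_1^*a_2)$, while $_A\braket{a_1}{a_2}=a_1a_2^*$. The key observation is that for $a\in A$ viewed as an element of $X\defeq A_\alpha$, the ``sandwich'' $aXa$ in the sense of Definition~\ref{def:Kishimoto_Hilm} involves the left action of $A$ on both sides, but one side gets converted through the bimodule relation; explicitly, for $x\in A=X$ and $a\in A$, the bimodule element $a\cdot x\cdot a$ equals $a x \alpha(a)$ as an element of $A$ under the left-module identification. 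Thus $\Kish(A_\alpha)$ is characterised by $\inf\{\norm{axa}_{A_\alpha}\mid a\in D^+,\norm a=1\}=0$ for all $D$, and since the left Hilbert module norm on $A_\alpha$ is just the norm of $A$ (because $_A\braket{\cdot}{\cdot}$ is the ordinary one), this infimum equals $\inf\{\norm{ax\alpha(a)}\mid a\in D^+,\norm a=1\}$. Comparing with Definition~\ref{def:Kishimoto}, this is exactly Kishimoto's condition for $\alpha$, proving the first equivalence.

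For topological non-triviality, I would simply invoke $\widehat{A_\alpha}=\widehat\alpha^{-1}$: the fixed-point set $\{[\pi]\mid\widehat{A_\alpha}([\pi])=[\pi]\}$ equals $\{[\pi]\mid\widehat\alpha([\pi])=[\pi]\}$ since a point is fixed by a homeomorphism iff it is fixed by its inverse, and $A_\alpha$ is full over $A$ so the domain issue in Definition~\ref{def:topological_condition_Hilm} is vacuous. Hence $A_\alpha$ is topologically non-trivial iff $\alpha$ is topologically non-trivial in the sense of Definition~\ref{def:properly_outer}.

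For the inner-type conditions, the point is that an isomorphism of Hilbert $A$-bimodules $A_\alpha\cong A$ (the identity bimodule) is the same datum as a unitary multiplier $u\in\UM(A)$ with $\alpha=\Ad_u$: given such $u$, the map $A_\alpha\to A$, $x\mapsto xu^*$ (or $ux$, depending on the side convention) intertwines the module structures; conversely any bimodule isomorphism $T\colon A_\alpha\to A$ is given by right multiplication by a unitary multiplier $u=T(1)$, and compatibility with the left action forces $\alpha=\Ad_u$. Thus $A_\alpha$ is inner iff $\alpha$ is inner. Passing to the bidual and using that $X^{**}=(A_\alpha)^{**}=(A^{**})_{\alpha^{**}}$ gives the universally weakly inner case verbatim with $u\in\UM(A^{**})$. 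For the ``partly'' versions, note that an ideal $I\idealin A$ is $A_\alpha$-invariant iff it is $\alpha$-invariant (as already remarked in Definition~\ref{def:invariant_ideal} and matching Definition~\ref{def:properly_outer}), and the restriction $X\cdot I$ is canonically $(I)_{\alpha|_I}$, so $X\cdot I\cong I$ as a Hilbert $A,I$-bimodule iff $\alpha|_I=\Ad_u$ for a unitary $u\in\UM(I)$; similarly for the bidual. Taking complements (``not partly inner'', ``not partly universally weakly inner'') yields purely outer and purely universally weakly outer.

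**The main obstacle** is purely bookkeeping: getting the side conventions straight so that the twisting by $\alpha$ lands on the correct factor when passing between the bimodule element $a\cdot x\cdot a$ and its image $axa$ or $ax\alpha(a)$ in $A$, and likewise pinning down whether a bimodule isomorphism $A_\alpha\to A$ corresponds to $\Ad_u$ or $\Ad_{u^*}$. None of this is conceptually hard, but one must be careful that the asymmetry in Definition~\ref{def:Kishimoto_Hilm} (which uses $axa$ with the \emph{same} $a$ on both sides, acting via the bimodule structure) reproduces precisely the asymmetry in Definition~\ref{def:Kishimoto} (which has $ab\alpha(a)$). A secondary minor point is the fullness of $A_\alpha$: since $A_\alpha$ is an equivalence bimodule, $_A\braket{X}{X}=\braket{X}{X}_A=A$, so there are no degenerate ideals to worry about and the ``equivalently'' clause in Definition~\ref{def:topological_condition_Hilm} reduces to the plain statement about the fixed-point set having empty interior.
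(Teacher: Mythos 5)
Your proposal is correct and follows essentially the same route as the paper: the Kishimoto equivalence by the direct identification \(a\cdot x\cdot a = a x\alpha(a)\), topological non-triviality via \(\widehat{A_\alpha}=\widehat{\alpha}^{-1}\) from Example~\ref{exa:widehat_partial_auto}, and the inner-type conditions by identifying a bimodule isomorphism \(I_{\alpha|_I}\cong I\) with right multiplication by a unitary \(u\in\UM(I)\) implementing \(\alpha|_I=\Ad_{u^*}\), with the bidual handling the universally weakly inner cases. Only a cosmetic caveat: rather than writing \(u=T(1)\) (which presumes a unit), justify the multiplier \(u\) as the paper does, by noting that a bimodule isomorphism is in particular a unitary of left Hilbert modules and hence right multiplication by a unitary in \(\Bound(I)=\Mult(I)\).
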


\begin{proof}
  The assertion for Kishimoto's condition is trivial.  The assertion
  for topological non-triviality follows from
  Example~\ref{exa:widehat_partial_auto}.  Let
  \(0\neq I\in \I^\alpha(A)\).
  Then \(I A_\alpha = A_\alpha I = I_{\alpha|_I}\).
  An isomorphism of Hilbert bimodules \(I \congto I_{\alpha|_I}\)
  is, in particular, a unitary operator of left Hilbert
  \(I\)\nb-modules.
  So it is of the form \(x\mapsto x\cdot u\)
  for a unitary~\(u\) in \(\Bound(I) = \Mult(I)\).
  This map is an isomorphism of right Hilbert modules as well if and
  only if \(x\cdot u \cdot \alpha(y) = x \cdot y\cdot u\)
  for all \(x,y\in I\).
  Equivalently, \(\alpha|_I = \Ad_{u^*}\).
  Thus~\(\alpha\)
  is partly inner if and only if~\(A_\alpha\)
  is.  The case \(A=I\)
  shows that~\(\alpha\)
  is inner if and only if~\(A_\alpha\)
  is.  The Hilbert \(A^{**}\)\nb-bimodule~\((A_\alpha)^{**}\)
  associated to~\(A_\alpha\)
  is equal to \((A^{**})_{\alpha^{**}}\).
  Hence the same argument shows that~\(\alpha\) is (partly)
  universally weakly inner if and only if~\(A_\alpha\) is.
\end{proof}

All the properties of Hilbert bimodules defined above translate
naturally to Fell bundles by considering them pointwise (fibrewise).
Aperiodicity for Fell bundles was introduced in \cite{Kwasniewski-Szymanski:Pure_infinite}*{Definition~4.1}.
 Topological freeness, which is not a pointwise condition, was considered for systems dual to saturated Fell bundles  in \cite{Kwasniewski-Szymanski:Ore}*{Corollary 6.5}, and for  general Fell bundles in  \cite{Abadie-Abadie:Ideals}*{Section 3}.
We are not aware of any source that uses other conditions in the general context of Fell bundles.
\begin{definition}
  \label{def:aperiodic}
  Let \(\mathcal{B}=(B_g)_{g\in G}\) be a Fell bundle over a
  discrete group~\(G\).  Let \(A\defeq B_e\) be its unit fibre.  We
  call~\(\mathcal{B}\) \emph{pointwise outer}, \emph{pointwise
    purely outer} or \emph{pointwise purely universally weakly
    outer} if, for each \(g\in G\setminus\{e\}\), the Hilbert
  \(A\)\nb-bimodule~\(B_g\) is outer, purely outer or purely
  universally weakly outer, respectively.  We call~\(\mathcal{B}\)
  \emph{aperiodic} if~\(B_g\) satisfies Kishimoto's condition for
  all \(g\in G\setminus\{e\}\).  We
  call~\(\mathcal{B}\) \emph{pointwise topologically non-trivial}
  if~\(B_g\) is topologically non-trivial for all \(g\in
  G\setminus\{e\}\), that is, for each \(g\in G\setminus\{e\}\) the
  subset
  \[
  F_g \defeq \{[\pi]\in \widehat{A} \mid
  \widehat{B_g}([\pi])=[\pi]\}
  \]
  in~\(\widehat{A}\)
  has empty interior; here
  \(\widehat{\mathcal{B}}=(\widehat{B_g})_{g\in G}\)
  is the dual partial action, see
  Lemma~\ref{lem:partial_action_on_irreps}.  We call~\(\mathcal{B}\)
  \emph{topologically free} if~\(\widehat{\mathcal{B}}\)
  is topologically free, that is, for any
  \(g_1,\dotsc,g_n \in G\setminus\{e\}\)
  the union \(F_{g_1}\cup F_{g_2}\cup \dotsb \cup F_{g_n}\)
  has empty interior in~\(\widehat{A}\)
  (see~\cite{Lebedev:Topologically_free}).  This only depends on the
  partial \(G\)\nb-action
  on~\(\widehat{A}\) induced by the Fell bundle.
\end{definition}

\begin{remark}
  \label{rem:twisted_properties}
  Let \(\mathcal{B}^{\omega}=(B_g)_{g\in G}\)
  be a deformation of \(\mathcal{B}=(B_g)_{g\in G}\)
  by a \(2\)\nb-cocycle
  \(\omega\colon G\times G\to \T\),
  see Example~\ref{ex:twisted_fell_bundles}.  Then the fibres~\(B_g\),
  \(g\in G\),
  of the bundles \(\mathcal{B}\)
  and~\(\mathcal{B}^\omega\)
  coincide not only as Banach spaces, but also as Hilbert bimodules
  over \(A\defeq B_e\).
  Hence~\(\mathcal{B}\)
  has any of the properties in Definition~\ref{def:aperiodic} if and
  only if~\(\mathcal{B}^\omega\) has this property.
 \end{remark}

\begin{proposition}
  \label{pro:Fell_Kishimoto_to_detect}
  Let \(\mathcal{B}=(B_g)_{g\in G}\) be a Fell bundle over a
  discrete group~\(G\).  If~\(\mathcal{B}\) is aperiodic or
  topologically free, then \(A\defeq B_e\) detects ideals
  in~\(\Cred(\mathcal{B})\).
\end{proposition}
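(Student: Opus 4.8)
The plan is to show, under either hypothesis, that every non-zero closed ideal $J\idealin\Cred(\mathcal{B})$ has $J\cap A\neq0$; the crucial tool is the faithful conditional expectation $E\colon\Cred(\mathcal{B})\to A$. For topologically free~$\mathcal{B}$ this is essentially known from the description of the ideals of $\Cred(\mathcal{B})$ via the partial action $\widehat{\mathcal{B}}$ on~$\widehat{A}$, see \cite{Abadie-Abadie:Ideals}; it also follows from the averaging argument below, since topological freeness provides, for any finite $F\subseteq G\setminus\{e\}$ and any $\varepsilon>0$, a positive norm-one $a\in A$, coming from a point of $\widehat{A}$ not fixed by any $\widehat{B_g}$ with $g\in F$, such that $\norm{ab_ga}<\varepsilon$ for all $g\in F$ and all $b_g\in B_g$ of norm at most~$1$, just as in the proof of \cite{Archbold-Spielberg:Topologically_free}*{Theorem~1}. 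So I focus on the aperiodic case, which is the Fell bundle version of the Kishimoto--Olesen--Pedersen averaging argument. Let $0\neq J\idealin\Cred(\mathcal{B})$ and pick $0\neq x\in J^+$. Since~$E$ is faithful, $E(x)\neq0$, and after rescaling we may assume $\norm{E(x)}=1$; put $a_0\defeq E(x)\in A^+$. Fix a first, crude parameter $\varepsilon_1\in(0,\tfrac{1}{8})$ and a finitely supported $y=\sum_{g\in F}b_g$, $b_g\in B_g$, with $e\in F$ and $\norm{x-y}<\varepsilon_1$; this pins down~$F$, the number $n\defeq\abs{F\setminus\{e\}}$, an enumeration $F\setminus\{e\}=\{g_1,\dots,g_n\}$, and $M\defeq\max_{g\in F}\norm{b_g}$. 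Note that $y-E(y)=\sum_{i=1}^n b_{g_i}$.

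Next, with a second parameter $\varepsilon_2>0$, I build a decreasing chain $D^{(0)}\supseteq\dots\supseteq D^{(n)}$ of non-zero hereditary subalgebras of~$A$. Let $D^{(0)}\in\Her(\overline{a_0Aa_0})\subseteq\Her(A)$ come from Lemma~\ref{lem:technical} applied to $b\defeq a_0$ with parameter~$\varepsilon_2$, so that $\norm{a_0z}\ge(1-\varepsilon_2)\norm{z}$ for all $z\in D^{(0)}$. Given $D^{(i-1)}\in\Her(A)$: since $\mathcal{B}$ is aperiodic and $g_i\neq e$, we have $b_{g_i}\in\Kish(B_{g_i})$, so Kishimoto's condition~\eqref{eq:inf_equation} for the hereditary subalgebra~$D^{(i-1)}$ yields $\tilde a_i\in(D^{(i-1)})^+$ with $\norm{\tilde a_i}=1$ and $\norm{\tilde a_ib_{g_i}\tilde a_i}<\varepsilon_2$; then let $D^{(i)}\in\Her(D^{(i-1)})\subseteq\Her(A)$ come from Lemma~\ref{lem:technical} applied, inside the $\Cst$-algebra~$D^{(i-1)}$, to $b\defeq\tilde a_i$, so that $\norm{\tilde a_iz-z}<\varepsilon_2\norm{z}$ for all $z\in D^{(i)}$, and hence also $\norm{z\tilde a_i-z}<\varepsilon_2\norm{z}$ by taking adjoints. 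Inserting $z\approx\tilde a_iz$ and $z\approx z\tilde a_i$ into $zb_{g_i}z$ exactly as in the proof of Lemma~\ref{lem:kishimoto_subspace} gives $\norm{zb_{g_i}z}\le C\varepsilon_2$ for every $z\in D^{(i)}$ with $\norm{z}\le1$, where $C$ depends only on~$M$. Since the chain decreases, any $a\in(D^{(n)})^+$ with $\norm{a}=1$ satisfies $\norm{ab_{g_i}a}\le C\varepsilon_2$ for all $i=1,\dots,n$ and $\norm{a_0a}\ge1-\varepsilon_2$, whence $\norm{aa_0a}=\norm{a_0^{1/2}a}^2\ge\norm{a_0a}^2\ge(1-\varepsilon_2)^2$ (using $\norm{a_0}=1$).

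Now put $z\defeq axa\in J^+$. Then $E(z)=aE(x)a=aa_0a$, so $\norm{E(z)}\ge(1-\varepsilon_2)^2$, and, decomposing $x-E(x)=(x-y)+(y-E(y))+(E(y)-E(x))$,
\[
\norm{z-E(z)}=\norm{a\bigl(x-E(x)\bigr)a}
\le\norm{x-y}+\Bigl\|\sum_{i=1}^n ab_{g_i}a\Bigr\|+\norm{E(y-x)}
<2\varepsilon_1+nC\varepsilon_2.
\]
Set $\delta\defeq2\varepsilon_1+nC\varepsilon_2$ and choose~$\varepsilon_2$ so small that $nC\varepsilon_2<\tfrac{1}{8}$ and $(1-\varepsilon_2)^2>\tfrac{3}{4}$; then $\delta<\tfrac{3}{8}<\norm{E(z)}$. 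Consequently $(E(z)-\delta)_+$ is a non-zero element of~$A$, and since $z\in J^+$, $E(z)\in A^+$ and $\norm{z-E(z)}<\delta$, a standard lemma of R{\o}rdam provides a contraction $d\in\Cred(\mathcal{B})$ with $dzd^*=(E(z)-\delta)_+$; as~$J$ is an ideal, this element lies in~$J$. Hence $0\neq(E(z)-\delta)_+\in J\cap A$, so $A$ detects ideals in~$\Cred(\mathcal{B})$.

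The delicate point is the order of quantifiers in the averaging: the support set~$F$ and the number~$n$ must be fixed using the crude parameter~$\varepsilon_1$ before the fine parameter~$\varepsilon_2$ is chosen, so that the accumulated error~$\delta$ can be pushed below~$\norm{E(z)}$; one also has to check carefully that descending the chain of hereditary subalgebras from Lemma~\ref{lem:technical} preserves the earlier smallness bounds $\norm{zb_{g_j}z}\le C\varepsilon_2$ for $j<i$, which is why the two-sided approximations $z\approx\tilde a_jz\approx z\tilde a_j$ (rather than just a one-sided one) are extracted at each step. Granting this and the elementary R{\o}rdam-type fact invoked at the very end, the remainder is bookkeeping with the estimates above.
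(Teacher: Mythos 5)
Your argument is correct, but it takes a different route from the paper: the paper disposes of this proposition in one line, citing \cite{Kwasniewski-Szymanski:Pure_infinite}*{Corollary~4.3} for the aperiodic case and \cite{Abadie-Abadie:Ideals}*{Corollary~3.4} for the topologically free case, whereas you reconstruct the aperiodic half from scratch by the Kishimoto--Olesen--Pedersen averaging scheme (and, like the paper, delegate the topologically free half to the Abadie--Abadie ideal description, with a plausible but only sketched averaging alternative). Your direct proof is essentially the content of the cited Lemma~4.2/Corollary~4.3 of Kwa\'sniewski--Szyma\'nski, and it parallels the estimate chain the paper itself runs in the proof of Theorem~\ref{the:general_separation_Fell_bundles}: fix the finite support~$F$, $n$ and $M$ (hence the constant $C=2M+1$) with the crude parameter~$\varepsilon_1$ \emph{before} choosing~$\varepsilon_2$, build the decreasing chain of hereditary subalgebras via Lemma~\ref{lem:technical} so that the two-sided approximate invariance under each~$\tilde a_i$ propagates the bounds $\norm{ab_{g_i}a}\le C\varepsilon_2$ down the chain while $\norm{aa_0a}\ge(1-\varepsilon_2)^2$ survives from the top level, and finish with the Kirchberg--R{\o}rdam cut-down lemma (\cite{Kirchberg-Rordam:Infinite_absorbing}*{Lemma~2.2}) to land a non-zero positive element of $A$ inside~$J$; all of these steps check out, including the quantifier order you flag. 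What your version buys is self-containedness -- the reader sees exactly how aperiodicity of the fibres is consumed, fibre by fibre, through Kishimoto's condition for hereditary subalgebras -- at the cost of length; the paper's citation is shorter but opaque. The only soft spot is the parenthetical claim in the topologically free case that a single norm-one positive $a$ kills $\norm{ab_ga}$ uniformly over all $b_g\in B_g$ of norm at most~$1$ for $g\in F$; this needs the Fell-bundle (not just group-action) version of the Archbold--Spielberg construction, which is precisely what the cited Abadie--Abadie result supplies, so your primary justification there stands.
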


\begin{proof}
  The two assertions are
  \cite{Kwasniewski-Szymanski:Pure_infinite}*{Corollary~4.3} and
  \cite{Abadie-Abadie:Ideals}*{Corollary~3.4}.
\end{proof}

\begin{definition}
  \label{def:Cuntz_preorder}
  For \(a, b\in B^+\),
  we write \(a \precsim b\)
  and say that \emph{\(a\)~supports~\(b\)}
  if there is a sequence~\((x_k)_{k\in\N}\)
  in~\(B\)
  with \(\lim x^*_k b x_k = a\)
  (see \cite{Cuntz:Dimension_functions}).
  We call \(a, b\in A^+\)
  \emph{Cuntz equivalent} if \(a\preceq b\)
  and \(b\preceq a\).
  Let \(A\subseteq B\)
  be a \(\Cst\)\nb-subalgebra.
  We say that~\(A\)
  \emph{supports}~\(B\)
  if for each \(b\in B^+\setminus\{0\}\)
  there is \(a\in A^+\setminus\{0\}\)
  with \(a\precsim b\).  We say that~\(A\)
  \emph{residually supports}~\(B\)
  if for each ideal~\(I\)
  in~\(B\)
  the image of~\(A\)
  in the quotient~\(B/I\)
  supports~\(B/I\)
  (see \cite{Kwasniewski:Crossed_products}*{Definition~2.39}).
\end{definition}

\begin{lemma}
  \label{lem:support_implies_detect}
  If~\(A\) supports~\(B\), then~\(A\) detects ideals in~\(B\).
  If~\(A\) residually supports~\(B\), then~\(A\) separates
  ideals in~\(B\).
\end{lemma}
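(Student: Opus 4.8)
The plan is to prove the two assertions in turn, deriving the second from the first by means of Remark~\ref{rem:separates_is_residual_detects}.

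For the first assertion, suppose \(A\) supports~\(B\) and let \(J\idealin B\) be a non-zero ideal; the goal is to exhibit a non-zero element of \(J\cap A\). Choose \(b\in J^+\setminus\{0\}\). Since \(A\) supports~\(B\), there is \(a\in A^+\setminus\{0\}\) with \(a\precsim b\), that is, a sequence \((x_k)_{k\in\N}\) in~\(B\) with \(x_k^* b x_k\to a\). As \(b\in J\) and \(J\) is a two-sided ideal, each \(x_k^* b x_k\) lies in~\(J\); since \(J\) is closed, the limit~\(a\) lies in~\(J\) as well. Hence \(0\neq a\in J\cap A\), so \(J\cap A\neq 0\), which is precisely the statement that \(A\) detects ideals in~\(B\).

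For the second assertion, suppose \(A\) residually supports~\(B\). By Remark~\ref{rem:separates_is_residual_detects}, it suffices to show that for every ideal \(J\idealin B\) the image of~\(A\) in~\(B/J\) detects ideals in~\(B/J\). Fix such a~\(J\). The image of~\(A\) in~\(B/J\) is a \(\Cst\)\nb-subalgebra (being the image of a \Star{}homomorphism of \(\Cst\)\nb-algebras), and by the definition of residual support it supports~\(B/J\). Applying the first assertion to this subalgebra inside~\(B/J\) shows that it detects ideals in~\(B/J\), as needed; since \(J\) was arbitrary, Remark~\ref{rem:separates_is_residual_detects} yields that \(A\) separates ideals in~\(B\).

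The proof is essentially a one-line observation together with a bookkeeping reduction, so there is no genuine obstacle. The only point worth noting is that the defining property of ``supports'' is exactly what residual support provides in each quotient, so no extra work is needed to pass to \(B/J\).
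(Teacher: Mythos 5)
Your proof is correct and takes essentially the same route as the paper: the first assertion comes down to the observation that a Cuntz subordinate of an element of a closed two-sided ideal again lies in that ideal, and the second assertion follows from the first applied in each quotient via Remark~\ref{rem:separates_is_residual_detects}. The only cosmetic difference is that you argue directly from the definition of \(\precsim\) together with closedness of~\(J\), whereas the paper invokes the exact-element characterisation of supporting (\(0\neq z b z^*\in A\)) from Pasnicu--Phillips; both amount to the same one-line argument.
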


\begin{proof}
  A \(\Cst\)\nb-subalgebra~\(A\) supports~\(B\) if and only if for
  each \(b\in B^+\setminus\{0\}\) there is \(z\in B\) such that \(0
  \neq zbz^*\in A\), see
  \cite{Pasnicu-Phillips:Spectrally_free}*{Proposition~3.9}.  This
  implies the first assertion.  The second one follows from this and
  Remark~\ref{rem:separates_is_residual_detects}.
\end{proof}

\begin{proposition}[\cite{Kwasniewski-Szymanski:Pure_infinite}*{Corollary~4.4}]
  \label{the:general_uniqueness}
  If~\(\mathcal{B}\)
  is aperiodic, then~\(A\defeq B_e\)
  supports ideals in~\(\Cred(\mathcal{B})\).
\end{proposition}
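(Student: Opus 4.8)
The plan is to verify directly the definition of ``supports'' (Definition~\ref{def:Cuntz_preorder}): for every $b\in\Cred(\mathcal{B})^+\setminus\{0\}$ I will produce $a\in A^+\setminus\{0\}$ with $a\precsim b$. The tool from the Fell-bundle side is the canonical faithful conditional expectation $E\colon\Cred(\mathcal{B})\to A$; since it is positive and faithful, $E(b)\in A^+$ is non-zero, and after rescaling I may assume $\norm{E(b)}=1$. Fix a small $\varepsilon\in(0,1)$. Using density of the finitely supported elements, I first choose a finite set $F\subseteq G$ with $e\in F$ and elements $b_g\in B_g$ for $g\in F$ so that $\norm{b-b_0}<\varepsilon$, where $b_0\defeq\sum_{g\in F}b_g$; then $b_e=E(b_0)$ and $\norm{E(b)-b_e}<\varepsilon$ by contractivity of~$E$.

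The heart of the argument is a Kishimoto-type selection: I want a positive contraction $a\in A$, sitting in a hereditary corner of $E(b)$, that simultaneously makes $\norm{a\,b_g\,a}$ small for every $g\in F\setminus\{e\}$. To anchor $a$ in the corner I apply Lemma~\ref{lem:technical} to $E(b)$, obtaining $d\in\overline{E(b)AE(b)}^+$ with $\norm{d}=1$ and a non-zero hereditary subalgebra $D_0\defeq\{x\in A\mid dx=x=xd\}\in\Her(A)$ such that $\norm{E(b)x-x}<\varepsilon\norm{x}$ for all $x\in D_0$. To get a single $a$ that handles all of the $b_g$ at once, I pass to the finite $A$-bimodule $Y\defeq\bigoplus_{g\in F\setminus\{e\}}B_g$ with the supremum norm on coordinates. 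Aperiodicity gives $\Kish(B_g)=B_g$ for each $g\neq e$; since the bimodule operations on $Y$ are coordinatewise, the isometric coordinate embeddings $B_g\hookrightarrow Y$ land in $\Kish(Y)$, and $\Kish(Y)$ is a closed linear subspace of~$Y$ by Lemma~\ref{lem:kishimoto_subspace}, so $\Kish(Y)=Y$. Feeding the element $(b_g)_{g\in F\setminus\{e\}}\in\Kish(Y)$ and the hereditary subalgebra $D_0$ into~\eqref{eq:inf_equation} yields $a\in D_0^+$ with $\norm{a}=1$ and $\norm{a\,b_g\,a}<\varepsilon$ for all $g\in F\setminus\{e\}$.

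The remaining work is bookkeeping with the triangle inequality and the Cuntz preorder. Because $a\in D_0$ one has $\norm{E(b)a-a}<\varepsilon$, hence $\norm{aE(b)a-a^2}<\varepsilon$; combining this with $\norm{b_e-E(b)}<\varepsilon$, the finitely many bounds $\norm{a b_g a}<\varepsilon$, and $\norm{b-b_0}<\varepsilon$ (so $\norm{aba-ab_0a}<\varepsilon$ since $\norm{a}=1$), I get $\norm{aba-a^2}<\eta$ with $\eta\defeq(\abs{F}+2)\varepsilon$. Now $aba\in\Cred(\mathcal{B})^+$ (as $a=a^*$ and $b\geq0$) and $aba\precsim b$ by using the constant sequence $x_k=a$ in Definition~\ref{def:Cuntz_preorder}, while $a^2\in A^+$ with $\norm{a^2}=1$. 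The standard R{\o}rdam comparison estimate --- if $\norm{x-y}<\eta$ for $x,y\geq0$ then $(x-\eta)_+\precsim y$ --- applied to $x=a^2$ and $y=aba$ yields $(a^2-\eta)_+\precsim aba\precsim b$. Choosing $\varepsilon$ so small that $\eta<1$ makes $(a^2-\eta)_+$ a non-zero positive element of~$A$, and transitivity of $\precsim$ completes the argument.

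The only genuinely delicate step is the simultaneous Kishimoto selection in the middle paragraph: the reduction from the single-fibre hypothesis of aperiodicity to a finite direct sum of fibres is immediate once one invokes that $\Kish(\cdot)$ is a closed linear subspace, and Lemma~\ref{lem:technical} is precisely what places~$a$ inside the corner of $E(b)$ so that the off-diagonal estimates collapse $aba$ onto the non-zero element $a^2\in A$. Everything else --- the finite approximation and the Cuntz comparison --- is routine.
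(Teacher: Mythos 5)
Your argument is correct in substance, and it is worth noting that the paper itself gives no in-text proof of this proposition: it is quoted from \cite{Kwasniewski-Szymanski:Pure_infinite}*{Corollary~4.4}, whose proof rests on the Kishimoto-type averaging lemma (Lemma~4.2 of that paper) that the present article also invokes inside the proof of Theorem~\ref{the:general_separation_Fell_bundles}. What you wrote is essentially a self-contained reconstruction of that argument: finite Fourier approximation of \(b\), a simultaneous Kishimoto selection over the finitely many non-unit fibres, the corner trick of Lemma~\ref{lem:technical} to place \(a\) where \(E(b)\) acts approximately as a unit, and the R{\o}rdam comparison \((x-\eta)_+\precsim y\) when \(\norm{x-y}<\eta\), followed by transitivity of \(\precsim\) from Definition~\ref{def:Cuntz_preorder}. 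Your device for the simultaneous selection --- the sup-normed finite direct sum \(Y=\bigoplus_{g\in F\setminus\{e\}}B_g\) combined with Lemma~\ref{lem:kishimoto_subspace} --- is legitimate, since that lemma is formulated for arbitrary normed \(A\)\nb-bimodules and the isometric coordinate embeddings clearly send \(\Kish(B_g)\) into \(\Kish(Y)\); it neatly replaces the usual induction over the elements of \(F\). Comparing \(aba\) with \(a^2\) instead of with \(aE(b)a\) (as in the cited source) is a harmless simplification, because \(\norm{a^2}=1\) automatically.

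One small repair is needed: as written, your constants are circular. You fix \(\varepsilon\), choose \(F\) with \(\norm{b-b_0}<\varepsilon\), arrive at \(\eta=(\abs{F}+2)\varepsilon\), and then ask to ``choose \(\varepsilon\) so small that \(\eta<1\)'' --- but \(\abs{F}\) depends on \(\varepsilon\), and shrinking \(\varepsilon\) may enlarge \(F\). Since you only need \(\eta<1\) for a single choice (not \(\eta\to0\)), decouple the tolerances: first choose \(F\) and \(b_0\) with \(\norm{b-b_0}<1/8\) (hence also \(\norm{E(b)-b_e}<1/8\)), and only afterwards run Lemma~\ref{lem:technical} and the Kishimoto selection with tolerance, say, \(1/(8(\abs{F}+1))\). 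This gives \(\norm{aba-a^2}<1/2\), so \((a^2-\tfrac12)_+\in A^+\setminus\{0\}\) and \((a^2-\tfrac12)_+\precsim aba\precsim b\), and the rest of your argument goes through verbatim.
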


We define residual versions of the non-triviality conditions for
Fell bundles:

\begin{definition}
  \label{def:residual}
  A Fell bundle \(\mathcal{B} = (B_g)_{g\in G}\) with unit fibre
  \(A\defeq B_e\) is \emph{residually pointwise purely outer},
  \emph{residually pointwise purely universally weakly outer},
  \emph{residually aperiodic}, \emph{residually pointwise
    topologically non-trivial} or \emph{residually topologically
    free} if, for each \(\mathcal{B}\)\nb-invariant ideal \(I\idealin
  A\), the restriction \(\mathcal{B}|_{A/I}\) is pointwise purely
  outer, pointwise purely universally weakly outer,
  aperiodic, pointwise topologically non-trivial or topologically
  free, respectively (compare
  \cite{Kwasniewski-Szymanski:Pure_infinite}*{Definition~4.7} and
  \cite{Giordano-Sierakowski:Purely_infinite}*{Definition~3.4}).
\end{definition}

\begin{remark}
  By Remark~\ref{rem:restrictions_of_duals_to_Fell_bundles}, a Fell
  bundle~\(\mathcal{B}\) is residually topologically free if and
  only if each restriction of the dual partial
  action~\(\widehat{\mathcal{B}}\) to a closed invariant subset is
  topologically free.  Moreover, if~\(G\) is Abelian
  and~\(\widehat{A}\) is Hausdorff, then both residual topological
  freeness and residual pointwise topological non-triviality
  coincide with freeness, compare the proof
  of~\cite{Pasnicu-Phillips:Spectrally_free}*{Proposition~1.10}.
\end{remark}

\begin{remark}
  \label{rem:twisted_properties2}
  Let \(\mathcal{B}^{\omega}=(B_g)_{g\in G}\)
  be a deformation of \(\mathcal{B}=(B_g)_{g\in G}\)
  by a \(2\)\nb-cocycle~\(\omega\),
  see Example~\ref{ex:twisted_fell_bundles}.  Then
  \(\I^{\mathcal{B}}(A)=\I^{\mathcal{B}^{\omega}}(A)\)
  and for any \(I\in \I^{\mathcal{B}}(A)\) and \(g\in G\)
  we have \(B_g|_{A/I} = B_g^{\omega}|_{A/I}\),
  see Remark~\ref{rem:twisted_properties}.  Hence~\(\mathcal{B}\)
  has any of the properties in Definition~\ref{def:residual} if and
  only if~\(\mathcal{B}^{\omega}\) has this property.
\end{remark}

We add one more separation condition introduced recently by Kirchberg
and Sierakowski in~\cite{Kirchberg-Sierakowski:Filling_families} to
detect strong pure infiniteness.

\begin{definition}[\cite{Kirchberg-Sierakowski:Filling_families}*{Definition~4.2}]
  \label{def:filling_family}
  Let~\(B\)
  be a \(\Cst\)\nb-algebra.
  A subset \(\mathcal{F}\subseteq B^+\)
  is a \emph{filling family for}~\(B\)
  if, for each \(D \in \Her(B)\)
  and each \(I\in \I(B)\)
  with \(D\not\subseteq I\),
  there is \(z\in B\setminus I\)
  such that \(z^*z\in D\) and \(zz^* \in \mathcal{F}\).
\end{definition}


It suffices to consider only primitive ideals~\(I\)
in the above definition.

\begin{lemma}
  \label{lem:filling_implies_residually_support}
  If~\(A\) is a \(\Cst\)\nb-subalgebra of~\(B\) and~\(A^+\)
  is a filling family for~\(B\), then~\(A\) residually supports~\(B\).
\end{lemma}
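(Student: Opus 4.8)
The plan is to unwind the definitions of a filling family and of residual support, feed the filling-family hypothesis into a suitable hereditary subalgebra of~\(B\), and transport the outcome to each quotient by means of two standard facts about the Cuntz preorder.

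First I would fix an ideal \(I\idealin B\) with quotient map \(q\colon B\to B/I\), and observe that it is enough to show that \(q(A)\) supports~\(B/I\): given \(\bar b\in (B/I)^+\setminus\{0\}\), we must produce \(a\in q(A)^+\setminus\{0\}\) with \(a\precsim \bar b\) in~\(B/I\). To this end, lift~\(\bar b\) to \(b\in B^+\) and put \(D\defeq \overline{bBb}\in\Her(B)\). Then \(b\in D\) while \(b\notin I\) (since \(q(b)=\bar b\neq 0\)), so \(D\not\subseteq I\); and clearly \(q(D)\subseteq \overline{\bar b(B/I)\bar b}\).

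Now apply the hypothesis that \(A^+\) is a filling family for~\(B\) to the pair \(D\in\Her(B)\), \(I\in\I(B)\): there is \(z\in B\setminus I\) with \(z^*z\in D\) and \(zz^*\in A^+\). Setting \(w\defeq q(z)\neq 0\), we get \(w^*w=q(z^*z)\in \overline{\bar b(B/I)\bar b}\) and \(ww^*=q(zz^*)\in q(A)^+\setminus\{0\}\). It then remains to use two elementary facts inside the \(\Cst\)\nb-algebra~\(B/I\). First, every positive \(c\) in the hereditary subalgebra \(\overline{\bar b(B/I)\bar b}\) satisfies \(c\precsim \bar b\): since \(c^{1/2}\in \overline{\bar b(B/I)\bar b}\subseteq \overline{\bar b^{1/2}(B/I)}\), we may write \(c^{1/2}=\lim_n \bar b^{1/2}y_n\), and then \(y_n^*\,\bar b\,y_n\to c\). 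Second, \(ww^*\precsim w^*w\) for any~\(w\), witnessed by \(x_k\defeq (w^*w+\nicefrac1k)^{-1/2}w^*\): by the intertwining identity \(w f(w^*w)=f(ww^*)w\) (for \(f\) vanishing at~\(0\)) one finds \(x_k^*(w^*w)x_k\to ww^*\). Combining these, \(a\defeq ww^*\precsim w^*w\precsim \bar b\), with \(a\in q(A)^+\setminus\{0\}\), which is exactly what was needed. As \(I\) was arbitrary, \(A\) residually supports~\(B\).

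The argument is essentially bookkeeping; the only point needing mild care is that the filling-family condition is a statement inside~\(B\), whereas residual support must be checked in every quotient~\(B/I\). This is dealt with exactly as above — lift the positive element and its hereditary subalgebra to~\(B\), invoke the hypothesis there, and descend — together with the two standard Cuntz-preorder lemmas recalled above. (Equivalently, one first checks that a filling family pushes forward to a filling family under any quotient map, and that a filling family \(\mathcal F\subseteq B^+\) always supports~\(B\) by the same computation with \(I=0\).)
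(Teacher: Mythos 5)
Your proof is correct and follows essentially the same route as the paper: fix an ideal \(I\idealin B\), lift \(\bar b\) to a positive element \(b\in B^+\setminus I\), apply the filling-family hypothesis to \(D=\overline{bBb}\) and \(I\), and push the resulting \(z\) down to the quotient. The only difference is that the paper concludes by citing the characterisation of supporting subalgebras from Pasnicu--Phillips (Proposition~3.9), whereas you verify the final Cuntz-comparison step \(q(z)q(z)^*\precsim q(z)^*q(z)\precsim \bar b\) by hand (using transitivity of \(\precsim\)); both are fine.
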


\begin{proof}
  Let \(I\in \I(B)\)
  and let \(q\colon B\to B/I\)
  be the quotient map.  Let \(b\in q(B)^+\setminus \{0\}\).
  There is \(d\in B^+\setminus I\)
  with \(q(d)=b\).
  Let \(D\defeq \overline{d B d}\).
  Then \(q(D)=\overline{bq(B)b}\)
  and \(D\not\subseteq I\).
  Hence there is \(z\in B\setminus I\)
  with \(z^*z\in D\)
  and \(zz^* \in A^+\).
  Thus \(q(z)\neq0\)
  in~\(B/I\),
  \(q(z)^*q(z)\in \overline{bq(B)b}\)
  and \(q(z)q(z)^*\in q(A)^+\setminus\{0\}\).
  Hence~\(q(A)\)
  supports~\(q(B)\)
  by \cite{Pasnicu-Phillips:Spectrally_free}*{Proposition~3.9}.
\end{proof}

In a future work, we shall prove the converse implication in the
above lemma.

\begin{theorem}
  \label{the:general_separation_Fell_bundles}
  Let \(\mathcal{B}=(B_g)_{g\in G}\) be an exact Fell bundle over a
  discrete group~\(G\) and let \(A\defeq B_e\).  Consider the
  following conditions:
  \begin{enumerate}[wide,label=\textup{(\ref*{the:general_separation_Fell_bundles}.\arabic*)}]
  \item \label{en:general_separation_of_ideals}%
    \(A\) separates ideals of~\(\Cred(\mathcal{B})\);
  \item \label{en:general_Fourier}%
    each ideal in~\(\mathcal{B}\)
    is of the form \(\Cred(\mathcal{B}|_I)\)
    for a \(\mathcal{B}\)\nb-invariant ideal \(I\idealin A\);
  \item \label{en:general_residual_support}%
    \(A\) residually supports~\(\Cred(\mathcal{B})\);
  \item \label{en:general_filling}%
    \(A^+\)~is a filling family for \(\Cred(\mathcal{B})\);
  \item \label{en:general_separation2}%
    \(\mathcal{B}\) is residually aperiodic;
  \item \label{en:general_separation3}%
    \(\mathcal{B}\) is residually topologically free.
  \end{enumerate}
  Then 
  \ref{en:general_separation3}\(\Rightarrow
  \)\ref{en:general_separation_of_ideals}\(\Leftrightarrow
  \)\ref{en:general_Fourier}\(\Leftarrow
  \)\ref{en:general_residual_support}\(\Leftarrow
  \)\ref{en:general_filling}\(\Leftarrow
  \)\ref{en:general_separation2}.
\end{theorem}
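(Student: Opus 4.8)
The plan is to dispatch all but one of the implications by citing earlier results, and to concentrate on \ref{en:general_separation2}$\Rightarrow$\ref{en:general_filling}, which is the only one that really requires work. For the equivalence \ref{en:general_separation_of_ideals}$\Leftrightarrow$\ref{en:general_Fourier} I would combine Corollary~\ref{cor:separation_in_reduced_cross} --- $A$ separates ideals in $\Cred(\mathcal{B})$ iff every ideal of $\Cred(\mathcal{B})$ is graded --- with Proposition~\ref{pro:gauge-invariance_vs_separation}, which identifies the graded ideals with the ideals $\Cred(\mathcal{B}|_I)$, $I\in\I^{\mathcal{B}}(A)$. The chain \ref{en:general_filling}$\Rightarrow$\ref{en:general_residual_support}$\Rightarrow$\ref{en:general_separation_of_ideals} is Lemma~\ref{lem:filling_implies_residually_support} followed by the second assertion of Lemma~\ref{lem:support_implies_detect}. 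For \ref{en:general_separation3}$\Rightarrow$\ref{en:general_separation_of_ideals} I would apply Proposition~\ref{pro:separates_vs_exactness_B-residual_detect}: exactness of $\mathcal{B}$ is assumed, and residual topological freeness makes every restriction $\mathcal{B}|_{A/I}$ topologically free, so $A/I$ detects ideals in $\Cred(\mathcal{B}|_{A/I})$ by Proposition~\ref{pro:Fell_Kishimoto_to_detect}.

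For \ref{en:general_separation2}$\Rightarrow$\ref{en:general_filling} I would first pass to a convenient quotient. Fix an ideal $I\idealin\Cred(\mathcal{B})$; it suffices to verify the filling condition for primitive~$I$, but this is not even needed. Put $J\defeq I\cap A$, a $\mathcal{B}$-invariant ideal by Proposition~\ref{pro:gauge-invariance_vs_separation}, and note $\Cred(\mathcal{B}|_J)\subseteq I$ since $B_g J\subseteq\Cred(\mathcal{B})\cdot I\subseteq I$ for every $g$. Residual aperiodicity makes $\mathcal{B}|_{A/J}$ aperiodic, and exactness identifies $\Cred(\mathcal{B})/\Cred(\mathcal{B}|_J)$ with $\Cred(\mathcal{B}|_{A/J})$. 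The image of $I$ in this quotient meets the image of $A$ only in $0$, while an aperiodic cross-section algebra has its unit fibre detecting ideals (Proposition~\ref{pro:Fell_Kishimoto_to_detect}); hence that image is $0$, so $I=\Cred(\mathcal{B}|_J)$ is graded, $\Cred(\mathcal{B})/I\cong\Cred(\mathcal{B}|_{A/J})$, and the conditional expectation $E\colon\Cred(\mathcal{B})\to A$ descends to the canonical expectation of $\mathcal{B}|_{A/J}$.

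Now take $D\in\Her(\Cred(\mathcal{B}))$ with $D\not\subseteq I$, let $q\colon\Cred(\mathcal{B})\to\Cred(\mathcal{B})/I$ be the quotient map, and choose $d\in D^+$ with $q(d)\neq0$. Since $\mathcal{B}|_{A/J}$ is aperiodic, Proposition~\ref{the:general_uniqueness} together with the characterisation of ``supporting'' recalled inside the proof of Lemma~\ref{lem:support_implies_detect} yields $\bar w_0\in\Cred(\mathcal{B})/I$ with $0\neq\bar w_0\,q(d)\,\bar w_0^*\in q(A)^+$; a standard R{\o}rdam-type comparison then provides, for a suitable $\delta>0$, an element $\bar w\in\Cred(\mathcal{B})/I$ with $\bar w\,q(e)\,\bar w^*=q(a)$, where $e\defeq(d-\delta)_+\in D^+$ and $a\in A^+$ satisfies $q(a)\neq0$. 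It remains to upgrade this witness --- which so far lives only in the quotient --- to an honest $z\in\Cred(\mathcal{B})\setminus I$ with $z^*z\in D$ and $zz^*\in A^+$. If $w\in\Cred(\mathcal{B})$ lifts $\bar w$ and we set $z\defeq w\,e^{1/2}$, then $z^*z\in\overline{e\,\Cred(\mathcal{B})\,e}\subseteq\overline{d\,\Cred(\mathcal{B})\,d}\subseteq D$ and $q(z)\neq0$ come for free, so the whole difficulty is concentrated in making $zz^*=w e w^*$ lie in $A$ rather than merely in $A+I$.

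This lifting is the step I expect to be the main obstacle. The resolution should be to run the aperiodicity estimates behind Proposition~\ref{the:general_uniqueness} with the approximately diagonalising (``averaging'') elements chosen in $A$ itself, not in $A/J$, using that $E$ descends to quotients by graded ideals --- the Fell-bundle counterpart of the crossed-product arguments of Kirchberg and Sierakowski~\cite{Kirchberg-Sierakowski:Strong_pure}. Once the filling witness $z$ is produced inside $\Cred(\mathcal{B})$ itself, \ref{en:general_separation2}$\Rightarrow$\ref{en:general_filling} --- and hence the whole theorem --- follows.
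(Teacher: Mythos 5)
Your treatment of the routine implications is fine and matches the paper: \ref{en:general_separation_of_ideals}$\Leftrightarrow$\ref{en:general_Fourier} via Corollary~\ref{cor:separation_in_reduced_cross} and Proposition~\ref{pro:gauge-invariance_vs_separation}, the chain \ref{en:general_filling}$\Rightarrow$\ref{en:general_residual_support}$\Rightarrow$\ref{en:general_separation_of_ideals} via Lemmas \ref{lem:filling_implies_residually_support} and~\ref{lem:support_implies_detect}, and \ref{en:general_separation3}$\Rightarrow$\ref{en:general_separation_of_ideals} via Propositions \ref{pro:Fell_Kishimoto_to_detect} and~\ref{pro:separates_vs_exactness_B-residual_detect}. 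Your reduction showing that every ideal $I$ of $\Cred(\mathcal{B})$ is graded, $I=\Cred(\mathcal{B}|_{I\cap A})$, is also correct and is the same separation-of-ideals step the paper performs. But the proof of \ref{en:general_separation2}$\Rightarrow$\ref{en:general_filling} is not complete: you arrive at the genuine crux --- a witness $z$ with $z^*z\in D$ and $zz^*$ exactly in $A^+\setminus I$, rather than in $A+I$ --- and then only conjecture a resolution (``run the aperiodicity estimates with averaging elements chosen in $A$ itself''). That suggestion does not work as stated: if you pick the averaging element $x\in A^+$ using aperiodicity of $\mathcal{B}$ relative to $d$ and $E(d)$, you get $x d x\approx x E(d) x\in A^+$, but you have no lower bound on $\norm{q(x E(d) x)}$ in the quotient; the element $x$ may essentially lie in $I\cap A$, so the resulting $zz^*$ can land in the ideal $I$ and fails the condition $z\notin I$. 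The quotient is exactly where the norm control must be anchored, which is why the estimates cannot simply be moved upstairs.

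The mechanism that actually closes the gap (following \cite{Kirchberg-Sierakowski:Strong_pure}*{Theorem~3.8}, as the paper does) is a lift-and-correct argument rather than a re-run of the aperiodicity estimates. Work in $\Cred(\mathcal{B})/J\cong\Cred(\mathcal{B}|_{A/I})$ with $I\defeq A\cap J$ and its faithful expectation; apply \cite{Kwasniewski-Szymanski:Pure_infinite}*{Lemma~4.2} to $b\defeq q(d)$ to get $x\in(A/I)^+$ with $\norm{xbx-xE(b)x}<\varepsilon$ and $\norm{xE(b)x}>3\varepsilon$, and \cite{Kirchberg-Rordam:Infinite_absorbing}*{Lemma~2.2} to get a contraction $y$ with $y^*(xbx)y=(xE(b)x-\varepsilon)_+\in(A/I)^+$. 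Now lift $c\in A^+$ and a contraction $w$ with $q(c)=(xE(b)x-\varepsilon)_+$ and $q(w)=xy$, so that $c=w^*dw+v$ with $v\in J$. The decisive point, which your sketch does not use, is that $J=\Cred(\mathcal{B}|_I)$ is graded, so an approximate unit of $I$ is an approximate unit of $J$: choose a contraction $f\in I^+$ with $\norm{v-fv}<\varepsilon$ and set $g\defeq 1-f$, giving $\norm{g w^*dw g-gcg}<\varepsilon$ with $gcg\in A$. A second application of \cite{Kirchberg-Rordam:Infinite_absorbing}*{Lemma~2.2} yields a contraction $h$ with $h^*(g w^* d w g)h=(gcg-\varepsilon)_+\in A^+$, and $z\defeq (d^{\nicefrac12}wgh)^*$ satisfies $z^*z\in D$, $zz^*=(gcg-\varepsilon)_+\in A^+$, and $\norm{q(zz^*)}=(\norm{q(c)}-\varepsilon)_+>\varepsilon>0$, so $zz^*\notin J$. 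Without this approximate-unit correction and the second comparison step, the lifting obstacle you isolated remains open, so as written the key implication --- and with it the theorem --- is not yet proved.
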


\begin{proof}
  The equivalence between
  \ref{en:general_separation_of_ideals} and~\ref{en:general_Fourier}
  follows from
  \cite{Kwasniewski-Szymanski:Pure_infinite}*{Theorem~3.12}, compare
  also Propositions \ref{pro:gauge-invariance_vs_separation}
  and~\ref{pro:separates_vs_exactness_B-residual_detect}.  That
  \ref{en:general_separation3} implies
  \ref{en:general_separation_of_ideals} follows from
  \cite{Abadie-Abadie:Ideals}*{Corollary~3.20}, from
  \cite{Kwasniewski-Szymanski:Pure_infinite}*{Corollary~3.23}, or by
  combining
  Propositions \ref{pro:Fell_Kishimoto_to_detect}
  and~\ref{pro:separates_vs_exactness_B-residual_detect}.
  Condition~\ref{en:general_residual_support}
  implies~\ref{en:general_separation_of_ideals} by
  Lemma~\ref{lem:support_implies_detect}.  The implication
  \ref{en:general_filling}\(\Rightarrow
  \)\ref{en:general_residual_support} is
  Lemma~\ref{lem:filling_implies_residually_support}.  Thus we must only
  prove that~\ref{en:general_separation2}
  implies~\ref{en:general_filling}.  We mimic the proof of
  \cite{Kirchberg-Sierakowski:Strong_pure}*{Theorem~3.8}.

  Assume that~\(\mathcal{B}\)
  is residually aperiodic.  Pick \(D\in\Her(\Cred(\mathcal{B}))\)
  and \(J\in\I(\Cred(\mathcal{B}))\)
  with \(D\not\subseteq J\).
  We need \(z\in A\)
  with \(z^*z\in D\)
  and \(zz^* \in A^+ \setminus J\).
  Propositions \ref{pro:Fell_Kishimoto_to_detect}
  and~\ref{pro:separates_vs_exactness_B-residual_detect} show
  that \(A\subseteq\Cred(\mathcal{B})\)
  separates ideals.  Hence
  \(\I(\Cred(\mathcal{B}))\cong \I^{\mathcal{B}}(A)\)
  by Proposition~\ref{pro:gauge-invariance_vs_separation}.
  Since~\(\mathcal{B}\)
  is exact, we may identify \(\Cred(\mathcal{B})/J\)
  with \(\Cred(\mathcal{B}|_{A/I})\),
  where \(I\defeq A\cap J\).
  In particular, there is a faithful conditional expectation
  \(E\colon \Cred(\mathcal{B}) /J \to A/I\).
  Let \(q\colon \Cred(\mathcal{B})\to \Cred(\mathcal{B})/J\)
  be the quotient map.  Let \(d\in D^+\setminus J\).
  Define \(b\defeq q(d)\)
  and \(\varepsilon\defeq \frac{1}{4}\norm{E(b)}>0\).
  Since~\(\mathcal{B}|_{A/I}\)
  is aperiodic, \cite{Kwasniewski-Szymanski:Pure_infinite}*{Lemma~4.2}
  gives \(x\in (A/I)^+\) satisfying
  \[
  \norm{x}=1,\qquad
  \norm{xbx-xE(b)x}<\varepsilon, \qquad
  \norm{xE(b)x}> \norm{E(b)}-\varepsilon = 3\varepsilon.
  \]
  Now
  \cite{Kirchberg-Rordam:Infinite_absorbing}*{Lemma~2.2} gives a
  contraction \(y\in \Cred(\mathcal{B}) /J\) with
  \[
  y^*(xbx)y = (xE(b)x-\varepsilon)_+ \in (A/I)^+.
  \]
  Moreover, \(y^*xbxy \neq 0\) because
  \[
  \norm{(xE(b)x-\varepsilon)_+}\ge
  \norm{xE(b)x}-\varepsilon > 2\varepsilon > 0.
  \]
  There are \(c\in A^+\)
  and a contraction \(w\in \Cred(\mathcal{B})\)
  with \(q(c)=(xE(b)x-\varepsilon)_+\)
  and \(q(w)=xy\).
  Then \(q(c)=y^*xbxy= q(w^*dw)\).
  So \(c=w^*dw+ v\) for some \(v\in J\).

  Since an approximate unit in~\(I\)
  is also one for~\(J\),
  there is a contraction \(f\in I^+\)
  with \(\norm{v-fv}<\varepsilon\).
  Let~\(1\)
  denote the formal unit in the unitisations of \(A\)
  or~\(\Cred(\mathcal{B})\)
  and let \(g\defeq 1-f\in A^+\).  Then \(\norm{g}\le 1\) and
  \[
  \norm{gw^*dwg - gcg} = \norm{g v g} \le  \norm{ v-fv } < \varepsilon.
  \]
  Now \cite{Kirchberg-Rordam:Infinite_absorbing}*{Lemma~2.2} gives a
  contraction \(h\in \Cred(\mathcal{B})\) with
  \[
  h^* (gw^*dwg) h = (gcg-\varepsilon)_+ \in  A^+.
  \]
  Let \(z\defeq (d^{\nicefrac12}wgh)^*\).  Then \(z^*z\in D\) and \(z z^*=
  (gcg-\varepsilon)_+ \in A^+\).  Moreover, since \(q(gcg)=q(c+
  fcf-cf-fc)=q(c)\), we get
  \begin{align*}
    \norm{q(zz^*)}
    &=
    \norm{q((gcg)-\varepsilon)_+)}=(\norm{q(gcg)}-\varepsilon)_+=(\norm{q(c)}-\varepsilon)_+\\
    &=
    (\norm{(xE(b)x-\varepsilon)_+}-\varepsilon)_+
    = \norm{xE(b)x}
    - 2 \varepsilon > \varepsilon > 0.
  \end{align*}
  Hence \(zz^*\notin J\).
\end{proof}

\section{Strong pure infiniteness of reduced section
  \texorpdfstring{$\Cst$}{C*}-algebras}
\label{sec:strong_pi}

We generalise the main results
from~\cite{Kirchberg-Sierakowski:Strong_pure} to Fell bundles and
slightly improve the main result
of~\cite{Kwasniewski-Szymanski:Pure_infinite}.  This shows why
residually supporting and filling families are important.

If \(a,b \in A\) are elements of a \(\Cst\)\nb-algebra~\(A\) and
\(\varepsilon >0\), we write \(a\approx_\varepsilon b\) if
\(\norm{a-b}<\varepsilon\).  Infinite and properly infinite
elements in~\(A^+\) are defined
in~\cite{Kirchberg-Rordam:Non-simple_pi}.  We recall their
equivalent description in
\cite{Kwasniewski-Szymanski:Pure_infinite}*{Lemma~2.1}.
We also introduce the notion of separated pairs of elements
in~\(A^+\).

\begin{definition}
  \label{def:kinds_of_elements}
  Let~\(A\) be a \(\Cst\)\nb-algebra and let \(a\in A^+\setminus\{0\}\).
  \begin{enumerate}
  \item We call \(a\in A^+\) \emph{infinite} in~\(A\) if there is
    \(b\in A^+\setminus\{0\}\) such that for all \(\varepsilon >0\)
    there are \(x,y\in a A\) with \(x^*x\approx_\varepsilon a\),
    \(y^*y\approx_\varepsilon b\) and \(x^*y\approx_\varepsilon 0\).
  \item We call \(a^+\setminus\{0\}\) \emph{properly infinite} if
    for all \(\varepsilon >0\) there are \(x,y\in a A\) with
    \(x^*x\approx_\varepsilon a\), \(y^*y\approx_\varepsilon a\) and
    \(x^*y\approx_\varepsilon 0\).
  \item We call \(a,b\in A^+\) \emph{separated in~\(A\)} if for all
    \(\varepsilon >0\) there are \(x\in a A\) and \(y\in bA\) with
    \(x^*x\approx_\varepsilon a\), \(y^*y\approx_\varepsilon b\) and
    \(x^*y\approx_\varepsilon 0\).
  \end{enumerate}
\end{definition}

Any properly infinite element is infinite.  An element \(a\in
A^+\setminus\{0\}\) is infinite if and only if it is separated from
some other element \(b\in A^+\setminus\{0\}\), and properly infinite
if and only if it is separated from itself.  Moreover, \(a\in
A^+\setminus\{0\}\) is properly infinite if and only if \(a+I\)
in~\(A/I\) is either zero or infinite for each ideal~\(I\) in~\(A\),
see \cite{Kirchberg-Rordam:Non-simple_pi}*{Proposition~3.14}.  Thus
proper infiniteness is residual infiniteness.  For the above
approximate equalities, we may assume \(x=\sqrt{a} d_1\) and
\(y=\sqrt{b} d_2\) for some \(d_1, d_2 \in A\); thus we may
reformulate the condition of being separated as follows: for all
\(\varepsilon >0\) there are \(d_1, d_2 \in A\) with \(d_1^*a d_1
\approx_\varepsilon a\), \(d_2^* b d_2\approx_\varepsilon b\) and
\(d_1^*a^{\nicefrac12} b^{\nicefrac12} d_2\approx_\varepsilon 0\).

The following definition uses characterizations of purely infinite
and strongly purely infinite \(\Cst\)\nb-algebras in
\cite{Kirchberg-Rordam:Non-simple_pi}*{Theorem~4.16} and
\cite{Kirchberg-Rordam:Infinite_absorbing}*{Remark~5.10}.

\begin{definition}
  A \(\Cst\)\nb-algebra~\(A\)
  is \emph{purely infinite} if each element
  \(a\in A^+\setminus\{0\}\)
  is properly infinite, and \emph{strongly purely infinite} if each
  pair of elements \(a,b\in A^+\setminus\{0\}\) is separated in~\(A\).
\end{definition}

We present two general criteria that allow to prove that a
\(\Cst\)\nb-algebra~\(B\)
is strongly purely infinite by analysing a \(\Cst\)\nb-subalgebra
\(A\subseteq B\).
The first one, established
in~\cite{Kirchberg-Sierakowski:Filling_families}, requires no
structural assumptions on the \(\Cst\)\nb-algebras,
but the conditions on elements are quite strong.

\begin{proposition}
  \label{pro:Kirchberg_Sierakowski}
  Let~\(A\)
  be a \(\Cst\)\nb-subalgebra
  of~\(B\)
  such that~\(A^+\)
  is a filling family for~\(B\).
  Then~\(B\)
  is strongly purely infinite if and only if each pair of elements
  \(a,b\in A^+\)
  has the matrix diagonalisation property in~\(B\),
  that is, for each \(x\in B\)
  with
  \(\left(\begin{smallmatrix} a&x^*\\x&b \end{smallmatrix}\right) \in
  M_2(B)^+ \)
  and each \(\varepsilon>0\)
  there are \(d_1\in B\) and \(d_2\in B\) such that
  \[
  d_1^*a d_1 \approx_\varepsilon a,\qquad
  d_2^* b d_2\approx_\varepsilon b,\qquad
  d_1^* x d_2\approx_\varepsilon 0.
  \]
\end{proposition}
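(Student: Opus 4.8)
The plan is to reduce everything to the characterisation of strong pure infiniteness by matrix diagonalisation in \cite{Kirchberg-Rordam:Infinite_absorbing}*{Remark~5.10}: a \(\Cst\)\nb-algebra~\(B\) is strongly purely infinite if and only if \emph{every} pair \(a,b\in B^+\) has the matrix diagonalisation property in~\(B\). Granting this, the ``only if'' direction of the proposition is immediate, since \(A^+\subseteq B^+\). For the ``if'' direction it then suffices to upgrade the matrix diagonalisation property from pairs in~\(A^+\) to all pairs in~\(B^+\), and the filling family hypothesis is exactly the device for this upgrade. As a preliminary I would observe that the hypothesis already forces~\(B\) to be purely infinite: applying matrix diagonalisation to a pair \((a,a)\) with \(a\in A^+\setminus\{0\}\) and off-diagonal entry~\(a\) shows every such~\(a\) is properly infinite in~\(B\), and then the filling property spreads proper infiniteness over \(\Prim(B)\) to all of \(B^+\setminus\{0\}\), using that infiniteness passes up along~\(\precsim\).

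So fix \(a',b'\in B^+\setminus\{0\}\), an element \(x\in B\) with \(\bigl(\begin{smallmatrix}a'&x^*\\ x&b'\end{smallmatrix}\bigr)\in M_2(B)^+\), and \(\varepsilon>0\); I must produce \(d_1,d_2\in B\) with \(d_1^*a'd_1\approx_\varepsilon a'\), \(d_2^*b'd_2\approx_\varepsilon b'\) and \(d_1^*xd_2\approx_\varepsilon0\). First I would pass to the cut-downs \((a'-\delta)_+\in\overline{a'Ba'}\) and \((b'-\delta)_+\in\overline{b'Bb'}\) for a small \(\delta>0\) fixed at the end. The crux is to build, out of the filling family, elements \(c_1,c_2\in A^+\) together with contractions in~\(B\) realising \((a'-\delta)_+\precsim c_1\precsim a'\) and \((b'-\delta)_+\precsim c_2\precsim b'\), in such a way that conjugating \(c_1,c_2\) by these contractions returns the cut-downs up to~\(\varepsilon\). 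Applied to the hereditary subalgebra \(\overline{(a'-\delta)_+B(a'-\delta)_+}\) (and its analogue for~\(b'\)), the filling property only hands over elements~\(z\) with \(zz^*\in A^+\) and \(z^*z\) inside that subalgebra, that is, elements of~\(A^+\) that are Cuntz-\emph{subordinate} to \((a'-\delta)_+\). To obtain a single \(c_1\in A^+\) that is also Cuntz-\emph{above} \((a'-\delta)_+\) one runs an exhaustion over \(\Prim(B)\): for every primitive ideal~\(P\) with \((a'-\delta)_+\notin P\) the filling property supplies such a~\(z\) with \(z\notin P\), so the ideal generated by the produced \(A^+\)\nb-elements contains \((a'-\delta)_+\); one then compresses finitely many of them into one element, absorbing the direct sums using proper infiniteness of \((a'-\delta)_+\) in the purely infinite algebra~\(B\), and extracts the witnessing contractions from \cite{Kirchberg-Rordam:Infinite_absorbing}*{Lemma~2.2}.

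With \(c_1,c_2\in A^+\) and the witnessing contractions in hand, I would transport the positive \(2\times2\) matrix by conjugating \(\bigl(\begin{smallmatrix}a'&x^*\\ x&b'\end{smallmatrix}\bigr)\ge0\) by the diagonal matrix built from the contractions; this produces a positive \(2\times2\) matrix over~\(B\) whose diagonal entries are \(c_1\) and~\(c_2\) (up to perturbations absorbed into~\(\delta\)) and whose off-diagonal entry is a transported copy~\(\tilde x\) of~\(x\). The hypothesis, applied to the pair \((c_1,c_2)\in A^+\times A^+\) with off-diagonal~\(\tilde x\), yields \(\tilde d_1,\tilde d_2\in B\) diagonalising it; pulling these back through the witnessing contractions and invoking \cite{Kirchberg-Rordam:Infinite_absorbing}*{Lemma~2.2} once more gives the required \(d_1,d_2\) for \((a',b',x)\), with all \(\varepsilon\)\nb-errors controlled by choosing \(\delta\ll\varepsilon\) at the start. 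The overall skeleton is the one used in \cite{Kirchberg-Sierakowski:Strong_pure}*{proof of Theorem~3.8}, which is reproduced in the proof of Theorem~\ref{the:general_separation_Fell_bundles} above; this proposition is essentially the abstract statement underlying that argument and is contained in \cite{Kirchberg-Sierakowski:Filling_families}.

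The main obstacle is the construction of the dominating elements \(c_1,c_2\in A^+\): this is where the filling hypothesis is genuinely stronger than ``\(A\) residually supports~\(B\)'', since one must glue countably many Cuntz-subordinate \(A^+\)\nb-elements across \(\Prim(B)\) and then compress back to a single element, which forces the detour through pure infiniteness of~\(B\). The remaining steps — the \(2\times2\) positivity transfer and the two applications of \cite{Kirchberg-Rordam:Infinite_absorbing}*{Lemma~2.2} — are routine Cuntz-comparison bookkeeping, the only delicacy being to fix the tolerances in the order \(\delta\ll\varepsilon\).
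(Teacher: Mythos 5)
There is a genuine gap, and it sits at the heart of your ``if'' direction. Your upgrade from pairs in \(A^+\) to pairs in \(B^+\) hinges on first knowing that \(B\) is purely infinite, and you get this from the principle that ``infiniteness passes up along \(\precsim\)'': from a properly infinite \(q(a)\precsim q(b)\) with \(a\in A^+\) you conclude \(q(b)\) is infinite. That principle is not available for general positive elements. What is true is the opposite comparison, \cite{Kirchberg-Rordam:Non-simple_pi}*{Proposition~3.5(ii)} (anything in the ideal generated by a properly infinite element is Cuntz \emph{below} it), and the projection case \cite{Kirchberg-Rordam:Non-simple_pi}*{Lemma~3.17}; for non-projections there is no such lemma, which is exactly why Proposition~\ref{pro:pure_infiniteness} in this paper needs extra hypotheses (finitely many ideals, projections separating ideals, or real rank zero) to pass from ``\(A\) residually supports \(B\) and elements of \(A^+\setminus\{0\}\) are properly infinite in \(B\)'' to pure infiniteness of \(B\). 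Your later compression step genuinely needs this unproved fact: to squeeze the finitely many filling-family elements \(z_iz_i^*\) into a single \(c_1\in A^+\) with both \((a'-\delta)_+\precsim c_1\) and \(c_1\precsim a'\), you must absorb direct sums inside the hereditary subalgebra of \((a'-\delta)_+\), i.e.\ you need proper infiniteness of \((a'-\delta)_+\), a general element of \(B^+\). (The half you can get for free, \((a'-\delta)_+\precsim c_1'\) for \(c_1'=\sum z_iz_i^*\in A^+\), follows from proper infiniteness of \(c_1'\) and \cite{Kirchberg-Rordam:Non-simple_pi}*{Proposition~3.5(ii)}, but then \(c_1'\not\precsim a'\) in general, so the transport of the \(2\times2\) matrix breaks down.) This globalisation over \(\Prim(B)\) is precisely the non-trivial content of \cite{Kirchberg-Sierakowski:Filling_families}*{Theorem~1.1}, and it cannot be dispatched by the comparison argument you sketch.

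For contrast, the paper does not reprove any of this: it observes that \(A^+\) is closed under \(\varepsilon\)\nb-cut-downs, so \cite{Kirchberg-Sierakowski:Filling_families}*{Lemma~5.4} reduces the matrix diagonalisation hypothesis of \cite{Kirchberg-Sierakowski:Filling_families}*{Theorem~1.1} to pairs of elements, and then quotes that theorem. Your peripheral steps are fine or fixable — the ``only if'' direction via \cite{Kirchberg-Rordam:Infinite_absorbing}*{Remark~5.10}, the observation that diagonalising \((a,a)\) makes every \(a\in A^+\setminus\{0\}\) properly infinite in \(B\), and the exact-positivity issue in the transported matrix, which can be repaired with \cite{Kirchberg-Rordam:Infinite_absorbing}*{Lemma~2.2} because cut-downs \((c_1-\eta)_+\) stay in \(A^+\) (the same closure-under-cut-downs point the paper's proof exploits). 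But as written the proposal does not prove the proposition; it would have to either import \cite{Kirchberg-Sierakowski:Filling_families}*{Theorem~1.1} (as the paper does) or supply a genuinely new argument for the pure-infiniteness/compression step.
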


\begin{proof}
  The subset~\(A^+\)
  is closed under \(\varepsilon\)\nb-cut-downs,
  so that \cite{Kirchberg-Sierakowski:Filling_families}*{Lemma~5.4}
  applies here.  Therefore, it suffices to check the
  ``matrix diagonalisation property'' needed
  in~\cite{Kirchberg-Sierakowski:Filling_families}*{Theorem~1.1} for
  pairs of elements.  Hence the claim follows
  from~\cite{Kirchberg-Sierakowski:Filling_families}*{Theorem~1.1}.
\end{proof}

The matrix diagonalisation described in the above proposition is a
strong version of the separation of elements introduced in
Definition~\ref{def:kinds_of_elements}.  It is hard to check it in
practice.  However, if ``the ratio of ideals in~\(B\)
to projections in~\(A\)
is not large'' we may characterise when~\(B\)
is strongly purely infinite using much weaker conditions.  The ideal
property is one of the conditions that guarantee that the ideals and
projections in a \(\Cst\)\nb-algebra
are well-balanced.  We recall that a \(\Cst\)\nb-algebra~\(A\)
has \emph{the ideal property} if the set of
projections in~\(A\)
separates ideals in~\(A\).
A \(\Cst\)\nb-algebra
with the ideal property is purely infinite if and only if it is
strongly purely infinite by
\cite{Pasnicu-Rordam:Purely_infinite_rr0}*{Proposition~2.14}.

The proof of the following proposition uses known arguments, see, for
instance, the proofs of
\cite{Kwasniewski:Crossed_products}*{Proposition~2.46} and
\cite{Kwasniewski-Szymanski:Pure_infinite}*{Theorem 4.10}.

\begin{proposition}
  \label{pro:pure_infiniteness}
  Assume that \(A\subseteq B\)
  residually supports~\(B\).
  Let \(\I^B(A)\defeq \{J\cap A\mid J\triangleleft B\}\)
  and assume that~\(\I^B(A)\)
  is finite or that projections in~\(A\)
  separate the ideals in~\(\I^B(A)\).
  Then~\(B\)
  is purely infinite if and only if each element in
  \(A^+\setminus \{0\}\)
  is properly infinite in~\(B\).
  Moreover, if~\(B\)
  is purely infinite, then it has the ideal property, and so~\(B\)
  is strongly purely infinite.
 \end{proposition}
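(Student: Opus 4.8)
The plan is to treat the two implications and the ``moreover'' in turn, reducing everything to standard properties of (properly) infinite elements from \cite{Kirchberg-Rordam:Non-simple_pi} together with the fact, recalled above, that proper infiniteness is residual infiniteness.

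The ``only if'' direction is immediate: if $B$ is purely infinite then every element of $B^+\setminus\{0\}$ is properly infinite in $B$, and $A^+\setminus\{0\}\subseteq B^+\setminus\{0\}$. For the converse I would use that $B$ is purely infinite as soon as every non-zero hereditary subalgebra in every quotient of $B$ contains an infinite positive element --- one of the equivalent conditions in \cite{Kirchberg-Rordam:Non-simple_pi}*{Theorem~4.16}, and Cuntz's original definition in the simple case. So fix an ideal $J\idealin B$, let $q\colon B\to B/J$ be the quotient map, and let $D\in\Her(B/J)$; the goal is to exhibit an infinite element of $D$. Choose $d\in D^+\setminus\{0\}$. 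Since $A$ residually supports $B$, the image $q(A)$ supports $B/J$, so there is $a\in A^+\setminus\{0\}$ with $q(a)\neq0$ and $q(a)\precsim d$ in $B/J$. By hypothesis $a$ is properly infinite in $B$, and since proper infiniteness passes to quotients, $q(a)$ is a non-zero, properly infinite element of $B/J$.

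It remains to move this infinite element inside $D$, which I would do with the usual cut-down trick for the Cuntz order. From $q(a)\precsim d$, \cite{Kirchberg-Rordam:Infinite_absorbing}*{Lemma~2.2} gives, for each $\varepsilon>0$, an element $w\in B/J$ with $w^*dw=(q(a)-\varepsilon)_+$. Put $v\defeq d^{1/2}w$, so that $v^*v=(q(a)-\varepsilon)_+$ and $vv^*=d^{1/2}ww^*d^{1/2}\in\overline{d(B/J)d}\subseteq D$ because $D$ is hereditary. For $\varepsilon$ small enough $(q(a)-\varepsilon)_+\neq0$, and it is still properly infinite --- hence infinite --- since $q(a)$ is; as $vv^*$ is Cuntz equivalent to $v^*v$ and being infinite is invariant under Cuntz equivalence (the witnessing element carries over in the Cuntz semigroup), $vv^*$ is an infinite positive element of $D$. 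This proves $B$ purely infinite. The one genuinely non-routine step is this passage from ``$q(a)$ is infinite and Cuntz-below $d$'' to ``$D$ contains an infinite element''; everything else is bookkeeping with the cut-down and Cuntz-equivalence properties of (properly) infinite elements in \cite{Kirchberg-Rordam:Non-simple_pi}.

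For the ``moreover'', assume $B$ is purely infinite. Because $A$ residually supports $B$, Lemma~\ref{lem:support_implies_detect} shows that $A$ separates ideals in $B$, so $J\mapsto J\cap A$ is a bijection $\I(B)\congto\I^B(A)$. If the projections in $A$ separate the ideals in $\I^B(A)$, then --- regarded as projections of $B$ --- they separate the ideals of $B$, so $B$ has the ideal property. If instead $\I^B(A)$ is finite, then $\I(B)$ is finite, hence $\Prim(B)$ is finite and therefore has a basis of compact-open sets; by \cite{Pasnicu-Rordam:Purely_infinite_rr0} a purely infinite $\Cst$-algebra with this property has real rank zero, and real rank zero implies the ideal property. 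In either case $B$ has the ideal property, and then $B$ is strongly purely infinite by \cite{Pasnicu-Rordam:Purely_infinite_rr0}*{Proposition~2.14}. Here I expect the only point requiring care is pinning down the precise reference in \cite{Pasnicu-Rordam:Purely_infinite_rr0} for ``purely infinite with $\Prim$ having a basis of compact-open sets $\Rightarrow$ real rank zero''.
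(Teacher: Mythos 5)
Your reduction of pure infiniteness to the statement ``every non-zero hereditary subalgebra of every quotient of \(B\) contains an infinite positive element'' is the gap. This is not one of the equivalent conditions of \cite{Kirchberg-Rordam:Non-simple_pi}*{Theorem~4.16}: that theorem characterises pure infiniteness by the requirement that every element of \(B^+\setminus\{0\}\) be properly infinite, i.e.\ (by \cite{Kirchberg-Rordam:Non-simple_pi}*{Proposition~3.14}) that the element \(q(b)\) \emph{itself} be infinite in every quotient in which it survives. Your construction only produces an infinite element \(vv^*\) inside \(D=\overline{q(b)(B/J)q(b)}\); since \(vv^*\precsim q(b)\) and infiniteness does not pass upwards along \(\precsim\), this says nothing about \(q(b)\). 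The existence condition you invoke is a ``local'' pure infiniteness condition, which lacks exactly the comparison ingredient built into the Kirchberg--R{\o}rdam definition, and it is not a known (let alone citable) characterisation in the non-simple case. A clear symptom is that your proof of pure infiniteness never uses the standing hypothesis that \(\I^B(A)\) is finite or that projections in \(A\) separate the ideals in \(\I^B(A)\); that hypothesis is precisely what the paper uses to upgrade ``some element of \(A^+\) sits below \(q(b)\)'' to infiniteness of \(q(b)\): in the finite case via a composition series with simple subquotients, \cite{Kirchberg-Rordam:Non-simple_pi}*{Proposition~3.5(ii)} and closedness of pure infiniteness under extensions, and in the projection case by producing a properly infinite \emph{projection} \(q(p)\precsim q(b)\) and applying \cite{Kirchberg-Rordam:Non-simple_pi}*{Lemma~3.17}.

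A second, independent gap is the claim that \((q(a)-\varepsilon)_+\) is still properly infinite, hence infinite, because \(q(a)\) is. Cut-downs of properly infinite elements need not even be infinite: if \(D\) is a simple unital \(\Cst\)\nb-algebra containing a finite projection \(p\) orthogonal to an infinite (hence properly infinite and full) projection \(q\) --- such \(D\) exist inside matrix algebras over R{\o}rdam's simple \(\Cst\)\nb-algebra with a finite and an infinite projection --- then \(a\defeq\tfrac12 q+p\) is properly infinite because \(a\oplus a\precsim q\precsim a\) by \cite{Kirchberg-Rordam:Non-simple_pi}*{Proposition~3.5(ii)}, while \((a-\tfrac34)_+=\tfrac14 p\) is Cuntz equivalent to the finite projection \(p\) and therefore not infinite. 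So even granting your global criterion, the step transporting infiniteness into \(D\) breaks down. Your treatment of the ``moreover'' part (ideal property from separation of ideals by projections, or from finiteness of \(\I(B)\) via \(\Prim(B)\) finite, Pasnicu--R{\o}rdam and real rank zero) is sound and close to the paper's, but the core claim --- pure infiniteness --- genuinely needs the two case-specific arguments sketched above.
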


\begin{proof}
  We may assume that each element in \(A^+\setminus \{0\}\) is
  properly infinite in~\(B\).  It suffices to prove that~\(B\) is
  purely infinite and has the ideal property.  Then it is strongly
  purely infinite by
  \cite{Pasnicu-Rordam:Purely_infinite_rr0}*{Proposition~2.14}.
  By
  Lemma~\ref{lem:support_implies_detect}, \(A\)~separates
  ideals in~\(B\).
  In particular, restriction to~\(A\)
  is a lattice isomorphism \(\I(B) \congto \I^B(A)\).

  Suppose first that \(\I^B(A)\cong \I(B)\)
  is finite.  Then there is a chain
  \(0 = J_0\subsetneq J_1\subsetneq J_2 \subsetneq \dotsb\subsetneq
  J_n=B\)
  of ideals in~\(B\)
  such that \(J_i/J_{i-1}\)
  is simple for \(i=1,\dotsc,n\).
  Fix~\(i\)
  and let \(q\colon B\to B/J_{i-1}\)
  be the quotient map.  For any \(b\in q(J_i)^+\setminus\{0\}\)
  there is \(a\in q(A)^+\setminus\{0\}\)
  such that \(a\precsim b\)
  in~\(B/J_{i-1}\).
  Thus \(a\in q(J_i\cap A)^+\setminus\{0\}\).
  Since~\(a\)
  is the image under~\(q\)
  of a properly infinite element in~\(A^+\),
  it is properly infinite.  Since~\(q(J_i)\)
  is simple, \(b\precsim a\)
  by \cite{Kirchberg-Rordam:Non-simple_pi}*{Proposition 3.5(ii)}.
  Hence~\(b\)
  is Cuntz equivalent to~\(a\).
  Thus~\(b\)
  is properly infinite in~\(q(J_i)\).
  It follows that~\(q(J_i)\)
  is purely infinite.  Since pure infiniteness is closed under
  extensions by \cite{Kirchberg-Rordam:Non-simple_pi}*{Theorem 4.19},
  \(B\)
  is purely infinite.  Since~\(B\)
  has finite ideal structure, \(B\)
  has the ideal property by
  \cite{Kwasniewski-Szymanski:Pure_infinite}*{Lemma~4.9}.

  Now suppose that projections in~\(A\)
  separate the ideals in \(\I^B(A)\cong \I(B)\).
  Then~\(B\)
  has the ideal property.  Let \(b\in B^+\setminus\{0\}\)
  and let~\(J\)
  be an ideal in~\(B\)
  with \(b\notin J\).
  By \cite{Kirchberg-Rordam:Non-simple_pi}*{Proposition~3.14}, it
  suffices to show that the image of~\(b\)
  under the quotient map \(q\colon B\to B/J\)
  is infinite in~\(q(B)\).
  We may find \(a\in A^+\setminus J\)
  such that \(q(a)\precsim q(b)\).
  By \cite{Kirchberg-Rordam:Non-simple_pi}*{Proposition 3.14},
  \(q(a)\)
  is properly infinite in~\(q(B)\).
  By our assumption, we may find a non-zero projection~\(p\)
  that belongs to \(A\cap \overline{BaB}\)
  but not to \(I\defeq A\cap J\).
  Since~\(q(a)\)
  is properly infinite and~\(q(p)\)
  belongs to the ideal \(\overline{q(B)q(a)q(B)}\),
  we get \(q(p) \precsim q(a)\)
  by \cite{Kirchberg-Rordam:Non-simple_pi}*{Proposition 3.5(ii)}.
  Hence \(q(p) \precsim q(b)\).
  Since~\(q(p)\)
  is a properly infinite projection, \(b\)
  is infinite by \cite{Kirchberg-Rordam:Non-simple_pi}*{Lemma~3.17},
  see also \cite{Kirchberg-Rordam:Non-simple_pi}*{Lemma 3.12(iv)}.
\end{proof}

The analogues of (properly and residually) infinite elements for Fell
bundles are defined in
\cite{Kwasniewski-Szymanski:Pure_infinite}*{Definition~5.1}.  We add
strongly separated pairs to this list:

\begin{definition}
  \label{def:infinite_elements_B}
  Let \(\mathcal{B}=\{B_g\}_{g\in G}\)
  be a Fell bundle and let \(A\defeq B_e\).
  \begin{enumerate}
  \item \label{en:infinite_elements_B1}%
    We call \(a\in A^+\setminus\{0\}\)
    \emph{\(\mathcal{B}\)\nb-infinite}
    if there is \(b\in A^+\setminus\{0\}\)
    such that for each \(\varepsilon >0\)
    there are \(n,m\in\N\)
    and \(t_i\in G\),
    \(a_i\in a B_{t_i}\) for \(i=1,\dotsc,n+m\) such that
    \[
    a \approx_\varepsilon \sum_{i=1}^n a_i^*a_i, \quad
    b\approx_\varepsilon\sum_{i=n+1}^{n+m}a_i^*a_i
    \quad \text{and} \quad
    a_i^* a_j\approx_{\varepsilon/\max\{n^2, m^2\}} 0
    \quad \text{for}\ i\neq j.
    \]
    We call \(a\in A^+\setminus\{0\}\)
    \emph{residually \(\mathcal{B}\)\nb-infinite}
    if \(a+I\)
    is \(\mathcal{B}|_{A/I}\)\nb-infinite
    in \(A/I\)
    for all \(I\in \I^{\mathcal{B}}(A)\) with \(a\notin I\).

  \item \label{en:infinite_elements_B2}%
    We call \(a\in A^+\setminus\{0\}\)
    \emph{properly \(\mathcal{B}\)\nb-infinite}
    (or \(\mathcal{B}\)\nb-paradoxical)
    if for each \(\varepsilon >0\)
    there are \(n,m\in\N\)
    and \(a_i\in a B_{t_i}\),
    \(t_i\in G\) for \(i=1,\dotsc,n+m\) such that
    \[
    a \approx_\varepsilon \sum_{i=1}^n a_i^*a_i,\quad
    a\approx_\varepsilon\sum_{i=n+1}^{n+m}a_i^*a_i
    \quad \text{and}\quad
    a_i^* a_j\approx_{\varepsilon/\max\{n^2, m^2\}} 0
    \quad\text{for}\ i\neq j.
    \]
  \item \label{en:infinite_elements_B3}%
    We call \(a,b\in A^+\setminus\{0\}\)
    \emph{strongly \(\mathcal{B}\)\nb-separated}
    if for each \(c\in B_g\), \(g\in G\),
    and each \(\varepsilon >0\)
    there are \(n,m\in\N\)
    and \(s_i\in G\),
    \(a_i\in a B_{s_i}\)
    for \(i=1,\dotsc,n\)
    and \(t_j\in G\),
    \(b_j\in bB_{t_j}\) for \(j=1,\dotsc,m\) such that
    \[
    a \approx_\varepsilon \sum_{i=1}^n a_i^*a_i,\qquad
    b\approx_\varepsilon\sum_{j=1}^mb_i^*b_i
    \quad \text{and} \quad
    a_i^* c b_j\approx_{\varepsilon/nm} 0
    \quad\text{for all}\ i, j
    \]
    and \(a_i^* a_j\approx_{\varepsilon/n^2} 0\)
    and \(b_i^* b_j\approx_{\varepsilon/m^2} 0\)
    for all \(i, j\) with \(i\neq j\).
  \end{enumerate}
\end{definition}

\begin{remark}
  Let \(\alpha\colon G\to \Aut(A)\)
  be a group action and let \(\mathcal{B}\defeq \mathcal{A}_\alpha\)
  be the associated Fell bundle.  By
  \cite{Kirchberg-Sierakowski:Strong_pure}*{Remark~5.4},
  the action~\(\alpha\)
  is \(G\)\nb-separating
  in the sense of
  \cite{Kirchberg-Sierakowski:Strong_pure}*{Definition~5.1}
  if and only each pair \(a,b\in A^+\setminus\{0\}\)
  is strongly \(\mathcal{B}\)\nb-separated
  with \(n=m=1\).
  Allowing arbitrary \(n\)
  and~\(m\)
  gives a weaker condition.
\end{remark}

\begin{lemma}
  \label{lem:from separiation to diagonalisation}
  Let \(\mathcal{B}=\{B_g\}_{g\in G}\)
  be a Fell bundle.  Put \(A\defeq B_e\)
  and let \(B=\overline{\bigoplus_{g\in G} B_g}\)
  be any \(\Cst\)\nb-completion.
  If each pair of elements \(a,b\in A^+\setminus\{0\}\)
  is strongly \(\mathcal{B}\)\nb-separated,
  then each pair of elements \(a,b\in A^+\setminus\{0\}\)
  has the matrix diagonalisation in~\(B\)
  described in Proposition~\textup{\ref{pro:Kirchberg_Sierakowski}}.
\end{lemma}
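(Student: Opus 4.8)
\textit{Strategy.} I would reduce the matrix diagonalisation property to a finite computation with the fibres of~\(\mathcal{B}\) and then feed that computation into the strong separation hypothesis. The decisive structural point is that the hypothesis supplies strong \(\mathcal{B}\)\nb-separation for \emph{every} pair in \(A^+\setminus\{0\}\), so it may be applied again to pairs produced along the way, which makes an inductive argument possible.

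First I would fix \(a,b\in A^+\setminus\{0\}\), an element \(x\in B\) with \(\left(\begin{smallmatrix} a&x^*\\ x&b \end{smallmatrix}\right)\in M_2(B)^+\), and \(\varepsilon>0\). Positivity of the \(2\times2\) matrix gives \(x^*x\le\norm{b}a\) and \(xx^*\le\norm{a}b\), so by the Cohen--Hewitt factorisation theorem \(x\in\overline{b^{1/2}Ba^{1/2}}\). Since \(a^{1/2},b^{1/2}\) lie in the unit fibre \(A=B_e\) and hence respect the grading, \(x\) can be approximated within any \(\delta>0\) by a finite sum \(\sum_{g\in F}b^{1/2}c_g a^{1/2}\) with \(F\subseteq G\) finite and \(c_g\in B_g\) (note \(b^{1/2}c_g a^{1/2}\in B_g\)). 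As the elements \(d_1,d_2\) constructed below have \textit{a priori} bounded norm, it suffices to make \(d_1^*xd_2\) small after replacing \(x\) by \(\sum_{g\in F}b^{1/2}c_g a^{1/2}\) and, if convenient, \(a,b\) by \(a+\delta,b+\delta\) (undone at the end).

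Next I would prove by induction on \(\abs{F}\) that for every pair \(p,q\in A^+\setminus\{0\}\) and every \(\eta>0\) there are \(N,M\in\N\) and elements \(a_1,\dots,a_N\), \(b_1,\dots,b_M\) with \(a_i\in pB_{s_i}\), \(b_j\in qB_{t_j}\) such that \(p\approx_\eta\sum_i a_i^*a_i\), \(q\approx_\eta\sum_j b_j^*b_j\), \(\norm{a_i^*a_j}<\eta/N^2\) and \(\norm{b_i^*b_j}<\eta/M^2\) for \(i\neq j\), and \(\norm{a_i^* c_g b_j}<\eta/(NM\abs{F})\) for all \(i,j\) and all \(g\in F\) (with the fibre elements \(c_g\), or their conjugates, inserted on the correct side). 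The base case \(\abs{F}=1\) is exactly strong \(\mathcal{B}\)\nb-separation of \((p,q)\). For the inductive step one peels off one \(g\in F\) by applying strong \(\mathcal{B}\)\nb-separation to \((p,q)\), obtaining a new pair \((p',q')=\bigl(\sum a_i'^*a_i',\sum b_j'^*b_j'\bigr)\) within any prescribed tolerance of \((p,q)\), and applies the inductive hypothesis to \((p',q')\) for the remaining \(\abs{F}-1\) fibres; this is legitimate precisely because the lemma's hypothesis gives strong separation for \emph{every} pair in \(A^+\setminus\{0\}\). One then checks that the estimates obtained for the first fibre survive the refinement, up to a controlled error, since the final elements factor through those of the refined pair. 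Taking \((p,q)=(a,b)\) and \(\eta\) small enough yields families killing all \(c_g\), \(g\in F\), simultaneously.

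Finally I would assemble \(d_1,d_2\): using Cohen--Hewitt write \(a_i=a^{1/2}\eta_i\) and \(b_j=b^{1/2}\theta_j\) with \(\eta_i\in B_{s_i}\), \(\theta_j\in B_{t_j}\), and set \(d_1\defeq\sum_i\eta_i\), \(d_2\defeq\sum_j\theta_j\), both of bounded norm. Then \(d_1^*ad_1=\sum_{i,i'}\eta_i^*a\eta_{i'}=\sum_i a_i^*a_i+\sum_{i\neq i'}a_i^*a_{i'}\approx_{2\eta}a\) and similarly \(d_2^*bd_2\approx_{2\eta}b\), because each off-diagonal batch has fewer than \(N^2\) (resp.\ \(M^2\)) terms of norm \(<\eta/N^2\) (resp.\ \(<\eta/M^2\)); and \(d_1^*xd_2\) reduces, up to the conjugation conventions fixed above, to a sum of fewer than \(NM\abs{F}\) products of the shape \(a_i^*(\cdot)b_j\), hence \(\approx_\eta 0\). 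Rescaling \(\eta\) against \(\varepsilon\) and undoing the perturbations from the first step completes the proof. The main obstacle is the bookkeeping in the induction: controlling how the tolerances and the Cohen--Hewitt norm bounds compound across the \(\abs{F}\) steps, and verifying that refining a pair to kill one more fibre does not spoil the estimates already achieved for the previous fibres. A secondary technical point is matching one-sided supports: positivity of the matrix factors \(x\) through \(b^{1/2}(\cdot)a^{1/2}\), whereas strong separation produces expressions of the form \(a_i^*(\cdot)b_j\); this is handled by applying strong separation to the pair \((b,a)\), equivalently by using the conjugate bimodules \(B_g^*\) and the fibre elements \((b^{1/2}c_g a^{1/2})^*\).
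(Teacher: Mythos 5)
Your overall plan diverges from the paper's: the paper never attempts to handle a general \(x\in B\) by hand. It only verifies the diagonalisation for \emph{homogeneous} \(x\in B_g\) — a single application of strong \(\mathcal{B}\)\nb-separation with \(c\defeq a^{\nicefrac12}xb^{\nicefrac12}\in B_g\), taking \(d_1,d_2\) in the algebraic sum \(\mathcal{S}\defeq\bigoplus_g B_g\) — and then invokes \cite{Kirchberg-Sierakowski:Filling_families}*{Lemma~5.6}, whose hypotheses (\(\mathcal{S}\) a multiplicative semigroup, \(\mathcal{S}^*\mathcal{C}\mathcal{S}\subseteq\mathcal{C}\), \(A\mathcal{S}A\subseteq\mathcal{S}\), \(\clsp\mathcal{C}=B\) for \(\mathcal{C}\defeq\bigcup_g B_g\)) are trivial for a Fell bundle, to upgrade this to arbitrary \(x\in B\). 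Your induction over the homogeneous components of \(x\) is an attempt to reprove that upgrade from scratch, and as sketched it has two genuine gaps.

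First, the initial reduction is circular. You replace \(x\) by a finite sum of homogeneous pieces "since the elements \(d_1,d_2\) constructed below have a priori bounded norm", but they do not: the separation hypothesis gives no control over the number \(n,m\) of terms nor over the norms of the factors \(\eta_i,\theta_j\) in \(a_i=a^{\nicefrac12}\eta_i\), \(b_j=b^{\nicefrac12}\theta_j\) (Cohen--Hewitt gives existence, not norm bounds), so \(\norm{d_1},\norm{d_2}\) are unbounded. The accuracy \(\delta\) of the approximation of \(x\) must be fixed before \(d_1,d_2\) are built, yet the tolerable \(\delta\) depends on \(\norm{d_1}\norm{d_2}\); breaking this circularity is exactly the nontrivial content of the cited Lemma~5.6. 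Second, the key claim in your inductive step — that "the estimates obtained for the first fibre survive the refinement" — is unjustified and, in the form stated, does not hold: if the refined pair is \((p',q')=\bigl(\sum_i a_i'^*a_i',\sum_j b_j'^*b_j'\bigr)\) and the new families lie in \(p'B_s\), \(q'B_t\), then a new cross term with \(c_{g_1}\) expands into sums of products of the shape \(\zeta^*a_i'^*\,a_i'c_{g_1}b_j'^*\,b_j'\omega\), which involves \(a_i'c_{g_1}b_j'^*\) rather than the controlled quantities \(a_i'^*c_{g_1}b_j'\); moreover elements of \(p'B_s\) are not of the required form \(a\cdot(\text{element of }B_s)\), and repairing that costs perturbations of size \(\norm{p'-a}\cdot\norm{\zeta}\) with uncontrolled \(\norm{\zeta}\). (Your side remark that the orientation mismatch \(x\in\overline{b^{\nicefrac12}Ba^{\nicefrac12}}\) is fixed by applying separation to \((b,a)\) also does not work: that merely replaces \(c\) by \(c^*\) and leaves \(a^{\nicefrac12}\) and \(b^{\nicefrac12}\) on the same sides of the controlled expression.) So the proposal is not a complete proof; either supply the bookkeeping that Lemma~5.6 of \cite{Kirchberg-Sierakowski:Filling_families} encapsulates, or follow the paper and reduce to homogeneous \(x\) and cite that lemma.
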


\begin{proof}
  Let \(\CC\defeq \bigcup_{g\in G} B_g\)
  and \(\mathcal{S}\defeq \bigoplus_{g\in G} B_g\).
  Since each pair of elements in \(A^+\setminus\{0\}\)
  is strongly \(\mathcal{B}\)\nb-separated,
  each pair of elements \(a, b\in A^+\setminus\{0\}\)
  has the matrix diagonalisation property with respect to \(\CC\)
  and~\(\mathcal{S}\)
  as introduced in
  \cite{Kirchberg-Sierakowski:Filling_families}*{Definition~4.6}.
  Indeed, let \(x\in B_g\)
  be such that
  \(\left(\begin{smallmatrix} a&x^*\\x&b \end{smallmatrix}\right) \in
  M_2(B)^+\)
  and let \(\varepsilon >0\).
  Let \(a_i\in a B_{s_i}\)
  and \(b_j\in b B_{t_j}\)
  satisfy the conditions described in
  Definition~\ref{def:infinite_elements_B}.\ref{en:infinite_elements_B3}
  with \(c\defeq a^{\nicefrac12}x b^{\nicefrac12}\).
  We may assume \(a_i=a^{\nicefrac12}x_i\)
  and \(b_j=b^{\nicefrac12}y_j\) for some \(x_i\), \(y_j\).  Let
  \[
  d_1\defeq \sum_{i=1}^n x_i,\qquad
  d_2\defeq \sum_{j=1}^n y_j.
  \]
  The estimates in
  Definition~\ref{def:infinite_elements_B}.\ref{en:infinite_elements_B3},
  imply
  \[
  d_1^*a d_1 \approx_{2\varepsilon} a,\qquad
  d_2^* b d_2\approx_{2\varepsilon} b,\qquad
  d_1^* x d_2\approx_\varepsilon 0.
  \]
  This proves our claim.

  Clearly, \(\mathcal{S}\)
  is a multiplicative subsemigroup of~\(B\),
  \(\mathcal{S}^* \mathcal{C}\mathcal{S}\subseteq \mathcal{C}\),
  \(A \mathcal{S} A\subseteq \mathcal{S}\),
  and \(\clsp\{\mathcal{C}\}=B\).
  Thus \cite{Kirchberg-Sierakowski:Filling_families}*{Lemma~5.6}
  implies that each pair of elements \(a, b\in A^+\setminus\{0\}\)
  has the matrix diagonalisation property in~\(B\).
\end{proof}

\begin{theorem}
  \label{the:pure_infiniteness_paradoxical_Fell_bundles}
  Let \(\mathcal{B}=\{B_g\}_{g\in G}\)
  be a residually aperiodic, exact Fell bundle.  Let \(A\defeq B_e\).
  Then \(\Cred(\mathcal{B})\)
  is strongly purely infinite if one of the following conditions
  holds:
  \begin{enumerate}[wide,label=\textup{(\ref*{the:pure_infiniteness_paradoxical_Fell_bundles}.\arabic*)}]
  \item\label{en:separation_of_pairs}%
    each pair of elements in \(A^+\setminus\{0\}\)
    is strongly \(\mathcal{B}\)\nb-separated;
  \item\label{en:finite_plus_infinity}%
    \(\I^{\mathcal{B}}(A)\)
    is finite and each element in \(A^+\setminus\{0\}\)
    is Cuntz equivalent in~\(\Cred(\mathcal{B})\) to a residually \(\mathcal{B}\)\nb-infinite
    element;
  \item \label{en:IP_plus_infinity}%
    projections in~\(A\)
    separate ideals in~\(\I^\mathcal{B}(A)\)
    and each element in \(A^+\setminus\{0\}\)
    is Cuntz equivalent in~\(\Cred(\mathcal{B})\) to a residually \(\mathcal{B}\)\nb-infinite
    element;
  \item\label{en:RR_zero_plus_infinity}%
    \(A\)~is
    of real rank zero and each non-zero projection in~\(A\)
    is Cuntz equivalent  in~\(\Cred(\mathcal{B})\) to one which is
    residually \(\mathcal{B}\)\nb-infinite.
  \end{enumerate}
\end{theorem}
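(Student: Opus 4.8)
Write \(B\defeq\Cred(\mathcal{B})\).  The plan is to route residual aperiodicity and exactness through Theorem~\ref{the:general_separation_Fell_bundles}, which tells us that \(A\) separates ideals in~\(B\), that \(A\) residually supports~\(B\), and that \(A^+\) is a filling family for~\(B\).  Since \(A\) separates ideals, every ideal \(J\triangleleft B\) is graded (Corollary~\ref{cor:separation_in_reduced_cross}), hence of the form \(\Cred(\mathcal{B}|_I)\) with \(I\defeq J\cap A\in\I^{\mathcal{B}}(A)\) (Proposition~\ref{pro:gauge-invariance_vs_separation}); in particular \(\I^B(A)=\I^{\mathcal{B}}(A)\), and by exactness \(B/J\cong\Cred(\mathcal{B}|_{A/I})\), a reduced section algebra whose unit fibre is the image \(A/I\) of~\(A\).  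We use these identifications throughout.  For Case~\ref{en:separation_of_pairs}, Lemma~\ref{lem:from separiation to diagonalisation} turns strong \(\mathcal{B}\)\nb-separation of the pairs in \(A^+\setminus\{0\}\) into the matrix diagonalisation property in~\(B\) for every pair of elements of~\(A^+\) (the cases with a vanishing entry being trivial), and Proposition~\ref{pro:Kirchberg_Sierakowski}, applicable because \(A^+\) is a filling family for~\(B\), then yields that \(B\) is strongly purely infinite.

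For the remaining three cases the crux is the claim that every residually \(\mathcal{B}\)\nb-infinite element \(a\in A^+\setminus\{0\}\) is properly infinite in~\(B\).  By \cite{Kirchberg-Rordam:Non-simple_pi}*{Proposition~3.14} it suffices to prove that \(a+J\) is infinite in~\(B/J\) for every ideal \(J\triangleleft B\) with \(a\notin J\).  Set \(I\defeq J\cap A\); then \(a\notin I\), so \(a+I\) is \(\mathcal{B}|_{A/I}\)\nb-infinite in~\(A/I\), and \(a+J\) corresponds to \(a+I\) under \(B/J\cong\Cred(\mathcal{B}|_{A/I})\).  Finally, \(\mathcal{B}|_{A/I}\)\nb-infiniteness of a unit-fibre element implies ordinary infiniteness in \(\Cred(\mathcal{B}|_{A/I})\): for \(\varepsilon>0\) one takes the elements \(a_i\in(a+I)(\mathcal{B}|_{A/I})_{t_i}\) from Definition~\ref{def:infinite_elements_B}.\ref{en:infinite_elements_B1} and sets \(x\defeq\sum_{i\le n}a_i\), \(y\defeq\sum_{i>n}a_i\); the weight \(\varepsilon/\max\{n^2,m^2\}\) swamps all cross terms, so, up to a harmless rescaling of~\(\varepsilon\), \(x^*x\approx_\varepsilon a+I\), \(y^*y\approx_\varepsilon b\) and \(x^*y\approx_\varepsilon0\) with \(x,y\in(a+I)\Cred(\mathcal{B}|_{A/I})\) (compare \cite{Kwasniewski-Szymanski:Pure_infinite}*{Section~5}).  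Hence \(a+J\) is infinite in~\(B/J\).  Since proper infiniteness is invariant under Cuntz equivalence, it follows that every element of \(A^+\setminus\{0\}\) that is Cuntz equivalent in~\(B\) to a residually \(\mathcal{B}\)\nb-infinite element is properly infinite in~\(B\).

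In Cases~\ref{en:finite_plus_infinity} and~\ref{en:IP_plus_infinity}, \(\I^B(A)=\I^{\mathcal{B}}(A)\) is finite or has its ideals separated by projections in~\(A\), and each element of \(A^+\setminus\{0\}\) is properly infinite in~\(B\) by the previous paragraph; so Proposition~\ref{pro:pure_infiniteness} gives that \(B\) is purely infinite, has the ideal property, and is therefore strongly purely infinite.  For Case~\ref{en:RR_zero_plus_infinity}, real rank zero of~\(A\) makes projections in~\(A\) separate all of its ideals and hence those in \(\I^{\mathcal{B}}(A)=\I^B(A)\); it remains to see that the weaker hypothesis on projections still forces every \(a\in A^+\setminus\{0\}\) to be properly infinite in~\(B\).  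Fix such an~\(a\) and an ideal \(J\triangleleft B\) with \(a\notin J\); since \(\overline{aAa}\) again has real rank zero, there is a projection \(p\in\overline{aAa}\) with \(p\notin J\) (lift a non-zero projection from the real-rank-zero quotient \(\overline{aAa}/(\overline{aAa}\cap J)\)), and then \(p\precsim a\).  By hypothesis \(p\) is Cuntz equivalent in~\(B\) to a residually \(\mathcal{B}\)\nb-infinite element, hence properly infinite in~\(B\) by the claim; so, writing \(q\colon B\to B/J\) for the quotient map, \(q(p)\) is a non-zero properly infinite projection in~\(B/J\) with \(q(p)\precsim q(a)\), whence \(q(a)\) is infinite by \cite{Kirchberg-Rordam:Non-simple_pi}*{Lemma~3.17}, exactly as in the proof of Proposition~\ref{pro:pure_infiniteness}.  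Then \(a\) is properly infinite in~\(B\) by \cite{Kirchberg-Rordam:Non-simple_pi}*{Proposition~3.14}, and Proposition~\ref{pro:pure_infiniteness} finishes as in the other cases.

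The supporting and filling-family inputs come for free from Theorem~\ref{the:general_separation_Fell_bundles}, and the Cuntz-comparison steps inside \(B/J\) are routine.  The main thing to get right is the residual bookkeeping — using Corollary~\ref{cor:separation_in_reduced_cross}, Proposition~\ref{pro:gauge-invariance_vs_separation} and exactness to identify \(B/J\) with \(\Cred(\mathcal{B}|_{A/I})\), so that the Fell-bundle notions of infiniteness transfer correctly between \(A/I\) and \(B/J\) — together with, in Case~\ref{en:RR_zero_plus_infinity}, the real-rank-zero argument that promotes properly infinite projections in~\(B\) to properly infinite positive elements of~\(A\) inside~\(B\).
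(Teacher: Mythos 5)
Your proposal is correct and follows the paper's own route: Theorem~\ref{the:general_separation_Fell_bundles} supplies the filling/residually supporting properties, case~\ref{en:separation_of_pairs} is handled via Lemma~\ref{lem:from separiation to diagonalisation} and Proposition~\ref{pro:Kirchberg_Sierakowski}, and cases \ref{en:finite_plus_infinity}--\ref{en:RR_zero_plus_infinity} via Proposition~\ref{pro:pure_infiniteness}. The only difference is that you inline details the paper delegates to citations -- spelling out, using gradedness of all ideals and exactness to identify \(\Cred(\mathcal{B})/J\cong\Cred(\mathcal{B}|_{A/I})\), why residual \(\mathcal{B}\)\nb-infiniteness gives proper infiniteness in \(\Cred(\mathcal{B})\), and replacing the appeal to \cite{Kwasniewski:Crossed_products}*{Lemma~2.44} in case~\ref{en:RR_zero_plus_infinity} by a direct real-rank-zero argument -- both of which are sound.
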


\begin{proof}
  By Theorem~\ref{the:general_separation_Fell_bundles}, \(A^+\)
  is a filling family for~\(\Cred(\mathcal{B})\),
  and~\(A\)
  residually supports~\(\Cred(\mathcal{B})\).
  Therefore, \ref{en:separation_of_pairs} implies that
  \(\Cred(\mathcal{B})\)
  is strongly purely infinite by Lemma~\ref{lem:from separiation to
    diagonalisation} and Proposition~\ref{pro:Kirchberg_Sierakowski}.
  If we assume \ref{en:finite_plus_infinity}
  or~\ref{en:IP_plus_infinity}, then \(\Cred(\mathcal{B})\)
  is strongly purely infinite by
  Proposition~\ref{pro:pure_infiniteness}: the assumption that
  \(a\in A^+\setminus\{0\}\)
  is Cuntz equivalent to a residually \(\mathcal{B}\)\nb-infinite
  element implies that~\(a\)
  is properly infinite in~\(\Cred(\mathcal{B})\).
  By \cite{Kwasniewski:Crossed_products}*{Lemma~2.44},
  condition~\ref{en:RR_zero_plus_infinity} implies that each element
  in \(A^+\setminus\{0\}\)
  is properly infinite in \(\Cred(\mathcal{B})\).
  Hence~\(\Cred(\mathcal{B})\)
  is strongly purely infinite, again by
  Proposition~\ref{pro:pure_infiniteness}
\end{proof}

\begin{remark}
  \label{rem:twisted_properties3}
  Let \(\mathcal{B}^{\omega}=(B_g)_{g\in G}\)
  be a deformation of a Fell bundle \(\mathcal{B}=(B_g)_{g\in G}\)
  by a \(2\)\nb-cocycle~\(\omega\).
  For any \(b\in B_g\),
  we have \(b^{* \omega}\cdot_\omega b=b^*b\in A\defeq B_e\).
  Thus every property of \(a\in A^+\setminus\{0\}\)
  described in Definition~\ref{def:infinite_elements_B} holds
  in~\(\mathcal{B}\)
  if and only if it holds in~\(\mathcal{B}^{\omega}\).
  Accordingly, by Remark~\ref{rem:twisted_properties2}, if~\(G\)
  is exact or~\(\mathcal{B}\)
  is minimal, then the assumptions and conditions in
  Theorem~\ref{the:pure_infiniteness_paradoxical_Fell_bundles} hold
  for~\(\mathcal{B}\)
  if and only if they hold for~\(\mathcal{B}^{\omega}\).
  The pure infiniteness criteria in
  Theorem~\ref{the:pure_infiniteness_paradoxical_Fell_bundles} when
  applied to crossed products are weaker than those in
  \cites{Laca-Spielberg:Purely_infinite,
    Jolissaint-Robertson:Simple_purely_infinite,
    Rordam-Sierakowski:Purely_infinite,
    Giordano-Sierakowski:Purely_infinite,
    Kirchberg-Sierakowski:Strong_pure}, compare
  \cite{Kwasniewski-Szymanski:Pure_infinite}*{Corollary~5.14}.  In
  particular, the pure infiniteness criteria in the above sources
  remain valid also in the twisted case.  Moreover, twisted
  \(k\)\nb-graph \(\Cst\)\nb-algebras
  \cite{Kumjian-Pask-Sims:Homology}*{Definition 7.4} are modelled in
  a natural way by twisted Fell bundles, see
  \cite{Raeburn:Deformations}*{Corollary 4.9}.  Therefore,
  Theorem~\ref{the:pure_infiniteness_paradoxical_Fell_bundles} can
  be used to analyse when such algebras are purely infinite,
  compare
  \cite{Kwasniewski-Szymanski:Pure_infinite}*{Theorem~7.9}.
\end{remark}


\section{Morita restrictions and coverings of Hilbert bimodules}
\label{sec:Kish_permanence}

First, we prove that Kishimoto's condition is invariant under
Morita equivalence and preserved by restriction to possibly
non-invariant ideals.  We combine both processes in one step, which we
call Morita restriction.

\begin{proposition}
  \label{pro:Kish_Morita_restriction}
  Let~\(Y\)
  be a Hilbert \(B\)\nb-bimodule
  and~\(\E\)
  a Hilbert \(A,B\)\nb-bimodule
  that is full over~\(A\).
  Then \(X \defeq \E \otimes_B Y \otimes_B \E^*\)
  is a Hilbert \(A\)\nb-bimodule.
  If~\(Y\) satisfies Kishimoto's condition, then so does~\(X\).
\end{proposition}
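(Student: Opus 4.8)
The plan is to verify Kishimoto's condition for $X = \E \otimes_B Y \otimes_B \E^*$ directly from the definition, using the fullness of~$\E$ over~$A$ to transport hereditary subalgebras of~$A$ to hereditary subalgebras of~$B$. Fix $D \in \Her(A)$ and an element $\xi \in X$; we must show $\inf\{\norm{a\xi a} \mid a \in D^+, \norm{a}=1\} = 0$. By Lemma~\ref{lem:kishimoto_subspace}, $\Kish(X)$ is a closed linear subspace, so it suffices to treat elementary tensors $\xi = e_1 \otimes y \otimes e_2^*$ with $e_1, e_2 \in \E$ and $y \in Y$; general elements are norm-limits of finite sums of these, and a finite sum of Kishimoto elements is again one. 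Rescaling, we may assume $\norm{e_1} = \norm{e_2} = 1$.

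First I would produce, from $D$ and~$\varepsilon$, a hereditary subalgebra of~$B$ on which we can apply Kishimoto's condition for~$Y$. The idea is: since~$\E$ is full over~$A$, the ideal ${}_A\braket{\E}{\E} = A$, so we can approximate $e_1$ (inside~$D^+ \cdot \E$, after cutting down) by elements of the form $a' \cdot \eta$ with $a' \in D^+$; more usefully, given any $d \in D^+$ of norm one, the closed linear span $\overline{d \cdot \E} \otimes_B \cdots$ relates to the hereditary subalgebra $\overline{\braket{d\E}{d\E}_B}$ of~$B$. Concretely, set $E_1 \defeq \overline{\braket{\E d}{\E d}_B} \in \Her(B)$ using an auxiliary $d$ obtained from Lemma~\ref{lem:technical} applied to a suitable positive element of~$D$; one checks this is non-zero because~$\E$ is full over~$A$ and $d \ne 0$. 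Now apply Kishimoto's condition for~$Y$ (with hereditary subalgebra $E_1$ and the element $b \defeq \braket{e_1}{e_1'}_B \cdot (\text{something involving } y) \cdot \braket{e_2'}{e_2}_B \in B$, for appropriate $e_i' \in \E$) to obtain $b' \in E_1^+$ with $\norm{b'} = 1$ and $\norm{b' \, y \, b'}$ small, then pull this back through~$\E$ to an element $a \in D^+$ with $\norm{a} = 1$ that makes $\norm{a \xi a}$ small. The pullback uses the inner-product identities \eqref{eq:left_innprod}--\eqref{eq:right_innprod} and the associativity of the tensor products, together with the estimate $\norm{a \xi a} = \norm{a e_1 \otimes y \otimes e_2^* a}$, which one bounds in terms of $\norm{\braket{e_2 a}{e_2 a}_B^{1/2} \, y \, \braket{e_1 a}{e_1 a}_B^{1/2}}$ after reorganizing the tensor factors.

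The main obstacle is bookkeeping: matching a hereditary subalgebra $D \subseteq A$ against the ``right'' hereditary subalgebra of~$B$ so that the element $a \in D^+$ on which Kishimoto's condition for~$Y$ is tested genuinely has norm one and genuinely controls $\norm{a\xi a}$. The subtlety is that $a \mapsto \braket{\E a}{\E a}_B$ need not be injective, so one must work with the cut-down provided by Lemma~\ref{lem:technical} to ensure a non-zero hereditary subalgebra downstairs; one then transports Kishimoto's witness $b'$ back upstairs and checks that the almost-idempotency built into Lemma~\ref{lem:technical} absorbs the cross terms, much as in the proof of Theorem~\ref{the:Kishimoto_spectrum}. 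Once this correspondence of hereditary subalgebras is set up carefully, the norm estimate is a routine application of the $\Cst$-identity and the fact that $\norm{x \otimes_B z} \le \norm{x}\,\norm{z}$ together with $\norm{e^* e}_B = \norm{e}^2$ for $e \in \E$.
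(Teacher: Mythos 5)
Your outline reproduces the paper's overall shape (reduce to elementary tensors \(e_1\otimes y\otimes e_2^*\) via Lemma~\ref{lem:kishimoto_subspace}, transfer the hereditary subalgebra \(D\in\Her(A)\) to~\(B\), invoke Kishimoto's condition for~\(Y\), pull the witness back), but the step you describe as ``bookkeeping'' is precisely where the idea of the proof lives, and as sketched it does not go through. Two concrete problems. First, Kishimoto's condition for~\(Y\) produces a \emph{single} positive contraction \(b\) with \(\norm{b\,y'\,b}\) small for a \emph{fixed} \(y'\in Y\); but for a general \(a\in D^+\) the quantity you must control is \(\norm{a e_1\otimes y\otimes e_2^*a}\), which works out to \(\norm{\,\abs{a e_1}\;y\;\abs{a e_2}\,}\) with the two different legs \(\abs{a e_1}=\braket{a e_1}{a e_1}_B^{1/2}\) and \(\abs{a e_2}\) both determined by~\(a\). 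This is not of the form \(b\,y'\,b\), so Kishimoto's condition downstairs does not apply directly, and nothing in your plan collapses the two legs to one element. Second, your transfer \(D\rightsquigarrow \overline{\braket{d\,\E}{d\,\E}_B}\) produces an ideal of~\(B\), and there is no ``pullback through~\(\E\)'' sending a witness \(b'\) in it to a norm-one element \(a\in D^+\) with \(\norm{a\xi a}\) controlled by \(\norm{b'y'b'}\): the correspondence relates algebras, not elements, so the witness has no canonical preimage in~\(D\). (Also, fullness of~\(\E\) over~\(A\) does not let you approximate an arbitrary \(e_1\in\E\) by elements of \(D\cdot\E\); what it gives is \(A\cong\Comp(\E)\), hence \(D\cong\Comp(D\cdot\E)\), and in particular \(D\cdot\E\neq0\). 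The issue is not injectivity of \(a\mapsto\braket{\E a}{\E a}_B\).)

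The paper resolves both points by making \(a\) a rank-one operator built from a single vector. Fix a unit vector \(\eta\in\F\defeq D\cdot\E\) (non-zero by fullness), set \(y'\defeq\braket{\eta}{e_1}_B\,y\,\braket{e_2}{\eta}_B\in Y\), and apply Lemma~\ref{lem:technical} to \(\abs{\eta}=\braket{\eta}{\eta}_B^{1/2}\in B\) to get \(D_0\in\Her(\overline{\abs{\eta}B\abs{\eta}})\) on which \(\norm{\eta b}\ge(1-\varepsilon)\norm{b}\). Kishimoto's condition for~\(Y\) applied to \((y',D_0)\) gives \(b\) with \(\norm{b\,y'\,b}<\varepsilon(1-\varepsilon)^2\); the lower bound lets one normalise \(b_0\defeq b/\norm{\eta b}\) so that \(\norm{\eta b_0}=1\) while \(\norm{b_0y'b_0}<\varepsilon\). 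Then \(a\defeq\ket{\eta b_0}\bra{\eta b_0}\in\Comp(\F)^+\cong D^+\) has norm exactly one, and \(a\,(e_1\otimes y\otimes e_2^*)\,a=\eta b_0\otimes b_0y'b_0\otimes(\eta b_0)^*\): because \(a\) is rank one, both legs collapse to the same \(b_0\), and the norm is \(<\varepsilon\). So the role of Lemma~\ref{lem:technical} here is the normalisation estimate, not absorbing cross terms, and the choice of a rank-one \(a\) under the identification \(D\cong\Comp(D\cdot\E)\) is the missing ingredient; without it, or an equivalent device, your plan stalls at the pullback step.
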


\begin{proof}
Let
  \(x \defeq e \otimes_B y_0 \otimes_B f^* \in \E\otimes_B Y \otimes_B
  \E^*\)
  with \(e,f\in \E\),
  \(y_0\in Y\).  Elements of this form
  are linearly dense in~\(X\)
  by definition.  Hence, by Lemma~\ref{lem:kishimoto_subspace}, it suffices
  to check Kishimoto's condition for~\(x\)
  of this form.  Fix \(D\in \Her(A)\)
  and \(\varepsilon>0\).
  We are going to find \(a\in D^+\)
  with \(\norm{a}=1\) and \(\norm{a x a}<\varepsilon.\)

  Let \(\F\defeq D\cdot \E=\{d\cdot x\mid d\in D,\ x\in \E\}\).
  This is a Hilbert \(B\)\nb-submodule
  of~\(\E\).
  The left action of~\(A\)
  on~\(\E\)
  gives an isomorphism \(A\cong \Comp(\E)\)
  because~\(\E\)
  is full over~\(A\).
  We claim that this isomorphism maps~\(D\)
  onto~\(\Comp(\F)\).
  Since \(D=D A D\),
  it maps~\(D\)
  onto \(D \Comp(\E) D\),
  which is the closed linear span of
  \(d_1\ket{x}\bra{y}d_2^* = \ket{d_1 x}\bra{d_2 y}\)
  for \(d_1,d_2\in D\),
  \(x,y\in\E\).
  This is the same as the closed linear span of \(\ket{x}\bra{y}\)
  for \(x, y\in \F\),
  which is \(\Comp(\F)\), viewed as a subalgebra of~\(\Comp(\E)\).

  So \(\F\neq\{0\}\)
  and there is \(\eta\in \F\)
  with \(\norm{\eta}=1\).
  Lemma~\ref{lem:technical} for
  \(\abs{\eta} \defeq \sqrt{\langle\eta,\eta \rangle_B}\in B\)
  gives \(D_0\in \Her(\overline{\abs{\eta} B\abs{\eta}})\)
  with \(\norm{\abs{\eta} b} \ge (1-\varepsilon) \norm{b}\)
  for all \(b\in D_0\).  If \(b\in D_0\), then
  \begin{equation}
    \label{eq:lower_bound_eta_b}
    \norm{\eta b}^2
    = \norm{\braket{\eta b}{\eta b}_B}
    = \norm{b^* \abs{\eta}^2 b}
    = \norm{\abs{\eta} b}^2
    \ge (1-\varepsilon)^2\norm{b}^2.
  \end{equation}
  Put
  \[
  y\defeq \braket{\eta}{e}_B \cdot y_0\cdot \braket{f}{\eta}_B \in Y.
  \]
  Kishimoto's condition for~\(Y\)
  gives \(b\in D_0^+\)
  with \(\norm{b}=1\)
  and \(\norm{b y b}<\varepsilon (1-\varepsilon)^2\).
  Let \(b_0\defeq b/\norm{\eta b}\in D_0^+\),
  so that \(\norm{\eta b_0} = 1\).
  Equation~\eqref{eq:lower_bound_eta_b} implies
  \(\norm{b_0 y b_0}<\varepsilon\).
  The rank-one operator \(\ket{\eta b_0}\bra{\eta b_0}\)
  belongs to~\(\Comp(\F)^+\)
  and has norm
  \(\norm{\ket{\eta b_0}\bra{\eta b_0}} = \norm{\eta b_0}^2 = 1\).
  The isomorphism \(\Comp(\F)\cong D\)
  maps it to an element \(a\in D^+\) with \(\norm{a}=1\) and
  \begin{align*}
    \norm{a x a}
    &= \norm{ \ket{\eta b_0}\bra{\eta b_0} e \otimes y_0 \otimes f^*
      \ket{\eta b_0}\bra{\eta b_0}}\\
    &= \norm{\eta b_0\cdot\braket{\eta b_0}{e}_B \otimes y_0
      \otimes (\eta b_0\cdot \braket{\eta b_0}{f}_B)^*}\\
    &= \norm{\eta b_0\otimes b_0^*\braket{\eta}{e}_B\cdot y_0 \cdot
      \braket{f}{\eta}_B b_0 \otimes (\eta b_0)^*}\\
    &= \norm{\eta b_0\otimes b_0 y b_0 \otimes  (\eta b_0)^*}
    < \varepsilon.
  \end{align*}
  Hence \(x\in\Kish(X)\).
\end{proof}

\begin{definition}
  \label{def:Morita_restriction}
  In the situation of Proposition~\ref{pro:Kish_Morita_restriction},
  we call~\(X\)
  a \emph{Morita restriction} of~\(Y\).
  If, in addition, \(\E\)
  is an equivalence bimodule, we call \(X\)
  and~\(Y\) \emph{Morita equivalent}.
\end{definition}

\begin{corollary}
  \label{cor:Morita_kishimoto}
  If \(X\) and~\(Y\) are Morita equivalent Hilbert bimodules,
  then~\(X\) satisfies Kishimoto's condition if and only if~\(Y\)
  does.
\end{corollary}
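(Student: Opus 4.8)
The plan is to derive both implications of the equivalence from Proposition~\ref{pro:Kish_Morita_restriction}, using that an equivalence bimodule is invertible. By definition, \(X\) and~\(Y\) being Morita equivalent means \(X = \E \otimes_B Y \otimes_B \E^*\) for some equivalence \(A,B\)\nb-bimodule~\(\E\). Since an equivalence bimodule is in particular full over~\(A\), one implication — that \(Y\) satisfying Kishimoto's condition forces \(X\) to satisfy it — is exactly the conclusion of Proposition~\ref{pro:Kish_Morita_restriction} and needs no extra argument.

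For the converse I would exploit the symmetry of the situation. First I would note that the conjugate bimodule~\(\E^*\) is again an equivalence bimodule, now between~\(B\) and~\(A\), with \((\E^*)^* \cong \E\); in particular it is full over both~\(A\) and~\(B\). Then, using the tensor-product isomorphisms \eqref{eq:left_mult}--\eqref{eq:right_innprod} together with the associativity of the interior tensor product, I would identify
\begin{align*}
  \E^* \otimes_A X \otimes_A \E
  &= \E^* \otimes_A \E \otimes_B Y \otimes_B \E^* \otimes_A \E\\
  &\cong B \otimes_B Y \otimes_B B \cong Y,
\end{align*}
where \(\E^* \otimes_A \E \cong \braket{\E}{\E}_B = B\) because~\(\E\) is full over~\(B\). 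Thus~\(Y\) is isomorphic to the Morita restriction of~\(X\) along the bimodule~\(\E^*\).

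Finally I would invoke Proposition~\ref{pro:Kish_Morita_restriction} once more, with the roles of~\(\E\) and~\(Y\) now played by~\(\E^*\) and~\(X\) respectively: this gives that \(X\) satisfying Kishimoto's condition forces \(\E^* \otimes_A X \otimes_A (\E^*)^* \cong Y\) to satisfy it. Here I only use the trivial remark that Kishimoto's condition, as in Definition~\ref{def:Kishimoto_Hilm}, depends solely on the isomorphism class of a Hilbert bimodule. Combining the two implications yields the corollary. I do not expect a real obstacle here: the only points needing care are that~\(\E^*\) is full over both algebras and that the displayed chain consists of isomorphisms of Hilbert bimodules rather than merely of Banach spaces — both are routine facts about equivalence bimodules and the interior tensor product, essentially recorded in \eqref{eq:left_mult}--\eqref{eq:right_innprod}.
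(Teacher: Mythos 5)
Your argument is correct and is exactly the intended one: the paper states the corollary without proof because it is immediate from Proposition~\ref{pro:Kish_Morita_restriction} together with the symmetry you spell out, namely that \(\E^*\) is again an equivalence bimodule and \(\E^*\otimes_A X\otimes_A\E\cong Y\) via \eqref{eq:left_mult}--\eqref{eq:right_innprod}. No gaps; your fullness checks for \(\E^*\) are the only points needing care, and you handle them correctly.
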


Our Morita equivalences between Hilbert bimodules are the same as
in~\cite{Abadie-Eilers-Exel:Morita_bimodules}:

\begin{lemma}
  \label{lem:equivalence_for_Hilbert_bimodules}
  An equivalence \(A\)\nb-\(B\)-bimodule~\(\E\)
  witnesses an equivalence between \(X\)
  and~\(Y\) if and only if \(X\otimes_A \E \cong \E \otimes_B Y\).
\end{lemma}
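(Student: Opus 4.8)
The plan is to read the equivalence off directly from the canonical isomorphisms \eqref{eq:left_mult}--\eqref{eq:right_innprod} together with the associativity of the interior tensor product of Hilbert bimodules recalled after~\eqref{eq:restricted_mult}. Recall that, by Definition~\ref{def:Morita_restriction}, the statement that the equivalence bimodule~\(\E\) witnesses an equivalence between~\(X\) and~\(Y\) means exactly that \(X \cong \E \otimes_B Y \otimes_B \E^*\) as Hilbert \(A\)\nb-bimodules. Since~\(\E\) is an equivalence bimodule, it is full over both \(A\) and~\(B\), so \eqref{eq:left_innprod} and~\eqref{eq:right_innprod} specialise to the ``cancellation'' isomorphisms of Hilbert bimodules \(\E \otimes_B \E^* \cong {}_A\braket{\E}{\E} = A\) and \(\E^* \otimes_A \E \cong \braket{\E}{\E}_B = B\). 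These are the only structural inputs.

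For the forward implication I would start from \(X \cong \E \otimes_B Y \otimes_B \E^*\), tensor on the right with~\(\E\) over~\(A\), reassociate to get \(\E \otimes_B Y \otimes_B (\E^* \otimes_A \E)\), and cancel \(\E^* \otimes_A \E \cong B\) using~\eqref{eq:right_mult}, which yields \(X \otimes_A \E \cong \E \otimes_B Y\). For the converse I would start from \(X \otimes_A \E \cong \E \otimes_B Y\), tensor on the right with~\(\E^*\) over~\(B\), reassociate to get \((X \otimes_A \E) \otimes_B \E^* \cong X \otimes_A (\E \otimes_B \E^*)\), and cancel \(\E \otimes_B \E^* \cong A\), then use \(X \otimes_A A \cong X\) from~\eqref{eq:right_mult}, recovering \(X \cong \E \otimes_B Y \otimes_B \E^*\); by Definition~\ref{def:Morita_restriction} this is precisely the assertion that~\(\E\) witnesses the desired equivalence. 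Each direction is a short chain of canonical isomorphisms, and the two are genuinely inverse manipulations of one another.

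The only point needing care — and really the sole obstacle — is the bookkeeping of the bimodule structures and of the coefficient algebra over which each tensor product is formed, so that every intermediate object is a Hilbert bimodule of the correct type (\(X\) is a Hilbert \(A\)\nb-bimodule, \(\E\) a Hilbert \(A,B\)\nb-bimodule, \(Y\) a Hilbert \(B\)\nb-bimodule, whence \(X \otimes_A \E\) and \(\E \otimes_B Y\) are Hilbert \(A,B\)\nb-bimodules) and so that the isomorphisms produced are isomorphisms of Hilbert bimodules, not merely of one-sided Hilbert modules. All of this is subsumed in the bicategory structure on Hilbert bimodules of~\cite{Buss-Meyer:Actions_groupoids}, in which associativity and the unit isomorphisms~\eqref{eq:left_mult} and~\eqref{eq:right_mult} are coherence data; once that is invoked, the argument is essentially a one-line computation. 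Finally, applying the same reasoning with~\(\E^*\) in place of~\(\E\) shows that~\(\E\) witnesses an equivalence between~\(X\) and~\(Y\) if and only if~\(\E^*\) witnesses one between~\(Y\) and~\(X\), which is why the relation deserves to be called \emph{Morita equivalent}.
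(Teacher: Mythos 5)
Your proposal is correct and follows essentially the same argument as the paper: tensor the assumed isomorphism with \(\E\) (respectively \(\E^*\)) on the right and cancel via \eqref{eq:left_innprod}, \eqref{eq:right_innprod} and \eqref{eq:right_mult}, using fullness of the equivalence bimodule. The paper's proof is exactly these two chains of canonical isomorphisms, so there is nothing to add.
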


\begin{proof}
  On the one hand, \(X \cong \E \otimes_B Y \otimes_B \E^*\) implies
  \[
  X\otimes_A \E \cong \E \otimes_B Y \otimes_B \E^* \otimes_A \E \cong
  \E \otimes_B Y \otimes_B B \cong \E \otimes_B Y
  \]
  by \eqref{eq:right_innprod} and~\eqref{eq:right_mult}.  On the other
  hand, \(X\otimes_A \E \cong \E \otimes_B Y\)
  and \eqref{eq:left_innprod} and~\eqref{eq:right_mult} imply
  \[
  \E \otimes_B Y \otimes_B \E^* \cong X\otimes_A \E \otimes_B \E^*
  \cong X\otimes_A A \cong X.\qedhere
  \]
\end{proof}

\begin{remark}
  \label{rem:restrict_Hilbert_bimodules}
  If~\(\E\)
  is only full over~\(A\),
  then it is an equivalence between~\(A\)
  and the ideal \(J\defeq \braket{\E}{\E}_B \idealin B\).
  Then \(\E = \E\cdot J \cong \E\otimes_B J\)
  by~\eqref{eq:restricted_mult} and hence
  \[
  X \defeq \E\otimes_B Y \otimes_B \E^*
  \cong \E\otimes_B J\otimes_B Y \otimes_B J \otimes_B \E^*
  \cong \E\otimes_B (J\cdot Y\cdot J) \otimes_B \E^*.
  \]
  Thus~\(\E\)
  witnesses a Morita equivalence between~\(X\)
  and the Hilbert \(J\)\nb-bimodule~\(J Y J\).
  We may split a Morita restriction into two steps.  First, we
  restrict the Hilbert \(B\)\nb-module~\(Y\)
  to a Hilbert \(J\)\nb-module~\(J Y J\)
  for an ideal~\(J\) in~\(B\);
  secondly, we replace this by a Morita equivalent Hilbert bimodule.
  Restriction to any ideal is a special case of Morita restriction.
\end{remark}

\begin{remark}
  \label{rem:restrictions_of_Fell_bundles}
  Let \(\mathcal{C} = (C_g)_{g\in G}\)
  be a Fell bundle with unit fibre \(C \defeq C_e\).
  Let~\(\E\)
  be a Hilbert \(A,C\)\nb-bimodule
  that is full over~\(A\).
  We transfer the Fell bundle~\(\mathcal{C}\)
  to a Fell bundle
  \(\E \mathcal{C} \E^* \defeq (\E\otimes_C C_g \otimes_C \E^*)_{g\in
    G}\),
  which has the unit fibre
  \(\E \otimes_C C\otimes_C \E^* \cong \E \otimes_C \E^*\cong A\)
  by~\eqref{eq:left_innprod}; it is equipped with the multiplication
  maps and involutions defined by
  \begin{align*}
    (x_1\otimes (c_1,g_1) \otimes y_1^*) \cdot
    (x_2\otimes (c_2,g_2) \otimes y_2^*)
    &\defeq x_1\otimes (c_1,g_1)\cdot (\braket{y_1}{x_2}_C\cdot c_2,g_2)
    \otimes y_2^*,
    \\
    (x_1\otimes (c_1,g_1) \otimes y_1^*)^*
    &\defeq y_1 \otimes (c_1,g_1)^* \otimes x_1^*
  \end{align*}
  for \(x_1,x_2,y_1,y_2\in \E\), \(c_1,c_2\in C\), \(g_1,g_2\in G\).
  Here~\((c_i,g_i)\) denotes~\(c_i\) viewed as an element of the
  fibre~\(C_{g_i}\) of our Fell bundle at~\(g_i\), and~\(y_1^*\)
  denotes~\(y_1\) viewed as an element of the complex conjugate
  Banach space~\(\E^*\).  If \(\mathcal{C}_\gamma \defeq
  (C_{\gamma_g})_{g\in G}\) is the Fell bundle associated to a group
  action \(\gamma\colon G\to \Aut(C)\), then the multiplication and
  involution on \(\E \mathcal{C}_\gamma \E^*\) become
   \begin{align*}
    (x_1\otimes (c_1,g_1) \otimes y_1^*) \cdot
    (x_2\otimes (c_2,g_2) \otimes y_2^*)
    &= x_1\otimes (c_1 \gamma_{g_1}(\braket{y_1}{x_2}_C\cdot c_2), g_1 g_2),
    \\
    (x_1\otimes (c_1,g_1) \otimes y_1^*)^*
    &= y_1 \otimes (\gamma_{g_1^{-1}}(c_1^*),g_1^{-1}) \otimes x_1^*.
  \end{align*}
\end{remark}

Our next goal is
Proposition~\ref{pro:dense_Morita_covering}, which allows, roughly
speaking, to detect various non-triviality conditions for a Hilbert
bimodule~\(X\)
by looking at other Hilbert bimodules that are Morita equivalent to
pieces of~\(X\).
It combines various permanence properties of Kishimoto's condition.
The first property is Morita invariance.  The second is invariance
in both directions
under passing to \emph{essential} ideals.  The third is a locality
condition: if~\(A\)
is a sum of ideals, then~\(X\)
satisfies Kishimoto's condition if and only if its restrictions to
all these ideals do so.  The proofs of our main theorems will show the
usefulness of the last technical property.
The second property is
related to the equivalence of \ref{en:Kishimoto_spectrum}
and~\ref{en:Kishimoto_spectrum_Mult} because \(B\subseteq \Mult(I)\)
if~\(I\)
is an essential ideal in~\(B\).

\begin{definition}
  \label{def:covering_Hilbert_bimodule}
  Let~\(A\)
  be a \(\Cst\)\nb-algebra
  and~\(X\)
  a Hilbert \(A\)\nb-bimodule.
  Let~\(S\)
  be a set.  For each \(i\in S\),
  let \(B_i\)
  be a \(\Cst\)\nb-algebra
  and~\(\E_i\)
  a Hilbert \(B_i,A\)-bimodule.
  Define \(I\defeq \left({}_A\braket{X}{X}+ \braket{X}{X}_A\right)\idealin A\),
  \(K_i \defeq \braket{\E_i}{\E_i}_A \idealin A\)
  and \(Y_i \defeq \E_i\otimes_A X\otimes_A \E_i^*\)
  for \(i\in S\).
  Let \(K\in\I^X(A)\)
  be the smallest \(X\)\nb-invariant
  ideal that contains~\(K_i\)
  for all \(i\in S\).
  We say that~\((Y_i)_{i\in S}\)
  \emph{essentially covers~\(X\)
    \textup{(}up to Morita equivalence\textup{)}} or, more briefly,
  that it is a \emph{dense Morita covering} for~\(X\)
  if \(I\cap K\)
  is an essential ideal in~\(I\).
  We say that~\((Y_i)_{i\in S}\)
  \emph{covers~\(X\)
    \textup{(}up to Morita equivalence\textup{)}} and call it a
  \emph{Morita covering} for~\(X\) if \(I\subseteq K\).
\end{definition}

In this definition, \(I\)
is the smallest ideal for which~\(X\)
is a Hilbert \(I\)\nb-bimodule.
The Hilbert \(B_i\)\nb-bimodule~\(Y_i\)
is Morita equivalent through~\(\E_i\)
to the restriction \(K_i \cdot X\cdot K_i\)
of~\(X\)
to the ideal~\(K_i\).
An ideal~\(J\)
in~\(A\)
corresponds to an open subset~\(\widehat{J}\)
in~\(\widehat{A}\),
and~\(J\)
is \(X\)\nb-invariant
if and only if~\(\widehat{J}\)
is \(\widehat{X}\)\nb-invariant.
The ideal \(I\cap K\)
is essential in~\(I\)
if and only if \(\widehat{I}\cap\widehat{K}\)
is dense in~\(\widehat{I}\).
So the condition for~\((Y_i)_{i\in S}\)
to \emph{essentially} cover~\(X\)
says that the \(\widehat{X}\)\nb-orbit
of \(\bigcup \widehat{K_i} \cap \widehat{I}\)
is dense in~\(\widehat{I}\);
equivalently, any \(\widehat{X}\)\nb-invariant
open subset of~\(\widehat{I}\)
meets \(\bigcup \widehat{K_i}\).
And \((Y_i)_{i\in S}\)
covers~\(X\)
if the \(\widehat{X}\)\nb-orbit
of \(\bigcup \widehat{K_i}\)
contains~\(\widehat{I}\).

\begin{proposition}
  \label{pro:dense_Morita_covering}
  Let~\((Y_i)_{i\in S}\)
  essentially cover~\(X\) up to Morita equivalence.
  \begin{enumerate}
  \item \label{en:dense_Morita_covering_Kish}%
    \(X\)~satisfies Kishimoto's condition if and only if~\(Y_i\)
    does so for all \(i\in S\);
  \item \label{en:dense_Morita_covering_top}%
    \(X\)~is topologically non-trivial if and only if~\(Y_i\)
    is so for all \(i\in S\);
  \item \label{en:dense_Morita_covering_outer}%
    \(X\)~is purely outer if and only if~\(Y_i\)
    is so for all \(i\in S\);
  \item \label{en:dense_Morita_covering_weakly_outer}%
    \(X\)~is purely universally weakly outer if and only if~\(Y_i\)
    is so for all \(i\in S\).
  \end{enumerate}
\end{proposition}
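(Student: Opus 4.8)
The plan is to reduce all four equivalences to one scheme built from permanence properties. Each of the four properties of~\(X\) depends only on~\(X\) as a Hilbert bimodule over the ideal \(I\defeq{}_A\braket{X}{X}+\braket{X}{X}_A\): for Kishimoto's condition because \(D\cdot X=0\) whenever \(D\in\Her(A)\) has \(D\cap I=0\), so in~\eqref{eq:inf_equation} one may replace \(D\in\Her(A)\) by \(\overline{DID}\in\Her(I)\); for the other three it is immediate from the definitions. So I would first assume \(A=I\); then \(I\) is automatically \(X\)\nb-invariant with \(X/XI=0\) (using \(\overline{XI}=\overline{IX}=X\)), and one checks, reading everything through the dual partial homeomorphism, that the hypothesis that \((Y_i)_{i\in S}\) essentially covers~\(X\) becomes: the smallest \(X\)\nb-invariant ideal~\(K\) containing all~\(K_i\) is an \emph{essential} ideal of~\(A\). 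After replacing~\(B_i\) by \({}_{B_i}\braket{\E_i}{\E_i}\) (harmless for all four properties), \(\E_i\) is an equivalence \(B_i,K_i\)-bimodule, so by Remark~\ref{rem:restrict_Hilbert_bimodules} it witnesses a Morita equivalence of~\(Y_i\) with the restriction \(X_{K_i}\defeq K_i\cdot X\cdot K_i\) of~\(X\) to~\(K_i\). Using Corollary~\ref{cor:Morita_kishimoto} and its counterparts for the other three properties — topological non-triviality is Morita invariant because \(\widehat{Y_i}\) is conjugate to \(\widehat X|_{\widehat{K_i}}\); the two outerness conditions because a Morita equivalence matches up \(X\)\nb-invariant ideals and identity-bimodule isomorphisms, over biduals as well, cf.\ the argument of Lemma~\ref{lem:outer_Hilbert_bimodule} — it then suffices to prove, for each property~\(P\), that~\(X\) has~\(P\) if and only if~\(X_{K_i}\) has~\(P\) for all~\(i\).

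For the ``only if'' direction, \(X_{K_i}\) is simply the restriction of~\(X\) to the ideal \(K_i\idealin A\): for Kishimoto's condition this is Proposition~\ref{pro:Kish_Morita_restriction} (via Remark~\ref{rem:restrict_Hilbert_bimodules}); for topological non-triviality since \(\widehat{X_{K_i}}=\widehat X|_{\widehat{K_i}}\); and for the outerness conditions since an ideal of~\(K_i\) is an ideal of~\(A\) with \(X_{K_i}\cdot J=X\cdot J\), so a witnessing ideal for~\(X_{K_i}\) witnesses for~\(X\). For the ``if'' direction I would factor through the restriction \(XK\). Step one: \(XK\) has~\(P\) if and only if~\(X_{K_i}\) has~\(P\) for all~\(i\). ``Only if'' is the previous paragraph with~\(XK\) in place of~\(X\), since \(K_i\idealin K\). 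For ``if'', suppose~\(XK\) fails~\(P\), yielding a ``bad datum'' carried by a nonzero hereditary subalgebra, respectively a nonzero \(X\)\nb-invariant ideal, of~\(K\). Since~\(K\) is the smallest \(X\)\nb-invariant ideal containing \(\bigcup_iK_i\), every nonzero \(X\)\nb-invariant ideal of~\(K\) meets some~\(K_i\) (otherwise all~\(K_i\) would lie in its annihilator, an \(X\)\nb-invariant ideal, contradicting minimality of~\(K\)); equivalently, the \(\widehat X\)\nb-orbit of \(\bigcup_i\widehat{K_i}\) fills~\(\widehat K\). Transporting the bad datum by a suitable element of a higher fibre \(X^{\otimes m}\) of the Fell bundle of~\(X\) (Example~\ref{def:Fell_bundle_for_Hilbert_bimodule}) moves it into~\(K_i\), contradicting that~\(X_{K_i}\) has~\(P\); for the outerness conditions this transport is trivial (restriction to any ideal preserves the isomorphism class of an invariant ideal), and for topological non-triviality it is trivial because~\(\widehat X\) is the identity on its fixed points.

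Step two: \(X\) has~\(P\) if and only if~\(XK\) has~\(P\), using that~\(K\) is essential in~\(A\). ``Only if'' is restriction to the ideal~\(K\). For ``if'': for Kishimoto's condition, given \(D\in\Her(A)\) and \(x\in X\), the subalgebra \(\overline{DKD}\in\Her(K)\) is nonzero by essentiality, and a cut-down inside it as in Lemma~\ref{lem:technical} — exactly as in the proof that~\ref{en:Kishimoto_spectrum} implies~\ref{en:Kishimoto_spectrum_Mult} in Theorem~\ref{the:Kishimoto_spectrum} — pushes the bimodule element~\(x\) into~\(XK\) and lets one invoke Kishimoto's condition for~\(XK\); for the other three, a witnessing \(X\)\nb-invariant ideal~\(J\) for~\(X\) has \(J\cap K\neq0\) by essentiality, and \(J\cap K\) is a witnessing ideal contained in~\(K\), while for topological non-triviality interior fixed points of~\(\widehat X\) meet the dense open set~\(\widehat K\). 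Combining steps one and two yields the ``if'' direction and completes the proof.

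The hard part will be the ``if'' half of step one for Kishimoto's condition: deducing~\eqref{eq:inf_equation} for the restriction~\(XK\) from its validity on the pieces~\(X_{K_i}\), which sit over the \emph{non-invariant} ideals~\(K_i\) that generate~\(K\) only after passing to the \(\widehat X\)\nb-orbit. One must transport a configuration violating~\eqref{eq:inf_equation} along finitely many steps of the partial homeomorphism~\(\widehat X\) until it reaches some~\(\widehat{K_i}\), i.e.\ conjugate the offending hereditary subalgebra and bimodule element by an element of a higher fibre \(X^{\otimes m}\) and keep the resulting \(\varepsilon\)'s under control, much as in Proposition~\ref{pro:Kish_Morita_restriction}. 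Together with the Lemma~\ref{lem:technical} cut-downs needed for the essential-ideal permanence in step two, this dynamical transport is where the real work lies; the analogous step for the three non-dynamical properties is comparatively routine.
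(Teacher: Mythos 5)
Your reductions for parts \ref{en:dense_Morita_covering_top}--\ref{en:dense_Morita_covering_weakly_outer} and for all the ``easy'' directions are sound and close in spirit to the paper: restricting to the support ideal, replacing \(Y_i\) by the Morita equivalent restriction \(K_iXK_i\), and using essentiality of the orbit ideal \(K\) are exactly the right moves (one small slip: the blanket claim ``\(X_{K_i}\cdot J = X\cdot J\) for every ideal \(J\idealin K_i\)'' is false in general, e.g.\ when \(\widehat{X}(\widehat{J})\not\subseteq\widehat{K_i}\); it does hold for a \emph{witnessing} ideal, because \(X_{K_i}J\cong J\) forces \(\widehat{X}(\widehat{J})=\widehat{J}\) and hence \(XJ=\overline{JXJ}\subseteq K_iXJ\), but that argument needs to be said).

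The genuine gap is the ``if'' half of part \ref{en:dense_Morita_covering_Kish}, which is the heart of the proposition and which you explicitly leave as ``where the real work lies''. After your reductions, a hereditary subalgebra \(D\subseteq K\) need not meet any of the original ideals \(K_i\) — it may sit entirely over \(\widehat{X_n}(\widehat{K_i})\) for some \(n\neq 0\) — so some bridge between \(K\) and the pieces is unavoidable; your proposed bridge, transporting a configuration that \emph{violates}~\eqref{eq:inf_equation} by a single element of a higher fibre \(X^{\otimes m}\) ``with the \(\varepsilon\)'s under control'', is precisely the unproved content, and the contrapositive framing makes it harder than necessary: the bad datum is an infimum over all of \(D^+\), and conjugating by one element of \(X^{\otimes m}\) does not obviously preserve a positive lower bound. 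The paper needs no new estimates at this point: it enlarges the covering family by setting \(\E_{i,n}\defeq \E_i\otimes_A X_{-n}\), so that the coefficient ideals \(K_{i,n}\defeq\braket{\E_{i,n}}{\E_{i,n}}_A\) satisfy \(\widehat{K_{i,n}}=\widehat{X_n}(\widehat{K_i})\) and span \(K\); then any \(D\) with \(D\cap K\neq 0\) meets some \(K_{i,n}\), the restriction \(K_{i,n}XK_{i,n}\) is a Morita restriction of \(Y_i\) and hence satisfies Kishimoto's condition by Proposition~\ref{pro:Kish_Morita_restriction} (using also that the property passes to the submodules \(Y_{i,n}\subseteq Y_i\)), and a cut-down as in Lemma~\ref{lem:technical} replaces \((x,D)\) by \((dxd,D_0)\) with \(dxd\in K_{i,n}XK_{i,n}\) and \(a\,dxd\,a=axa\) for \(a\in D_0\). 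Without either this enlargement trick or a worked-out quantitative transport, your proof of \ref{en:dense_Morita_covering_Kish} is incomplete.
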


\begin{proof}
  First we improve our Morita covering by enlarging~\(S\).
  Let~\((X_n)_{n\in\Z}\) be the Fell bundle generated by~\(X\) and let
  \[
  \E_{i,n}\defeq \E_i \otimes_A X_{-n},\qquad
  K_{i,n} \defeq \braket{\E_{i,n}}{\E_{i,n}}_A
  \]
  for \(i\in S\), \(n\in\Z\).  Then
  \[
  Y_{i,n}
  \defeq \E_{i,n} \otimes_A X \otimes_A \E_{i,n}^*
  = \E_i \otimes_A X_{-n} \otimes_A X\otimes_A X_n \otimes_A \E_i^*
  \subseteq \E_i \otimes_A X \otimes_A \E_i^*
  = Y_i
  \]
  because~\((X_n)_{n\in\Z}\)
  is a Fell bundle.  Kishimoto's condition is inherited by
  submodules, and so are the properties in
  \ref{en:dense_Morita_covering_top}--%
  \ref{en:dense_Morita_covering_weakly_outer}.
	Thus we may
  replace~\(Y_i\) by the family~\((Y_{i,n})_{i\in S, n\in\Z}\).

  The tensor product of Hilbert bimodules corresponds to the
  composition of partial maps.  Thus the ideal
  \(\braket{\E_{i,n}}{\E_{i,n}}_A\)
  corresponds to the domain of the partial map
  \(\widehat{A}\to\widehat{B}_i\)
  associated to \(\widehat{\E_i}\circ \widehat{X_{-n}}\).
  This is the same as \(\widehat{X_{-n}}^{-1} = \widehat{X_n}\)
  applied to the domain of~\(\widehat{\E_i}\),
  which is~\(\widehat{K_i}\).  Thus
  \(\widehat{K_{i,n}} = \widehat{X_n}(\widehat{K_i})\).
  So \(\widehat{K} \defeq \bigcup \widehat{K_{i,n}}\)
  is the \(\widehat{X}\)\nb-orbit
  of~\(\bigcup \widehat{K_i}\).
  The ideal~\(K\)
  corresponding to~\(\widehat{K}\)
  is \(X\)\nb-invariant.
  By assumption, each open subset of~\(\widehat{I}\)
  meets~\(\widehat{K}\).
  Equivalently, \(I\cap K\) is essential in~\(I\).

  Now we prove~\ref{en:dense_Morita_covering_Kish}.  First assume
  that~\(X\)
  satisfies Kishimoto's condition.  By definition, \(Y_i\)
  for \(i\in S\)
  is a Morita restriction of~\(X\).
  Thus it satisfies Kishimoto's condition by
  Proposition~\ref{pro:Kish_Morita_restriction}.  Conversely, assume
  that~\(Y_i\)
  satisfies Kishimoto's condition for all \(i\in S\).
  We want to prove Kishimoto's condition for~\(X\).
  So we fix \(x\in X\)
  and \(D\in\Her(A)\).
  If \(D\cap I=0\),
  then \(X\cdot D=0\)
  and hence any \(x\in D^+\)
  with \(\norm{x}=1\)
  will do.  So we may assume that \(D\cap I\neq0\).
  This implies \(D\cap K\neq 0\)
  because \(I\cap K\)
  is essential in~\(I\).

  The ideal~\(K\)
  is the closed linear span of the ideals
  \(K_{i,n}\defeq \braket{\E_{i,n}}{\E_{i,n}}_A\)
  with~\(\E_{i,n}\)
  for \(i\in S\),
  \(n\in \Z\)
  as above.  Hence there are \(i\in S\),
  \(n\in \Z\)
  with \(D\cap K_{i,n}\neq0\).
  Kishimoto's condition for \(x\)
  and~\(D\)
  is weaker than the same condition for \(x\)
  and~\(D\cap K_{i,n}\).
  Hence we may assume without loss of generality that
  \(D\in\Her(K_{i,n})\).
  The Hilbert bimodule~\(\E_{i,n}\)
  witnesses that the Hilbert \(K_{i,n}\)-bimodule
  \(K_{i,n} X K_{i,n}\) is a Morita restriction of~\(Y_i\):
  \[
  K_{i,n} X K_{i,n} = \E_{i,n}^* \otimes_{B_i} \E_{i,n} \otimes_A X
  \otimes_A \E_{i,n}^* \otimes_{B_i} \E_{i,n}
  = \E_{i,n}^* \otimes_{B_i} Y_i \otimes_{B_i} \E_{i,n}.
  \]
  Since~\(Y_i\)
  satisfies Kishimoto's condition, so does~\(K_{i,n} X K_{i,n}\)
  by Proposition~\ref{pro:Kish_Morita_restriction}.
  Lemma~\ref{lem:technical} gives \(d\in D^+\)
  with \(\norm{d}=1\)
  such that \(D_0 \defeq\{a\in A\mid da=ad=a\}\)
  is in \(\Her(D)\).
  Since \(d x d\in DXD\subseteq K_{i,n} X K_{i,n}\),
  we may apply Kishimoto's condition for \(K_{i,n} X K_{i,n}\)
  to the pair~\((d x d,D_0)\).
  This gives \(a_\varepsilon\in D_0^+\)
  with \(\norm{a_\varepsilon}=1\)
  and \(\norm{a_\varepsilon d x d a_\varepsilon}<\varepsilon\)
  for all \(\varepsilon>0\).
  Since \(D_0\subseteq D\)
  and \(a d = a = a d\)
  for \(a\in D_0\),
  the elements~\(a_\varepsilon\)
  witness Kishimoto's condition for~\((x,D)\).
  Hence~\(X\) satisfies Kishimoto's condition.

  We prove~\ref{en:dense_Morita_covering_top}.  Assume first
  that~\(Y_i\)
  is not topologically non-trivial for some \(i\in S\).
  That is, there are \(i\in S\)
  and an open subset~\(U\)
  of~\(\widehat{B_i}\)
  on which~\(\widehat{Y_i}\)
  acts identically.  The Hilbert bimodule~\(\E_i\)
  induces a homeomorphism~\(\widehat{\E_i^*}\)
  from~\(\widehat{B_i}\)
  to the open subset~\(\widehat{K_i}\)
  in~\(\widehat{A}\).
  Since the action of Hilbert bimodules on representations is
  functorial with respect to the composition of Hilbert bimodules,
  \(\widehat{X}\)
  acts identically on~\(\widehat{\E_i^*}(U)\),
  which is open in~\(\widehat{A}\).
  Conversely, assume that~\(X\)
  is not topologically non-trivial.  So there is a nonempty open subset~\(U\)
  of~\(\widehat{I}\)
  with \(\widehat{X}|_U = \id_U\).
  The subset~\(U\)
  is \(\widehat{X}\)\nb-invariant.
  It intersects the image~\(\widehat{K_i}\)
  of~\(\widehat{\E_i^*}\)
  non-trivially for some \(i\in S\)
  because~\((Y_i)_{i\in S}\) essentially covers~\(X\).
  Then \(V \defeq \widehat{\E_i}(U)\)
  is a nonempty open subset of~\(\widehat{B_i}\)
  with \(\widehat{Y_i}|_V = \id_V\).
  So~\(Y_i\) is not topologically non-trivial.

  We prove~\ref{en:dense_Morita_covering_outer}.
  The same argument for the bidual \(\textup{W}^*\)\nb-algebras
  shows~\ref{en:dense_Morita_covering_weakly_outer}.
  Assume first
  that~\(Y_i\)
  is partly inner for some \(i\in S\).
  Then there is a non-zero invariant ideal \(L\idealin B_i\)
  such that \(L\cdot Y_i = Y_i\cdot L\)
  is isomorphic to~\(L\)
  with the obvious Hilbert bimodule structure.  For any ideal~\(L\)
  in~\(B_i\),
  \(\E_i^* \otimes_{B_i} Y_i \cdot L \otimes_{B_i} \E_i\)
  is a Hilbert subbimodule in
  \(\E_i^* \otimes_{B_i} Y_i \otimes_{B_i} \E_i\),
  which is, in turn, a Hilbert subbimodule in~\(X\).
  Thus
  \(\E_i^* \otimes_{B_i} Y_i \cdot L \otimes_{B_i} \E_i = X\cdot L' =
  L'\cdot X\),
  where \(L' \defeq \braket{L\cdot \E_i}{L\cdot \E_i}_A \idealin A\)
  is the ideal in~\(A\)
  corresponding to~\(L\)
  under the Rieffel correspondence for~\(\E_i\).
  Since \(Y_i\cdot L \cong L\),
  the restriction \(L'\cdot X\)
  is isomorphic to
  \(\E_i^* \otimes_{B_i} Y_i \cdot L \otimes_{B_i} \E_i \cong \E_i^*
  \otimes_{B_i} L \otimes_{B_i} \E_i \cong \braket{\E_i\cdot
    L}{\E_i\cdot L}_A = L'\).  Thus~\(X\) is partly inner.

  Conversely, if~\(X\)
  is partly inner, then there is a non-zero ideal \(L\idealin A\)
  such that \(X\cdot L \cong L\)
  as Hilbert bimodules.  We have \(L\subseteq I\)
  because \(X=X\cdot I\).
  Since \(K\cap I\)
  is essential in~\(I\),
  the intersection \(L\cap K\)
  is non-zero.  Then \(L\cap K_{i,n} \neq0\)
  for some \(i\in S\),
  \(n\in\Z\)
  because
  \(\widehat{L} \cap \bigcup \widehat{K_{i,n}} \neq\emptyset\).
  Then \(Y_{i,n} \defeq \E_{i,n} \otimes_A X \otimes_A \E_{i,n}^*\)
  contains the non-zero Hilbert subbimodule
  \(\E_{i,n} \otimes_A (X\cdot L) \otimes_A \E_{i,n}^* \cong \E_{i,n}
  \otimes_A L \otimes_A \E_{i,n}^* \cong {}_{B_i}\braket{\E_{i,n}
    L}{\E_{i,n}L}\),
  which is isomorphic to the ideal of~\(B_{i,n}\)
  corresponding to \(L\cap K_{i,n}\)
  under the Rieffel correspondence.  This is a non-zero ideal~\(L'\)
  with \(Y_{i,n} L' \cong L'\).  Thus~\(Y_{i,n}\) is partly inner.
\end{proof}


\begin{corollary}
  \label{cor:restriction_to_essential_ideals}
  Let~\(K\) be an essential ideal in \(\left({}_A\braket{X}{X}+
    \braket{X}{X}_A\right)\).  The Hilbert \(A\)\nb-bimodule~\(X\)
  has one of the properties mentioned in
  Proposition~\textup{\ref{pro:dense_Morita_covering}} if and only
  if the restricted Hilbert \(K\)\nb-bimodule~\(KXK\) has that
  property.
\end{corollary}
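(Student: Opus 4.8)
The plan is to deduce this directly from Proposition~\ref{pro:dense_Morita_covering}, using the cheapest possible dense Morita covering: a family indexed by a single point. Since \(K\) is a closed ideal in the closed ideal \(I\defeq {}_A\braket{X}{X}+\braket{X}{X}_A\) of~\(A\), and a closed ideal of a closed ideal of a \(\Cst\)\nb-algebra is a closed ideal of the whole algebra, \(K\) is an ideal in~\(A\). Hence \(K\), equipped with the obvious left Hilbert \(K\)\nb-module and right Hilbert \(A\)\nb-module structures (with inner products \({}_K\braket{x}{y}=x y^*\) and \(\braket{x}{y}_A=x^* y\)), is a Hilbert \(K,A\)\nb-bimodule that is full over~\(K\) and satisfies \(\braket{K}{K}_A=K\).

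First I would take \(S\defeq\{*\}\), \(B_*\defeq K\), and \(\E_*\defeq K\) as above, so that in the notation of Definition~\ref{def:covering_Hilbert_bimodule} we have \(K_*=\braket{\E_*}{\E_*}_A=K\) and, by the restricted multiplication isomorphisms~\eqref{eq:restricted_mult} (compare Remark~\ref{rem:restrict_Hilbert_bimodules}),
\[
Y_* \defeq \E_*\otimes_A X\otimes_A \E_*^* \cong K\cdot X\cdot K = KXK
\]
as Hilbert \(K\)\nb-bimodules.  Next I would check that this one-element family essentially covers~\(X\).  Let \(\tilde K\in \I^X(A)\) be the smallest \(X\)\nb-invariant ideal containing \(K_*=K\); then \(K\subseteq \tilde K\), and also \(K\subseteq I\) because \(K\) is an ideal in~\(I\).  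Therefore \(K\subseteq I\cap \tilde K\subseteq I\), and since \(K\) is essential in~\(I\) by hypothesis, the ideal \(I\cap \tilde K\) is essential in~\(I\) as well.  This is exactly the condition required in Definition~\ref{def:covering_Hilbert_bimodule} for \((Y_*)\) to be a dense Morita covering of~\(X\).

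Finally I would apply Proposition~\ref{pro:dense_Morita_covering}: for each of the four properties~P listed there (satisfying Kishimoto's condition, being topologically non-trivial, being purely outer, being purely universally weakly outer), \(X\) has property~P if and only if \(Y_i\) has property~P for every \(i\in S\); since \(S\) is a single point and \(Y_*\cong KXK\), this says precisely that \(X\) has property~P if and only if \(KXK\) does.  I do not anticipate any real obstacle: the corollary is essentially a repackaging of Proposition~\ref{pro:dense_Morita_covering}, and the only points needing a little care are the verification that \(\E_*=K\) is a legitimate Hilbert \(K,A\)\nb-bimodule (which uses that \(K\) is an ideal in~\(A\), not merely in~\(I\)) and the identification \(Y_*\cong KXK\) via~\eqref{eq:restricted_mult}; both are routine given the machinery already developed.
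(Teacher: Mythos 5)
Your proposal is correct and is essentially the paper's own proof: the paper likewise takes \(\E\defeq K\) with its Hilbert \(K,A\)\nb-bimodule structure inherited from~\(A\), identifies \(\E\otimes_A X\otimes_A \E^*\cong KXK\), and invokes Proposition~\ref{pro:dense_Morita_covering}. Your extra verifications (that \(K\) is an ideal in~\(A\) and that the smallest \(X\)\nb-invariant ideal containing~\(K\) meets \(I\) in an essential ideal) are exactly the routine checks left implicit there.
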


\begin{proof}
  Let \(\E\defeq K\) with the Hilbert \(K,A\)-bimodule structure
  inherited from~\(A\).  It establishes that~\(K X K\) essentially
  covers~\(X\) because \(\E\otimes_A X\otimes_A \E= K \otimes_A
  X\otimes_A K\cong K X K\).
\end{proof}

\section{A convenient Morita globalisation}
\label{sec:Morita_globalisation}

Let \(\mathcal{B} = (B_g)_{g\in G}\)
be a Fell bundle over a discrete group~\(G\)
with unit fibre \(A\defeq B_e\).
A \emph{Morita globalisation} of~\(\mathcal{B}\)
consists of a \(\Cst\)\nb-algebra~\(C\),
a group action \(\gamma\colon G\to\Aut(C)\),
a Hilbert \(A,C\)-bimodule~\(\E\)
that is full over~\(A\),
and an isomorphism between~\(\mathcal{B}\)
and the Fell bundle \(\E \mathcal{C}_\gamma \E^*\)
constructed in Remark~\ref{rem:restrictions_of_Fell_bundles}; this
Fell bundle isomorphism consists of Hilbert bimodule isomorphisms
\(B_g \cong \E\otimes_C C_{\gamma_g} \otimes_C \E^*\)
for \(g\in G\)
that are compatible with the multiplication maps and involutions.  We
are going to construct a canonical Morita globalisation using a
variant of Takesaki--Takai duality.  This construction is already
studied by Quigg~\cite{Quigg:Discrete_coactions_and_bundles} in the
language of coactions.  If~\(\mathcal{B}\)
comes from a partial action~\(\alpha\),
then this Morita globalisation agrees with the Morita enveloping
action of~\(\alpha\)
analysed in~\cite{Abadie:Enveloping}.

Following
\cite{Quigg:Discrete_coactions_and_bundles}*{Definition~2.13}, we call
a property~\(\mathcal{P}\)
of \(\Cst\)\nb-algebras
\emph{ideal} if it is invariant under Morita equivalence and inherited
by ideals and if every \(\Cst\)\nb-algebra
has a largest ideal with this property.  Examples of such properties
are: being liminal, antiliminal, Type~I\(_0\),
Type~I or nuclear (see~\cite{Quigg:Discrete_coactions_and_bundles}).
We warn the reader that the ``ideal property'' is not ``ideal'' in
this sense.

\begin{proposition}
  \label{pro:Morita_globalisation}
  Let \(\mathcal{B} = (B_g)_{g\in G}\)
  be a Fell bundle over a discrete group~\(G\)
  with unit fibre~\(A\).
  Let~\(\Cred(\mathcal{B})\)
  be its reduced section \(\Cst\)\nb-algebra,
  equipped with the dual \(G\)\nb-coaction~\(\delta_G\),
  and let
  \(C\defeq \Cst(\mathcal{B})\rtimes_{\red,\delta_G} \widehat{G}\)
  be the corresponding reduced crossed product.  Let
  \(\gamma\colon G\to\Aut(C)\)
  be the \(G\)\nb-action
  on~\(C\)
  dual to~\(\delta_G\).
  Let~\(\mathcal{P}\)
  be a property of \(\Cst\)\nb-algebras that is ideal.  Then
  \begin{enumerate}
  \item \label{en:Morita_globalisation_1}%
    \(\gamma\colon G\to \Aut(C)\) is a Morita globalisation
    of~\(\mathcal{B}\);
  \item \label{en:Morita_globalisation_2}%
    for each \(g\in G\), the Hilbert bimodules~\((B_{h g
      h^{-1}})_{h\in G}\) cover~\(C_{\gamma_g}\) up to Morita
    equivalence;
  \item \label{en:Morita_globalisation_3}%
    \(\Cred(\mathcal{B})\) and \(C\rtimes_{\red,\gamma} G\)
    are Morita--Rieffel equivalent, and the induced lattice
    isomorphism \(\I(\Cred(\mathcal{B}))\cong
    \I(C\rtimes_{\gamma,\red} G)\) restricts to an isomorphism
    between the lattices of graded ideals,
    \(\I^{\widehat{G}}(\Cred(\mathcal{B}))\cong
    \I^{\widehat{G}}(C\rtimes_{\gamma,\red} G)\);
  \item \label{en:Morita_globalisation_3_and_a_bit}%
    there is lattice isomorphism
    \(\I^{\mathcal{B}}(A)\cong\I^{\gamma}(C)\),
    \(I\mapsto \Cst(\mathcal{B}|_{I})\rtimes_{\red,\delta_G}
    \widehat{G}\);
  \item\label{en:Morita_globalisation_3_and_half}%
    the Hilbert bimodule~\(\E\)
    that witnesses that \(\gamma\colon G\to \Aut(C)\)
    is a Morita globalisation of~\(\mathcal{B}\)
    is an equivalence bimodule if and only if~\(\mathcal{B}\)
    is saturated, that is, the fibres~\(B_g\),
    \(g\in G\), are equivalence bimodules over~\(A\);
  \item \label{en:Morita_globalisation_4}%
    \(A\)~has the property~\(\mathcal{P}\) if and only if~\(C\) has
    that property;
  \item \label{en:Morita_globalisation_5}%
    if~\(G\) is countable, then~\(A\) is separable if and only
    if~\(C\) is separable;
  \item \label{en:Morita_globalisation_8}%
    \(C\)~is simple if and only if~\(A\) is simple and \(B_g\neq0\)
    for all \(g\in G\).
  \end{enumerate}
\end{proposition}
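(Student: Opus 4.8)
The plan is to derive the final statement, part~(8), directly from the Morita\nb-globalisation data supplied by the earlier parts of the proposition: the Hilbert \(A,C\)\nb-bimodule~\(\E\) is full over~\(A\), the structure isomorphisms \(B_g\cong \E\otimes_C C_{\gamma_g}\otimes_C \E^*\) identify \(\mathcal{B}\) with \(\E\mathcal C_\gamma\E^*\), and \(\E\) is an equivalence bimodule if and only if \(\mathcal{B}\) is saturated. The key elementary fact that makes everything collapse is that over a \emph{simple} \(\Cst\)\nb-algebra~\(A\) every non-zero Hilbert \(A\)\nb-bimodule~\(X\) is automatically an equivalence bimodule, since \({}_A\braket{X}{X}\) and \(\braket{X}{X}_A\) are then non-zero ideals of~\(A\) and hence equal to~\(A\).

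\emph{The ``if'' direction.} Assume~\(A\) is simple and \(B_g\neq 0\) for all \(g\in G\). By the fact just noted, each~\(B_g\) is an \(A,A\)\nb-equivalence bimodule, so \(\mathcal{B}\) is saturated; hence~\(\E\) is an \(A,C\)\nb-equivalence bimodule, and therefore \(A\) and~\(C\) are Morita--Rieffel equivalent. As \(A\) is simple and simplicity is a Morita--Rieffel invariant, \(C\) is simple.

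\emph{The ``only if'' direction.} Assume~\(C\) is simple, so in particular \(C\neq 0\). Then \(A\neq 0\): otherwise each fibre~\(B_g\), being a Hilbert bimodule over the zero algebra, vanishes, forcing \(\Cst(\mathcal{B})=0\) and hence \(C=0\). Since~\(\E\) is full over~\(A\) and \(A\neq 0\), we have \(\E\neq 0\), so the ideal \(\braket{\E}{\E}_C\) of the simple algebra~\(C\) is non-zero and therefore equals~\(C\); thus~\(\E\) is an \(A,C\)\nb-equivalence bimodule, whence~\(A\) is Morita--Rieffel equivalent to~\(C\) and hence simple. Moreover \(\mathcal{B}\) is saturated by the same criterion (equivalently, \(B_g\cong \E\otimes_C C_{\gamma_g}\otimes_C \E^*\) is a composite of the non-zero equivalence bimodules \(\E\), \(C_{\gamma_g}\), \(\E^*\)), so every~\(B_g\) is non-zero.

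I expect no serious obstacle in this last step: granting the Morita\nb-globalisation package, (8) is a short formal argument whose only subtlety is the one-line observation on non-zero Hilbert bimodules over simple algebras. The genuine work of Proposition~\ref{pro:Morita_globalisation} lies instead in constructing~\(C\) and~\(\E\) via duality for discrete coactions (following Quigg and Abadie) and in verifying that the Rieffel correspondence for~\(\E\) respects the gradings; parts~(1)--(7), and through them~(8), rest on that.
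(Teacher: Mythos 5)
Your argument for part~(8) is correct and is essentially the paper's own: in one direction, simplicity of~\(A\) together with \(B_g\neq0\) makes each inner-product ideal of~\(B_g\) all of~\(A\), so \(\mathcal{B}\) is saturated, and then part~(5) makes \(\E\) an equivalence bimodule, so \(C\) inherits simplicity; in the other direction, simplicity of~\(C\) forces the non-zero ideal \(\braket{\E}{\E}_C\) (in the paper's notation, \(I=C*p_e*C\)) to be all of~\(C\), so \(A\) is Morita--Rieffel equivalent to~\(C\) and hence simple, and saturation follows from part~(5), giving \(B_g\neq0\) since \(A\neq0\). The small observations you add (non-zero Hilbert bimodules over simple algebras are equivalences, \(A\neq0\) when \(C\neq0\)) are fine.

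The genuine gap is that the statement under review is the whole Proposition, and you prove only~(8) while explicitly ``granting the Morita-globalisation package,'' i.e.\ parts (1)--(7) --- which is where all the actual content lies, and on which your (8)-argument depends (you use (1) for fullness of~\(\E\) and the isomorphisms \(B_g\cong\E\otimes_C C_{\gamma_g}\otimes_C\E^*\), and (5) for the saturation criterion). The paper's proof constructs~\(C\) concretely (following Abadie) as the enveloping \(\Cst\)\nb-algebra of finitely supported kernels \(k\colon G\times G\to\mathcal{B}\) with \(k(r,s)\in B_{rs^{-1}}\), exhibits projections \(p_r\in\Mult(C)\) with \(\gamma_t(p_r)=p_{rt^{-1}}\), identifies \(B_g\cong p_g*C*p_e\) and takes \(\E\defeq p_e*C\) to prove~(1); proves~(2) by showing that the bimodules \(\E_h\defeq\E\otimes_C C_{\gamma_h}\) have \(\E_h^*\otimes_A\E_h\cong C*p_h*C\) and that \(\sum_h C*p_h*C=C\) because \(\sum_h p_h\to1\) strictly; proves~(3) via Katayama's version of Takesaki duality and deduces~(4) from it; proves~(5) by showing that saturation of~\(\mathcal{B}\) is equivalent to \(\gamma\)\nb-invariance of \(I=C*p_e*C\), hence to \(I=C\); and handles (6) and~(7) by citing Quigg/Abadie and by Lemma~\ref{lem:separable_A_X}, respectively. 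None of these steps appears in your proposal beyond a pointer to the literature, so as a proof of the Proposition it is incomplete; what you have is a correct proof of (8) conditional on (1) and (5).
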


\begin{proof}
  The double crossed product
  \(C\rtimes_{\gamma,\red} G \cong
  (\Cst(\mathcal{B})\rtimes_{\delta_G,\red}
  \widehat{G})\rtimes_{\gamma,\red} G\)
  is ``\(\widehat{G}\)\nb-\hspace{0pt}equivariantly''
  isomorphic to \(\Cst(\mathcal{B}) \otimes \Comp(L^2(G))\)
  with the diagonal coaction by Katayama's version of Takesaki
  duality~\cite{Katayama:Takesaki_Duality}.  Thus
  \(\Cred(\mathcal{B})\)
  and \(C\rtimes_{\gamma,\red} G\)
  are \(\widehat{G}\)\nb-equivariantly
  Morita--Rieffel equivalent.  This implies an isomorphism between the
  ideal lattices that preserves the sublattices of graded ideals,
  proving~\ref{en:Morita_globalisation_3}.  This
  implies~\ref{en:Morita_globalisation_3_and_a_bit} by
  Proposition~\ref{pro:gauge-invariance_vs_separation}.

  Next we prove~\ref{en:Morita_globalisation_1}.  We first recall a
  more concrete description of~\(C\) from
  \cite{Abadie:Enveloping}*{Proposition~8.1}.  The finitely
  supported functions \(k\colon G\times G\to \mathcal{B}\) with
  \(k(r,s)\in B_{rs^{-1}}\) form a \Star{}algebra with the product
  and involution
  \[
  k_1 *k_2 (r,s) \defeq \sum_{t\in G} k_1(r,t) k_2(t,s),\qquad
  k^*(r,s)\defeq k(s,r)^*.
  \]
  This is a normed \Star{}algebra for the ``Hilbert--Schmidt'' norm
  \[
  \norm{k}_2 \defeq \biggl(\sum_{r,s\in G} \norm{k(r,s)}^2\biggr)^{\nicefrac12}.
  \]
  The group~\(G\) acts on it by \Star{}automorphisms:
  \[
  \gamma_t(k)(r,s) \defeq k(rt,st), \qquad r,s,t\in G.
  \]
  The enveloping \(\Cst\)\nb-algebra of this normed \Star{}algebra
  is identified in \cite{Abadie:Enveloping}*{Proposition~8.1} with
  the crossed product~\(C\) with its dual \(G\)\nb-action.

  Let~\(b \,1_{(s,r)}\) for \(r,s\in G\), \(b\in B_{r s^{-1}}\) be the
  section \(G\times G\to\mathcal{B}\) with \(b\,1_{(s,r)}(s,r)=b\) and
  \(b\,1_{(s,r)}(t,u) =0\) for \((s,r)\neq (t,u)\).  These elements
  span~\(C\), and they satisfy \((b \,1_{(s,r)})* (c \,1_{(t,u)}) =
  \delta_{r,t} (b\cdot c)\,1_{(s,u)}\), \((b \,1_{(s,r)})^* = b^*
  \,1_{(r,s)}\), and \(\gamma_t(b \,1_{(s,r)}) = b \,1_{(s t^{-1}, r
    t^{-1})}\).  When \(r=s\) we extend this notation by considering
  also \(b=1\), a multiplier of~\(B_e\).
  Hence \(p_r\defeq \,1_{r,r}\) is a multiplier of~\(C\).  This is
  an orthogonal projection with \(\gamma_t(p_r) = p_{r t^{-1}}\).
  (If~\(C\) is treated as a \(\Cst\)\nb-subalgebra of adjointable
  operators on the Fock module \(L^2(\mathcal{B})\), then~\(p_g\)
  corresponds to the projection onto the summand~\(B_g\), see
  \cite{Abadie:Enveloping}*{Proposition~5.6}.)  The convolution
  formula implies
  \begin{align*}
    C * p_e &= \clsp\{b \,1_{(g,e)}\mid b\in B_g,\ g\in G\},\\
    p_g*C*p_e &= \{b_g \,1_{(g,e)}\mid b\in B_g\}
  \end{align*}
  for all \(g\in G\).  In particular, there are linear isomorphisms
  \begin{equation}
    \label{eq:iso_Bg_corner_C}
    B_g \xrightarrow[\cong]{\psi_g} p_g*C*p_e,\quad
    b_g \mapsto b_g \,1_{(g,e)},\qquad \forall g\in G.
  \end{equation}
  The subspace \(p_g*C*p_e\subseteq C\) is a Hilbert
  \(p_g*C*p_g,p_e*C*p_e\)-bimodule with operations inherited
  from~\(C\).  The group action restricts to isomorphisms
  \(\gamma_{g^{-1}}\colon p_e*C*p_e\congto p_g*C*p_g\) for \(g\in
  G\).  Composing the left Hilbert \(p_g*C*p_g\)-module structure on
  \(p_g*C*p_e\)
  with this isomorphism gives a Hilbert bimodule
  \({}_{\gamma_g^{-1}}(p_g * C * p_e)\) over \(p_e*C*p_e\).
  Its Hilbert bimodule structure is given by
  \begin{alignat*}{2}
    \gamma_g^{-1}(a \,1_{(e,e)}) * (b\,1_{(g,e)})
    &= (a b) \,1_{(g,e)},&\qquad
    \gamma_g\bigl((b \,1_{(g,e)}) * (c \,1_{(g,e)})^*\bigr)
    &= (b c^*)\,1_{(e,e)},\\
    (b\,1_{(g,e)}) * (a \,1_{(e,e)})
    &= (b a) \,1_{(g,e)},&\qquad
    (b \,1_{(g,e)})^* * (c \,1_{(g,e)})
    &= (b^* c)\,1_{(e,e)}
  \end{alignat*}
  for \(a\in A\), \(b,c\in B_g\).
  Thus the pair \((\psi_e,\psi_g)\) of isomorphisms \(p_e*C*p_e\cong
  B_e = A\) and \(p_g*C*p_e\cong B_g\) in~\eqref{eq:iso_Bg_corner_C}
  is an isomorphism of Hilbert bimodules \(B_g \cong
  {}_{\gamma_g^{-1}}(p_g * C * p_e)\).  In particular, the
  maps~\(\psi_g\) for \(g\in G\) are isometric.

  We turn the group action~\(\gamma\) on~\(C\) into a Fell bundle
  over~\(G\) as in Example~\ref{exa:automorphism_to_Fell}.  We shall
  use the isomorphic variant \({}_{\gamma_g^{-1}} C\) instead
  of~\(C_{\gamma_g}\).  The right ideal \(\E \defeq p_e* C\) is a
  Hilbert \(p_e* C * p_e,C\)-bimodule full over \(p_e* C * p_e\).
  Identifying \(p_e* C * p_e\) with~\(A\) by~\(\psi_e\), we
  view~\(\E\) as a Hilbert \(A,C\)-bimodule.  The multiplication
  isomorphisms \eqref{eq:left_mult} and~\eqref{eq:right_mult} and
  the isomorphisms~\(\psi_g\) give Hilbert bimodule isomorphisms
  \begin{multline*}
    \E\otimes_C C_{\gamma_g} \otimes_C \E^*
    \cong \E\otimes_C {}_{\gamma_g^{-1}}C \otimes_C \E^*
    \defeq p_e * C \otimes_C {}_{\gamma_g^{-1}}C \otimes_C C * p_e
    \\\cong p_e * {}_{\gamma_g^{-1}}C * p_e
    \cong {}_{\gamma_g^{-1}}\bigl(\gamma_g^{-1}(p_e) * C * p_e\bigr)
    \cong {}_{\gamma_g^{-1}}(p_g * C * p_e)
    \xrightarrow[\cong]{\psi_g} B_g.
  \end{multline*}
  All isomorphisms are explicit, and it is easy to check that they
  give an isomorphism of Fell bundles \(\mathcal{B}\cong \E
  \mathcal{C}_\gamma \E^*\) for the canonical Fell bundle
  structure on \(\E \mathcal{C}_\gamma \E^*
  =(\E\otimes_C C_{\gamma_g} \otimes_C \E^*)_{g\in G}\) described in
  Remark~\ref{rem:restrictions_of_Fell_bundles}.  This finishes the
  proof of~\ref{en:Morita_globalisation_1}.

  Now we prove~\ref{en:Morita_globalisation_2}.  Fix \(g\in G\).
  Let \(A_h\defeq A\) and \(\E_h \defeq \E\otimes_C C_{\gamma_h}\)
  for \(h\in G\).  Then
  \begin{align*}
    Y_h &\defeq \E_h\otimes C_{\gamma_g} \otimes \E_h^*
    \defeq
    (\E \otimes_C C_{\gamma_h}) \otimes_C C_{\gamma_g} \otimes_C
    (\E\otimes_C C_{\gamma_h})^*
    \\&\cong \E \otimes_C C_{\gamma_h} \otimes_C C_{\gamma_g} \otimes_C
    C_{\gamma_{h^{-1}}} \otimes_C \E^*
    \\&\cong \E \otimes_C C_{\gamma_{h g h^{-1}}} \otimes_C \E^*
    \cong B_{h g h^{-1}}.
  \end{align*}
  We claim that the family of Hilbert \(A\)\nb-bimodules~\((B_{h g
    h^{-1}})_{h\in G}\) covers~\(C_{\gamma_g}\) up to Morita
  equivalence, witnessed by the Hilbert bimodules~\((\E_h)_{h\in
    G}\).  The Hilbert \(A,C\)-bimodules \(\E_h \defeq \E\otimes_C
  C_{\gamma_h}\) for \(h\in G\) are full over~\(A\).
  The Hilbert bimodule \(\E = p_e * C\) is an
  equivalence bimodule for~\(A\) and the ideal \(I\defeq C*p_e*C
  \idealin C\) generated by~\(p_e\).  Thus
  \begin{multline*}
    \E_h^* \otimes_A \E_h
    \cong C_{\gamma_h}^* \otimes_C I \otimes_C C_{\gamma_h}
    \\\cong \braket{I\cdot C_{\gamma_h}}{I\cdot C_{\gamma_h}}_C
    = \gamma_{h^{-1}}(I)
    = C*\gamma_{h^{-1}}(p_e)*C
    = C*p_h*C.
  \end{multline*}
  The series \(\sum_{h\in G} p_h\) converges to~\(1\) in the strict
  topology on \(\Mult(\Cst(\widehat{G})) \subseteq \Mult(C)\).  Hence
  \(\sum C*p_h*C = C\).  Thus~\((B_{h g h^{-1}})_{h\in G}\)
  covers~\(C_{\gamma_g}\) up to Morita equivalence through the
  Hilbert \(A,C\)-bimodules~\(\E_h\) for \(h\in G\).

  Statement~\ref{en:Morita_globalisation_3_and_half} is
  \cite{Quigg:Discrete_coactions_and_bundles}*{Corollary 2.7}.  We
  include a proof in our notation.  Let \(g\in G\).
  The isomorphism \(\E\otimes_C C_{\gamma_g} \otimes_C \E^*\cong B_g\)
  implies that the Hilbert \(A\)\nb-bimodule~\(B_g\)
  is Morita equivalent to the restriction~\(I C_{\gamma_g} I\)
  of \(C_{\gamma_g}\)
  to \(I\defeq C*p_e*C\),
  see Remark~\ref{rem:restrict_Hilbert_bimodules}.  Clearly,
  \(I C_{\gamma_g} I = I_{\alpha_g}\),
  where~\(\alpha_g\)
  is the restriction of~\(\gamma_g\)
  to a partial isomorphism with domain
  \(D_{g^{-1}}\defeq I\cap \gamma_{g^{-1}}(I)\).
  Hence~\(\mathcal{B}\)
  is saturated if and only if the Hilbert bimodules~\(I_{\alpha_g}\)
  for \(g\in G\)
  are full (both on the left and right), if and only if~\(I\)
  is \(\gamma\)\nb-invariant.
  Since \(C= \sum_{g\in G} \gamma_g(I)\),
  this happens only if \(I=C\).

  Statement~\ref{en:Morita_globalisation_4} is
  \cite{Quigg:Discrete_coactions_and_bundles}*{Theorem 2.15} or
  \cite{Abadie:Enveloping}*{Corollary~5.2}.

  We prove~\ref{en:Morita_globalisation_5}.  If~\(A\) is separable,
  then each~\(B_g\) is separable by Lemma~\ref{lem:separable_A_X} below.
  Then~\(C\) is separable because~\(G\) is countable.  Conversely,
  if~\(C\) is separable, then so is \(p_e * C * p_e \cong A\).

  Statement~\ref{en:Morita_globalisation_8} is
  \cite{Quigg:Discrete_coactions_and_bundles}*{Theorem 2.10}.  We
  include a proof.  Let~\(A\)
  be simple and \(B_g\neq0\)
  for all \(g\in G\).
  Then~\(B_g\)
  is full for all \(g\in G\),
  so that~\(\mathcal{B}\)
  is saturated.  Hence \(A\)
  and~\(C\)
  are Morita--Rieffel equivalent
  by~\ref{en:Morita_globalisation_3_and_half}.  So~\(C\)
  is simple.  Conversely, let~\(C\)
  be simple.  Then \(I=C\),
  that is, \(A\)
  is Morita--Rieffel equivalent to~\(C\).
  Then~\(A\)
  is simple.  And~\(\mathcal{B}\)
  is saturated by~\ref{en:Morita_globalisation_3_and_half}.
\end{proof}

\begin{lemma}
  \label{lem:separable_A_X}
  If~\(A\) is separable, then any Hilbert \(A\)\nb-bimodule~\(X\) is
  separable.
\end{lemma}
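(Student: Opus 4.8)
The plan is to exhibit a countable subset of $X$ whose closed linear span is all of $X$. First I would observe that a Hilbert $A$-bimodule $X$ is, in particular, a right Hilbert $A$-module, and one has the standard fact that $X = \overline{X\cdot A}$ (using an approximate unit of $A$, or the factorisation $X = X\otimes_A A$ from~\eqref{eq:right_mult}); moreover the norm on $X$ is determined by $\norm{\xi}^2 = \norm{\braket{\xi}{\xi}_A}$. Since $A$ is separable, fix a countable dense subset $\{a_n : n\in\N\}$ of $A$.

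The key step is to produce a countable norm-dense subset of $X$ itself. Because $X$ is a right Hilbert $A$-module with $X = \overline{XA}$, it suffices to find a countable subset $S\subseteq X$ such that $\overline{\operatorname{span}}(S\cdot A) = X$; then $S\cdot\{a_n : n\in\N\}$ is the desired countable dense set. To get such an $S$, I would use that $X$ is generated as a right Hilbert module by the image of any approximate unit action — more concretely, pick any norm-dense subset $T$ of $X$ (a priori uncountable) and, for each $\xi\in T$ and each $n$, consider $\xi\cdot a_n$; but this does not immediately give countability unless we first cut $T$ down. The cleanest route is: let $B\defeq \braket{X}{X}_A \idealin A$, a separable $\Cst$-algebra, so $X$ is a right Hilbert $B$-module which is \emph{countably generated} — indeed every Hilbert module over a separable (hence $\sigma$-unital) $\Cst$-algebra is countably generated, e.g. by Kasparov's stabilisation theorem or directly: choose a countable dense $\{\xi_k\}$ in the unit ball of $X$ and note $\overline{\operatorname{span}}\{\xi_k b : k\in\N, b\in B\} = \overline{XB} = X$. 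Taking $b$ to range over the countable dense subset $\{a_n\}\cap B$ of $B$ (or just $\{a_n\}$, since $X\cdot a_n \in X$ for all $n$), the countable set $\{\xi_k a_n : k,n\in\N\}$ has dense linear span in $X$.

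So the concrete argument is: fix countable dense subsets $\{a_n\}\subseteq A$ and, since $X$ with the right inner product is a separable metric space in norm only after we know separability (circular), instead argue — $X$ is a right Hilbert module over the separable $\Cst$-algebra $B = \braket{X}{X}_A$; such a module is automatically countably generated (as above, via stabilisation or the elementary factorisation argument using that $B$ is $\sigma$-unital), say by $\{\xi_k : k\in\N\}$, and then $\{\xi_k \cdot a_n : k,n\in\N\}$ is a countable subset of $X$ whose closed linear span contains $\overline{\operatorname{span}\{\xi_k B\}} = \overline{X B} = X$. Hence $X$ is separable.

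I do not expect any genuine obstacle here; the only subtlety is avoiding the circular step of "pick a countable dense subset of $X$", which is why one should invoke countable generation of Hilbert modules over separable $\Cst$-algebras (which itself does not presuppose separability of $X$ in the norm topology — it follows from $\sigma$-unitality of the coefficient algebra, either via Kasparov stabilisation or by the direct approximate-unit argument $X = \overline{X\cdot B}$ with $B$ having a countable approximate unit whose span of products with a countable dense set of $X$-elements is dense). If one prefers to cite the literature, this is standard: see, e.g., \cite{Lance:Hilbert_modules}.
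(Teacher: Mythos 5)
Your reduction of the problem to ``find a countable generating set for $X$ as a right Hilbert module, then multiply by a countable dense subset of $A$'' is fine, but the way you obtain countable generation has a genuine gap. The principle you invoke --- that every Hilbert module over a separable (hence $\sigma$\nb-unital) $\Cst$\nb-algebra is countably generated --- is false: $\ell^2(I)$ over $\C$ for an uncountable index set~$I$ is a full right Hilbert module over a separable coefficient algebra and is not countably generated. Kasparov's stabilisation theorem does not help either, since it \emph{assumes} countable generation rather than producing it. And your ``direct'' argument (choose a countable dense subset $\{\xi_k\}$ of the unit ball of~$X$, or ``a countable dense set of $X$\nb-elements'' in the last paragraph) is exactly the circularity you set out to avoid: norm-density of a countable subset of~$X$ is the statement to be proved. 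The point is that $\sigma$\nb-unitality of the \emph{coefficient} algebra $\braket{X}{X}_A$ is not the right hypothesis; what controls countable generation of a Hilbert module is $\sigma$\nb-unitality of the algebra of compact operators $\Comp(X)$, and nothing in your argument bounds~$\Comp(X)$.

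This is where the bimodule structure, which your proof never uses, is essential, and it is how the paper argues: for a Hilbert $A$\nb-bimodule, $\Comp(X)$ is isomorphic to the ideal ${}_A\braket{X}{X}$ of~$A$ (this is where the left inner product and the compatibility condition enter), hence $\Comp(X)$ is separable and in particular has a countable approximate unit $(u_n)$; approximating each~$u_n$ by finite-rank operators produces a countable family $(x_n)$ generating~$X$ as a right Hilbert $A$\nb-module, and then $\{x_n a_m \mid n,m\in\N\}$ with $(a_m)$ dense in~$A$ has dense linear span in~$X$. Your final multiplication step is correct once countable generation is in hand; to repair the proposal, replace the false general claim by the identification $\Comp(X)\cong{}_A\braket{X}{X}\idealin A$ (or any equivalent use of the left module structure) and deduce countable generation from $\sigma$\nb-unitality of~$\Comp(X)$.
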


\begin{proof}
  Since~\(\Comp(X)\) is isomorphic to an ideal in~\(A\) (see, for
  instance, \cite{Kwasniewski:Cuntz-Pimsner-Doplicher}*{Proposition
    1.11}),
  \(\Comp(X)\) is  separable.  Thus~\(\Comp(X)\)
  has a countable approximate unit~\((u_n)\).
  Approximating each~\(u_n\)
  by a sequence of finite-rank operators, we see that~\(X\)
  is countably generated as a right Hilbert \(A\)\nb-module.
  Let~\((x_n)_{n\in \N}\subseteq X\)
  be a sequence generating~\(X\)
  as a right Hilbert \(A\)\nb-module and let \((a_n)_{n\in \N} \subseteq A\)
  be a dense sequence.  The linear span of
  \(\{x_n a_m \mid n,m\in\N\}\) is dense in~\(X\).
\end{proof}

\section{Application of Morita coverings to Hilbert bimodules}
\label{sec:tok}

In this section, we use Morita coverings to generalise
Theorem~\ref{the:automorphism_Kishimoto_vs_properly_outer} to
Hilbert bimodules.  Moreover, we show that separability is not
needed in the Type~I case or, more generally, if there is an
essential ideal of Type~I.

\begin{theorem}
  \label{the:Hilbi_Kish_vs_weakly_inner}
  Let~\(X\)
  be a Hilbert \(A\)\nb-bimodule~\(X\).
  Consider the following conditions:
  \begin{enumerate}[wide,label=\textup{(\ref*{the:Hilbi_Kish_vs_weakly_inner}.\arabic*)}]
  \item \label{en:kish}%
    \(X\)~satisfies Kishimoto's condition;
  \item \label{en:topol}%
    \(X\)~is topologically non-trivial;
  \item \label{en:univ_outer}%
    \(X\)~is purely universally weakly outer;
  \item \label{en:outer}%
    \(X\)~is purely outer.
  \end{enumerate}
  Then \ref{en:topol}%
  \(\Rightarrow\)\ref{en:univ_outer}%
  \(\Rightarrow\)\ref{en:outer}%
  \(\Leftarrow\)\ref{en:kish}.  If~\(A\) contains a separable,
   essential ideal, then the conditions
  \ref{en:kish}--\ref{en:univ_outer} are equivalent.  If~\(A\)
  contains a simple, essential ideal, then
  \ref{en:kish}\(\Leftrightarrow\)\ref{en:outer}.  If~\(A\) contains
  an essential ideal of Type~I, then all the conditions
  \ref{en:kish}--\ref{en:outer} are equivalent.
\end{theorem}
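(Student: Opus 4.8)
The plan is to reduce everything to the case of a single automorphism, which is handled by Theorem~\ref{the:automorphism_Kishimoto_vs_properly_outer}. The implication \ref{en:univ_outer}$\Rightarrow$\ref{en:outer} is immediate from the definitions, since a unitary multiplier of an ideal $I\idealin A$ is a unitary in~$I^{**}$, so partly inner bimodules are partly universally weakly inner. For the reduction, let $(X_n)_{n\in\Z}$ be the Fell bundle over~\(\Z\) generated by~$X$ (Example~\ref{def:Fell_bundle_for_Hilbert_bimodule}), with $X_0=A$ and $X_1=X$, and let $\gamma\colon\Z\to\Aut(C)$ be its Morita globalisation from Proposition~\ref{pro:Morita_globalisation}. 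Since~\(\Z\) is Abelian, every conjugate $h\cdot1\cdot h^{-1}$ equals~$1$, so Proposition~\ref{pro:Morita_globalisation}\ref{en:Morita_globalisation_2} with $g=1$ says that the one-element family consisting of~$X$ covers the Hilbert \(C\)\nb-bimodule~$C_{\gamma_1}$ up to Morita equivalence. By Proposition~\ref{pro:dense_Morita_covering}, $C_{\gamma_1}$ satisfies Kishimoto's condition / is topologically non-trivial / is purely outer / is purely universally weakly outer if and only if~$X$ does, and by Lemma~\ref{lem:outer_Hilbert_bimodule} these four properties of~$C_{\gamma_1}$ are equivalent to conditions \ref{en:automorphism_Kishimoto_vs_properly_outer_Kishimoto}, \ref{en:automorphism_Kishimoto_vs_properly_outer_topological}, \ref{en:automorphism_Kishimoto_vs_properly_outer_purely_outer}, \ref{en:automorphism_Kishimoto_vs_properly_outer_weakly_inner} for the automorphism $\gamma_1\in\Aut(C)$. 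Hence \ref{en:kish}--\ref{en:outer} for~$X$ translate into the corresponding conditions for~$\gamma_1$, and Theorem~\ref{the:automorphism_Kishimoto_vs_properly_outer} yields \ref{en:topol}$\Rightarrow$\ref{en:univ_outer} and \ref{en:kish}$\Rightarrow$\ref{en:outer}.

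Now suppose $A$ contains an essential ideal~$J$ that is separable, simple, or of Type~I. Put $I\defeq{}_A\braket{X}{X}+\braket{X}{X}_A$ and $K\defeq J\cap I$; since~$J$ is essential in~$A$, $K$ is essential in~$I$, and~$K$ inherits the property of~$J$ (an ideal of a separable, simple, or Type~I \(\Cst\)\nb-algebra is again such; for simple~$J$ one has $K=J$). By Corollary~\ref{cor:restriction_to_essential_ideals}, each of \ref{en:kish}--\ref{en:outer} holds for~$X$ if and only if it holds for the Hilbert \(K\)\nb-bimodule~$KXK$. Running the reduction above for~$KXK$ produces an automorphism~$\delta$ of the Morita globalisation~$C'$ of the Fell bundle generated by~$KXK$, under which \ref{en:kish}--\ref{en:outer} for~$KXK$ become the four automorphism conditions for~$\delta$; moreover $C'$ is separable if~$K$ is (Proposition~\ref{pro:Morita_globalisation}\ref{en:Morita_globalisation_5}, as~\(\Z\) is countable), simple if~$K$ is (Proposition~\ref{pro:Morita_globalisation}\ref{en:Morita_globalisation_8}: over a simple \(\Cst\)\nb-algebra every non-zero Hilbert bimodule is an equivalence bimodule, so all fibres of the generated Fell bundle are non-zero), and of Type~I if~$K$ is (Proposition~\ref{pro:Morita_globalisation}\ref{en:Morita_globalisation_4}, Type~I being an ideal property). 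Theorem~\ref{the:automorphism_Kishimoto_vs_properly_outer} applied to~$\delta$ then settles the separable case, where \ref{en:kish}--\ref{en:univ_outer} are equivalent, and the simple case, where \ref{en:kish}$\Leftrightarrow$\ref{en:outer}.

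It remains to prove that all of \ref{en:kish}--\ref{en:outer} coincide for an automorphism~$\delta$ of a Type~I \(\Cst\)\nb-algebra~$C'$; this is not covered by Theorem~\ref{the:automorphism_Kishimoto_vs_properly_outer} and is the main obstacle. Given the general chain, it suffices to establish \ref{en:outer}$\Rightarrow$\ref{en:topol}$\Rightarrow$\ref{en:kish} for~$\delta$. For \ref{en:topol}$\Rightarrow$\ref{en:kish} the plan is to use the structure theory of Type~I \(\Cst\)\nb-algebras to pass, via Corollary~\ref{cor:restriction_to_essential_ideals}, to an essential ideal with continuous trace, so that one may assume~$\widehat{C'}$ locally compact Hausdorff; then topological non-triviality of~$\delta$ lets one separate the supports of~$a$ and~$\delta(a)$ near any non-fixed point of~$\widehat{C'}$, which yields Kishimoto's condition through the reformulations in Theorem~\ref{the:Kishimoto_spectrum}. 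For \ref{en:outer}$\Rightarrow$\ref{en:topol} the plan is the contrapositive: if~$\delta$ is not topologically non-trivial, the interior of the fixed-point set of the induced map on~$\widehat{C'}$ is a non-empty invariant open set, hence corresponds to a non-zero $\delta$\nb-invariant ideal on which~$\delta$ fixes every irreducible representation up to unitary equivalence; using the local structure of Type~I \(\Cst\)\nb-algebras (locally $C_0(U)\otimes\Comp(\Her)$, with vanishing local Mackey obstruction), this pointwise inner automorphism is inner on a smaller non-zero ideal, so~$\delta$ is partly inner and therefore not purely outer. Combining \ref{en:outer}$\Rightarrow$\ref{en:topol}$\Rightarrow$\ref{en:kish} with the already proved \ref{en:kish}$\Rightarrow$\ref{en:outer} and \ref{en:topol}$\Rightarrow$\ref{en:univ_outer}$\Rightarrow$\ref{en:outer} shows that \ref{en:kish}--\ref{en:outer} are all equivalent in the Type~I case. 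The hardest part is precisely this last step, and in particular the passage from ``pointwise inner'' to ``partly inner'', which requires a genuine local analysis of Type~I \(\Cst\)\nb-algebras rather than a cited black box.
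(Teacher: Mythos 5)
Your treatment of the general implications and of the separable and simple cases is correct and matches the paper's proof: extend \(X\) to the Fell bundle \((X_n)_{n\in\Z}\), pass to the Morita globalisation \(\gamma\colon\Z\to\Aut(C)\) from Proposition~\ref{pro:Morita_globalisation}, transfer the four conditions between \(X\) and \(C_{\gamma_1}\) via Proposition~\ref{pro:dense_Morita_covering}, reduce to the essential ideal with Corollary~\ref{cor:restriction_to_essential_ideals}, and quote Theorem~\ref{the:automorphism_Kishimoto_vs_properly_outer}; your direct verification of \ref{en:univ_outer}\(\Rightarrow\)\ref{en:outer} is fine.

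The Type~I case, however, contains a genuine gap. You correctly note that Theorem~\ref{the:automorphism_Kishimoto_vs_properly_outer} does not apply to (possibly non-separable) Type~I algebras and propose to prove \ref{en:outer}\(\Rightarrow\)\ref{en:topol}\(\Rightarrow\)\ref{en:kish} directly for the globalised automorphism \(\delta\) of the Type~I algebra \(C'\) --- but both implications are left as plans. The crux, which you yourself flag as the hardest step, is the claim that an automorphism fixing every point of a non-empty open subset of the spectrum up to equivalence is inner on some non-zero ideal; this is precisely the local vanishing of a Phillips--Raeburn/Mackey-type obstruction, it is neither proved nor cited in your text, and it carries essentially all the content of the Type~I statement, so the theorem is not established as written. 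There is also an unexamined structural point in your sketch of \ref{en:topol}\(\Rightarrow\)\ref{en:kish}: the essential continuous-trace ideal of \(C'\) given by \cite{Pedersen:Cstar_automorphisms}*{Theorem 6.2.11} is in general not \(\delta\)\nb-invariant, so after restricting through Corollary~\ref{cor:restriction_to_essential_ideals} you no longer have an automorphism of an algebra with Hausdorff spectrum but a Hilbert bimodule over one (only inside the interior of the fixed-point set, where the dual map is the identity, is every ideal automatically invariant), and the support-separation argument has to be carried out in that bimodule setting. The paper sidesteps both difficulties by never returning to the automorphism picture in the Type~I case: it restricts to an essential continuous-trace ideal of \(A\), covers it by ideals Morita--Rieffel equivalent to commutative algebras \(\Cont_0(Z_j)\), so that by Proposition~\ref{pro:dense_Morita_covering} one may assume \(A=\Cont_0(Z)\), and then removes the line-bundle twist by one further dense Morita covering, reducing to bimodules coming from partial homeomorphisms, where all four conditions are equivalent by an elementary argument. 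If you want to keep your automorphism-based route, you must actually supply the local innerness lemma in the non-separable setting (where measurable-selection techniques are unavailable); otherwise the Type~I equivalences remain unproved.
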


\begin{proof}
  Extend~\(X\) to a Fell bundle~\((X_n)_{n\in\N}\) over~\(\Z\) as in
  Example~\ref{def:Fell_bundle_for_Hilbert_bimodule}.
  Proposition~\ref{pro:Morita_globalisation} gives a canonical
  Morita globalisation \(\gamma\colon \Z\to\Aut(C)\) for this Fell
  bundle such that a countable number of copies of~\(X\) cover the
  Hilbert \(C\)\nb-bimodule~\(C_{\gamma_{1}}\).  By
  Proposition~\ref{pro:dense_Morita_covering}, \(X\) has one of the
  four properties in our theorem if and only if~\(C_{\gamma_1}\) has
  it.  Hence the implications \ref{en:topol}%
  \(\Rightarrow\)\ref{en:univ_outer}%
  \(\Rightarrow\)\ref{en:outer}%
  \(\Leftarrow\)\ref{en:kish} follow from the corresponding
  implications in
  Theorem~\ref{the:automorphism_Kishimoto_vs_properly_outer}.
  Furthermore, by Corollary
  \ref{cor:restriction_to_essential_ideals}, if~\(K\) is an
  essential ideal in~\(A\), then~\(X\) has one of the four
  properties in our theorem if and only if its restriction~\(KXK\)
  has it.  Therefore, to prove the remaining part of the assertion
  we may assume that the essential ideal in question is equal
  to~\(A\).  If~\(A\) is separable, then~\(C\) is separable by
  Proposition~\ref{pro:Morita_globalisation}, and the equivalence
  \ref{en:kish}--\ref{en:univ_outer} follows from
  Theorem~\ref{the:automorphism_Kishimoto_vs_properly_outer}.
  If~\(A\) is simple and \(X\neq0\), then~\(X\) is full on both
  sides and hence \(X_n\neq0\) for all \(n\in\Z\).  So~\(C\) is simple by
  Proposition~\ref{pro:Morita_globalisation}.%
  \ref{en:Morita_globalisation_8}.  By
  Theorem~\ref{the:automorphism_Kishimoto_vs_properly_outer},
  \ref{en:kish} and~\ref{en:outer} are equivalent for~\(\gamma_1\)
  and hence for~\(X\).  The case \(X=0\) is trivial because, by
  convention, \(0\) satisfies Kishimoto's condition and is purely
  outer.

  Finally, assume that~\(A\)
  contains an essential ideal of Type~I.  Now a different proof is
  needed because
  Theorem~\ref{the:automorphism_Kishimoto_vs_properly_outer} does not
  apply.  We reduce this case in two steps to the case of Hilbert
  bimodules over commutative \(\Cst\)\nb-algebras
  coming from partial homeomorphisms.  Let~\(Z\)
  be a locally compact space and let~\(\theta\)
  be a partial homeomorphism of~\(Z\).
  Let~\(\theta^*\)
  be the induced partial isomorphism of~\(\Cont_0(Z)\).
  Since \(\Cont_0(Z)^{**}\)
  is commutative, \(\theta^*\)
  is partly inner if and only if it is partly universally weakly
  inner, if and only if~\(\theta\)
  restricts to the identity map on some open subset.  This is
  equivalent to not being topologically non-trivial because the space
  of irreducible representations of~\(\Cont_0(Z)\)
  is just~\(Z\),
  and it is equivalent to Kishimoto's condition by an elementary
  argument, see also
  \cite{Giordano-Sierakowski:Purely_infinite}*{Proposition~A.7}.  Now
  assume that~\(A\)
  contains an essential ideal of Type~I.  We are going to construct a
  dense Morita covering of~\(X\)
  where each piece is of the form~\(\theta^*\)
  for some partial isomorphism.  We have just seen that our four
  properties are equivalent for these pieces~\(\theta^*\).
  By Proposition~\ref{pro:dense_Morita_covering}, \(X\)
  has one of our four properties if and only if each of the pieces of
  a dense Morita covering has it.  Thus the construction of the dense
  Morita covering will finish the proof.

  The essential Type~I ideal in~\(A\)
  contains an essential ideal~\(K\)
  with continuous trace by
  \cite{Pedersen:Cstar_automorphisms}*{Theorem 6.2.11}.  This ideal is
  still essential in~\(A\).
  There is a set of ideals~\((K_j)_{j\in S}\)
  in~\(K\)
  with \(K = \sum K_j\)
  and such that each~\(K_j\)
  is Morita--Rieffel equivalent to a commutative \(\Cst\)\nb-algebra
  \(\Cont_0(Z_j)\)
  for a locally compact space~\(Z_j\),
  compare \cite{Huef-Kumjian-Sims:Dixmier-Douady}*{Theorem~3.3}.
  Let~\(\E_j\)
  be the equivalence \(\Cont_0(Z_j),K_j\)-bimodule.
  Let \(Y_j \defeq \E_j \otimes_A X \otimes_A \E_j^*\).
  The family of Hilbert \(\Cont_0(Z_j)\)-bimodules~\((Y_j)_{j\in
    S}\)
  essentially covers~\(X\)
  up to Morita equivalence because \(\sum K_j = K\)
  is essential in~\(A\).
  Thus we have reduced the case where~\(A\)
  contains an essential ideal of Type~I to the case where~\(A\)
  is commutative.  So let \(A=\Cont_0(Z)\)
  and let~\(X\) be a Hilbert \(\Cont_0(Z)\)-bimodule.

  The structure of a Hilbert bimodule over \(\Cont_0(Z)\)
  is well known.  Namely, let~\(\theta\)
  be the partial homeomorphism on the spectrum~\(Z\)
  of \(\Cont_0(Z)\)
  induced by~\(X\).
  Then there is a line bundle~\(L\)
  over the domain of~\(\theta\)
  such that~\(X\)
  is isomorphic to the space of sections of~\(L\),
  with \(\Cont_0(Z)\)
  acting by pointwise multiplication on the right and by pointwise
  multiplication combined with~\(\theta^*\)
  on the left.  We could now treat this case directly.  We prefer,
  however, to remove the line bundle by another dense Morita covering.
  We construct this without reference to the structure of Hilbert
  bimodules over \(\Cont_0(Z)\).

  Let \(U\subseteq Z\)
  be the open subset that corresponds to the ideal
  \(\braket{X}{X}_{\Cont_0(Z)} \idealin \Cont_0(Z)\).
  For each \(z\in U\),
  there is \(x_z\in X\)
  with \(\braket{x_z}{x_z}(z)\neq0\).
  Let \(U_z \defeq \{z\in U \mid \braket{x_z}{x_z}(z)\neq0\}\)
  and let \(U_\infty \defeq Z\setminus \overline{U}\).
  Then \(U_\infty \cup \bigcup_{z\in U} U_z = Z \setminus \partial U\)
  is dense in~\(Z\).
  So the Hilbert \(\Cont_0(U_z)\)-bimodules
  \(X_z\defeq \Cont_0(U_z)\cdot X\cdot \Cont_0(U_z)\)
  for \(z\in U\)
  and \(z=\infty\)
  essentially cover~\(X\), compare
  Corollary~\ref{cor:restriction_to_essential_ideals}.
  Thus it suffices to prove the theorem for each of these restrictions
  of~\(X\).
  The case \(X_\infty = 0\)
  is trivial.  So it remains to consider the Hilbert
  \(\Cont_0(U_z)\)-bimodules~\(X_z\)
  for \(z\in U\).
  We are going to prove that each~\(X_z\)
  is associated to a partial homeomorphism of~\(U_z\).

  First, we claim that \(X'_z \defeq X\cdot \Cont_0(U_z)\)
  is isomorphic to \(\Cont_0(U_z)\)
  as a right Hilbert \(\Cont_0(U_z)\)-module.
  We have \(\eta\in X'_z\)
  if and only if \(\braket{\eta}{\eta} \in \Cont_0(U_z)\).
  Hence~\(X'_z\)
  contains~\(x_z\).
  Since~\(X'_z\)
  is a Hilbert \(\Cont_0(Z),\Cont_0(U_z)\)\nb-bimodule,
  the compact operators on~\(X'_z\)
  are isomorphic to an ideal in~\(\Cont_0(Z)\)
  and hence commutative.  Therefore,
  \(\ket{\eta}\bra{x_z} \cdot \ket{x_z}\bra{x_z} = \ket{x_z}\bra{x_z}
  \cdot \ket{\eta}\bra{x_z}\)
  for any \(\eta\in X'_z\).
  This implies
  \(\eta\in \eta\cdot\Cont_0(U_z) \subseteq x_z\cdot \Cont_0(U_z)\)
  because \(\braket{x_z}{x_z}\)
  is strictly positive in \(\Cont_0(U_z)\).
  Hence the rank-\(1\)
  operator \(\Cont_0(U_z) \to X'_z\),
  \(f\mapsto x_z f\),
  has dense range.  Its adjoint also has dense range because
  \(\braket{x_z}{x_z}\)
  is strictly positive.  Polar decomposition now gives the required
  unitary \(X'_z \cong \Cont_0(U_z)\).

  Since~\(X'_z\)
  is a Hilbert bimodule, the left \(\Cont_0(Z)\)-module
  structure on~\(X'_z\)
  must map some ideal in~\(Z\)
  isomorphically onto \(\Comp(X'_z) \cong \Cont_0(U_z)\).
  This isomorphism gives a homeomorphism between~\(U_z\)
  and some open subset of~\(Z\).
  By definition,
  \(X_z \defeq \Cont_0(U_z) \cdot X \cdot \Cont_0(U_z) =
  \Cont_0(U_z)\cdot X'_z\).
  Thus~\(X_z\)
  is the Hilbert \(\Cont_0(U_z)\)-bimodule
  associated to a partial homeomorphism of~\(U_z\),
  as required.  This finishes the proof in case~\(A\)
  contains an essential ideal of Type~I.
\end{proof}

\section{The Connes spectrum and the main results for Fell bundles}
\label{sec:Connes_spectrum}

\begin{definition}
  \label{def:Connes_spectrum}
  Let~\(\mathcal{B}\)
  be a Fell bundle over an Abelian group~\(G\).
  Let~\(\beta\)
  be the dual \(\widehat{G}\)\nb-action
  on~\(\Cst(\mathcal{B})\).
  The \emph{Connes spectrum} of~\(\mathcal{B}\) is
  \[
  \Gamma(\mathcal{B})\defeq \{z\in \widehat{G} \mid
  I \cap \beta_z(I)\neq 0\
  \text{for each non-zero ideal}\ I \text{ in } \Cst(\mathcal{B})\}.
  \]
  The \emph{strong Connes spectrum} of~\(\mathcal{B}\)
  is
  \[
  \tilde\Gamma(\mathcal{B})
  \defeq  \{z\in \widehat{G} \mid
  \beta_z(I)\subseteq  I
  \text{ for any ideal}\ I \text{ in } \Cst(\mathcal{B})\}.
  \]
  If \(X\) is a Hilbert \(A\)\nb-bimodule, let \((X_n)_{n\in \Z}\)
  be the Fell bundle generated by~\(X\) as in Example
  \ref{def:Fell_bundle_for_Hilbert_bimodule} and define
  \(\Gamma(X)\defeq \Gamma((X_n)_{n\in \Z})\) and
  \(\tilde\Gamma(X)\defeq \tilde\Gamma((X_n)_{n\in \Z})\).
\end{definition}

\begin{remark}
  \label{rem:Connes_spectrum_dual_actions}
  Proposition~\ref{pro:Connes_spectrum} says that these definitions
  give the usual notions for \(G\)\nb-actions
  by automorphisms, viewed as Fell bundles as in
  Example~\ref{exa:automorphism_to_Fell}.  In general, Takai Duality
  tells us that the double crossed product
  \(\Cst(\mathcal{B})\rtimes_\beta \widehat{G}\rtimes_\gamma G\)
  for the action
  \(\gamma\colon G\to \Aut(\Cst(\mathcal{B})\rtimes_\beta
  \widehat{G})\)
  that is dual to~\(\beta\)
  is \(\widehat{G}\)\nb-equivariantly
  isomorphic to \(\Cst(\mathcal{B}) \otimes \Comp(L^2(\widehat{G}))\),
  compare the proof of Proposition~\ref{pro:Morita_globalisation}.
  Thus \(\Cst(\mathcal{B})\)
  and \(C\rtimes_\gamma G\)
  are \(\widehat{G}\)\nb-equivariantly
  Morita--Rieffel equivalent.  This induces a
  \(\widehat{G}\)\nb-equivariant
  lattice isomorphism between the ideal lattices of
  \(\Cst(\mathcal{B})\)
  and \(\Cst(\mathcal{B})\rtimes_\beta \widehat{G}\rtimes_\gamma G\).
  The questions whether each ideal in a particular
  \(\widehat{G}\)\nb-\(\Cst\)-algebra
  contains a \(\widehat{G}\)\nb-invariant
  ideal or is \(\widehat{G}\)\nb-invariant
  are invariant under \(\widehat{G}\)\nb-equivariant
  Morita equivalence.  Hence
  \begin{equation}
    \label{eq:Connes_spectra_Fell_vs_Aut}
    \Gamma(\mathcal{B})=\Gamma(\gamma)
    \quad\text{and}\quad
    \tilde\Gamma(\mathcal{B})=\tilde\Gamma(\gamma).
  \end{equation}
  We could use~\eqref{eq:Connes_spectra_Fell_vs_Aut} to define
  the spectra for Fell bundles.
  Schweizer~\cite{Schweizer:Crossed_preprint} defines~\(\Gamma(X)\)
  for an equivalence bimodule~\(X\) in this fashion, as the Connes
  spectrum of the automorphism that generates the \(\Z\)\nb-action
  on \((A\rtimes_X\Z)\rtimes_\beta \T\).
  So our definition generalises Schweizer's.
\end{remark}

As in the case of group actions, the strong Connes spectrum for Fell
bundles is a residual version of the (ordinary) Connes
spectrum.  

\begin{proposition}
  \label{pro:strong_Connes_vs_ordinary}
  Let~\(\mathcal{B}\) be a Fell bundle over a discrete, Abelian
  group~\(G\) with unit fibre~\(A\).  Then
  \[
  \tilde\Gamma(\mathcal{B})
  = \bigcap_{I\in\I^{\mathcal{B}}(A)} \Gamma(\mathcal{B}|_{A/I}),
  \]
  where we use the restrictions of~\(\mathcal{B}\) to the
  \(\mathcal{B}\)\nb-invariant quotients~\(A/I\).
\end{proposition}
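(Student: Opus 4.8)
The plan is to reduce the statement to its analogue~\eqref{eq:residual_Connes_is_strong_Connes} for group actions by automorphisms, using the dual picture of Remark~\ref{rem:Connes_spectrum_dual_actions} and the Morita globalisation of Proposition~\ref{pro:Morita_globalisation}. Let \(\beta\colon\widehat{G}\to\Aut(\Cst(\mathcal{B}))\) be the dual action and let \(\gamma\colon G\to\Aut(C)\) be the \(G\)\nb-action on \(C\defeq\Cst(\mathcal{B})\rtimes_\beta\widehat{G}\) dual to~\(\beta\); since \(G\) is discrete Abelian, the pair \((C,\gamma)\) is exactly the Morita globalisation \(\Cst(\mathcal{B})\rtimes_{\red,\delta_G}\widehat{G}\) of Proposition~\ref{pro:Morita_globalisation}. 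By the identity~\eqref{eq:Connes_spectra_Fell_vs_Aut} we have \(\tilde\Gamma(\mathcal{B})=\tilde\Gamma(\gamma)\); applying the same identity to each restricted Fell bundle \(\mathcal{B}|_{A/I}\), we get \(\Gamma(\mathcal{B}|_{A/I})=\Gamma(\gamma^{A/I})\), where \(\gamma^{A/I}\) denotes the \(G\)\nb-action built from \(\mathcal{B}|_{A/I}\) in the same way, namely the action on \(\Cst(\mathcal{B}|_{A/I})\rtimes_{\red,\delta_G}\widehat{G}\) dual to the dual coaction of \(\mathcal{B}|_{A/I}\).

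Next I would apply~\eqref{eq:residual_Connes_is_strong_Connes} to the \(G\)\nb-action~\(\gamma\), which gives
\[
\tilde\Gamma(\mathcal{B})=\tilde\Gamma(\gamma)=\bigcap_{\mathcal{J}\in\I^\gamma(C),\ \mathcal{J}\neq C}\Gamma(\gamma|_{C/\mathcal{J}}).
\]
By Proposition~\ref{pro:Morita_globalisation}\ref{en:Morita_globalisation_3_and_a_bit}, the assignment \(I\mapsto\mathcal{J}_I\defeq\Cst(\mathcal{B}|_I)\rtimes_{\red,\delta_G}\widehat{G}\) is a lattice isomorphism \(\I^{\mathcal{B}}(A)\congto\I^\gamma(C)\), with \(\mathcal{J}_I\neq C\) precisely when \(I\neq A\). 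Since \(G\) is Abelian, it is exact, so \(\mathcal{B}\) is exact and \(\Cst(\mathcal{B}|_{A/I})=\Cst(\mathcal{B})/\Cst(\mathcal{B}|_I)\); and since \(\widehat{G}\) is a compact, hence amenable, group, crossed products by~\(\widehat{G}\) are exact, so the quotient map induces a \(\gamma\)\nb-equivariant isomorphism \(C/\mathcal{J}_I\cong\Cst(\mathcal{B}|_{A/I})\rtimes_{\red,\delta_G}\widehat{G}\) that carries \(\gamma|_{C/\mathcal{J}_I}\) to \(\gamma^{A/I}\). Hence \(\Gamma(\gamma|_{C/\mathcal{J}_I})=\Gamma(\gamma^{A/I})=\Gamma(\mathcal{B}|_{A/I})\) by the first paragraph.

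Combining these identifications yields \(\tilde\Gamma(\mathcal{B})=\bigcap_{I\in\I^{\mathcal{B}}(A),\ I\neq A}\Gamma(\mathcal{B}|_{A/I})\). Finally, for \(I=A\) the restricted Fell bundle \(\mathcal{B}|_{A/A}\) is the zero bundle, so \(\Cst(\mathcal{B}|_{A/A})=\{0\}\) has no non-zero ideals and \(\Gamma(\mathcal{B}|_{A/A})=\widehat{G}\); thus the term \(I=A\) is redundant in the intersection, and the asserted equality follows. The only point needing genuine care is verifying that the quotient isomorphism \(C/\mathcal{J}_I\cong\Cst(\mathcal{B}|_{A/I})\rtimes_{\red,\delta_G}\widehat{G}\) is \(\gamma\)\nb-equivariant and intertwines the two dual actions, which amounts to functoriality of the crossed product together with the \(\gamma\)\nb-invariance of \(\mathcal{J}_I\) supplied by Proposition~\ref{pro:Morita_globalisation}\ref{en:Morita_globalisation_3_and_a_bit}; everything else is bookkeeping on top of~\eqref{eq:residual_Connes_is_strong_Connes} and Remark~\ref{rem:Connes_spectrum_dual_actions}.
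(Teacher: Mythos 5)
Your proposal is correct and follows essentially the same route as the paper: pass to the dual action \(\gamma\) on \(C=\Cst(\mathcal{B})\rtimes_\beta\widehat{G}\), apply \eqref{eq:residual_Connes_is_strong_Connes} there, identify \(\I^\gamma(C)\) with \(\I^{\mathcal{B}}(A)\), and then match \(\Gamma(\gamma|_{C/\mathcal{J}_I})\) with \(\Gamma(\mathcal{B}|_{A/I})\) via the \(\widehat{G}\)- and \(G\)-equivariant isomorphisms \(C/\mathcal{J}_I\cong\Cst(\mathcal{B}|_{A/I})\rtimes\widehat{G}\) together with \eqref{eq:Connes_spectra_Fell_vs_Aut}. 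The paper carries out exactly these identifications (citing Proposition~\ref{pro:gauge-invariance_vs_separation} for the ideal correspondence rather than Proposition~\ref{pro:Morita_globalisation}, and noting as you do that the term \(I=A\) is harmless), so your exactness/amenability remarks are just a slightly more explicit justification of the same steps.
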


\begin{proof}
  Put \(C\defeq \Cst(\mathcal{B})\rtimes_\beta \widehat{G}\) and let
  \(\gamma\colon G\to \Aut(C)\) be the action dual
  to~\(\beta\).  
  Equation~\eqref{eq:residual_Connes_is_strong_Connes} gives
  \begin{equation}
    \label{eq:Kishimoto_residual_strong_Connes}
    \tilde\Gamma(\gamma)
    = \bigcap_{\I\in\I^\gamma(C)} \Gamma(\gamma |_{C/\I}),
  \end{equation}
  where~\(\gamma|_{C/\I}\)
  denotes the \(G\)\nb-action
  on~\(C/\I\)
  induced by~\(\gamma\)
  (we may add \(\I=C\)
  to the intersection because \(C/C=\{0\}\)
  and \(\Gamma(\gamma|_{\{0\}})=\widehat{G}\)).
  Every \(\I\in\I^\gamma(C)\)
  is of the form \(\I=J\rtimes_\beta \widehat{G}\)
  for some \(J\in \I^{\beta}(\Cst(\mathcal{B}))\).
  In turn, this is of the form \(J=\Cst(\mathcal{B}|_I) \)
  for some \(I\in \I^{\mathcal{B}}(A)\)
  by Proposition~\ref{pro:gauge-invariance_vs_separation}.  Equip
  \(\Cst(\mathcal{B})/ J\)
  with the \(\widehat{G}\)\nb-action
  \(\beta|_{\Cst(\mathcal{B})/ J}\)
  induced by~\(\beta\),
  and let~\(\beta|_{A/ I}\)
  be the dual \(\widehat{G}\)\nb-action
  on \(\Cst(\mathcal{B}|_{A/I})\).
  We have \(\widehat{G}\)\nb-equivariant isomorphisms
  \begin{gather*}
    \Cst(\mathcal{B}|_{A/I})
    \cong \Cst(\mathcal{B})/\Cst(\mathcal{B}|_I)
    = \Cst(\mathcal{B})/ J,\\
    (\Cst(\mathcal{B})/ J)\rtimes_{\beta|_{\Cst(\mathcal{B}) / J}} \widehat{G}
    \cong (\Cst(\mathcal{B})\rtimes_{\beta}\widehat{G}) \mathbin{/}
    (J\rtimes_{\beta|_J}\widehat{G})
    = C/\I.
  \end{gather*}
  These induce a \(G\)\nb-equivariant isomorphism
  \(\Cst(\mathcal{B}|_{A/I})\rtimes_{\beta|_{A/ I}}\widehat{G}\cong
  C/\I\).  Therefore,
  \[
  \Gamma(\mathcal{B}|_{A/I}) = \Gamma(\gamma|_{C/\I}),
  \]
  see Remark~\ref{rem:Connes_spectrum_dual_actions}.  Since
  \(\tilde\Gamma(\mathcal{B}) = \tilde\Gamma(\gamma)\),
  \eqref{eq:Kishimoto_residual_strong_Connes} becomes the desired
  formula.
\end{proof}

\begin{corollary}
  If~\(\mathcal{B}\) is a minimal Fell bundle over an Abelian group,
  then \(\tilde\Gamma(\mathcal{B}) = \Gamma(\mathcal{B})\).
\end{corollary}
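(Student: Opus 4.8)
The plan is to read this off immediately from Proposition~\ref{pro:strong_Connes_vs_ordinary}. That proposition expresses
\[
\tilde\Gamma(\mathcal{B}) = \bigcap_{I\in\I^{\mathcal{B}}(A)} \Gamma(\mathcal{B}|_{A/I}),
\]
so the first step is simply to use the hypothesis of minimality to see that the lattice \(\I^{\mathcal{B}}(A)\) of \(\mathcal{B}\)\nb-invariant ideals reduces to \(\{0,A\}\). The intersection on the right therefore runs over only two terms.

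Next I would identify those two terms. For \(I=0\) we have \(\mathcal{B}|_{A/0}=\mathcal{B}\), contributing the factor \(\Gamma(\mathcal{B})\). For \(I=A\) the restriction \(\mathcal{B}|_{A/A}\) is the zero Fell bundle, whose full section \(\Cst\)\nb-algebra is~\(\{0\}\); since \(\{0\}\) has no non-zero ideals, the defining condition for the Connes spectrum is vacuous and \(\Gamma(\mathcal{B}|_{A/A})=\widehat{G}\). Hence
\[
\tilde\Gamma(\mathcal{B}) = \Gamma(\mathcal{B})\cap\widehat{G} = \Gamma(\mathcal{B}),
\]
which is the claim. There is essentially no obstacle here: the only thing to be slightly careful about is the edge case \(I=A\), i.e.\ confirming that \(\Gamma\) of the trivial Fell bundle is all of~\(\widehat{G}\) under the convention in Definition~\ref{def:Connes_spectrum}; alternatively one may note that \(\Gamma(\mathcal{B}|_{A/I})\) only shrinks the intersection for \(I\neq A\), so the \(I=A\) term can be dropped outright.
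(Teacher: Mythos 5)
Your proof is correct and is exactly the argument the paper intends: the corollary is an immediate consequence of Proposition~\ref{pro:strong_Connes_vs_ordinary} once minimality reduces \(\I^{\mathcal{B}}(A)\) to \(\{0,A\}\), and your handling of the \(I=A\) term matches the paper's own convention (noted in the proof of that proposition) that the Connes spectrum of the zero quotient is all of \(\widehat{G}\).
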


\begin{proposition}%
  \label{pro:Fell_Connes_spectrum_vs_detect_ideals}
  Let~\(\mathcal{B}\) be a Fell bundle over an Abelian group~\(G\)
  and \(A\defeq B_e\).
  \begin{enumerate}
  \item \label{en:Connes_spectrum_vs_detect_ideals_Connes2}%
    \(A\)~detects ideals in~\(\Cst(\mathcal{B})\) if and only if\/
    \(\Gamma(\mathcal{B}) = \widehat{G}\);
  \item \label{en:strong_Connes_spectrum_vs_separates2}%
    \(A\)~separates ideals in~\(\Cst(\mathcal{B})\) if and only
    if\/ \(\tilde\Gamma(\mathcal{B}) = \widehat{G}\).
  \end{enumerate}
\end{proposition}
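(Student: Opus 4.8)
The plan is to read both parts off the definitions of $\Gamma(\mathcal{B})$ and $\tilde\Gamma(\mathcal{B})$, using the dictionary ``graded ideal $=$ ideal invariant under the dual action $\beta\colon\widehat{G}\to\Aut(\Cst(\mathcal{B}))$'' together with Corollaries~\ref{cor:detection_in_reduced_cross} and~\ref{cor:separation_in_reduced_cross}; the only non-formal ingredient, needed for part~\ref{en:Connes_spectrum_vs_detect_ideals_Connes2}, will come from the Morita globalisation of Proposition~\ref{pro:Morita_globalisation}. Since $G$ is Abelian it is amenable, so $\Cst(\mathcal{B})=\Cred(\mathcal{B})$ and an ideal is graded exactly when it is $\beta$-invariant.

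Part~\ref{en:strong_Connes_spectrum_vs_separates2} I would prove directly. By definition $\tilde\Gamma(\mathcal{B})=\widehat{G}$ says that $\beta_z(I)\subseteq I$ for all $z\in\widehat{G}$ and all ideals $I$ in $\Cst(\mathcal{B})$; applying this to $z$ and to $z^{-1}$ upgrades the inclusion to $\beta_z(I)=I$. Hence $\tilde\Gamma(\mathcal{B})=\widehat{G}$ holds if and only if every ideal in $\Cst(\mathcal{B})$ is $\beta$-invariant, that is, graded, and by Corollary~\ref{cor:separation_in_reduced_cross} this is precisely the assertion that $A$ separates ideals in $\Cst(\mathcal{B})$.

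For part~\ref{en:Connes_spectrum_vs_detect_ideals_Connes2} one implication is again immediate: if $A$ detects ideals, then by Corollary~\ref{cor:detection_in_reduced_cross} every non-zero ideal $I$ contains a non-zero graded ideal $J$, and $0\neq J\subseteq I\cap\beta_z(I)$ for every $z$, so $\Gamma(\mathcal{B})=\widehat{G}$. For the converse I would transfer the question to a group action by automorphisms. Let $\gamma\colon G\to\Aut(C)$ with $C\defeq\Cst(\mathcal{B})\rtimes_\beta\widehat{G}$ be the Morita globalisation of Proposition~\ref{pro:Morita_globalisation}; its unit fibre is $C$, and the $\widehat{G}$-action on $C\rtimes_\gamma G$ is dual to~$\gamma$. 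By Proposition~\ref{pro:Morita_globalisation}\ref{en:Morita_globalisation_3} there is a $\widehat{G}$-equivariant Morita--Rieffel equivalence between $\Cst(\mathcal{B})$ and $C\rtimes_\gamma G$, so the induced order isomorphism $\I(\Cst(\mathcal{B}))\cong\I(C\rtimes_\gamma G)$ sends non-zero ideals to non-zero ideals and restricts to a bijection of graded ideals. Applying Corollary~\ref{cor:detection_in_reduced_cross} on each side, the property ``every non-zero ideal contains a non-zero graded ideal''---that is, that $A$ detects ideals in $\Cst(\mathcal{B})$---transports through this isomorphism to ``$C$ detects ideals in $C\rtimes_\gamma G$'', which by Theorem~\ref{the:Connes_spectrum_vs_detect_ideals} is equivalent to $\Gamma(\gamma)=\widehat{G}$, hence to $\Gamma(\mathcal{B})=\widehat{G}$ by~\eqref{eq:Connes_spectra_Fell_vs_Aut}.

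The real content is imported wholesale: the classical Olesen--Pedersen detection theorem (Theorem~\ref{the:Connes_spectrum_vs_detect_ideals}) and the Takai-duality-based Morita globalisation (Proposition~\ref{pro:Morita_globalisation}) with its identification $\Gamma(\gamma)=\Gamma(\mathcal{B})$. The step I would watch is the $\widehat{G}$-equivariant bookkeeping---that the Morita-equivalence lattice isomorphism genuinely matches up the graded (equivalently, $\beta$- and dual-$\gamma$-invariant) ideals---but this is exactly what Proposition~\ref{pro:Morita_globalisation}\ref{en:Morita_globalisation_3} records, so I do not anticipate a serious obstacle; in particular I do not expect a shorter direct route for the converse in part~\ref{en:Connes_spectrum_vs_detect_ideals_Connes2}, since the Olesen--Pedersen implication ``$\Gamma=\widehat{G}\Rightarrow$ detection'' already requires that machinery.
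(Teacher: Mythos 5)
Your proposal is correct and follows essentially the same route as the paper: part (2) is read off directly from Corollary~\ref{cor:separation_in_reduced_cross} and the definition of $\tilde\Gamma(\mathcal{B})$, while part (1) is reduced via the $\widehat{G}$-equivariant Morita equivalence of Remark~\ref{rem:Connes_spectrum_dual_actions}/Proposition~\ref{pro:Morita_globalisation} (so that $\Gamma(\mathcal{B})=\Gamma(\gamma)$) to the Olesen--Pedersen result, Theorem~\ref{the:Connes_spectrum_vs_detect_ideals}. Your direct argument for the easy implication of (1) is precisely the ``more elementary proof'' the paper alludes to parenthetically.
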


\begin{proof}
  Remark~\ref{rem:Connes_spectrum_dual_actions} reduces
  \ref{en:Connes_spectrum_vs_detect_ideals_Connes2} to the case of
  automorphisms, which is
  Theorem~\ref{the:Connes_spectrum_vs_detect_ideals}.  (In fact, our
  definition of~\(\Gamma(\mathcal{B})\) allows for a more elementary
  proof.)  By Corollary~\ref{cor:separation_in_reduced_cross}, \(A\)
  separates ideals in~\(\Cst(\mathcal{B})\) if and only if each
  ideal in~\(\Cst(\mathcal{B})\) is graded.  With our definition of
  \(\tilde\Gamma(\mathcal{B})\), this is clearly equivalent
  \(\tilde\Gamma(\mathcal{B}) = \widehat{G}\).
\end{proof}

\begin{lemma}
  Let~\(\mathcal{B}\) be a Fell bundle over a cyclic group~\(G\)
  generated by an element \(g\in G\).  Then
  \(\Gamma(\mathcal{B})=\Gamma(B_{g})\) and
  \(\tilde\Gamma(\mathcal{B})=\tilde\Gamma(B_{g})\).
\end{lemma}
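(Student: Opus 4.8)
The plan is to push both $\Gamma(\mathcal{B}),\tilde\Gamma(\mathcal{B})$ and both $\Gamma(B_g),\tilde\Gamma(B_g)$ to the Connes and strong Connes spectra of one and the same single automorphism. Let $\gamma\colon G\to\Aut(C)$, $C\defeq\Cst(\mathcal{B})\rtimes_{\delta_G}\widehat{G}$, be the Morita globalisation of~$\mathcal{B}$ from Proposition~\ref{pro:Morita_globalisation}. By Remark~\ref{rem:Connes_spectrum_dual_actions}, equation~\eqref{eq:Connes_spectra_Fell_vs_Aut}, we have $\Gamma(\mathcal{B})=\Gamma(\gamma)$ and $\tilde\Gamma(\mathcal{B})=\tilde\Gamma(\gamma)$. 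Since $G=\langle g\rangle$ is cyclic, Lemma~\ref{lem:explanation_of_spectra_for_automorphisms} applied to the $G$-action~$\gamma$ gives $\Gamma(\gamma)=\Gamma(\gamma_g)$ and $\tilde\Gamma(\gamma)=\tilde\Gamma(\gamma_g)$, the right-hand sides being the spectra of the $\Z$-action generated by the single automorphism $\gamma_g\in\Aut(C)$. Moreover the $\Z$-Fell bundle generated by the equivalence $C$-bimodule~$C_{\gamma_g}$ is the automorphism bundle $\mathcal{A}_{\bar\gamma_g}$ of the $\Z$-action~$\bar\gamma_g$ generated by~$\gamma_g$, so Definition~\ref{def:Connes_spectrum} together with Remark~\ref{rem:Connes_spectrum_dual_actions} (which identifies the Fell-bundle spectra of an automorphism bundle with the classical ones) yields $\Gamma(C_{\gamma_g})=\Gamma(\gamma_g)$ and $\tilde\Gamma(C_{\gamma_g})=\tilde\Gamma(\gamma_g)$. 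Hence it remains to prove that $\Gamma(C_{\gamma_g})=\Gamma(B_g)$ and $\tilde\Gamma(C_{\gamma_g})=\tilde\Gamma(B_g)$.

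For this remaining equality I would use that $C_{\gamma_g}$ and $B_g$ are tied together by Proposition~\ref{pro:Morita_globalisation}\ref{en:Morita_globalisation_2}: since $G$ is Abelian, $hgh^{-1}=g$ for every $h\in G$, so that statement says that $C_{\gamma_g}$ is covered, up to Morita equivalence, by copies of the Hilbert $A$-bimodule~$B_g$. The equality then follows from the assertion that the Connes spectrum and the strong Connes spectrum of a Hilbert bimodule are invariant under dense Morita coverings, i.e.\ the analogue for $\Gamma,\tilde\Gamma$ of Proposition~\ref{pro:dense_Morita_covering}. I would prove this analogue along the lines of Section~\ref{sec:Kish_permanence}: a Morita equivalence $X\sim Y$ of Hilbert bimodules induces a Morita equivalence $(X_n)_{n\in\Z}\sim(Y_n)_{n\in\Z}$ of the generated $\Z$-Fell bundles and hence a $\widehat{\Z}$-equivariant Morita--Rieffel equivalence of their section $\Cst$-algebras, so the defining condition for $\Gamma$ (a character moving every non-zero ideal off itself) is unchanged; invariance under restriction to an essential ideal of ${}_A\braket{X}{X}+\braket{X}{X}_A$ and under a decomposition of the coefficient algebra into a sum of ideals is then obtained exactly as in the proof of Proposition~\ref{pro:dense_Morita_covering} for the purely-outer property, tracking ideals and Arveson spectra through the Rieffel correspondence and using Propositions~\ref{pro:strong_Connes_vs_ordinary} and~\ref{pro:Fell_Connes_spectrum_vs_detect_ideals} to re-express $\tilde\Gamma$ as an intersection of ordinary Connes spectra over invariant quotients and $\Gamma=\widehat{\Z}$ as detection of ideals. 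Since every covering piece here is isomorphic to the one bimodule~$B_g$, the common value of their spectra is $\Gamma(B_g)$, resp.\ $\tilde\Gamma(B_g)$, which is therefore also $\Gamma(C_{\gamma_g})$, resp.\ $\tilde\Gamma(C_{\gamma_g})$.

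The main obstacle is precisely this Morita-covering invariance of $\Gamma$ and~$\tilde\Gamma$. The new point, compared with the $0/1$-valued properties of Proposition~\ref{pro:dense_Morita_covering}, is that $\Gamma$ and $\tilde\Gamma$ are closed subgroups of $\widehat{\Z}=\T$, so one must transport the whole subgroup, not just the truth value of ``equals $\widehat{\Z}$''. Inclusion of a proper subgroup $\T_d$ can be reduced to a detection-of-ideals statement by restricting every Fell bundle in sight along the subgroup $d\Z\le\Z$, which then propagates through the covering as in the purely-outer case; alternatively one first proves the invariance for $\T$-actions by automorphisms and transports it through the Morita globalisation, as was done above for~$\mathcal{B}$ itself, so that the only genuinely new thing to verify is that a dense Morita covering of~$B_g$ leaves $\Gamma$ and $\tilde\Gamma$ unchanged.
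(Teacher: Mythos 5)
Your first two reductions are exactly the paper's own proof: the paper also deduces the lemma from \eqref{eq:Connes_spectra_Fell_vs_Aut} together with Lemma~\ref{lem:explanation_of_spectra_for_automorphisms} applied to the Morita globalisation~\(\gamma\), and your identification \(\Gamma(C_{\gamma_g})=\Gamma(\gamma_g)\), \(\tilde\Gamma(C_{\gamma_g})=\tilde\Gamma(\gamma_g)\) is correct.  The gap is precisely the step you flag as the main obstacle: you leave \(\Gamma(C_{\gamma_g})=\Gamma(B_g)\) and \(\tilde\Gamma(C_{\gamma_g})=\tilde\Gamma(B_g)\) to an unproven principle, namely that \(\Gamma\) and~\(\tilde\Gamma\) are invariant under dense Morita coverings, to be proved ``along the lines of Proposition~\ref{pro:dense_Morita_covering}''.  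That principle is false.  For~\(\tilde\Gamma\) it fails already for the simplest dense Morita covering, restriction to an essential ideal as in Corollary~\ref{cor:restriction_to_essential_ideals}: take \(A=\Cont(\Z\cup\{\pm\infty\})\), let \(\alpha\in\Aut(A)\) be induced by the shift, \(X\defeq A_\alpha\) and \(K\defeq\Cont_0(\Z)\), an essential \(\alpha\)\nb-invariant ideal.  The induced action on \(A/K\cong\C^2\) is trivial, so \(\tilde\Gamma(X)=\tilde\Gamma(\alpha)\subseteq\Gamma(\alpha|_{A/K})=\{1\}\); but the shift on \(\Cont_0(\Z)\) has no non-trivial invariant ideals or invariant hereditary subalgebras and has full Arveson spectrum, so \(\tilde\Gamma(KXK)=\tilde\Gamma(\alpha|_K)=\T\).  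A Proposition-\ref{pro:dense_Morita_covering}-style density argument cannot work here because, by Proposition~\ref{pro:strong_Connes_vs_ordinary}, \(\tilde\Gamma\) is an intersection over all invariant quotients, and a quotient may annihilate the essential ideal that carries the covering.

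Moreover, even the honest covering provided by Proposition~\ref{pro:Morita_globalisation}.\ref{en:Morita_globalisation_2} is too weak to transport the spectra, because the covering condition in Definition~\ref{def:covering_Hilbert_bimodule} constrains only the ideals \(\braket{\E_i}{\E_i}_A\), not the pieces~\(Y_i\).  Concretely, let \(\mathcal{B}\) over \(G=\Z\) have \(B_n=\C\) for even~\(n\) and \(B_n=0\) for odd~\(n\) (the group bundle of \(2\Z\) viewed over~\(\Z\)), and \(g=1\).  Then \(\Cst(\mathcal{B})\cong\Cont(\T)\) with dual action rotation by~\(z^2\), so \(\Gamma(\mathcal{B})=\T_2\), and hence, by your own first reductions, \(\Gamma(C_{\gamma_1})=\Gamma(\gamma_1)=\T_2\); yet Proposition~\ref{pro:Morita_globalisation}.\ref{en:Morita_globalisation_2} covers \(C_{\gamma_1}\) by copies of \(B_1=0\), whose spectra by Definition~\ref{def:Connes_spectrum} are \(\Gamma(B_1)=\tilde\Gamma(B_1)=\T\).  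So covering invariance fails for~\(\Gamma\) as well, and this degenerate example shows that the identification you need cannot follow from the covering relation alone: it must use how the tensor powers \(B_g^{\otimes_A n}\) sit inside the fibres \(B_{g^n}\).  The workable repair, in the spirit of the paper's citation of \eqref{eq:Connes_spectra_Fell_vs_Aut}, is to run the Takai-duality/linking-algebra argument of Remark~\ref{rem:Connes_spectrum_dual_actions} a second time: because \(G=\langle g\rangle\), the \(\gamma_g\)-orbit of \(\braket{\E}{\E}_C=C*p_e*C\) generates~\(C\), and once the \(\Z\)\nb-Fell bundle generated by~\(B_g\) is identified with \(\E\otimes_C\mathcal{C}\otimes_C\E^*\) for the Fell bundle \(\mathcal{C}\) of the \(\Z\)\nb-action generated by~\(\gamma_g\) (this is where the semi-saturation-type input enters), one gets a \(\T\)\nb-equivariant Morita--Rieffel equivalence between \(\Cst((B_g^{\otimes_A n})_{n\in\Z})\) and \(C\rtimes_{\gamma_g}\Z\), which yields \(\Gamma(B_g)=\Gamma(\gamma_g)\) and \(\tilde\Gamma(B_g)=\tilde\Gamma(\gamma_g)\) directly, with no covering-invariance lemma.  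As written, your proposal reduces the statement to a lemma that is both unproven and, in the generality you state it, false.
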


\begin{proof}
  This follows from~\eqref{eq:Connes_spectra_Fell_vs_Aut} and
  Lemma~\ref{lem:explanation_of_spectra_for_automorphisms} applied to
  the Morita globalisation~\(\gamma\) of~\(\mathcal{B}\).
\end{proof}

We generalise and extend
Lemma~\ref{lem:separable_topological_equivalences}.

\begin{proposition}
  \label{pro:separable_countable_equivalences}
  Let \(\mathcal{B}=(B_g)_{g\in G}\)
  be a Fell bundle over a discrete group~\(G\)
  such that the unit fibre \(A\defeq B_e\)
  contains an essential
  ideal that is separable or whose spectrum is Hausdorff.  The following
  are equivalent:
  \begin{enumerate}[wide,label=\textup{(\ref*{pro:separable_countable_equivalences}.\arabic*)}]
  \item \label{en:topolog_free_Fell}%
    \(\mathcal{B}\) is topologically free;
  \item \label{en:topolog_pointwise_free_Fell}%
    \(\mathcal{B}\) is pointwise topologically nontrivial.
  \end{enumerate}
  If, in addition, \(G\) is countable, the above conditions are
  equivalent to
  \begin{enumerate}[wide,label=\textup{(\ref*{pro:separable_countable_equivalences}.\arabic*)}, resume]
  \item \label{en:densly_free_Fell}%
    \(\widehat{\mathcal{B}}\) is free on a dense subset
    of~\(\widehat{A}\).
  \end{enumerate}
\end{proposition}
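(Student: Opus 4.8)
The plan is to reduce to the case of group actions by automorphisms, Lemma~\ref{lem:separable_topological_equivalences}, via the canonical Morita globalisation of Section~\ref{sec:Morita_globalisation}, after two preliminary reductions; the Hausdorff case is then handled separately by a Baire category argument. The implications \ref{en:densly_free_Fell}\(\Rightarrow\)\ref{en:topolog_free_Fell}\(\Rightarrow\)\ref{en:topolog_pointwise_free_Fell} are immediate: a dense set of free points of \(\widehat{\mathcal B}\) is disjoint from every \(F_g\) with \(g\neq e\), hence from any finite union of them, so that union has empty interior; and \ref{en:topolog_free_Fell}\(\Rightarrow\)\ref{en:topolog_pointwise_free_Fell} is the case of one-element families. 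For the converse, I would first note that all three conditions depend only on the partial \(G\)\nb-action \(\widehat{\mathcal B}=(\widehat{B_g})_{g\in G}\) on \(\widehat A\). If \(K\idealin A\) is an essential ideal, then \(\widehat K\) is dense and open in \(\widehat A\), so a subset of \(\widehat A\) has empty interior if and only if its trace on \(\widehat K\) has empty interior in \(\widehat K\), and a dense free subset of \(\widehat A\) restricts to one of \(\widehat K\) and conversely. Hence, replacing \(\mathcal B\) by the Fell bundle \((K B_g K)_{g\in G}\) with unit fibre \(K\), whose dual partial action is \((\widehat{B_g})_{g\in G}\) restricted to the dense open subset \(\widehat K\subseteq\widehat A\), I may assume that \(A\) itself is separable or has Hausdorff spectrum. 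Secondly, \ref{en:topolog_free_Fell} holds for \(G\) if and only if it holds for every finitely generated, hence countable, subgroup of \(G\), while \ref{en:topolog_pointwise_free_Fell} trivially reduces to countable subgroups; since \ref{en:densly_free_Fell} is claimed only for countable \(G\), it suffices to prove the equivalences for countable \(G\).

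Assume first that \(A\) is separable. Let \(\gamma\colon G\to\Aut(C)\) be the Morita globalisation of \(\mathcal B\) from Proposition~\ref{pro:Morita_globalisation}; then \(C\) is separable by Proposition~\ref{pro:Morita_globalisation}.\ref{en:Morita_globalisation_5}. The witnessing Hilbert \(A,C\)\nb-bimodule~\(\E\) is an equivalence between \(A\) and the ideal \(I\defeq\braket{\E}{\E}_C\idealin C\), so \(\widehat\E\) identifies \(\widehat A\) with the open subset \(\widehat I\subseteq\widehat C\), and under this identification each \(\widehat{B_g}\) becomes a restriction of \(\widehat{\gamma_g}\); in particular \(\widehat\E\) carries \(F_g^C\cap\widehat I\) onto \(F_g\), where \(F_g^C\defeq\{[\pi]\in\widehat C\mid\widehat{\gamma_g}[\pi]=[\pi]\}\). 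The key point is that \(\mathcal B\) is pointwise topologically non-trivial if and only if \(\gamma\) is. Indeed, by Proposition~\ref{pro:Morita_globalisation}.\ref{en:Morita_globalisation_2} the Hilbert bimodules \((B_{hgh^{-1}})_{h\in G}\) cover \(C_{\gamma_g}\) up to Morita equivalence, so Proposition~\ref{pro:dense_Morita_covering}.\ref{en:dense_Morita_covering_top} together with Lemma~\ref{lem:outer_Hilbert_bimodule} shows that \(\gamma_g\) is topologically non-trivial if and only if \(B_{hgh^{-1}}\) is topologically non-trivial for all \(h\in G\); as \(g\) runs through \(G\setminus\{e\}\) and \(h\) through \(G\), the element \(hgh^{-1}\) runs through \(G\setminus\{e\}\), which gives the claim. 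Now Lemma~\ref{lem:separable_topological_equivalences}, applied to the action \(\gamma\) of the countable group~\(G\) on the separable algebra~\(C\), shows that pointwise topological non-triviality of~\(\gamma\) is equivalent to \(\gamma\) being topologically free and to \(\widehat\gamma\) being free on a dense subset of \(\widehat C\). These conclusions transfer back to \(\mathcal B\): if \(\bigcup_{g\in S}F_g^C\) has empty interior in \(\widehat C\), then so does its trace \(\bigcup_{g\in S}F_g\) on the open set \(\widehat I\cong\widehat A\); and if \(\widehat\gamma\) is free on a dense \(D\subseteq\widehat C\), then \(D\cap\widehat I\) is a dense subset of \(\widehat A\) on which \(\widehat{\mathcal B}\) is free. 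So \ref{en:topolog_pointwise_free_Fell} implies \ref{en:topolog_free_Fell} and \ref{en:densly_free_Fell} in this case.

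For the case where \(\widehat A\) is Hausdorff I would argue directly. Assume \ref{en:topolog_pointwise_free_Fell}. For \(g\neq e\) the fixed-point set \(F_g\) of the homeomorphism \(\widehat{B_g}\colon\widehat{D_{g^{-1}}}\congto\widehat{D_g}\) between open subsets of \(\widehat A\) is closed in \(\widehat{D_{g^{-1}}}\), being the equalizer of two continuous maps into the Hausdorff space \(\widehat A\); hence \(\overline{F_g}\setminus F_g\) is contained in the boundary of \(\widehat{D_{g^{-1}}}\), which is nowhere dense, and since \(F_g\) has empty interior the closed set \(\overline{F_g}\) is nowhere dense. As \(G\) is countable, \(\bigcup_{g\in G\setminus\{e\}}\overline{F_g}\) is meagre, and since the spectrum of a \(\Cst\)\nb-algebra is a Baire space (see \cite{Pedersen:Cstar_automorphisms}), its complement~\(D\) is dense. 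Every point of~\(D\) is a free point of \(\widehat{\mathcal B}\), because \(x\in F_g\) for some \(g\neq e\) whenever \(\widehat{B_g}(x)=x\). This proves \ref{en:densly_free_Fell}, whence \ref{en:topolog_free_Fell} and \ref{en:topolog_pointwise_free_Fell} as before.

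The substance of the argument lies in the two reductions and the Morita-globalisation transfer. The delicate point is that topological freeness is not a pointwise condition, so Proposition~\ref{pro:dense_Morita_covering} does not apply to it directly; the argument works only because both \ref{en:topolog_free_Fell} and \ref{en:topolog_pointwise_free_Fell} are properties of the partial system \((\widehat A,\widehat{\mathcal B})\), which is the restriction to an open subset of the global system \((\widehat C,\widehat\gamma)\), and because Proposition~\ref{pro:Morita_globalisation}.\ref{en:Morita_globalisation_2} covers each \(C_{\gamma_g}\) by \emph{all} of its \(\mathcal B\)\nb-conjugates \(B_{hgh^{-1}}\) rather than by \(B_g\) alone --- this is exactly what matches up pointwise non-triviality of \(\mathcal B\) with that of \(\gamma\). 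In the Hausdorff case the only subtlety is that \(\widehat A\) need not be locally compact, so one invokes the Baire property of the spectrum as an external input.
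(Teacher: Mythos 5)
Your proposal is correct and follows essentially the same route as the paper: reduce to an essential ideal via the restricted bundle \((KB_gK)_{g\in G}\) and to countable subgroups, handle the separable case through the Morita globalisation \(\gamma\colon G\to\Aut(C)\) using Proposition~\ref{pro:Morita_globalisation}.\ref{en:Morita_globalisation_2}, Proposition~\ref{pro:dense_Morita_covering}.\ref{en:dense_Morita_covering_top} and Lemma~\ref{lem:separable_topological_equivalences}, and settle the Hausdorff case by a Baire category argument. The only deviation is cosmetic: where the paper cites Exel--Laca--Quigg for closedness of the sets \(F_g\) in \(\widehat{A}\), you observe that \(F_g\) is only closed in the domain \(\widehat{D_{g^{-1}}}\) and patch this with the nowhere-dense boundary of that open set, which is a slightly more careful version of the same argument.
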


\begin{proof}
  As in the proof of
  Lemma~\ref{lem:separable_topological_equivalences}, it suffices to
  assume that~\(G\)
  is countable and prove that~\ref{en:topolog_pointwise_free_Fell}
  implies~\ref{en:densly_free_Fell}.  So
  assume~\ref{en:topolog_pointwise_free_Fell}.

  Let \(K\idealin A\) be an essential ideal.  Let \(K_g\defeq
  KB_gK\) for each \(g\in G\).  Then \((K_g)_{g\in G}\) is naturally
  a Fell bundle with \(K_e=K\).  The dual partial
  homeomorphisms~\(\widehat{K_g}\) for \(g\in G\) are the
  restrictions of the partial homeomorphisms~\(\widehat{B_g}\) to
  the open dense set \(\widehat{K}\).  More precisely,
  \(\widehat{K_g}\) is the restriction of \(\widehat{B_g}\colon
  \widehat{D}_{g^{-1}}\congto \widehat{D_g}\) to
  \(\widehat{D_{g^{-1}}} \cap \widehat{K}\cap
  \widehat{B_{g^{-1}}}(\widehat{D_g}\cap \widehat{K})\).  Thus the
  Fell bundle \((K_g)_{g\in G}\) is pointwise topologically
  nontrivial, and \ref{en:densly_free_Fell} holds if and only if
  \((\widehat{K_g})_{g\in G}\) is free on a dense subset
  of~\(\widehat{K}\).  This replaces~\(A\) by an essential
  ideal~\(K\).  So it suffices to prove the proposition if~\(A\)
  itself is separable or has Hausdorff spectrum.
	
  Suppose first that~\(A\) is separable. 
  Let \(\gamma\colon G\to\Aut(C)\)
  be the Morita globalisation of~\(\widehat{\mathcal{B}}\) described in
  Proposition~\ref{pro:Morita_globalisation}.
  The \(\Cst\)\nb-algebra~\(C\)
  is separable by
  Proposition~\ref{pro:Morita_globalisation}.\ref{en:Morita_globalisation_5},
  and the family of Hilbert bimodules~\((B_{h g h^{-1}})_{h\in G}\)
  covers~\(C_{\gamma_g}\)
  up to Morita equivalence.  Hence~\(\gamma_g\)
  is topologically non-trivial for all \(g\in G\setminus\{e\}\)
  by
  Proposition~\ref{pro:dense_Morita_covering}.\ref{en:dense_Morita_covering_top}.
  Lemma~\ref{lem:separable_topological_equivalences} gives a dense
  subset of~\(\widehat{C}\)
  on which~\(\widehat{\gamma}\)
  is free.  Since \(\gamma\colon G\to \Aut(C)\)
  is a Morita globalisation of~\(\mathcal{B}\),
  the partial action~\(\widehat{\mathcal{B}}\)
  may be identified with a restriction of~\(\widehat{\gamma}\)
  to an open subset.  Hence there is a dense subset of~\(\widehat{A}\)
  on which~\(\widehat{\mathcal{B}}\) is free.

  Suppose now that the spectrum~\(\widehat{A}\) of~\(A\) is
  Hausdorff.  The sets \(F_g \defeq \{[\pi]\in \widehat{A} \mid
  \widehat{B_g}([\pi])=[\pi]\}\) for \(g\in G\) are closed
  in~\(\widehat{A}\), see the proof of
  \cite{Exel-Laca-Quigg:Partial_dynamical}*{Lemma~2.2}.  Since they
  have empty interiors, the union \(\bigcup_{g\in G\setminus\{e\}}
  F_g\) has empty interior in~\(\widehat{A}\) by the Baire Category
  Theorem.
\end{proof}

\begin{theorem}
  \label{thm:separable_type_I_equivalences}
  Let \(\mathcal{B}=(B_g)_{g\in G}\)
  be a Fell bundle whose unit fibre \(A\defeq B_e\)
  contains an essential ideal that is separable or of Type~I.
  Then~\(\mathcal{B}\)
  is aperiodic if and only if~\(\mathcal{B}\) is topologically free.
\end{theorem}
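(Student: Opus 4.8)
The plan is to interpolate between aperiodicity and topological freeness through the condition of \emph{pointwise topological non\nb-triviality}, using Theorem~\ref{the:Hilbi_Kish_vs_weakly_inner} for the first link and Proposition~\ref{pro:separable_countable_equivalences} for the second. Both aperiodicity and pointwise topological non\nb-triviality are conditions imposed fibrewise on the Hilbert \(A\)\nb-bimodules~\(B_g\) for \(g\in G\setminus\{e\}\): the former asks that each such~\(B_g\) satisfy Kishimoto's condition, the latter that each such~\(B_g\) be topologically non\nb-trivial.

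First I would apply Theorem~\ref{the:Hilbi_Kish_vs_weakly_inner} to each \(B_g\), \(g\in G\setminus\{e\}\), viewed as a Hilbert \(A\)\nb-bimodule. By hypothesis~\(A\) contains an essential ideal that is separable or of Type~I; in either case that theorem gives the equivalence of its conditions \ref{en:kish} and~\ref{en:topol}, so~\(B_g\) satisfies Kishimoto's condition if and only if it is topologically non\nb-trivial. Quantifying over all \(g\in G\setminus\{e\}\) shows that~\(\mathcal{B}\) is aperiodic if and only if~\(\mathcal{B}\) is pointwise topologically non\nb-trivial.

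Next I would invoke Proposition~\ref{pro:separable_countable_equivalences}, which asserts that~\(\mathcal{B}\) is topologically free if and only if it is pointwise topologically non\nb-trivial, provided~\(A\) contains an essential ideal that is separable or has Hausdorff spectrum. If~\(A\) already has a separable essential ideal, this hypothesis is met directly. If instead~\(A\) has an essential ideal~\(J\) of Type~I, then by \cite{Pedersen:Cstar_automorphisms}*{Theorem~6.2.11} \(J\) contains an essential ideal~\(K\) with continuous trace, and the spectrum of a continuous\nb-trace algebra is Hausdorff; moreover~\(K\) is essential in~\(A\), since an essential ideal of an essential ideal is essential (if \(L\idealin A\) satisfies \(L\cap K=0\), then \(L\cap J\) is an ideal of~\(J\) meeting~\(K\) trivially, so \(L\cap J=0\), so \(L=0\)). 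Hence the hypothesis of Proposition~\ref{pro:separable_countable_equivalences} holds in this case as well, and combining with the previous paragraph yields the theorem. The only technical point is the passage from an essential Type~I ideal to an essential ideal with Hausdorff spectrum so that the hypotheses of the two cited results line up; there is no genuine analytic obstacle at this stage, since the substance—especially the Type~I case, which rests on the dense Morita covering of~\(B_g\) by Hilbert bimodules over commutative \(\Cst\)\nb-algebras—is already carried out in Theorem~\ref{the:Hilbi_Kish_vs_weakly_inner} and Proposition~\ref{pro:separable_countable_equivalences}.
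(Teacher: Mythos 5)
Your proposal is correct and follows essentially the same route as the paper: the paper's proof likewise reduces the Type~I case to an essential ideal with Hausdorff spectrum via \cite{Pedersen:Cstar_automorphisms}*{Theorem~6.2.11} and then combines Theorem~\ref{the:Hilbi_Kish_vs_weakly_inner} (Kishimoto's condition equivalent to topological non-triviality fibrewise) with Proposition~\ref{pro:separable_countable_equivalences} (pointwise topological non-triviality equivalent to topological freeness). Your extra remarks on transitivity of essentiality just make explicit details the paper leaves implicit.
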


\begin{proof}
  If~\(A\)
  has an essential ideal of Type~I, then it also has an essential
  ideal with Hausdorff spectrum by
  \cite{Pedersen:Cstar_automorphisms}*{Theorem~6.2.11}.  Thus the
  assertion follows from
  Proposition~\ref{pro:separable_countable_equivalences} and
  Theorem~\ref{the:Hilbi_Kish_vs_weakly_inner}.
\end{proof}

\begin{theorem}
  \label{the:Equivalence_for_finite2}
  Let~\(\mathcal{B}\) be a Fell bundle over a discrete group~\(G\).
  The following are equivalent:
  \begin{enumerate}[wide,label=\textup{(\ref*{the:Equivalence_for_finite2}.\arabic*)}]
  \item \label{en:Equivalence_finite_Kishimoto}%
    \(\mathcal{B}\)~is aperiodic;
  \item \label{en:Equivalence_finite_spectra1}%
    \(\Gamma(B_g)=\T_{\ord(g)}\) for all \(g\in G\);
  \item \label{en:Equivalence_finite_spectra2}%
    \(\Gamma(B_g)\neq\{1\}\) for all \(g\in G\setminus\{e\}\).
  \end{enumerate}
  These equivalent conditions imply
  \begin{enumerate}[wide,label=\textup{(\ref*{the:Equivalence_for_finite2}.\arabic*)},resume]
  \item \label{en:Equivalence_finite_outer}%
    \(\mathcal{B}\) is pointwise purely outer.
  \end{enumerate}
  If~\(G\)
  is finite or~\(A\)
  contains an essential
  ideal that is simple or of Type~I, then
  \ref{en:Equivalence_finite_Kishimoto}--\ref{en:Equivalence_finite_outer}
  are equivalent.
\end{theorem}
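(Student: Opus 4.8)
The plan is to reduce all the assertions to the case of group actions by automorphisms, i.e.\ to Theorem~\ref{the:Equivalence_for_finite}, by applying the Morita globalisation of Proposition~\ref{pro:Morita_globalisation} to the restrictions of~\(\mathcal{B}\) to cyclic subgroups.  Fix \(h\in G\), put \(H\defeq\langle h\rangle\) (a cyclic, possibly infinite, subgroup), let \(\mathcal{B}|_H=(B_{h^n})_{n\in\Z}\) be the Fell bundle over~\(H\) obtained by restriction, and let \(\gamma\colon H\to\Aut(C)\) be its canonical Morita globalisation.  I would first record two transfer facts.  On the spectral side, combining the lemma above identifying the Connes spectrum of a Fell bundle over a cyclic group with that of its fibre over a generator, the identity~\eqref{eq:Connes_spectra_Fell_vs_Aut}, and Lemma~\ref{lem:explanation_of_spectra_for_automorphisms} yields
\[
\Gamma(B_h)=\Gamma(\mathcal{B}|_H)=\Gamma(\gamma)=\Gamma(\gamma_h),
\]
a closed subgroup of \(\widehat H\cong\T_{\ord(h)}\).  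On the triviality side, since~\(H\) is Abelian, Proposition~\ref{pro:Morita_globalisation}.\ref{en:Morita_globalisation_2} shows that for each~\(k\) a family of copies of~\(B_{h^k}\) covers the Hilbert \(C\)\nb-bimodule~\(C_{\gamma_{h^k}}\) up to Morita equivalence; hence, by Proposition~\ref{pro:dense_Morita_covering}.\ref{en:dense_Morita_covering_Kish} and~\ref{en:dense_Morita_covering_outer} together with Lemma~\ref{lem:outer_Hilbert_bimodule}, \(\gamma_{h^k}\) satisfies Kishimoto's condition if and only if~\(B_{h^k}\) does, and \(\gamma_{h^k}\) is purely outer if and only if~\(B_{h^k}\) is.  Letting~\(h\) range over~\(G\), it follows that \(\mathcal{B}\) is aperiodic (resp.\ pointwise purely outer) if and only if, for every~\(h\), the action \(\gamma\colon H\to\Aut(C)\) is aperiodic (resp.\ pointwise purely outer).

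With these facts the first part of the theorem is quick.  The implication \ref{en:Equivalence_finite_spectra1}\(\Rightarrow\)\ref{en:Equivalence_finite_spectra2} is trivial because \(\T_{\ord(h)}\neq\{1\}\) for \(h\neq e\).  For \ref{en:Equivalence_finite_spectra2}\(\Rightarrow\)\ref{en:Equivalence_finite_Kishimoto}, fix \(h\neq e\); then \(\Gamma(\gamma_h)=\Gamma(B_h)\neq\{1\}\), and since \(\Gamma(\gamma_h)\subseteq\Gamma(\gamma_h|_D)\) for any restriction \(\gamma_h|_D\), condition~\ref{en:Kishimoto_spectrum_Connes} of Theorem~\ref{the:Kishimoto_spectrum} holds for~\(\gamma_h\); hence \(\gamma_h\), and therefore~\(B_h\), satisfies Kishimoto's condition, so \(\mathcal{B}\) is aperiodic.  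For \ref{en:Equivalence_finite_Kishimoto}\(\Rightarrow\)\ref{en:Equivalence_finite_spectra1}, fix \(h\in G\); for \(h=e\) we have \(\Gamma(B_e)=\Gamma(A)=\{1\}=\T_1\), while for \(h\neq e\) aperiodicity of~\(\mathcal{B}\), transferred to all powers~\(h^k\), makes \(\gamma\colon H\to\Aut(C)\) aperiodic, so \(C\) detects ideals in \(C\rtimes_\gamma H\) by Theorem~\ref{the:Kishimoto_to_detect} and \(\Gamma(\gamma)=\widehat H\) by Theorem~\ref{the:Connes_spectrum_vs_detect_ideals}; the displayed chain then gives \(\Gamma(B_h)=\widehat H=\T_{\ord(h)}\).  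Finally, \ref{en:Equivalence_finite_Kishimoto}\(\Rightarrow\)\ref{en:Equivalence_finite_outer} follows from the implication \ref{en:kish}\(\Rightarrow\)\ref{en:outer} of Theorem~\ref{the:Hilbi_Kish_vs_weakly_inner}, applied to each~\(B_h\) with \(h\neq e\).

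It remains to prove \ref{en:Equivalence_finite_outer}\(\Rightarrow\)\ref{en:Equivalence_finite_Kishimoto} under the extra hypotheses.  If~\(A\) contains an essential ideal that is simple or of Type~I, then Theorem~\ref{the:Hilbi_Kish_vs_weakly_inner} gives \ref{en:kish}\(\Leftrightarrow\)\ref{en:outer} for every Hilbert \(A\)\nb-bimodule, in particular for each~\(B_h\), so pointwise pure outerness of~\(\mathcal{B}\) coincides with aperiodicity.  If~\(G\) is finite, fix \(h\neq e\); then \(H=\langle h\rangle\) is finite, and pointwise pure outerness of~\(\mathcal{B}\), transferred to all powers~\(h^k\), makes \(\gamma\colon H\to\Aut(C)\) pointwise purely outer, so \(C\) detects ideals in \(C\rtimes_\gamma H\) by \cite{Rieffel:Actions_finite}*{Theorem~1.1}; hence \(\Gamma(B_h)=\Gamma(\gamma)=\widehat H\neq\{1\}\), which is~\ref{en:Equivalence_finite_spectra2}, and we conclude by the equivalences already proved.

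The step I expect to require the most care is the passage through the Morita globalisation.  One must work with the globalisation of the \emph{restricted} bundle~\(\mathcal{B}|_H\) rather than of~\(\mathcal{B}\) itself, so that \(\Gamma(B_h)\) is computed inside \(\widehat H\cong\T_{\ord(h)}\) exactly; and one must transfer aperiodicity---or pointwise pure outerness---of the \emph{whole} Fell bundle, not merely of the single fibre~\(B_h\), to the globalising \emph{action}~\(\gamma\) of~\(H\), which forces the covering statements of Proposition~\ref{pro:dense_Morita_covering} to be applied simultaneously to all powers~\(h^k\).  This is precisely what lets one invoke Theorems~\ref{the:Kishimoto_to_detect} and~\ref{the:Connes_spectrum_vs_detect_ideals} (and Rieffel's theorem) to pin \(\Gamma(B_h)\) down as exactly \(\T_{\ord(h)}\) rather than merely as a nontrivial subgroup.
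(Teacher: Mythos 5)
Your proposal is correct and follows essentially the same route as the paper: the paper's (very terse) proof is exactly the reduction to the automorphism case of Theorem~\ref{the:Equivalence_for_finite} via the Morita globalisation of Proposition~\ref{pro:Morita_globalisation}, the transfer of the pointwise conditions through Proposition~\ref{pro:dense_Morita_covering} (with Lemma~\ref{lem:outer_Hilbert_bimodule}), the spectral identification~\eqref{eq:Connes_spectra_Fell_vs_Aut}, and Theorem~\ref{the:Hilbi_Kish_vs_weakly_inner} for the simple/Type~I case. You merely spell out the details the paper leaves implicit --- working with the restrictions to cyclic subgroups~\(\langle h\rangle\) and re-running the arguments of Theorem~\ref{the:Equivalence_for_finite} (detection, Connes spectrum, Rieffel's theorem) at the globalised level instead of citing that theorem wholesale --- which is a sound elaboration of the same argument.
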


\begin{proof}
  Propositions \ref{pro:Morita_globalisation}
  and~\ref{pro:dense_Morita_covering}
  and~\eqref{eq:Connes_spectra_Fell_vs_Aut} reduce the theorem to the
  case of automorphisms, which is mostly done in
  Theorem~\ref{the:Equivalence_for_finite}.  If~\(A\)
  contains an  essential ideal which is simple or of Type~I, then
  \ref{en:Equivalence_finite_Kishimoto}\(\Leftrightarrow\)%
  \ref{en:Equivalence_finite_outer}  by
  Theorem~\ref{the:Hilbi_Kish_vs_weakly_inner}.
\end{proof}

\begin{theorem}
  \label{the:residual_for_finite2}
  Let~\(\mathcal{B}\) be a Fell bundle over a discrete group~\(G\).
  Consider the following conditions:
  \begin{enumerate}[wide,label=\textup{(\ref*{the:residual_for_finite2}.\arabic*)}]
  \item \label{en:residual_finite_spectra_full2}%
    \(\tilde\Gamma(B_g)=\T_{\ord(g)}\) for all \(g\in G\);
  \item \label{en:residual_finite_spectra_nontrivial2}%
    \(\tilde\Gamma(B_g)\neq\{1\}\) for all \(g\in
    G\setminus\{e\}\);
  \item \label{en:residual_finite_aperiodic2}%
    \(\mathcal{B}\) is residually aperiodic;
  \item \label{en:residual_finite_purely_outer2}%
    \(\mathcal{B}\) is residually pointwise purely outer;
  \item \label{en:strongly_outer2}%
    for any \(g\in G\setminus\{e\}\) and any two
    \(B_g\)\nb-invariant ideals \(I\subsetneq J\subseteq A\), the
    restriction \(B_g|_{J/I}\) is outer.
  \end{enumerate}
  Then \ref{en:residual_finite_spectra_full2}\(\Leftrightarrow\)%
  \ref{en:residual_finite_spectra_nontrivial2}\(\Rightarrow\)%
  \ref{en:residual_finite_aperiodic2}\(\Rightarrow\)%
  \ref{en:residual_finite_purely_outer2}\(\Leftarrow\)%
  \ref{en:strongly_outer2}.  If~\(G\) is finite, all
  conditions
  \ref{en:residual_finite_spectra_full2}--\ref{en:strongly_outer2}
  are equivalent.
\end{theorem}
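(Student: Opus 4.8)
The plan is to treat this as the residual counterpart of Theorem~\ref{the:Equivalence_for_finite2}: I would apply that theorem to the restricted Fell bundles \(\mathcal{B}|_{A/I}\) for \(\mathcal{B}\)\nb-invariant ideals \(I\idealin A\), and use Proposition~\ref{pro:strong_Connes_vs_ordinary} to write the strong Connes spectra \(\tilde\Gamma(B_g)\) as intersections of ordinary ones over invariant quotients. The implication \ref{en:residual_finite_spectra_full2}\(\Rightarrow\)\ref{en:residual_finite_spectra_nontrivial2} is immediate. For \ref{en:residual_finite_aperiodic2}\(\Rightarrow\)\ref{en:residual_finite_purely_outer2} one restricts to each \(\mathcal{B}\)\nb-invariant~\(I\) and invokes the implication (aperiodic)\(\Rightarrow\)(pointwise purely outer) from Theorem~\ref{the:Equivalence_for_finite2}. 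For \ref{en:strongly_outer2}\(\Rightarrow\)\ref{en:residual_finite_purely_outer2} I would argue contrapositively: if \(\mathcal{B}|_{A/I}\) is not pointwise purely outer, then \(B_g/B_gI\) is partly inner for some \(g\neq e\), so by Definition~\ref{def:topological_condition_Hilm} there is a nonzero ideal \(J/I\) of \(A/I\) such that \(B_g|_{J/I}=(B_gJ)/(B_gI)\) is isomorphic to the identity bimodule on \(J/I\); since \(I\) is \(\mathcal{B}\)\nb-invariant it is \(B_g\)\nb-invariant, and a short computation from \((B_g/B_gI)(J/I)=(J/I)(B_g/B_gI)\) together with \(IB_g=B_gI\) shows that \(J\) is \(B_g\)\nb-invariant as well, so \(B_g|_{J/I}\) is inner with \(I\subsetneq J\), contradicting~\ref{en:strongly_outer2}.

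The core of the argument consists of \ref{en:residual_finite_spectra_nontrivial2}\(\Rightarrow\)\ref{en:residual_finite_aperiodic2} and \ref{en:residual_finite_spectra_nontrivial2}\(\Rightarrow\)\ref{en:residual_finite_spectra_full2}. For the first, fix a \(\mathcal{B}\)\nb-invariant \(I\) and \(g\neq e\). Because \(I\) is \(B_g\)\nb-invariant, the \((B_g/B_gI)\)\nb-invariant ideals of \(A/I\) are precisely the \(L/I\) with \(L\idealin A\) a \(B_g\)\nb-invariant ideal containing \(I\), and the corresponding quotient Hilbert bimodule is \(B_g/B_gL\); hence Proposition~\ref{pro:strong_Connes_vs_ordinary}, applied to the \(\Z\)\nb-Fell bundles generated by \(B_g\) and by \(B_g/B_gI\), yields \(\tilde\Gamma(B_g)\subseteq\tilde\Gamma(B_g/B_gI)\subseteq\Gamma(B_g/B_gI)\). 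Therefore~\ref{en:residual_finite_spectra_nontrivial2} forces \(\Gamma(B_g/B_gI)\neq\{1\}\) for all \(g\neq e\), and Theorem~\ref{the:Equivalence_for_finite2} applied to \(\mathcal{B}|_{A/I}\) shows that \(\mathcal{B}|_{A/I}\) is aperiodic; thus \(\mathcal{B}\) is residually aperiodic. For the second implication, \ref{en:residual_finite_aperiodic2} (now available) makes \(\mathcal{B}\) aperiodic, so \(\Gamma(B_g)=\T_{\ord(g)}\) by Theorem~\ref{the:Equivalence_for_finite2}, whence \(\tilde\Gamma(B_g)\subseteq\Gamma(B_g)=\T_{\ord(g)}\). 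The reverse inclusion \(\T_{\ord(g)}\subseteq\tilde\Gamma(B_g)=\bigcap_{K}\Gamma(B_g/B_gK)\), where \(K\) ranges over \(B_g\)\nb-invariant ideals, requires showing \(\Gamma(B_g/B_gK)=\T_{\ord(g)}\) for each such \(K\). Here \(B_g/B_gK\) still satisfies Kishimoto's condition (it survives both restriction to an ideal and passage to a quotient by an invariant ideal), and it ``closes up at level \(\ord(g)\)'': the iterated multiplication \(B_g^{\otimes\ord(g)}\to B_{g^{\ord(g)}}=A\) is isometric, so \((B_g/B_gK)^{\otimes\ord(g)}\) is isomorphic to an ideal of \(A/K\). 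From this I expect to deduce \(\Gamma(B_g/B_gK)=\T_{\ord(g)}\) by reducing to an automorphism of finite order: the cyclic group lemma preceding the theorem identifies \(\tilde\Gamma(B_g)\) with \(\tilde\Gamma(\mathcal{B}|_{\langle g\rangle})\), and the Morita globalisation of Proposition~\ref{pro:Morita_globalisation} replaces \(B_g\) by an automorphism of order \(\ord(g)\), for which the statement is part of the classical residual Connes spectrum theory --- namely~\eqref{eq:residual_Connes_is_strong_Connes} together with Theorem~\ref{the:Equivalence_for_finite}.

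For the finite group case it remains only to close the loop, and \ref{en:residual_finite_purely_outer2}\(\Rightarrow\)\ref{en:residual_finite_aperiodic2} suffices: for each \(\mathcal{B}\)\nb-invariant \(I\), pointwise pure outerness of \(\mathcal{B}|_{A/I}\) forces aperiodicity by Theorem~\ref{the:Equivalence_for_finite2} (since \(G\) is finite), so \(\mathcal{B}\) is residually aperiodic and then \ref{en:residual_finite_spectra_nontrivial2} and \ref{en:residual_finite_spectra_full2} follow from the first chain. (Incidentally \ref{en:residual_finite_aperiodic2}\(\Rightarrow\)\ref{en:strongly_outer2} already holds for arbitrary~\(G\): if \(\mathcal{B}\) is residually aperiodic it is aperiodic, so \(B_g\) and hence \(B_g|_{J/I}\) satisfy Kishimoto's condition, while a nonzero identity bimodule never does.) The main obstacle throughout --- and the point demanding the most care, concentrated in the step \ref{en:residual_finite_spectra_nontrivial2}\(\Rightarrow\)\ref{en:residual_finite_spectra_full2} --- is the mismatch between \(B_g\)\nb-invariant ideals of~\(A\), which govern \(\tilde\Gamma(B_g)\) via Proposition~\ref{pro:strong_Connes_vs_ordinary} and appear in condition~\ref{en:strongly_outer2}, and \(\mathcal{B}\)\nb-invariant ideals, which govern residual aperiodicity and residual pointwise pure outerness. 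The device that bridges this gap is the cyclic subbundle reduction \(\mathcal{B}\rightsquigarrow\mathcal{B}|_{\langle g\rangle}\), whose strong Connes spectrum is exactly \(\tilde\Gamma(B_g)\), combined with the ideal lattice identifications furnished by Proposition~\ref{pro:Morita_globalisation}.
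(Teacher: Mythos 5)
The step that fails is your permanence claim that Kishimoto's condition ``survives \dots\ passage to a quotient by an invariant ideal''. It does not: aperiodicity is not inherited by invariant quotients, which is precisely why the residual notions of Definition~\ref{def:residual} are introduced at all. Concretely, the translation automorphism of \(\Cont(\Z\cup\{\infty\})\) (one-point compactification, \(\infty\) fixed) is topologically non-trivial, hence satisfies Kishimoto's condition on this commutative algebra (see the commutative case in the proof of Theorem~\ref{the:Hilbi_Kish_vs_weakly_inner}), but its quotient by the invariant ideal \(\Cont_0(\Z)\) is the identity automorphism of \(\C\), which does not. This false claim is load-bearing twice in your proposal: in the reverse inclusion \(\T_{\ord(g)}\subseteq\tilde\Gamma(B_g)\) for \ref{en:residual_finite_spectra_nontrivial2}\(\Rightarrow\)\ref{en:residual_finite_spectra_full2}, where you need \(\Gamma(B_g/B_gK)=\T_{\ord(g)}\) for every \(B_g\)-invariant \(K\) and try to get aperiodicity of \(B_g/B_gK\) from aperiodicity of \(\mathcal{B}\) itself, and in your parenthetical claim that \ref{en:residual_finite_aperiodic2}\(\Rightarrow\)\ref{en:strongly_outer2} holds for arbitrary~\(G\). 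The first gap can be repaired, but only by using the hypothesis rather than the false permanence: identify \(\tilde\Gamma(B_g)=\tilde\Gamma(\mathcal{B}|_{\langle g\rangle})\) by the cyclic-group lemma and apply Proposition~\ref{pro:strong_Connes_vs_ordinary} to the sub-bundle over \(\langle g\rangle\), so that the intersection runs only over ideals \(K\) invariant under all fibres \(B_{g^k}\); for such \(K\), hypothesis \ref{en:residual_finite_spectra_nontrivial2} applied to the \emph{powers} \(g^k\) gives \(\Gamma(B_{g^k}/B_{g^k}K)\supseteq\tilde\Gamma(B_{g^k})\neq\{1\}\), so the quotient bundle over \(\langle g\rangle\) is aperiodic and its spectra are full by Theorem~\ref{the:Equivalence_for_finite2}.

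The finite-group case is also not closed. After \ref{en:residual_finite_purely_outer2}\(\Rightarrow\)\ref{en:residual_finite_aperiodic2} you assert that \ref{en:residual_finite_spectra_nontrivial2} and \ref{en:residual_finite_spectra_full2} ``follow from the first chain'', but that chain only contains \ref{en:residual_finite_spectra_nontrivial2}\(\Rightarrow\)\ref{en:residual_finite_aperiodic2}; the converse is exactly the hard point, because residual aperiodicity controls only \(\mathcal{B}\)-invariant quotients while \(\tilde\Gamma(B_g)\) and condition~\ref{en:strongly_outer2} involve all \(B_g\)-invariant ones --- the mismatch you yourself flag but never actually bridge. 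The paper bridges it with an external input for which your proposal has no substitute: for actions of finite groups by automorphisms, residual pointwise pure outerness implies strong pointwise outerness (\cite{Pasnicu-Phillips:Spectrally_free}*{Lemma~1.15}), transported to Fell bundles by Propositions \ref{pro:Morita_globalisation} and~\ref{pro:dense_Morita_covering}; this yields \ref{en:residual_finite_purely_outer2}\(\Rightarrow\)\ref{en:strongly_outer2}, and then \ref{en:strongly_outer2}\(\Rightarrow\)\ref{en:residual_finite_spectra_full2} follows by rewriting \ref{en:strongly_outer2} as pure outerness of \(B_g/B_gI\) for all \(B_g\)-invariant~\(I\), converting this to \(\Gamma(B_g/B_gI)=\T_{\ord(g)}\) via Theorem~\ref{the:Equivalence_for_finite2} in the finite case, and applying Proposition~\ref{pro:strong_Connes_vs_ordinary}. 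Your aside \ref{en:residual_finite_aperiodic2}\(\Rightarrow\)\ref{en:strongly_outer2} would close the loop, but it rests on the same false quotient-permanence and is not asserted by the theorem for arbitrary~\(G\). The implications you do establish --- \ref{en:residual_finite_spectra_full2}\(\Rightarrow\)\ref{en:residual_finite_spectra_nontrivial2}, \ref{en:residual_finite_spectra_nontrivial2}\(\Rightarrow\)\ref{en:residual_finite_aperiodic2}, \ref{en:residual_finite_aperiodic2}\(\Rightarrow\)\ref{en:residual_finite_purely_outer2} and \ref{en:strongly_outer2}\(\Rightarrow\)\ref{en:residual_finite_purely_outer2} --- are correct and agree with the paper's argument.
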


\begin{proof}
  Most claims are just residual versions of assertions in
  Theorem~\ref{the:Equivalence_for_finite2}; by
  Proposition~\ref{pro:strong_Connes_vs_ordinary}, the strong Connes
  spectrum is the residual version of the Connes spectrum.  This gives
  the implications
  \ref{en:residual_finite_spectra_full2}\(\Leftrightarrow\)%
  \ref{en:residual_finite_spectra_nontrivial2}\(\Rightarrow\)%
  \ref{en:residual_finite_aperiodic2}\(\Rightarrow\)%
  \ref{en:residual_finite_purely_outer2} for all~\(G\)
  and the converse implication
  \ref{en:residual_finite_purely_outer2}\(\Rightarrow\)%
  \ref{en:residual_finite_aperiodic2} for finite~\(G\).
  Beware that \ref{en:residual_finite_spectra_nontrivial2} is \emph{a
    priori} stronger than~\ref{en:residual_finite_aperiodic2} because
  the former involves all ideals invariant under the single
  automorphism~\(\alpha_g\),
  whereas the latter involves only those ideals invariant under the
  whole action~\(\alpha\).
  Condition~\ref{en:strongly_outer2}
  implies~\ref{en:residual_finite_purely_outer2} because it
  requires~\(B_g|_{J/I}\)
  to be outer for more subquotients.  For finite~\(G\)
  and an action by automorphisms, the converse implication
  \ref{en:residual_finite_purely_outer2}\(\Rightarrow\)%
  \ref{en:strongly_outer2} is asserted in
  \cite{Pasnicu-Phillips:Spectrally_free}*{Lemma~1.15}.  Propositions
  \ref{pro:Morita_globalisation} and~\ref{pro:dense_Morita_covering}
  allow to generalise this implication from actions by automorphisms
  to Fell bundles. Thus it suffices to show that \ref{en:strongly_outer2} implies \ref{en:residual_finite_spectra_full2} when \(G\) is finite.

  To this end, note that \ref{en:strongly_outer2} is equivalent to
  the condition: for any \(g\in G\setminus\{e\}\) and \(I\in
  \I^{B_g}(A)\), the Hilbert \(A/I\)-bimodule \(B_g/B_g I\) is
  purely outer.  For finite~\(G\),
  Theorem~\ref{the:residual_for_finite2} shows that
  \ref{en:strongly_outer2} is equivalent to the condition:
  \(\Gamma(B_g/B_g I) = \T_{\ord(g)}\) for all \(g\in G\) and \(I\in
  \I^{B_g}(A)\).  The latter condition is equivalent to
  \ref{en:residual_finite_spectra_full2} by
  Proposition~\ref{pro:strong_Connes_vs_ordinary}.
\end{proof}

\begin{remark}
  Phillips calls a group action by automorphisms
  \(\alpha\colon G\to \Aut(A)\)
  \emph{strongly pointwise outer} if it
  satisfies~\ref{en:strongly_outer2}, see
  \cite{Phillips:Freeness_actions_finite_groups}*{Definition~4.11} or
  \cite{Pasnicu-Phillips:Spectrally_free}*{Definition~1.1}. In
  particular, for actions of finite groups by automorphisms, the
  equivalence between \ref{en:residual_finite_spectra_nontrivial2},
  \ref{en:residual_finite_aperiodic2} and~\ref{en:strongly_outer2} is
  \cite{Pasnicu-Phillips:Spectrally_free}*{Theorem~1.16}.
\end{remark}

\begin{theorem}
  \label{the:Fell_uniqueness}
  Let \(G=\Z\)
  or \(G=\Z/p\)
  for a square-free number \(p>0\).
  Let \(\mathcal{B}=(B_g)_{g\in G}\)
  be a Fell bundle over~\(G\).
  Suppose that \(A\defeq B_0\)
  contains an essential,
  ideal that is separable or of Type~I.  The following are
  equivalent:
  \begin{enumerate}[wide,label=\textup{(\ref*{the:Fell_uniqueness}.\arabic*)}]
  \item \label{en:Fell_uniqueness_Connes}%
    \(\Gamma(\mathcal{B}) = \widehat{G}\);
  \item \label{en:Fell_uniqueness_detect}%
    \(A\)~detects ideals in~\(\Cst(\mathcal{B})\);
  \item \label{en:Fell_uniqueness_gauge}%
    each non-zero ideal in~\(\Cst(\mathcal{B})\)
    contains a non-zero gauge-invariant ideal;
  \item \label{en:Fell_uniqueness_support}%
    \(A\)~supports~\(\Cst(\mathcal{B})\);
  \item \label{en:Fell_uniqueness_aperiodic}%
    \(\mathcal{B}\)~is aperiodic;
  \item \label{en:Fell_uniqueness_spectra2}%
    \(\Gamma(B_g)\neq\{1\}\) for all \(g\in G\setminus\{0\}\);
  \item \label{en:uniqueness_topolol_pointwise_free}%
    \(\mathcal{B}\)~is pointwise topologically nontrivial;
  \item \label{en:Fell_uniqueness_topological}%
    \(\mathcal{B}\)~is topologically free;
  \item \label{en:uniqueness_densly_free}%
    \(\widehat{\mathcal{B}}\)~is
    free on a dense subset of~\(\widehat{A}\);
  \item \label{en:Fell_uniqueness_partly_weakly_inner}%
    \(\mathcal{B}\)~is pointwise purely universally weakly outer.
  \end{enumerate}
  If~\(G\)
  is finite or if~\(A\)
  contains an essential ideal that is simple or of Type~I, then
  these conditions are also equivalent to
  \begin{enumerate}[wide,label=\textup{(\ref*{the:Fell_uniqueness}.\arabic*)},resume]
  \item \label{en:Fell_uniqueness_partly_inner}%
    \(\mathcal{B}\)~is pointwise purely outer.
  \end{enumerate}
\end{theorem}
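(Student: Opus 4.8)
The plan is to deduce all but one of the implications from results already proved in the paper, and to settle the remaining one, that detection of ideals by~\(A\) implies aperiodicity, by transporting the question to an action by automorphisms through the Morita globalisation of Section~\ref{sec:Morita_globalisation}. Since \(G\) is Abelian, \(\Cst(\mathcal B)=\Cred(\mathcal B)\) and the graded ideals are exactly the ideals invariant under the dual \(\widehat G\)-action, so no distinction between full and reduced, or between graded and gauge-invariant, needs to be made.

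First I would collect the cheap part. We have \ref{en:Fell_uniqueness_Connes}\(\Leftrightarrow\)\ref{en:Fell_uniqueness_detect} by Proposition~\ref{pro:Fell_Connes_spectrum_vs_detect_ideals}\ref{en:Connes_spectrum_vs_detect_ideals_Connes2}; \ref{en:Fell_uniqueness_detect}\(\Leftrightarrow\)\ref{en:Fell_uniqueness_gauge} by Corollary~\ref{cor:detection_in_reduced_cross}; \ref{en:Fell_uniqueness_support}\(\Rightarrow\)\ref{en:Fell_uniqueness_detect} by Lemma~\ref{lem:support_implies_detect}; \ref{en:Fell_uniqueness_aperiodic}\(\Rightarrow\)\ref{en:Fell_uniqueness_support} by Proposition~\ref{the:general_uniqueness}; \ref{en:Fell_uniqueness_aperiodic}\(\Leftrightarrow\)\ref{en:Fell_uniqueness_spectra2} by the equivalence \ref{en:Equivalence_finite_Kishimoto}\(\Leftrightarrow\)\ref{en:Equivalence_finite_spectra2} of Theorem~\ref{the:Equivalence_for_finite2}; \ref{en:Fell_uniqueness_aperiodic}\(\Leftrightarrow\)\ref{en:Fell_uniqueness_topological} by Theorem~\ref{thm:separable_type_I_equivalences}; \ref{en:uniqueness_topolol_pointwise_free}\(\Leftrightarrow\)\ref{en:Fell_uniqueness_topological}\(\Leftrightarrow\)\ref{en:uniqueness_densly_free} by Proposition~\ref{pro:separable_countable_equivalences} (using that \(G\) is countable and that an essential Type~I ideal contains an essential ideal with Hausdorff spectrum); and \ref{en:Fell_uniqueness_aperiodic}\(\Leftrightarrow\)\ref{en:Fell_uniqueness_partly_weakly_inner}, together with \ref{en:Fell_uniqueness_aperiodic}\(\Leftrightarrow\)\ref{en:Fell_uniqueness_partly_inner} in the Type~I case, by applying Theorem~\ref{the:Hilbi_Kish_vs_weakly_inner} to each fibre \(B_g\), \(g\neq0\); for finite \(G\), \ref{en:Fell_uniqueness_aperiodic}\(\Leftrightarrow\)\ref{en:Fell_uniqueness_partly_inner} comes from Theorem~\ref{the:Equivalence_for_finite2}. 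After this, \ref{en:Fell_uniqueness_Connes}--\ref{en:Fell_uniqueness_gauge} are mutually equivalent, \ref{en:Fell_uniqueness_aperiodic}--\ref{en:Fell_uniqueness_partly_weakly_inner} are mutually equivalent, and \ref{en:Fell_uniqueness_aperiodic}\(\Rightarrow\)\ref{en:Fell_uniqueness_support}\(\Rightarrow\)\ref{en:Fell_uniqueness_detect} joins the two blocks, so it remains to prove \ref{en:Fell_uniqueness_detect}\(\Rightarrow\)\ref{en:Fell_uniqueness_aperiodic}, equivalently \(\Gamma(\mathcal B)=\widehat G\Rightarrow\mathcal B\) aperiodic.

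For this remaining implication I pass to the Morita globalisation \(\gamma\colon G\to\Aut(C)\), \(C=\Cst(\mathcal B)\rtimes_{\red,\delta_G}\widehat G\), of Proposition~\ref{pro:Morita_globalisation}. By Remark~\ref{rem:Connes_spectrum_dual_actions} one has \(\Gamma(\mathcal B)=\Gamma(\gamma)\), which by Theorem~\ref{the:Connes_spectrum_vs_detect_ideals} equals \(\widehat G\) if and only if \(C\) detects ideals in \(C\rtimes_\gamma G\); and by Proposition~\ref{pro:Morita_globalisation}\ref{en:Morita_globalisation_2} combined with Proposition~\ref{pro:dense_Morita_covering}\ref{en:dense_Morita_covering_Kish}, \(\gamma\) is aperiodic if and only if \(\mathcal B\) is. So I must show that detection by~\(C\) forces \(\gamma\) to be aperiodic. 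When \(C\) is separable this is exactly Theorem~\ref{the:automorphism_detect_vs_Kishimoto}; in general it follows from the Olesen--Pedersen argument reproduced in the proof of that theorem -- which works for \(G=\Z\) and for \(G=\Z/p\) with \(p\) square-free -- once one knows that ``\(\gamma_g\) not properly outer'' and ``\(\gamma_g\) not purely universally weakly outer'' coincide for powers \(\gamma_g\), which holds whenever \(C\) contains an essential ideal that is separable or of Type~I (Theorems~\ref{the:automorphism_Kishimoto_vs_properly_outer} and~\ref{the:Hilbi_Kish_vs_weakly_inner}). To arrange this I first reduce to the case where the essential ideal of~\(A\) from the hypothesis is all of~\(A\): an essential Type~I ideal may be replaced by the largest Type~I ideal, a separable essential ideal by its \(\mathcal B\)-orbit (still separable, as \(G\) is countable), and an essential simple ideal by itself -- it is automatically \(\mathcal B\)-invariant, being the smallest non-zero ideal, whose spectrum consists of the dense points of~\(\widehat A\), which each \(\widehat{B_g}\) preserves; one then restricts \(\mathcal B\) to this invariant essential ideal and invokes Corollary~\ref{cor:restriction_to_essential_ideals} and the lattice isomorphisms of Propositions~\ref{pro:gauge-invariance_vs_separation} and~\ref{pro:Morita_globalisation} to see that none of \ref{en:Fell_uniqueness_Connes}--\ref{en:Fell_uniqueness_partly_inner} changes. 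Now Proposition~\ref{pro:Morita_globalisation}\ref{en:Morita_globalisation_4}--\ref{en:Morita_globalisation_5} carries ``separable'' and ``Type~I'' over to~\(C\), settling those two cases.

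The main obstacle is the simple case, in which the Morita globalisation of~\(\mathcal B\) need not be simple (for instance \(\C\) globalises to \(c_0(\Z)\)). Here I would use that a Fell bundle with simple unit fibre has all non-zero fibres full, hence is saturated and supported on a subgroup \(H\le G\), with \(\Gamma(\mathcal B)=\widehat G\) if and only if \(\Gamma(\mathcal B|_H)=\widehat H\) and with aperiodicity of~\(\mathcal B\) equivalent to that of~\(\mathcal B|_H\); the \(H\)-globalisation of the saturated bundle \(\mathcal B|_H\) is Morita--Rieffel equivalent to the simple algebra~\(A\), hence simple, and for a simple algebra and a cyclic group the equality \(\Gamma=\widehat H\) is precisely the statement that no non-zero power of the generator is inner, i.e.\ aperiodicity (Theorems~\ref{the:Kishimoto_spectrum}, \ref{the:Equivalence_for_finite}, and \cite{Pedersen:Cstar_automorphisms}*{Corollary~8.9.10}); transporting back via Proposition~\ref{pro:dense_Morita_covering} completes this case. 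The delicate bookkeeping is verifying that the chain of identifications -- between conditions for~\(\mathcal B\), for its restriction to an invariant essential ideal, for~\(\mathcal B|_H\), and for the globalising automorphic action -- together covers all ten (eleven) conditions; the topological conditions \ref{en:uniqueness_topolol_pointwise_free}--\ref{en:uniqueness_densly_free} in the non-separable simple sub-case are what needs the most care, and I expect that to be the hardest part of the write-up.
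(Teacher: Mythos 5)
Your overall architecture is the same as the paper's: the bulk of the equivalences is assembled from Proposition~\ref{pro:Fell_Connes_spectrum_vs_detect_ideals}, Corollary~\ref{cor:detection_in_reduced_cross}, Lemma~\ref{lem:support_implies_detect}, Proposition~\ref{the:general_uniqueness}, Proposition~\ref{pro:separable_countable_equivalences} and Theorems \ref{the:Equivalence_for_finite2} and~\ref{the:Hilbi_Kish_vs_weakly_inner}, and the crux is a single implication from detection of ideals back to aperiodicity (the paper proves detect \(\Rightarrow\) pointwise topological non-triviality), settled by passing to the Morita globalisation and invoking Theorem~\ref{the:automorphism_detect_vs_Kishimoto} in the separable case and the Olesen--Pedersen argument in the Type~I case. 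The genuine gap is in your reduction to the case where the essential ideal is all of~\(A\). You replace a separable essential ideal \(K\) by its \(\mathcal{B}\)-orbit and assert that the orbit is still separable because \(G\) is countable. That step fails: the orbit is the closed sum of the ideals \(\clsp(B_g K B_{g^{-1}})\), and each of these is merely Morita--Rieffel equivalent (via \(B_gK\)) to an ideal of~\(K\); Morita equivalence does not preserve separability, and essentiality of~\(K\) does not repair this -- a nonseparable \(\Cst\)\nb-algebra can contain a separable essential ideal and be Morita--Rieffel equivalent to a separable one (such an equivalence can be built inside a \(2\times2\) linking algebra over \(\Bound(\ell^2)\) using an almost disjoint family of projections, and then packaged as a Hilbert \(A\)\nb-bimodule \(X\) with \(K\) essential in~\(A\) and \(\clsp(XKX^*)\) nonseparable). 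You cannot bypass the reduction either, since the globalisation~\(C\) is separable only when the unit fibre itself is (Proposition~\ref{pro:Morita_globalisation}). The paper's device, which your proposal is missing, avoids invariance altogether: it restricts to the hereditary sub-Fell bundle \((KB_gK)_{g\in G}\) over the possibly non-invariant essential ideal~\(K\), transfers detection using that \(\Cst\bigl((KB_gK)_{g\in G}\bigr)\) is hereditary in \(\Cst(\mathcal{B})\) together with essentiality of~\(K\) in~\(A\), and transfers the pointwise conditions by Corollary~\ref{cor:restriction_to_essential_ideals}; after that the unit fibre itself is separable or of Type~I and the globalisation step applies.

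A second, smaller issue concerns the simple case. Your ``cheap part'' derives the equivalence of \ref{en:Fell_uniqueness_aperiodic}--\ref{en:Fell_uniqueness_partly_weakly_inner} from Theorem~\ref{thm:separable_type_I_equivalences}, Proposition~\ref{pro:separable_countable_equivalences} and Theorem~\ref{the:Hilbi_Kish_vs_weakly_inner}, but the hypotheses of the first two exclude the simple case, and in the simple case the third yields only Kishimoto \(\Leftrightarrow\) purely outer; so the equivalences of \ref{en:uniqueness_topolol_pointwise_free}--\ref{en:Fell_uniqueness_partly_weakly_inner} with aperiodicity are not covered there, and you explicitly leave them open. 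Your subgroup/saturation argument for detect \(\Rightarrow\) aperiodic over a simple unit fibre (all non-zero fibres full, support a subgroup \(H\), the \(H\)-globalisation Morita--Rieffel equivalent to~\(A\), hence simple) is a sound idea and goes beyond what the paper's written proof spells out for this case; also your invariance claim for a simple essential ideal is correct, although the ``dense points'' justification is shaky -- the cleaner reason is that \(\clsp(B_gKB_{g^{-1}})\) is Morita equivalent to an ideal of~\(K\), hence simple or zero, and contains~\(K\) if non-zero, so it equals~\(K\). Still, as written the proposal does not prove the full statement in the simple, non-separable, non-Type~I case.
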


\begin{proof}
  Conditions
  \ref{en:Fell_uniqueness_Connes}--\ref{en:Fell_uniqueness_gauge}
  are equivalent by
  Proposition~\ref{pro:Fell_Connes_spectrum_vs_detect_ideals} and
  Corollary~\ref{cor:detection_in_reduced_cross}.  We have
  \ref{en:Fell_uniqueness_aperiodic}\(\Leftrightarrow\)%
  \ref{en:Fell_uniqueness_spectra2}\(\Rightarrow\)%
  \ref{en:Fell_uniqueness_support}\(\Rightarrow\)%
  \ref{en:Fell_uniqueness_detect} by
  Theorem~\ref{the:Equivalence_for_finite2},
  Proposition~\ref{the:general_uniqueness} and
  Lemma~\ref{lem:support_implies_detect}.  Conditions
  \ref{en:uniqueness_topolol_pointwise_free}--\ref{en:uniqueness_densly_free}
  are equivalent by
  Proposition~\ref{pro:separable_countable_equivalences}.  Thus
  \ref{en:Fell_uniqueness_aperiodic}--\ref{en:Fell_uniqueness_partly_weakly_inner}
  are equivalent by Theorem~\ref{the:Hilbi_Kish_vs_weakly_inner};
  and if~\(A\) contains an essential ideal of Type~I, then they are
  also equivalent to~\ref{en:Fell_uniqueness_partly_inner}.
  If~\(G\) is finite, then
  \ref{en:Fell_uniqueness_aperiodic}--\ref{en:Fell_uniqueness_partly_weakly_inner}
  are equivalent to~\ref{en:Fell_uniqueness_partly_inner} by
  Theorem~\ref{the:residual_for_finite2}.  Thus it only remains to
  show that~\ref{en:Fell_uniqueness_detect}
  implies~\ref{en:uniqueness_topolol_pointwise_free}. Thus
  assume~\ref{en:Fell_uniqueness_detect}.
	
  Let \(K\idealin A\) be an essential ideal.  Put \(K_g\defeq
  KB_gK\) for each \(g\in G\).  Then \(\mathcal{K}\defeq (K_g)_{g\in
    G}\) is a hereditary sub-Fell bundle of~\(\mathcal{B}\), see
  \cite{Exel:Partial_dynamical}*{Definition 21.10}.
  Hence~\(\Cst(\mathcal{K})\) is a hereditary subalgebra
  of~\(\Cst(\mathcal{B})\), see
  \cite{Exel:Partial_dynamical}*{Theorem 21.12}.  If
  \(J\triangleleft \Cst(\mathcal{K})\) then
  \(\Cst(\mathcal{B})J\Cst(\mathcal{B}) \cap A\neq 0\)
  by~\ref{en:Fell_uniqueness_detect}.  Since \(\Cst(\mathcal{K})\)
  is hereditary in~\(\Cst(\mathcal{B})\) and~\(K\) is essential
  in~\(A\), we get \(J\cap K =
  \Cst(\mathcal{B})J\Cst(\mathcal{B})\cap K \neq 0\).  Thus
  \(K_e=K\) detects ideals in~\(\Cst(\mathcal{K})\).  Furthermore,
  by Corollary \ref{cor:restriction_to_essential_ideals},
  \(\mathcal{B}\)~is pointwise topologically nontrivial if and only
  if~\(\mathcal{K}\) is.  Therefore, to prove the remaining part of
  the assertion, we may assume that~\(A\) itself is separable or
  of Type~I.
	
  Now, let \(\gamma\colon G\to \Aut(C)\) be the Morita globalisation
  of~\(\mathcal{B}\) described in
  Proposition~\ref{pro:Morita_globalisation}.  It preserves both
  \ref{en:Fell_uniqueness_detect}
  and~\ref{en:uniqueness_topolol_pointwise_free}.  If~\(A\) is
  separable, then~\(C\) is separable by
  Proposition~\ref{pro:Morita_globalisation}.\ref{en:Morita_globalisation_5}.
  Hence \ref{en:Fell_uniqueness_detect}\(\Rightarrow\)%
  \ref{en:uniqueness_topolol_pointwise_free} follows from
  Theorem~\ref{the:automorphism_detect_vs_Kishimoto}.  If~\(A\) is
  of Type~I, then so is~\(C\) by
  Proposition~\ref{pro:Morita_globalisation}.\ref{en:Morita_globalisation_4}.
  Thus \ref{en:Fell_uniqueness_detect} implies
  \ref{en:uniqueness_topolol_pointwise_free} by the proof of
  \cite{Olesen-Pedersen:Applications_Connes_2}*{Theorem~4.6}.  It is
  remarked in
  \cite{Olesen-Pedersen:Applications_Connes_2}*{Remark~4.7} that the
  implication we care about does not need separability.  And the
  proof works both for~\(\Z\) and for~\(\Z/p\) with
  square-free~\(p\), compare the proof of
  Theorem~\ref{the:automorphism_simple}.
\end{proof}

\begin{theorem}
  \label{the:automatic_gauge-invariance}
  Let \(G=\Z\)
  or \(G=\Z/p\)
  with square-free~\(p\).
  Let~\(\mathcal{B}\)
  be a Fell bundle over~\(G\).
  Suppose that \(A\defeq B_0\)
  is separable or of Type~I.  The
  following are equivalent:
  \begin{enumerate}[wide,label=\textup{(\ref*{the:automatic_gauge-invariance}.\arabic*)}]
  \item \label{en:separation_of_ideals0}%
    the strong Connes spectrum~\(\tilde\Gamma(\mathcal{B})\)
    is~\(\widehat{G}\);
  \item \label{en:separation_of_ideals}%
    \(A\) separates ideals of~\(\Cst(\mathcal{B})\);
  \item \label{en:automatic_gauge-invariance_Fourier}%
    each ideal in~\(\Cst(\mathcal{B})\)
    is of the form \(\Cst(\mathcal{B}|_I)\)
    for some \(I\in \I^{\widehat{G} }(A)\);
  \item \label{en:automatic_gauge-invariance12}%
    \(A\)~residually supports~\(\Cst(\mathcal{B})\);
  \item \label{en:automatic_gauge-invariance13}%
    \(A^+\)~is a filling family for~\(\Cst(\mathcal{B})\);
  \item \label{en:automatic_gauge-invariance2}%
    \(\mathcal{B}\)~is residually aperiodic;
  \item \label{en:automatic_gauge-invariance4}%
    \(\mathcal{B}\)~is residually pointwise topologically nontrivial;
  \item \label{en:automatic_gauge-invariance3}%
    \(\mathcal{B}\)~is residually topologically free;
  \item \label{en:automatic_gauge-invariance5}%
    \(\mathcal{B}\)~is
    residually pointwise purely universally weakly outer.
  \end{enumerate}
  Moreover, if~\(G=\Z/p\), these conditions are further equivalent
  to
  \begin{enumerate}[wide,label=\textup{(\ref*{the:automatic_gauge-invariance}.\arabic*)},resume]
  \item \label{en:residually_nonperiodic}%
    \(\mathcal{B}\)~is residually pointwise purely outer;
  \item \label{en:automatic_gauge_spectra1}%
    \(\widetilde\Gamma(B_g)\neq\{1\}\)
    for all \(g\in G\setminus\{e\}\);
    \item \label{en:automatic_gauge_spectra2}%
      \(\widetilde\Gamma(B_g)=\T_{\ord(g)}\) for all \(g\in G\).
  \end{enumerate}
  If \(G=\Z\) and~\(A\) is of Type~I, then conditions
  \ref{en:separation_of_ideals0}--\ref{en:residually_nonperiodic}
  are equivalent.
\end{theorem}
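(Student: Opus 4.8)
The plan is to read this off from Theorems \ref{the:Fell_uniqueness}, \ref{the:general_separation_Fell_bundles} and~\ref{the:residual_for_finite2} by passing to all \(\mathcal{B}\)\nb-invariant quotients. The key preliminary observation is that the standing hypothesis is \emph{residually} stable: if \(A\) is separable or of Type~I, then so is every quotient \(A/I\) with \(I\in\I^{\mathcal{B}}(A)\), hence \(A/I\) contains an essential ideal (namely itself) that is separable or of Type~I. This is exactly why the hypothesis is imposed on \(A\) and not merely on an essential ideal of~\(A\), as in Theorem~\ref{the:Fell_uniqueness}. Moreover \(G\) is amenable, so \(\Cst(\mathcal{B})=\Cred(\mathcal{B})\) and every Fell bundle over~\(G\), in particular every restriction \(\mathcal{B}|_{A/I}\), is exact. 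Thus Theorem~\ref{the:Fell_uniqueness} applies to each \(\mathcal{B}|_{A/I}\), and Theorem~\ref{the:general_separation_Fell_bundles} applies both to each \(\mathcal{B}|_{A/I}\) and to~\(\mathcal{B}\) itself.

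Next I assemble conditions \ref{en:separation_of_ideals0}--\ref{en:automatic_gauge-invariance5}. We have \ref{en:separation_of_ideals0}\(\Leftrightarrow\)\ref{en:separation_of_ideals} by Proposition~\ref{pro:Fell_Connes_spectrum_vs_detect_ideals}.\ref{en:strong_Connes_spectrum_vs_separates2}, and \ref{en:separation_of_ideals}\(\Leftrightarrow\)\ref{en:automatic_gauge-invariance_Fourier} by Corollary~\ref{cor:separation_in_reduced_cross}. By exactness and Proposition~\ref{pro:separates_vs_exactness_B-residual_detect}, condition~\ref{en:separation_of_ideals} holds if and only if \(A/I\) detects ideals in \(\Cst(\mathcal{B}|_{A/I})\) for every \(I\in\I^{\mathcal{B}}(A)\); applying Theorem~\ref{the:Fell_uniqueness} to \(\mathcal{B}|_{A/I}\) identifies the latter with aperiodicity of \(\mathcal{B}|_{A/I}\), so \ref{en:separation_of_ideals}\(\Leftrightarrow\)\ref{en:automatic_gauge-invariance2}. (Alternatively, this follows from Proposition~\ref{pro:strong_Connes_vs_ordinary} together with the equivalence \ref{en:Fell_uniqueness_Connes}\(\Leftrightarrow\)\ref{en:Fell_uniqueness_aperiodic} of Theorem~\ref{the:Fell_uniqueness}.) Applying the remaining equivalences of Theorem~\ref{the:Fell_uniqueness} to each \(\mathcal{B}|_{A/I}\) and taking conjunctions over all \(I\) gives \ref{en:automatic_gauge-invariance2}\(\Leftrightarrow\)\ref{en:automatic_gauge-invariance4}\(\Leftrightarrow\)\ref{en:automatic_gauge-invariance3}\(\Leftrightarrow\)\ref{en:automatic_gauge-invariance5}. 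Finally, Theorem~\ref{the:general_separation_Fell_bundles} for the exact bundle~\(\mathcal{B}\) yields \ref{en:automatic_gauge-invariance2}\(\Rightarrow\)\ref{en:automatic_gauge-invariance13}\(\Rightarrow\)\ref{en:automatic_gauge-invariance12}\(\Rightarrow\)\ref{en:separation_of_ideals}, which, combined with \ref{en:separation_of_ideals}\(\Leftrightarrow\)\ref{en:automatic_gauge-invariance2}, closes the cycle. Hence \ref{en:separation_of_ideals0}--\ref{en:automatic_gauge-invariance5} are all equivalent.

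For the extra clauses: if \(G=\Z/p\) then \(G\) is finite, so Theorem~\ref{the:residual_for_finite2} directly supplies the equivalence of \ref{en:automatic_gauge-invariance2}, \ref{en:residually_nonperiodic}, \ref{en:automatic_gauge_spectra1} and~\ref{en:automatic_gauge_spectra2}; here no separability or Type~I assumption is needed, and finiteness of~\(G\) is precisely what makes the implication ``residually pointwise purely outer \(\Rightarrow\) residually aperiodic'' available. If \(G=\Z\) and \(A\) is of Type~I, then each \(A/I\) is again of Type~I, so the Type~I clause of Theorem~\ref{the:Fell_uniqueness} adds condition~\ref{en:Fell_uniqueness_partly_inner} (pointwise pure outerness) to the class of conditions on \(\mathcal{B}|_{A/I}\) equivalent to its aperiodicity; conjoining over all~\(I\) yields \ref{en:automatic_gauge-invariance2}\(\Leftrightarrow\)\ref{en:residually_nonperiodic}.

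I do not expect a substantial obstacle: the proof is essentially the bookkeeping of residual versions of results already established. The two points needing care are, first, checking that separability or Type~I of the unit fibre, exactness of the bundle, and amenability of~\(G\) all persist under passing to \(\mathcal{B}\)\nb-invariant quotients, so that Theorems \ref{the:Fell_uniqueness} and~\ref{the:residual_for_finite2} really do apply to every \(\mathcal{B}|_{A/I}\); and second, using Propositions \ref{pro:separates_vs_exactness_B-residual_detect} and~\ref{pro:strong_Connes_vs_ordinary} to match \emph{residual} aperiodicity with the global statements~\ref{en:separation_of_ideals} and~\ref{en:separation_of_ideals0}, rather than with a pointwise statement about the single automorphisms generating the fibres — a subtlety already visible in the relation between the strong Connes spectra \(\tilde\Gamma(B_g)\) and residual aperiodicity noted in the proof of Theorem~\ref{the:residual_for_finite2}.
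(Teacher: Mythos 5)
Your proposal is correct and follows essentially the same route as the paper: it reduces everything to Theorem~\ref{the:Fell_uniqueness} applied to the quotient bundles \(\mathcal{B}|_{A/I}\) (using that separability and Type~I pass to quotients), combined with Theorem~\ref{the:general_separation_Fell_bundles}, Proposition~\ref{pro:Fell_Connes_spectrum_vs_detect_ideals}, and Theorem~\ref{the:residual_for_finite2} for the finite cyclic case. Your explicit use of Proposition~\ref{pro:separates_vs_exactness_B-residual_detect} to match separation of ideals with quotientwise detection is just a more detailed spelling-out of the paper's appeal to Remark~\ref{rem:separates_is_residual_detects}, not a different argument.
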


\begin{proof}
  Theorem~\ref{the:general_separation_Fell_bundles} and
  Proposition~\ref{pro:Fell_Connes_spectrum_vs_detect_ideals} give
  the implications
  \begin{center}
    \ref{en:separation_of_ideals0}\(\Leftrightarrow\)%
    \ref{en:separation_of_ideals}\(\Leftrightarrow\)%
    \ref{en:automatic_gauge-invariance_Fourier}\(\Leftarrow\)%
    \ref{en:automatic_gauge-invariance12}\(\Leftarrow\)%
    \ref{en:automatic_gauge-invariance13}\(\Leftarrow\)%
    \ref{en:automatic_gauge-invariance2}.
  \end{center}
  Since separation of ideals
  is the residual version of detection of ideals (see
  Remark~\ref{rem:separates_is_residual_detects}) and being separable or of Type I passes to quotients,
  Theorem~\ref{the:Fell_uniqueness} implies that all conditions
  \ref{en:separation_of_ideals0}--\ref{en:automatic_gauge-invariance5}
  are equivalent.  Theorem~\ref{the:Fell_uniqueness} also shows that
  these conditions are equivalent to~\ref{en:residually_nonperiodic}
  if~\(G\)
  is finite or~\(A\)
  is simple or of Type~I.  If~\(G\)
  is finite, then independently of~\(A\)
  these conditions are equivalent
  to~\ref{en:automatic_gauge_spectra1}%
  \(\Leftrightarrow\)\ref{en:automatic_gauge_spectra2}
  by Theorem~\ref{the:residual_for_finite2}.
\end{proof}

\begin{remark}
  The above theorem proves the conjecture stated in
  \cite{Kwasniewski-Szymanski:Pure_infinite}*{Remark 7.4}.  Namely,
  if~\(E\)
  is a countable directed graph, then the graph
  \(\Cst\)\nb-algebra~\(\Cst(E)\)
  can be viewed as a crossed product~\(A\rtimes_X\Z\),
  where~\(A\)
  is the core subalgebra of~\(\Cst(E)\)
  and~\(X\)
  is the first spectral subspace of~\(\Cst(E)\)
  with respect to the associated gauge action.  Since~\(A\)
  is separable, all conditions (i)--(iv) in
  \cite{Kwasniewski-Szymanski:Pure_infinite}*{Proposition~7.3} are
  equivalent without finiteness assumptions on~\(E\).
\end{remark}

For minimal actions of the above groups, several conditions are
equivalent without any assumptions on~\(A\):

\begin{theorem}
  \label{the:Fell_simple}
  Let \(G=\Z\)
  or~\(\Z/p\)
  for a square-free number~\(p\).
  Assume that~\(\mathcal{B}\)
  is a minimal Fell bundle over~\(G\).
  Then the following are equivalent:
  \begin{enumerate}[wide,label=\textup{(\ref*{the:Fell_simple}.\arabic*)}]
  \item \label{en:Fell_simple_simple}%
    \(\Cst(\mathcal{B})\) is simple;
  \item \label{en:Fell_simple_Connes}%
    \(\Gamma(\mathcal{B})=\widehat{G}\);
  \item \label{en:Fell_simple_centre}%
    \(\Mult(\Cst(\mathcal{B}))\) has trivial centre;
  \item \label{en:Fell_simple_outer}%
    \(\mathcal{B}\) is pointwise outer.
   \end{enumerate}
   If \(G=\Z/p\)
   or \(A\defeq B_0\)
   is simple, then these conditions are further equivalent to
   \begin{enumerate}[wide,label=\textup{(\ref*{the:Fell_simple}.\arabic*)},resume]%
   \item \label{en:Fell_minimal_support}%
     \(A\)~supports \(\Cst(\mathcal{B})\);
   \item \label{en:Fell_minimal_filling}%
     \(A^+\)~is a filling family for \(\Cst(\mathcal{B})\);
   \item \label{en:Fell_minimal_aperiodic}%
    \(\mathcal{B}\)~is aperiodic.
  \end{enumerate}
\end{theorem}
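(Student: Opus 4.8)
The plan is to deduce everything from the case of an action by automorphisms, for which Theorem~\ref{the:automorphism_simple} is available.  Let $\gamma\colon G\to\Aut(C)$, with $C\defeq\Cst(\mathcal{B})\rtimes_{\red,\delta_G}\widehat{G}$, be the canonical Morita globalisation of~$\mathcal{B}$ from Proposition~\ref{pro:Morita_globalisation}.  By Proposition~\ref{pro:Morita_globalisation}\ref{en:Morita_globalisation_3_and_a_bit} the lattice isomorphism $\I^{\mathcal{B}}(A)\cong\I^{\gamma}(C)$ transports minimality of~$\mathcal{B}$ to minimality of~$\gamma$, so Theorem~\ref{the:automorphism_simple} applies to the minimal action~$\gamma$ of $G\in\{\Z,\Z/p\}$.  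Throughout we assume $A\neq0$; the degenerate situations in which some fibre~$B_g$, $g\neq e$, vanishes are immediate and I would dispatch them separately.

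First I would translate \ref{en:Fell_simple_simple}--\ref{en:Fell_simple_centre}.  Since $G$ is Abelian, $\Cst(\mathcal{B})=\Cred(\mathcal{B})$ and $C\rtimes_\gamma G=C\rtimes_{\gamma,\red}G$, and these are Morita--Rieffel equivalent by Proposition~\ref{pro:Morita_globalisation}\ref{en:Morita_globalisation_3}; hence \ref{en:Fell_simple_simple} is equivalent to simplicity of $C\rtimes_\gamma G$, i.e.\ to~\ref{en:automorphism_simple_simple} for~$\gamma$.  By~\eqref{eq:Connes_spectra_Fell_vs_Aut}, $\Gamma(\mathcal{B})=\Gamma(\gamma)$, so \ref{en:Fell_simple_Connes} is~\ref{en:automorphism_simple_Connes} for~$\gamma$.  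Since the centre of the multiplier algebra is a Morita invariant, \ref{en:Fell_simple_centre} is~\ref{en:automorphism_simple_centre} for~$\gamma$.  Theorem~\ref{the:automorphism_simple} then yields \ref{en:Fell_simple_simple}$\Leftrightarrow$\ref{en:Fell_simple_Connes}$\Leftrightarrow$\ref{en:Fell_simple_centre}, using that for a minimal action of a cyclic group \ref{en:automorphism_simple_centre} already implies~\ref{en:automorphism_simple_Connes}.

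The delicate point is condition~\ref{en:Fell_simple_outer}, which is not among the properties that Proposition~\ref{pro:dense_Morita_covering} transports through coverings, and I expect it to be the main obstacle.  Writing $\E\defeq p_e{*}C$, an equivalence $A$--$I_0$-bimodule with $I_0\defeq C{*}p_e{*}C$, each $B_g\cong\E\otimes_C C_{\gamma_g}\otimes_C\E^*$ is, by Remark~\ref{rem:restrict_Hilbert_bimodules}, Morita equivalent to the restriction $I_0 C_{\gamma_g} I_0$; by Lemma~\ref{lem:outer_Hilbert_bimodule} (innerness being a Morita invariant) this makes $B_g$ inner precisely when $\gamma_g(I_0)=I_0$ and $\gamma_g|_{I_0}$ is inner.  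So if $\gamma_g=\Ad_u$ with $u\in\UM(C)$, then $\gamma_g(I_0)=I_0$ and $\gamma_g|_{I_0}=\Ad_{u|_{I_0}}$, hence $B_g$ is inner; contrapositively $\mathcal{B}$ pointwise outer implies $\gamma$ pointwise outer, so \ref{en:Fell_simple_outer}$\Rightarrow$\ref{en:Fell_simple_simple} by Theorem~\ref{the:automorphism_simple}, since~\ref{en:automorphism_simple_outer} is equivalent to~\ref{en:automorphism_simple_simple} for these~$G$.  Conversely, if $\mathcal{B}$ is not pointwise outer, some~$B_g$ with $g\neq e$ is inner, so $\gamma_g$ restricts to an inner automorphism of the nonzero $\gamma_g$-invariant ideal~$I_0$ and is therefore not properly outer; the argument in the proof of Theorem~\ref{the:automorphism_detect_vs_Kishimoto} (based on~\cite{Olesen-Pedersen:Applications_Connes_2}) then shows that $\Gamma(\gamma)$, being annihilated by~$\widehat{g}^2$, is a proper subgroup of~$\widehat{G}$, where $G=\Z$ or $\Z/p$ with $p$ square-free is exactly what is used; thus \ref{en:Fell_simple_Connes} fails.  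This closes \ref{en:Fell_simple_simple}--\ref{en:Fell_simple_outer}.

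Finally, suppose $G=\Z/p$ or~$A$ is simple.  Minimality gives $\I^{\mathcal{B}}(A)=\{0,A\}$, so aperiodicity and residual aperiodicity coincide; since $\Z$ and $\Z/p$ are exact, Theorem~\ref{the:general_separation_Fell_bundles} gives \ref{en:Fell_minimal_aperiodic}$\Rightarrow$\ref{en:Fell_minimal_filling}$\Rightarrow$\ref{en:Fell_minimal_support}, and \ref{en:Fell_minimal_support} implies~\ref{en:Fell_simple_Connes} by Lemma~\ref{lem:support_implies_detect} together with Proposition~\ref{pro:Fell_Connes_spectrum_vs_detect_ideals}.  It remains to deduce \ref{en:Fell_minimal_aperiodic} from~\ref{en:Fell_simple_simple}.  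Returning to~$\gamma$: simplicity of $C\rtimes_\gamma G$ gives $\Gamma(\gamma)=\widehat{G}$, so by the proper-outerness argument of the previous paragraph $\gamma$ is pointwise properly outer, hence pointwise purely outer by Theorem~\ref{the:automorphism_Kishimoto_vs_properly_outer}.  If $G=\Z/p$ this forces~$\gamma$ to be aperiodic by Theorem~\ref{the:Equivalence_for_finite} (finite group); if~$A$ is simple then~$\mathcal{B}$ is saturated, $C$ is simple by Proposition~\ref{pro:Morita_globalisation}\ref{en:Morita_globalisation_8}, and pure outerness equals Kishimoto's condition for automorphisms of simple \(\Cst\)\nb-algebras (Theorem~\ref{the:automorphism_Kishimoto_vs_properly_outer}), so again~$\gamma$ is aperiodic.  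In either case aperiodicity of~$\mathcal{B}$ follows from that of~$\gamma$ via Proposition~\ref{pro:dense_Morita_covering}, which completes all the equivalences.
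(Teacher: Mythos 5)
Your handling of \ref{en:Fell_simple_simple}--\ref{en:Fell_simple_centre} via the Morita globalisation and Theorem~\ref{the:automorphism_simple} matches the paper, and your direct argument that pointwise outerness of~\(\mathcal{B}\) forces pointwise outerness of~\(\gamma\) (restricting \(C_{\gamma_g}\) to \(I_0\defeq C*p_e*C\)) is a correct and pleasant substitute for the paper's covering argument in that direction. The genuine gap is in the converse direction, where from ``some \(B_g\) is inner'' you conclude \(\Gamma(\gamma)\neq\widehat{G}\) ``by the argument in the proof of Theorem~\ref{the:automorphism_detect_vs_Kishimoto}'', and again in the final paragraph where you claim \(\Gamma(\gamma)=\widehat{G}\) makes \(\gamma\) pointwise properly outer. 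That argument is the Olesen--Pedersen machinery behind \cite{Olesen-Pedersen:Applications_Connes_3}*{Theorem~10.4} (passing from failure of proper outerness to a hereditary subalgebra on which a power of the automorphism is \(\exp(\delta)\) for an invariant derivation, via the equivalence with universal weak innerness), and it requires the underlying \(\Cst\)\nb-algebra to be separable -- which is exactly why Theorem~\ref{the:automorphism_detect_vs_Kishimoto} carries a separability hypothesis. Here the algebra is \(C=\Cst(\mathcal{B})\rtimes_{\red,\delta_G}\widehat{G}\), and the whole point of Theorem~\ref{the:Fell_simple} is that \(A\), hence~\(C\), is arbitrary. Moreover, what you actually obtain from \(B_g\) inner is only \(\gamma_g|_{I_0}=\Ad_u\) with \(u\in\UM(I_0)\) on the \(\gamma_g\)\nb-invariant (in general not \(\gamma\)\nb-invariant) ideal~\(I_0\); this does not contradict condition~\ref{en:automorphism_simple_outer} of Theorem~\ref{the:automorphism_simple}, which concerns unitaries in \(\UM(C)\). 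So the implication \(\neg\)\ref{en:Fell_simple_outer}\(\Rightarrow\neg\)\ref{en:Fell_simple_Connes}, and with it \ref{en:Fell_simple_simple}\(\Rightarrow\)\ref{en:Fell_simple_outer}, is not established.

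The paper closes precisely this hole by a separability-free route: it notes that in the minimal case outerness and pure outerness of the fibres coincide, so that Proposition~\ref{pro:dense_Morita_covering}.\ref{en:dense_Morita_covering_outer} transports pointwise outerness between \(\mathcal{B}\) and~\(\gamma\) in both directions, and it then applies Theorem~\ref{the:automorphism_simple} to the minimal action~\(\gamma\), whose proof (via \cite{Olesen-Pedersen:Applications_Connes_3}*{Theorem~4.5} and the invariant-unitary construction) does not need separability; for the last block it gets \ref{en:Fell_minimal_aperiodic}\(\Leftrightarrow\)\ref{en:Fell_simple_outer} from Theorem~\ref{the:Equivalence_for_finite2} rather than from a proper-outerness detour. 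Two smaller points: your ``precisely when'' for innerness of~\(B_g\) needs the (easy) fullness argument showing that \(B_g\cong A\) forces \(\gamma_g(I_0)=I_0\); and in the \(A\)-simple branch you invoke Proposition~\ref{pro:Morita_globalisation}.\ref{en:Morita_globalisation_8}, which requires \emph{all} fibres to be non-zero, so the ``degenerate'' vanishing-fibre cases you set aside are exactly the ones you would still have to handle there.
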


\begin{proof}
  Conditions \ref{en:Fell_simple_simple}
  and~\ref{en:Fell_simple_Connes} are equivalent by
  Proposition~\ref{pro:Fell_Connes_spectrum_vs_detect_ideals}
  because~\(\mathcal{B}\)
  is minimal.  The implications
  \ref{en:Fell_minimal_aperiodic}\(\Rightarrow\)%
  \ref{en:Fell_minimal_filling}\(\Rightarrow\)%
  \ref{en:Fell_minimal_support}\(\Rightarrow\)%
  \ref{en:Fell_simple_simple} follow from
  Theorem~\ref{the:general_separation_Fell_bundles}.  If either
  \(G=\Z/p\)
  or~\(A\)
  is simple, then \ref{en:Fell_minimal_aperiodic}
  and~\ref{en:Fell_simple_outer} are equivalent by
  Theorem~\ref{the:Equivalence_for_finite2}.

  It remains to show the equivalence of \ref{en:Fell_simple_simple},
  \ref{en:Fell_simple_centre} and~\ref{en:Fell_simple_outer}.  For
  actions of~\(G\)
  by automorphisms, this is contained in
  Theorem~\ref{the:automorphism_simple}.
  Proposition~\ref{pro:Morita_globalisation} gives a Morita
  globalisation \(\gamma\colon G \to\Aut(C)\)
  of~\(\mathcal{B}\)
  such that \(\Cst(\mathcal{B})\)
  and~\(C\rtimes_\gamma G\)
  are \(\widehat{G}\)\nb-equivariantly
  Morita equivalent.  Thus~\(\Cst(\mathcal{B})\)
  is simple if and only if~\(C\rtimes_\gamma G\)
  is; and both have isomorphic centre, say, by the Dauns--Hofmann
  Theorem.  Since
  \(\I^\gamma(C) \cong \I^{\widehat{G}}(C\rtimes_\gamma G) \cong
  \I^{\widehat{G}}(\Cst(\mathcal{B})) \cong \I^{\mathcal{B}}(A)\),
  \(\mathcal{B}\)
  is minimal if and only if~\(\gamma\)
  is.  In the minimal case, being outer and purely outer are
  equivalent.  Hence Proposition~\ref{pro:dense_Morita_covering}.%
  \ref{en:dense_Morita_covering_outer} shows that~\(\mathcal{B}\)
  is pointwise outer if and only if~\(\gamma\) is.
\end{proof}

\begin{corollary}
  If~\(X\)
  is a Hilbert \(A\)\nb-bimodule
  which is not an equivalence bimodule, then the crossed product
  \(A\rtimes_X \Z\) is simple if and only if~\(X\) is minimal.
\end{corollary}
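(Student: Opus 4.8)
The plan is to apply Theorem~\ref{the:Fell_simple} to the Fell bundle \(\mathcal{B}\defeq (X_n)_{n\in\Z}\) over~\(\Z\) generated by~\(X\) as in Example~\ref{def:Fell_bundle_for_Hilbert_bimodule}, for which \(\Cst(\mathcal{B}) = A\rtimes_X\Z\) holds by definition.  First I would observe that an ideal \(I\idealin A\) is \(\mathcal{B}\)\nb-invariant if and only if it is \(X\)\nb-invariant: the condition \(IX = XI\) passes to the conjugate bimodule~\(X^*\) by taking adjoints, and then, by iteration, to every tensor power~\(X_n\).  Hence \(\I^{\mathcal{B}}(A) = \I^X(A)\), so that~\(\mathcal{B}\) is minimal exactly when~\(X\) is.  Necessity of minimality for simplicity is then immediate and does not use the hypothesis on~\(X\): if \(L\idealin A\) is \(X\)\nb-invariant with \(0\neq L\neq A\), then by Proposition~\ref{pro:gauge-invariance_vs_separation} (applied with \(G=\Z\), so that \(\Cst=\Cred\)) the graded ideal \(\Cst(\mathcal{B}|_L)\) satisfies \(\Cst(\mathcal{B}|_L)\cap A = L\), hence it is a non-zero proper ideal of \(A\rtimes_X\Z\).

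The heart of the argument is to show that, when~\(X\) is not an equivalence bimodule, the Fell bundle~\(\mathcal{B}\) is automatically pointwise outer; equivalently, no fibre~\(X_n\) with \(n\neq 0\) is inner.  Suppose that \(X_n\cong A\) as a Hilbert \(A\)\nb-bimodule for some \(n>0\).  Then \(\langle X_n,X_n\rangle_A = A = {}_A\langle X_n,X_n\rangle\).  Writing \(X_n = X^{\otimes_A(n-1)}\otimes_A X = X\otimes_A X^{\otimes_A(n-1)}\) and expanding the two inner products by the formulas behind \eqref{eq:left_innprod} and~\eqref{eq:right_innprod}, one gets the inclusions \(\langle X_n,X_n\rangle_A\subseteq\langle X,X\rangle_A\) and \({}_A\langle X_n,X_n\rangle\subseteq{}_A\langle X,X\rangle\); these force \(\langle X,X\rangle_A = A = {}_A\langle X,X\rangle\), i.e.\ \(X\) is a full equivalence bimodule, contrary to assumption.  (Alternatively, the dual partial homeomorphism \(\widehat{X_n} = \widehat{X}^{\,n}\) would be the identity on all of~\(\widehat{A}\), which is impossible unless~\(\widehat{X}\) already has full domain and range.)  For \(n<0\) one repeats the argument with~\(X^*\) in place of~\(X\), using \({}_A\langle X^*,X^*\rangle = \langle X,X\rangle_A\) and \(\langle X^*,X^*\rangle_A = {}_A\langle X,X\rangle\).

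Combining these: if~\(X\) is minimal, then~\(\mathcal{B}\) is minimal and, by the previous paragraph, pointwise outer, so the implication \ref{en:Fell_simple_outer}\(\Rightarrow\)\ref{en:Fell_simple_simple} in Theorem~\ref{the:Fell_simple} shows that \(A\rtimes_X\Z = \Cst(\mathcal{B})\) is simple.  Conversely, if \(A\rtimes_X\Z\) is simple, then~\(\mathcal{B}\) is minimal by the first paragraph, hence~\(X\) is minimal.  I expect the only genuine obstacle to be the clean verification that \(X_n\cong A\) with \(n\neq 0\) forces~\(X\) to be full on both sides; once this ``no inner fibres'' observation is in place, the remainder is a direct appeal to Theorem~\ref{the:Fell_simple} and Proposition~\ref{pro:gauge-invariance_vs_separation}.
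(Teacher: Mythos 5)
Your proposal is correct and takes essentially the same route as the paper: the key point in both is that the inclusions \(\braket{X_n}{X_n}_A\subseteq\braket{X}{X}_A\) and \({}_A\braket{X_n}{X_n}\subseteq{}_A\braket{X}{X}\) rule out \(X_n\cong A\) for \(n\neq 0\) when \(X\) is not an equivalence bimodule, so the Fell bundle \((X_n)_{n\in\Z}\) is pointwise outer and Theorem~\ref{the:Fell_simple} applies. The extra details you supply (the identification \(\I^{\mathcal{B}}(A)=\I^X(A)\) and the necessity of minimality via graded ideals) are exactly what the paper leaves implicit.
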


\begin{proof}
  For any \(n>0\),
  \(\braket{X^{\otimes_A n}}{ X^{\otimes_A n}}_A \subseteq
  \braket{X}{X}_A\)
  and
  \({}_A\braket{X^{\otimes_A n}}{X^{\otimes_A n}}\subseteq
  {}_A\braket{X}{X}\).
  Since~\(X\)
  is not an equivalence bimodule, this implies
  \(X^{\otimes_A n}\not\cong A\)
  for all \(n>0\).
  So the Fell bundle~\((X_n)_{n\in\N}\) is pointwise outer.
\end{proof}

\begin{bibdiv}
  \begin{biblist}
    \bibselect{references}
  \end{biblist}
\end{bibdiv}
\end{document}